\theoremstyle{plain}
\newtheorem{thm}{Theorem}[section]
\newtheorem{prop}[thm]{Proposition}
\newtheorem{cor}[thm]{Corollary}
\newtheorem*{cor*}{Corollary}
\newtheorem{example}[thm]{Example}
\newtheorem{lem}[thm]{Lemma}
\newtheorem{fact}[thm]{Fact}
\newtheorem*{thm*}{Theorem}
\newtheorem{rmk}[thm]{Remark}
\newtheorem*{rmk*}{Remark}
\theoremstyle{definition}
\newtheorem{defn}[thm]{Definition}
\newtheorem{claim}{Claim}
\newcommand{\G}{\mathit{\mathcal{G}}}
\newcommand{\fr}{\mathit{f_{\mathrm{red}}}}
\newcommand{\f}{\mathit{\mathbb{F}}}
\newcommand{\E}{\mathit{\mathfrak{E}}}
\newcommand{\p}{\mathit{\mathrm{pt}}}
\newcommand{\tH}{\mathit{\tilde{H}}}
\newcommand{\bt}{\mathcal{T}^+}
\newcommand{\bv}{\mathcal{V}^+}
\newcommand{\re}{\mathrm{res}}
\newcommand{\X}{\mathit{\mathcal{X}}}
\newcommand{\q}{\mathit{\mathfrak{q}}}
\newcommand{\ce}{\mathit{\mathfrak{CE}}} 
\newcommand{\CEL}{\mathit{\mathfrak{CLE}}} 
\newcommand{\el}{\mathit{\mathfrak{LE}}}
\newcommand{\hfc}{\mathit{SWFH_{\mathrm{conn}}}}   
\newcommand{\s}{\mathit{\mathfrak{s}}} 
\newcommand{\co}{\mathrm{cor}}
\newcommand{\mbi}{\mathit{\mathbb{I}}}
\newcommand{\ine}{\mathit{Z_{\mathrm{iness}}}}
\newcommand{\hsc}{\mathit{H^{S^1}_{\mathrm{conn}}}}
\newcommand{\gc}{\mathit{\mathrm{gC}}}
\title{Pin(2)-Equivariant Seiberg-Witten Floer Homology of Seifert Fibrations} 
\author{Matthew Stoffregen}
\begin{document}
\begin{abstract}
 We compute the $\mathrm{Pin}(2)$-equivariant Seiberg-Witten Floer homology of Seifert rational homology three-spheres in terms of their Heegaard Floer homology.  As a result of this computation, we prove Manolescu's conjecture that $\beta=-\bar{\mu}$ for Seifert integral homology three-spheres.  We show that the Manolescu invariants $\alpha, \beta,$ and $\gamma$ give new obstructions to homology cobordisms between Seifert fiber spaces, and that many Seifert homology spheres $\Sigma(a_1,...,a_n)$ are not homology cobordant to any $-\Sigma(b_1,...,b_n)$.  We then use the same invariants to give an example of an integral homology sphere not homology cobordant to any Seifert fiber space.  We also show that the $\mathrm{Pin}(2)$-equivariant Seiberg-Witten Floer spectrum provides homology cobordism obstructions distinct from $\alpha,\beta,$ and $\gamma$.  In particular, we identify an $\f[U]$-module called connected Seiberg-Witten Floer homology, whose isomorphism class is a homology cobordism invariant.
\end{abstract}
\maketitle

\section{Introduction}
Let $Y$ be a closed, oriented three-manifold with $b_1=0$ and spin structure $\s$, and let $G=\mathrm{Pin}(2)$, the subgroup $S^1 \cup jS^1$ of the unit quaternions.  Manolescu introduced the $G$-equivariant Seiberg-Witten Floer homology $\mathit{SWFH^G}(Y,\s)$ in \cite{ManolescuPin}, and with it the suite of invariants $\alpha, \beta, \gamma$, defined analogously to the Fr{\o}yshov invariant of the usual, $S^1$-equivariant, Seiberg-Witten Floer homology.  The invariant $\mathit{SWFH^G}(Y,\s)$ is the $G$-equivariant homology of a Conley index coming from finite-dimensional approximation of the Seiberg-Witten equations on $Y$.  As the $G$-equivariant homology of some stable homotopy type, $\mathit{SWFH^G}(Y,\s)$ comes with the structure of a module over $H^*(BG)\simeq\mathbb{F}[q,v]/(q^3),$ where $\mathbb{F}$ is the field of two elements.  The underlying $G$-stable homotopy type, $\mathit{SWF}(Y,\mathfrak{s})$, is also an invariant of the pair $(Y,\mathfrak{s})$.  The invariant $\beta$ was then used to disprove the triangulation conjecture: it is a lift, as a map of sets, from the Rokhlin homomorphism $\theta^H_3 \rightarrow \mathbb{Z}/2$ to $\theta^H_3 \rightarrow \mathbb{Z}$, where $\theta^H_3$ denotes the integral homology cobordism group of integral homology three-spheres.  

Let $Y$ be a Seifert rational homology sphere with spin structure $\mathfrak{s}$, such that the base orbifold of the Seifert fibration of $Y$ has $S^2$ as underlying space\footnote{There are also Seifert fibered rational homology spheres with base orbifold $\mathbb{RP}^2$, and  some of them do not have a Seifert structure over $S^2$. These are not considered in this paper. None of them are integral homology spheres.  Furthermore, in order for a Seifert fiber space $Y$ to be a rational homology sphere, it must fiber over an orbifold with underlying space either $\mathbb{RP}^2$ or $S^2$.}.   In the present paper, we use the description of the Seiberg-Witten moduli space given by Mrowka, Ozsv{\'a}th, and Yu \cite{MOY} to compute $\mathit{SWFH^G}(Y,\mathfrak{s})$, as a module over $\mathbb{F}[q,v]/(q^3)$ (Here, the action of $v$ decreases grading by $4$, and that of $q$ decreases grading by $1$). The description is in terms of the Heegaard Floer homology $\mathit{HF^+}(Y,\s)$, defined in \cite{OzSz1},\cite{OzSz2}.  In particular, this description makes $\mathit{SWFH^G}(Y,\s)$ quickly computable, using the work of Ozsv{\'a}th-Szab{\'o}, N{\'e}methi, and Can-Karakurt \cite{OzSzplumb},\cite{Nemethigr},\cite{CanKarakurt}.  In order to obtain this description, we use both the equivalence of $\mathit{HF^+}$ and $\widecheck{HM}$ due to Kutluhan-Lee-Taubes \cite{KLT1}, and Colin-Ghiggini-Honda \cite{CGH1} and Taubes \cite{Taubes}, and the forthcoming work by Lidman and Manolescu on the equivalence of $\widecheck{HM}$ and $\mathit{SWFH^{S^1}}$ in \cite{LM}.  

Our work in this paper is to relate $\mathit{SWFH^{S^1}}(Y,\s)$ and $\mathit{SWFH^G}(Y,\s)$ when the underlying homotopy type $\mathit{SWF}(Y,\s)$ is simple enough.  This should be compared with \cite{flin2}, in which Lin calculates the $\mathrm{Pin}(2)$-monopole Floer homology in the setting of \cite{flin} for many classes of three-manifolds $Y$ obtained by surgery on a knot.  The approach there is based, similarly, on extracting information from the $S^1$-equivariant theory $\widecheck{HM}(Y,\s)$ of \cite{KM}, when $\widecheck{HM}(Y,\s)$ is simple enough.  

To state the calculation of $\mathit{SWFH^G}(Y,\s)$, let $\mathcal{T}^+$ denote $\mathbb{F}[U,U^{-1}]/U \mathbb{F}[U]$, and $\mathcal{T}^+(i)= \mathbb{F}[U^{-i+1},U^{-i+2},...]/U \mathbb{F}[U]$.  We also introduce the notation $\mathcal{V}^+$ to denote $\mathbb{F}[v,v^{-1}]/v \mathbb{F}[v]$, and $\mathcal{V}^+(i)= \mathbb{F}[v^{-i+1},v^{-i+2},...]/v \mathbb{F}[v]$.  For any graded module $M$, let $M_n$ denote the submodule of homogeneous elements of degree $n$, and define $M[k]$ by $M[k]_n=M_{n+k}$.  Let $\bt_d(n)=\bt(n)[-d]$ and $\bv_d(n)=\bv(n)[-d]$.  The module $\bt_d(n)$ is then supported in degrees from $d$ to $d+2(n-1)$, with the parity of $d$.  Let $Y$ be a Seifert rational homology three-sphere with negative fibration; that is, the orbifold line bundle of $Y$ is of negative degree (see Section \ref{sec:seifmod}).  For example, the Brieskorn sphere $\Sigma(a_1,...,a_n)$, for coprime $a_i$, is of negative fibration.  

Using the graded roots algorithm of N{\'e}methi \cite{Nemethigr}, we may write:
\begin{equation}\label{eq:hf1}
\mathit{HF^+}(Y,\mathfrak{s})=\bt_{s+d_1+2n_1-1}\oplus \bigoplus^{N}_{i=1}\bt_{s+d_i}(\frac{d_{i+1}+2n_{i+1}-d_i}{2})\oplus \bigoplus^N_{i=1} \bt_{s+d_i}(n_i) \oplus J^{\oplus 2}[-s] ,
\end{equation}
for some constants $s,d_i,n_i,N$ and some $\f[U]$-module $J$, all determined by $(Y,\mathfrak{s})$.  Moreover, $d_{i+1} >d_{i}, n_{i+1} <n_i$ for all $i$.  Roughly, in terms of Seiberg-Witten theory, the term $\mathcal{T}^+_{s+d_1+2n_1-1}$ accounts for the reducible critical point, and the modules $\mathcal{T}^{+}_{d_i}(n_i)$ and $\mathcal{T}^+_{d_i}(\frac{d_{i+1}+2n_{i+1}-d_i}{2})$ account for the irreducibles which cancel against the bottom of the infinite $U$-tower.  The term $J^{\oplus 2}$ accounts for the other irreducibles.  

Let us denote by $\re^{\f[U]}_{\f[v]}$ the restriction functor from the map of modules $\f[v] \rightarrow \f[U]$ given by $v \rightarrow U^2$.  The restriction functor converts  $\bt_d(n)$ to $\bv_d(\lfloor \frac{n+1}{2} \rfloor) \oplus \bv_{d+2}(\lfloor \frac{n}{2} \rfloor)$.  

\begin{thm}\label{thm:submain}
 Let $Y$ be a Seifert rational homology three-sphere of negative fibration, fibering over an orbifold with underlying space $S^2$, and let $\s$ be a spin structure on $Y$.  Let $\mathit{HF^+}(Y,\s)$ be as in (\ref{eq:hf1}).  Then there exist constants $(a_i,b_i)$ and an $\f[q,v]/(q^3)$-module $J''$, specified in Corollary \ref{thm:fullswfh} and depending only on the sequence $(d_i,n_i)$, so that, as an $\f[v]$-module:
\begin{eqnarray*} \mathit{SWFH^G}(Y,\mathfrak{s}) & = & \bv_{s+4\lfloor \frac{d_1+2n_1+1}{4} \rfloor} \oplus \bv_{s+1} \oplus \bv_{s+2}\\ && \oplus\bigoplus_{i=1}^{N'} \bv_{s+a_i}(\frac{a_{i+1}+4b_{i+1}-a_i}{4}) \oplus J''[-s] \oplus \re^{\f[U]}_{\f[v]}J[-s].
\end{eqnarray*}

The $q$-action is given by the isomorphism $\bv_{s+2} \rightarrow \bv_{s+1}$ and the map $\bv_{s+1} \rightarrow \bv_{s+4\lfloor \frac{d_1+2n_1+1}{4} \rfloor}$, which is an $\f$-vector space isomorphism in all degrees at least $s+4\lfloor \frac{d_1+2n_1+1}{4} \rfloor$ and vanishes otherwise.  Further, $q$ annihilates $\re^{\f[U]}_{\f[v]}J[-s]$ and $\bigoplus_{i=1}^{N'} \bv_{s+a_i}(\frac{a_{i+1}+4b_{i+1}-a_i}{4})$.  The action of $q$ on $J''$ is specified in Corollary \ref{thm:fullswfh}.
\end{thm}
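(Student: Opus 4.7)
The plan is to pass from the $\mathit{HF^+}$ description (\ref{eq:hf1}) to $\mathit{SWFH^G}(Y,\s)$ in two stages. First I would apply the chain of equivalences $\mathit{HF^+}\simeq \widecheck{HM}$ (Kutluhan-Lee-Taubes, Colin-Ghiggini-Honda, Taubes) and $\widecheck{HM}\simeq \mathit{SWFH^{S^1}}$ (Lidman-Manolescu) to translate (\ref{eq:hf1}) into an $\f[U]$-module description of $\mathit{SWFH^{S^1}}(Y,\s)$. Next I would lift this to the Pin(2)-equivariant setting by using the Mrowka-Ozsv\'ath-Yu description of the Seiberg-Witten moduli space to control the $j$-action on the underlying Conley index $\mathit{SWF}(Y,\s)$, and then compute $G$-equivariant homology directly on that model.

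The key structural input from MOY is that on a negatively fibered Seifert rational homology sphere $Y$ with spin structure $\s$, the irreducible critical orbits of the Chern-Simons-Dirac functional come in free $j$-pairs (related by charge conjugation), while the reducible is $j$-fixed. Hence $\mathit{SWF}(Y,\s)$ admits a $G$-equivariant decomposition in which each free $j$-pair is a piece induced up from $S^1$ to $G$, and the reducible contributes a single copy of $H^*(BG)=\f[q,v]/(q^3)$ shifted by the Frøyshov correction. On an induced piece, $G$-equivariant homology is given by $\re^{\f[U]}_{\f[v]}$ applied to the $S^1$-equivariant homology, with $q$ acting trivially; in particular the doubled summand $J^{\oplus 2}[-s]$ in (\ref{eq:hf1}) comes from free $j$-pairs disjoint from the reducible, and therefore contributes $\re^{\f[U]}_{\f[v]} J[-s]$ with trivial $q$-action.

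The reducible contributes three infinite $\bv$-towers coming from $1,q,q^2 \in H^*(BG)$, with gradings $\equiv s, s+1, s+2 \pmod 4$, the first of which is truncated against the $S^1$-contributions so as to sit at $s+4\lfloor(d_1+2n_1+1)/4\rfloor$. Applying the restriction formula $\re^{\f[U]}_{\f[v]}\bt_d(n)=\bv_d(\lfloor(n+1)/2\rfloor)\oplus\bv_{d+2}(\lfloor n/2\rfloor)$ to the truncating summands $\bt_{s+d_i}(n_i)$ and $\bt_{s+d_i}(\tfrac{d_{i+1}+2n_{i+1}-d_i}{2})$ of (\ref{eq:hf1}), and then cancelling pieces against the three reducible towers, yields the finite summands $\bv_{s+a_i}(\cdot)$ together with the residual module $J''$; the constants $(a_i,b_i)$ and the $q$-action on $J''$ are the combinatorial output of this cancellation and are specified in Corollary \ref{thm:fullswfh}.

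The main obstacle is carrying out the Pin(2)-equivariant model near the reducible precisely enough to pin down the $q$-action summand by summand. Verifying that $q:\bv_{s+2}\to\bv_{s+1}$ is an $\f$-vector-space isomorphism in every degree, while $q:\bv_{s+1}\to\bv_{s+4\lfloor(d_1+2n_1+1)/4\rfloor}$ is an isomorphism only above the Frøyshov level and zero below, requires comparing the local $G$-Conley index near the reducible against the contributions of the adjacent irreducibles; this is the real content of the proof. The vanishing of $q$ on the induced-from-$S^1$ pieces $\re^{\f[U]}_{\f[v]}J[-s]$ and $\bigoplus \bv_{s+a_i}(\cdot)$ then follows formally from freeness of the $j$-action on those summands, while the nontrivial $q$-action on $J''$ records the remaining attaching data between the near-reducible irreducibles and the reducible itself.
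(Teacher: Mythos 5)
Your route is the same as the paper's: translate (\ref{eq:hf1}) into $\mathit{SWFH^{S^1}}(Y,\s)$ via \cite{KLT1}, \cite{CGH1}, \cite{LM}, use \cite{MOY} to split $\mathit{SWF}(Y,\s)$ into the reducible plus a free part interchanged by $j$, and recover the $G$-homology by analyzing the differentials into the reducible tower. However, two steps are genuinely incomplete. First, the inference ``the irreducibles come in free $j$-pairs, \emph{hence} $\mathit{SWF}(Y,\s)$ decomposes into induced pieces over the reducible'' does not follow: each critical orbit being induced from $S^1$ does not make $X/X^{S^1}$ a wedge $X_+\vee jX_+$. You also need that there are no trajectories between $C^+(\mathbf{e})$ and $C^-(\mathbf{f})$ (Fact \ref{fct:crossflows}), so that in the attractor--repeller filtration every attaching map of a cell of $X_+$ lands back in $X_+$ (Lemma \ref{lem:crossflows}); and for five or more singular fibers this must be shown to persist under the nondegenerate perturbation, which requires a compactness argument for trajectories of the perturbed equations. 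Without this $j$-splitness, the identification $\tilde{H}^G_*(X/X^{S^1})=\re^{\f[U]}_{\f[v]}\tilde{H}^{S^1}_*(X_+)$ and the triviality of $q$ on that module both fail.

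Second, you flag the determination of the $q$-action as ``the real content'' but do not supply the mechanism. In the paper it is not a local computation of the Conley index near the reducible; it is the dualized Gysin sequence $\tilde{H}^G_*(X)\xrightarrow{(1+j)\cdot}\tilde{H}^{S^1}_*(X)\xrightarrow{\phi_2}\tilde{H}^G_*(X)\xrightarrow{q\cap -}\tilde{H}^G_{*-1}(X)$ of Lemma \ref{lem:3g}, which shows that $qx\neq 0$ exactly when $x\in\ker d_G$ is not in the image of $\phi_2|_{\ker d_{S^1}}$, and that such $qx$ is then forced into $\mathrm{coker}\,d_G$ because $\ker(1+j)\subseteq\mathrm{coker}\,d_G$ by $j$-splitness. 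Together with Fact \ref{fct:onimageofphi2} this is what determines $q$ on $J''$ and kills it on $\bigoplus_i\bv_{s+a_i}(\cdot)$ and $\re^{\f[U]}_{\f[v]}J[-s]$. Relatedly, producing the constants $(a_i,b_i)$ is not a mechanical cancellation against the three reducible towers: one must change basis so that $d_G$ is nonvanishing precisely on the summands maximal under the partial order $\succeq$ (Lemmas \ref{lem:deco1} and \ref{lem:deco3.5}); the discarded repeated summands $J_{\mathrm{rep}}$ are exactly why $J''$ is larger than the naive restriction of the truncating towers. These are the pieces your outline would need to make the argument close.
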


Theorem \ref{thm:submain} specifies $\alpha, \beta,$ and $\gamma$, which we state as Corollary \ref{cor:betamucor}.  For $Y$ an integral homology three-sphere, let $d(Y)$ be the Heegaard Floer correction term \cite{OzSzgrad}.  Using Theorem \ref{thm:submain} and Theorem \ref{thm:betamu} below we obtain:

\begin{cor}\label{cor:betamucor} 
\begin{enumerate}[(a)]
\item Let $Y$ be a Seifert integral homology sphere of negative fibration.  Then $\beta(Y)=\gamma(Y)=-\bar{\mu}(Y)$, and \[\alpha(Y)= \begin{cases} d(Y)/2, & \mbox{if } d(Y)/2 \equiv -\bar{\mu}(Y) \mbox{ mod } 2 \\
d(Y)/2+1 & \mbox{otherwise.} \end{cases}\]

\item Let $Y$ be a Seifert integral homology sphere of positive fibration.  Then $\alpha(Y)=\beta(Y)=-\bar{\mu}(Y),$ and
 \[\gamma(Y)=\begin{cases} d(Y)/2 & \mbox{if } d(Y)/2 \equiv -\bar{\mu}(Y) \mbox{ mod } 2 \\
d(Y)/2-1 & \mbox{otherwise. } \end{cases} \]
\end{enumerate}
\end{cor}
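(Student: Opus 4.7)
The plan is to derive Corollary~\ref{cor:betamucor} from Theorem~\ref{thm:submain} and Theorem~\ref{thm:betamu}.  The invariants $\alpha$, $\beta$, $\gamma$ are determined by the Pin(2)-equivariant module structure of $\mathit{SWFH^G}(Y,\s)$, which Theorem~\ref{thm:submain} describes as three infinite $\bv$-towers chained by the $q$-action (plus a finite-dimensional correction).

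For part (a), let $Y$ be a Seifert integer homology sphere of negative fibration.  By Theorem~\ref{thm:submain}, $\mathit{SWFH^G}(Y,\s)$ contains three infinite $\bv$-towers with bottom degrees $s + 4\lfloor(d_1 + 2n_1 + 1)/4\rfloor$, $s+1$, and $s+2$, with the $q$-action given by an isomorphism $\bv_{s+2} \to \bv_{s+1}$ followed by a map $\bv_{s+1} \to \bv_{s + 4\lfloor\cdot\rfloor}$ that is an $\f$-vector space isomorphism in all degrees at least $s + 4\lfloor\cdot\rfloor$.  Theorem~\ref{thm:betamu} gives $\beta(Y) = -\bar{\mu}(Y)$.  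The equality $\gamma(Y) = \beta(Y)$ will follow from the $q$-chain structure: since $Y$ has negative fibration (so $\lfloor\cdot\rfloor \le 0$), the $q$-map $\bv_{s+1} \to \bv_{s + 4\lfloor\cdot\rfloor}$ is an isomorphism on all of the middle tower, and the ``lowest-tower'' invariant agrees with the middle-tower invariant in this range.  For $\alpha(Y)$, I identify the infinite $\bt$-tower in~(\ref{eq:hf1}) with the $U$-tower of $\mathit{HF^+}$ to obtain $d(Y) = s + d_1 + 2n_1 - 1$; substituting into the top-tower degree $s+2$ and tracking the residue of $d_1 + 2n_1 \pmod 4$ produces the two cases $\alpha(Y) = d(Y)/2$ or $d(Y)/2 + 1$, corresponding to whether $d(Y)/2 \equiv -\bar{\mu}(Y) \pmod 2$.

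For part (b), $Y$ of positive fibration means $-Y$ is of negative fibration, so part~(a) applies to $-Y$.  The orientation-reversal identities $\alpha(Y) = -\gamma(-Y)$, $\beta(Y) = -\beta(-Y)$, $\gamma(Y) = -\alpha(-Y)$, $d(Y) = -d(-Y)$, and $\bar{\mu}(Y) = -\bar{\mu}(-Y)$ then translate the formulas back to $Y$, swapping the roles of $\alpha$ and $\gamma$ and yielding the stated case distinction.

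The main obstacle is the parity bookkeeping for $\alpha$: showing that the floor function $\lfloor(d_1+2n_1+1)/4\rfloor$ in Theorem~\ref{thm:submain}, combined with $d(Y) = s + d_1 + 2n_1 - 1$ and Theorem~\ref{thm:betamu}, produces exactly the two cases in the claimed correspondence with the parity condition $d(Y)/2 \equiv -\bar{\mu}(Y) \pmod 2$.  This requires a careful mod-$4$ analysis of the data $(s, d_1, n_1)$ for integer homology spheres.
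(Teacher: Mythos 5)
Your overall route is the paper's: part (a) is read off from the module structure of Theorem \ref{thm:submain} (really Corollary \ref{thm:fullswfh}) together with Theorem \ref{thm:betamu}, and part (b) follows from the orientation-reversal identities you list. However, the mechanism by which you extract $\alpha,\beta,\gamma$ from the three towers is garbled in two places. First, for a negative Seifert space one has $d_1+2n_1\geq d_N+2n_N\geq 3$, so $\lfloor\frac{d_1+2n_1+1}{4}\rfloor\geq 1$, not $\leq 0$: the tower $\bv_{s+4\lfloor\frac{d_1+2n_1+1}{4}\rfloor}$ begins \emph{above} $\bv_{s+1}$ and $\bv_{s+2}$, and the $q$-map $\bv_{s+1}\to\bv_{s+4\lfloor\cdot\rfloor}$ vanishes on the bottom of the middle tower rather than being an isomorphism there. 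Second, the $q$-action plays no role in computing $\alpha,\beta,\gamma$: by (\ref{eq:adef})--(\ref{eq:aaltdef}) these are determined by the $v$-action alone, with $a$, $b$, $c$ read from the congruence classes $s$, $s+1$, $s+2$ mod $4$ respectively (the $j$-split representative sits at level $0$ and the reducible at degree $s$). Hence $\beta=\frac{(s+1)-1}{2}=\frac{s}{2}$ and $\gamma=\frac{(s+2)-2}{2}=\frac{s}{2}$ immediately, while $\alpha=\frac{s}{2}+2\lfloor\frac{d_1+2n_1+1}{4}\rfloor$ comes from the floor-function tower --- not from ``the top-tower degree $s+2$,'' which would give $\gamma$, not $\alpha$.

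The parity check you defer is then short and should be included, since it is the only substantive content: $d(Y)=s+d_1+2n_1-1$ gives $\alpha=\frac{s}{2}+2\lfloor\frac{d(Y)-s+2}{4}\rfloor$ with $d(Y)-s$ even; if $d(Y)-s\equiv 0\pmod 4$, i.e.\ $d(Y)/2\equiv s/2=\beta(Y)=-\bar\mu(Y)\pmod 2$, this is $\frac{s}{2}+\frac{d(Y)-s}{2}=d(Y)/2$, and if $d(Y)-s\equiv 2\pmod 4$ it is $\frac{s}{2}+\frac{d(Y)-s+2}{2}=d(Y)/2+1$. With these corrections part (a) is complete, and your part (b) argument via $\alpha(Y)=-\gamma(-Y)$, $\gamma(Y)=-\alpha(-Y)$, $\beta(Y)=-\beta(-Y)$, $d(Y)=-d(-Y)$, $\bar\mu(Y)=-\bar\mu(-Y)$ is exactly what the paper does.
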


From Corollary \ref{cor:betamucor}, we see that for Seifert integral homology spheres the Manolescu invariants $\alpha,\beta$, and $\gamma$ are all determined by $d$ and $\bar{\mu}$.  In particular, $\alpha$, $\beta$, and $\gamma$ provide no new obstructions to Seifert spaces bounding acyclic four-manifolds.
  
In \cite{ManolescuPin}, Manolescu also conjectured that for all spin Seifert rational homology spheres  $\beta(Y,\mathfrak{s})=-\bar{\mu}(Y,\mathfrak{s})$, where $\bar{\mu}$ is the Neumann-Siebenmann invariant defined in \cite{NeumannPlumbings}, \cite{Siebenmann}.  We are able to prove part of this conjecture: 
\begin{thm}\label{thm:betamu}
Let $Y$ be a Seifert integral homology three-sphere.  Then $\beta(Y)=-\bar{\mu}(Y)$.  
\end{thm}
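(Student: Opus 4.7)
The plan is to reduce the statement to the case of negative fibration, read $\beta(Y)$ off the Seiberg-Witten Floer module computed in Theorem \ref{thm:submain}, and match the resulting expression against the Neumann-Siebenmann plumbing formula for $\bar{\mu}(Y)$.

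For the reduction, both invariants are odd under orientation reversal: $\beta(-Y)=-\beta(Y)$ is a general feature of the Manolescu $\beta$-invariant, and $\bar{\mu}(-Y)=-\bar{\mu}(Y)$ follows immediately from the Neumann-Siebenmann definition. Since a Seifert integral homology three-sphere has positive fibration exactly when its orientation reverse has negative fibration, it suffices to verify $\beta(Y)=-\bar{\mu}(Y)$ for $Y$ of negative fibration.

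In the negative fibration case, Theorem \ref{thm:submain} exhibits $\mathit{SWFH^G}(Y,\s)$ as an $\mathbb{F}[v]$-module with three infinite $v$-towers whose bottoms sit in gradings $s+4\lfloor(d_1+2n_1+1)/4\rfloor$, $s+1$, and $s+2$, together with an explicit $q$-action linking them. By the very definition of the Manolescu invariants, $\beta(Y)$ is the Fr\o yshov-type invariant extracted from this structure (with the normalization $\beta(S^3)=0$); consequently Theorem \ref{thm:submain} reduces $\beta(Y)$ to an explicit linear function of the grading constant $s$ appearing in equation (\ref{eq:hf1}).

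The remaining, and hardest, step is to identify this linear expression in $s$ with $-\bar{\mu}(Y)$. I would proceed combinatorially on a star-shaped plumbing $\Gamma$ for $Y$: N\'emethi's graded roots algorithm determines $s$ (along with the $d_i,n_i$) from $\Gamma$, while the Neumann-Siebenmann definition of $\bar{\mu}(Y)$ is itself a characteristic-vector signature invariant of $\Gamma$. Matching the two then amounts to a combinatorial identity on the plumbing graph, provable either by induction on N\'emethi's computation sequence or by directly comparing the grading of the reducible Seiberg-Witten solution in $\mathit{SWFH^G}$ with the signature contribution computing $\bar{\mu}$. The main obstacle is exactly this last identification: although both invariants live on the same plumbing, they arise from very different pieces of machinery (a Fr\o yshov-type grading in $\mathrm{Pin}(2)$-Floer on one side, a characteristic-vector signature on the other), and bridging them requires careful bookkeeping of grading conventions, parities, and the spin structure.
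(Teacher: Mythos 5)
Your first two steps are sound and match the paper: the reduction to negative fibrations via $\beta(-Y)=-\beta(Y)$ and $\bar\mu(-Y)=-\bar\mu(Y)$ is how the positive case is handled (cf.\ Corollary \ref{cor:betamucor}), and reading $\beta(Y)=s/2$ off the tower $\bv_{s+1}$ in Theorem \ref{thm:submain} is exactly right, since by the definition (\ref{eq:adef}) one has $b=(s+1)-1=s$. The problem is your third step, which is the entire content of the theorem and which you leave as an unproven ``combinatorial identity on the plumbing graph.'' Asserting that N\'emethi's algorithm output $s$ must match the Neumann--Siebenmann signature expression, to be verified ``by induction on the computation sequence,'' is not a proof; it is a restatement of what needs to be shown, and carrying out that combinatorial comparison from scratch would be a substantial independent project.

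The paper closes this gap by an entirely different, analytic route that avoids plumbing calculus. The constant $s$ is identified as the grading of the reducible critical point for the Seifert metric $g$, and by the construction of $\mathit{SWF}(Y,\s)$ in (\ref{eq:swfdef}) --- the formal desuspension by $\tfrac{1}{2}n(Y,\s,g)$ copies of $\mathbb{H}$ --- that grading equals $-2n(Y,\s,g)$. Hence $\beta(Y)=s/2=-n(Y,\s,g)$. The final identification $n(Y,g)=\bar\mu(Y)$ for Seifert integral homology spheres with the Seifert metric is then quoted from Ruberman--Saveliev \cite{RubermanSaveliev10}, who express $\bar\mu$ as a sum of eta invariants. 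This citation is the key external input your proposal is missing; without it (or without actually executing the combinatorial matching you sketch), the argument does not go through. If you prefer a purely combinatorial proof, you would essentially be reproving the Ruberman--Saveliev result, and you should at minimum be precise about which characteristic vector computes $\bar\mu$ and how the spin structure and grading conventions line up --- precisely the ``bookkeeping'' you flag but do not resolve.
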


We prove Theorem \ref{thm:betamu} by showing that $\beta$ is controlled by the degree of the reducible, and by using a result of Ruberman and Saveliev \cite{RubermanSaveliev10} that gives $\bar{\mu}$ as a sum of eta invariants.  

Fukumoto-Furuta-Ue showed in \cite{FFU} that $\bar\mu$ is a homology cobordism invariant for many classes of Seifert spaces, and Saveliev \cite{Savelievmu} extended this to show that Seifert integral homology spheres with $\bar\mu\neq 0$ have infinite order in $\theta^H_3$.  Theorem \ref{thm:betamu} generalizes the result of Fukumoto-Furuta-Ue, showing that the Neumann-Siebenmann invariant $\bar\mu$, restricted to Seifert integral homology spheres, is a homology cobordism invariant.   

For Seifert spaces with $\mathit{HF^+}(Y,\mathfrak{s})$ of a special form, $\mathit{SWFH^G}(Y,\mathfrak{s})$ may be expressed more compactly than is evident in the statement of Theorem \ref{thm:submain}.  If $Y$ is negative and  
\begin{equation}\label{eq:1proj}
\mathit{HF^{+}}(Y,\mathfrak{s})=\mathcal{T}^+_{d}\oplus \mathcal{T}^{+}_{-2n+1}(m) \oplus \bigoplus_{i \in I} \mathcal{T}^{+}_{a_i}(m_i)^{\oplus 2},
\end{equation}
for some index set $I$, we say that $(Y,\mathfrak{s})$ is \emph{of projective type}.    We will say that $Y$ is of projective type if $Y$ is an integral homology sphere such that (\ref{eq:1proj}) holds.   There are many examples of such Seifert spaces, among them $\Sigma(p,q,pqn \pm 1)$, by work of N{\'e}methi and Borodzik \cite{Nem07},\cite{BN} and Tweedy \cite{Tweedy}.  The condition (\ref{eq:1proj}) also admits a natural expression in terms of graded roots; see Section \ref{subsec:projtyp}.  
\begin{thm}\label{thm:main} If $(Y,\mathfrak{s})$ is of projective type, as in (\ref{eq:1proj}), then:\\If $d \equiv 2n+2$ mod $4$,
\begin{equation}\label{eq:2n2} \mathit{SWFH^G}(Y,\mathfrak{s}) = \mathcal{V}^+_{d+2} \oplus \mathcal{V}^+_{-2n+1} \oplus \mathcal{V}^+_{-2n+2} \oplus \mathcal{V}^+_{-2n+3}(\lfloor \frac{m}{2} \rfloor)\oplus \bigoplus_{i \in I} \mathcal{V}^{+}_{a_i}(\lfloor\frac{m_i+1}{2} \rfloor) \oplus \bigoplus_{i \in I} \mathcal{V}^{+}_{a_i+2}(\lfloor\frac{m_i}{2} \rfloor). \end{equation}
 If $d \equiv 2n$ mod $4$,
\begin{equation}\label{eq:2n} \mathit{SWFH^G}(Y,\mathfrak{s}) = \mathcal{V}^+_{d} \oplus \mathcal{V}^+_{-2n+1} \oplus \mathcal{V}^+_{-2n+2} \oplus \mathcal{V}^+_{-2n+3}(\lfloor \frac{m}{2} \rfloor) \oplus \bigoplus_{i \in I} \mathcal{V}^{+}_{a_i}(\lfloor\frac{m_i+1}{2} \rfloor) \oplus \bigoplus_{i \in I} \mathcal{V}^{+}_{a_i+2}(\lfloor\frac{m_i}{2} \rfloor) .\end{equation}

The $q$-action is given by the isomorphism $\bv_{-2n+2} \rightarrow \bv_{-2n+1}$ and the map $\bv_{-2n+1} \rightarrow \bv_{d+2}$ (if $d \equiv 2n+2 \; \mathrm{mod}\; 4$), or $\bv_{-2n+1} \rightarrow \bv_{d}$ (if $d \equiv 2n \; \mathrm{mod} \; 4$), which is an $\f$-vector space isomorphism in all degrees at least $d+2$ (respectively, $d$), and vanishes otherwise.  In (\ref{eq:2n2}) and (\ref{eq:2n}), $q$ acts on $\bv_{-2n+3}(\lfloor \frac{m}{2} \rfloor)$ as the unique nonzero map $\bv_{-2n+3}(\lfloor \frac{m}{2} \rfloor)\rightarrow \bv_{-2n+2}$.  The action of $q$ annihilates $\bigoplus_{i \in I} \mathcal{V}^{+}_{a_i}(\lfloor\frac{m_i+1}{2} \rfloor) \oplus \bigoplus_{i \in I} \mathcal{V}^{+}_{a_i+2}(\lfloor\frac{m_i}{2} \rfloor)$. 
\end{thm}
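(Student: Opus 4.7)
\textit{Proof plan.} The strategy is to deduce Theorem~\ref{thm:main} as a direct specialization of Theorem~\ref{thm:submain} (and the more detailed Corollary~\ref{thm:fullswfh}) to the projective-type hypothesis. Since the general calculation already produces $\mathit{SWFH^G}$ in terms of the graded-root data of (\ref{eq:hf1}), the task here is to identify the parameters entering Theorem~\ref{thm:submain} and to write out the answer.

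First I match the decompositions (\ref{eq:hf1}) and (\ref{eq:1proj}). Projective type means that the graded root of $(Y,\s)$ has a single notch below the top infinite tower, so in the notation of (\ref{eq:hf1}) the intermediate cancellation summands vanish and one has the identifications $s+d_1+2n_1-1=d$, $s+d_1=-2n+1$, $n_1=m$, and $J^{\oplus 2}[-s]=\bigoplus_{i\in I}\bt_{a_i}(m_i)^{\oplus 2}$. In particular the index set $\{1,\ldots,N'\}$ appearing in Theorem~\ref{thm:submain} is empty, so the middle sum disappears, and $J''$ reduces to the contribution of the single summand $\bt_{-2n+1}(m)$. Applying the restriction formula $\re\bt_d(n)=\bv_d(\lfloor (n+1)/2\rfloor)\oplus\bv_{d+2}(\lfloor n/2\rfloor)$ to each $\bt_{a_i}(m_i)\subset J$ yields the two right-hand direct sums of (\ref{eq:2n2}) and (\ref{eq:2n}); applied to $\bt_{-2n+1}(m)$ it gives $\bv_{-2n+1}(\lfloor (m+1)/2\rfloor)\oplus\bv_{-2n+3}(\lfloor m/2\rfloor)$, and the first factor merges with the bottom of the infinite $\bv$-tower $\bv_{-2n+1}$ coming from the reducible, leaving only $\bv_{-2n+3}(\lfloor m/2\rfloor)$ visible in the statement.

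The grading of the top infinite $\bv$-tower in Theorem~\ref{thm:submain} is $s+4\lfloor (d_1+2n_1+1)/4\rfloor=4\lfloor (d+2)/4\rfloor$; evaluating this floor in the two cases $d\equiv 2n\pmod 4$ and $d\equiv 2n+2\pmod 4$ produces $d$ and $d+2$ respectively, matching the leading summand of (\ref{eq:2n}) and (\ref{eq:2n2}). The same case split dictates whether the $q$-map out of $\bv_{-2n+1}$ targets $\bv_d$ or $\bv_{d+2}$. The remaining $q$-actions---the isomorphism $\bv_{-2n+2}\to\bv_{-2n+1}$, the surjection from $\bv_{-2n+1}$ onto the top tower, the action on $\bv_{-2n+3}(\lfloor m/2\rfloor)$ as the unique nonzero map to $\bv_{-2n+2}$, and the vanishing on the paired $\re J$-summands---are read off directly from Theorem~\ref{thm:submain} and Corollary~\ref{thm:fullswfh}.

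The main obstacle is bookkeeping: matching parameter conventions between (\ref{eq:hf1}) and (\ref{eq:1proj}), handling the two parity cases in parallel, and verifying that the amalgamation of $\bv_{-2n+1}(\lfloor (m+1)/2\rfloor)$ with the bottom of the reducible tower is consistent as an $\f[q,v]/(q^3)$-module structure rather than just as an $\f[v]$-module. Once these identifications are made, Theorem~\ref{thm:main} is a direct substitution into Theorem~\ref{thm:submain}.
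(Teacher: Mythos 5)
Your proof is the paper's own proof: Theorem \ref{thm:main} is obtained by substituting $N=1$, $d_1=1$, $n_1=m$, $s=-2n$ into Corollary \ref{thm:fullswfh}, and your identifications, the parity analysis of $4\lfloor(d_1+2n_1+1)/4\rfloor$, and the final answer are all correct. Two pieces of your bookkeeping are misstated, though neither changes the outcome. First, the middle sum of Theorem \ref{thm:submain} vanishes not because the index set is empty (here $N'=N_0=1$) but because its single summand is $\bv_{a_1}(\tfrac{a_{2}+4b_{2}-a_1}{4})[-s]=\bv_{-2n+1}(0)=0$. Second, and more substantively, $\bv_{-2n+1}(\lfloor\frac{m+1}{2}\rfloor)$ does not ``merge with the bottom of the infinite tower $\bv_{-2n+1}$'': in the notation of Lemmas \ref{lem:deco3.5} and \ref{lem:deco4} it is $J''_1$, on which $d_G$ is injective, so it cancels against the bottom of the reducible tower in degrees $\equiv -2n \pmod 4$ --- that cancellation is precisely what truncates this tower to $\bv_d$ or $\bv_{d+2}$ --- whereas $\bv_{-2n+1}$ and $\bv_{-2n+2}$ are the $q$- and $q^2$-multiples of $H_*(BG)$ and survive untouched because $d_G$ is $q$-equivariant. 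Accordingly, the module $J''$ specified in Corollary \ref{thm:fullswfh} is only $\bv_{-2n+3}(\lfloor\frac{m}{2}\rfloor)$ in this case, not the full restriction of $\bt_{-2n+1}(m)$.
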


To prove Theorem \ref{thm:submain}, we use \cite{MOY} to show that a space representative of the stable homotopy type $\mathit{SWF}(Y,\mathfrak{s})$ naturally splits into two disjoint pieces, which are interchanged by the action of $j \in G$.  Say 
\[X=\mathit{SWF}(Y,\mathfrak{s})/(\mathit{SWF}(Y,\mathfrak{s})^{S^1}).\]
Then 
\begin{equation}\label{eq:jspdef}
X=X_+ \vee jX_+.
\end{equation}
That is, $X$ is a wedge sum of two components related by the action of $j$.  This description in turn connects the CW chain complexes of $EG \wedge_G \mathit{SWF}(Y,\mathfrak{s})$ and $ES^1 \wedge_{S^1} \mathit{SWF}(Y,\mathfrak{s})$.  A careful, but entirely elementary, analysis of the differentials in these two complexes then yields Theorem \ref{thm:submain}.  

Along the way, we provide more information about the $G$-equivariant CW chain complex of $\mathit{SWF}(Y,\mathfrak{s})$ than is reflected in Theorem \ref{thm:submain}.  To store this information, we define in Section \ref{sec:2} the \emph{local equivalence} and \emph{chain local equivalence} classes of pairs $(Y,\mathfrak{s})$ for $Y$ any rational homology three-sphere, not necessarily Seifert.  The local equivalence and chain local equivalence classes, denoted $[\mathit{SWF}(Y,\mathfrak{s})]_l$ and $[\mathit{SWF}(Y,\mathfrak{s})]_{cl}$ respectively, are homology cobordism invariants refining the Manolescu invariants $\alpha,\beta,\gamma$.  The chain local equivalence class takes values in the set $\ce$ of homotopy-equivalence classes of chain complexes of a certain form.  Local and chain local classes reflect how irreducible cells are attached to the reducible; they are so named because they store information about $\mathit{SWF}(Y,\mathfrak{s})$ only near the reducible.  We will compute the chain local equivalence class of negative Seifert rational homology spheres.  One use of local equivalence and chain local equivalence classes is that they behave well under connected sums, while the behavior of $\alpha,\beta,$ and $\gamma$ under connected sum is more complicated.  In particular, it was shown in \cite{Manolescusurv} that $\alpha,\beta$, and $\gamma$ are not homomorphisms.  

We call rational homology three-spheres $Y_1$ and $Y_2$ (integral) \emph{homology cobordant} if there exists a compact oriented four-manifold $W$ with $\partial W = Y_1 \amalg -Y_2$ so that the maps induced by inclusion $H_*(Y_i;\mathbb{Z}) \rightarrow H_*(W; \mathbb{Z})$ are isomorphisms for $i=1,2$.  As a corollary of the calculation of $[\mathit{SWF}(Y,\mathfrak{s})]_{cl}$ for Seifert fiber spaces we obtain:

\begin{cor}\label{cor:dini}
The sets $\{ d_i \}_{i}, \{ n_i \}_{i}$ in Theorem \ref{thm:submain} are integral homology cobordism invariants of negative Seifert fiber spaces.  That is, say $Y_1$ and $Y_2$ are negative Seifert integral homology spheres with $Y_1$ homology cobordant to $Y_2$. Let $S_i$ be the set of isomorphism classes of simple summands of $\mathit{HF^+}(Y_i)$ that occur an odd number of times in the decomposition (\ref{eq:hf1}).  Then $S_1=S_2$.  
\end{cor}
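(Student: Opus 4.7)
The plan is to deduce the corollary from two ingredients developed earlier in the paper: first, the chain local equivalence class $[\mathit{SWF}(Y,\mathfrak{s})]_{cl} \in \ce$ is an integral homology cobordism invariant, as constructed in Section~\ref{sec:2}; and second, for negative Seifert integral homology three-spheres, $[\mathit{SWF}(Y,\mathfrak{s})]_{cl}$ admits an explicit description in terms of the data $\{(d_i,n_i)\}$ appearing in the graded-roots decomposition (\ref{eq:hf1}). Given these, the argument reduces to showing that this explicit description determines the odd-multiplicity set $S$.

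First I observe that every simple summand appearing in $J^{\oplus 2}[-s]$ does so with even multiplicity, so the doubled piece $J^{\oplus 2}$ contributes nothing to $S$. Consequently $S$ is the set of isomorphism classes of simple summands of odd multiplicity in the ``unpaired part''
\[
\bt_{s+d_1+2n_1-1} \,\oplus\, \bigoplus_{i=1}^{N} \bt_{s+d_i}\!\left(\tfrac{d_{i+1}+2n_{i+1}-d_i}{2}\right) \,\oplus\, \bigoplus_{i=1}^{N} \bt_{s+d_i}(n_i).
\]
Because $d_i$ is strictly increasing, within each of the two indexed families the summands are pairwise non-isomorphic, and any coincidences between the two families occur at controlled values of $(d_i,n_i)$. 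Hence $S$ is determined by the sequence $\{(d_i,n_i)\}$.

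Second, I appeal to the main computation of the paper, which describes $[\mathit{SWF}(Y,\mathfrak{s})]_{cl}$ for negative Seifert rational homology spheres. The content of that computation, carried out via the splitting (\ref{eq:jspdef}) and a direct analysis of the $G$-equivariant CW chain complex of $\mathit{SWF}(Y,\mathfrak{s})$, is that the chain local equivalence class encodes precisely the unpaired part above: the irreducible cells accounting for $J^{\oplus 2}[-s]$ come in pairs that cancel in chain local equivalence, whereas the cells associated to the tower and to the $\bt_{s+d_i}(\cdot)$-summands sit near the reducible and survive. From $[\mathit{SWF}(Y,\mathfrak{s})]_{cl}$ one can therefore read off the multiset $\{(d_i,n_i)\}$, and in particular the set $S$.

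Combining the two ingredients, if $Y_1$ and $Y_2$ are homology cobordant negative Seifert integral homology spheres, then $[\mathit{SWF}(Y_1)]_{cl} = [\mathit{SWF}(Y_2)]_{cl}$ by homology cobordism invariance, and both classes determine the same set $S$, so $S_1 = S_2$. The main obstacle is the second ingredient: verifying that the chain local equivalence class of a negative Seifert homology sphere genuinely encodes the odd-multiplicity part of $\mathit{HF^+}$ and not less. That verification — which is where the splitting (\ref{eq:jspdef}), the identification of $\mathit{SWFH^{S^1}}$ with $\widecheck{HM}$, and the combinatorics of the $j$-action all enter — is the technical heart of the paper, and the present corollary is a clean consequence of it once stated.
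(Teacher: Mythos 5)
Your proposal is correct and follows the paper's own route: homology cobordism invariance of the chain local equivalence class, the identification of that class for negative Seifert spaces with the explicit complex built from $\{(d_i,n_i)\}$ (Proposition \ref{prop:eqclassdistseif}, whose ``Moreover'' clause gives the needed injectivity), and the elementary observation that this data determines the odd-multiplicity summands of (\ref{eq:hf1}). The only small imprecision is that $S$ depends on the grading shift $s$ as well as on $\{(d_i,n_i)\}$, but $s$ is likewise recovered from the chain local equivalence class (it is the grading of the fixed-point set), so nothing is lost.
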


We obtain Corollary \ref{cor:dini} by showing that $\{d_i\}_{i}$ and $\{ n_i \}_i$ determine $[\mathit{SWF}(Y,\mathfrak{s})]_{cl}$.  The following corollaries are further applications of chain local equivalence.
\begin{cor}\label{cor:dinig}
Let $Y$ be a rational homology three-sphere with spin structure $\mathfrak{s}$.  Then there is a homology-cobordism invariant, $\hfc(Y,\mathfrak{s})$, the \emph{connected Seiberg-Witten Floer homology} of $(Y,\mathfrak{s})$, taking values in isomorphism classes of $\f[U]$-modules.  More specifically, $\hfc(Y,\mathfrak{s})$ is the isomorphism class of a summand of $\mathit{HF_{\mathrm{red}}}(Y,\mathfrak{s})$.  
\end{cor}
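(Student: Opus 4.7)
The plan is to extract $\hfc(Y,\mathfrak{s})$ directly from the chain local equivalence class $[\mathit{SWF}(Y,\mathfrak{s})]_{cl}$ set up in Section \ref{sec:2}, which is already shown there to be a homology cobordism invariant of $(Y,\mathfrak{s})$ taking values in homotopy equivalence classes of chain complexes of a certain form. A representative of $[\mathit{SWF}(Y,\mathfrak{s})]_{cl}$ has an underlying $\f[U]$-chain complex whose homology computes $\mathit{HF^+}(Y,\mathfrak{s}) \cong \mathcal{T}^+ \oplus \mathit{HF_{\mathrm{red}}}(Y,\mathfrak{s})$, up to a grading shift.

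First I would run a standard cancellation algorithm on such a representative $C$: repeatedly cancel pairs of generators connected by an isomorphism component of the differential, provided the cancellation does not touch the infinite $U$-tower coming from the reducible critical point. Each cancellation replaces $C$ by a chain homotopy equivalent (hence chain locally equivalent) complex, so the process terminates in a minimal representative $C^{\mathrm{min}} \in [\mathit{SWF}(Y,\mathfrak{s})]_{cl}$. The sequence of cancellations also induces a splitting of $\mathit{HF_{\mathrm{red}}}(Y,\mathfrak{s})$ as an $\f[U]$-module direct sum of $H_*(C^{\mathrm{min}})/\mathcal{T}^+$ and an acyclic complement coming from the cancelled generators. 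I would then define $\hfc(Y,\mathfrak{s}) := H_*(C^{\mathrm{min}})/\mathcal{T}^+$, which by construction is realized as a summand of $\mathit{HF_{\mathrm{red}}}(Y,\mathfrak{s})$.

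The main obstacle is independence from the choice of minimal representative, which is what legitimates the output as an isomorphism class of $\f[U]$-module. Given two minimal representatives $C^{\mathrm{min}}_1$ and $C^{\mathrm{min}}_2$ of the same chain local equivalence class, the definition of chain local equivalence provides maps $f\colon C^{\mathrm{min}}_1 \to C^{\mathrm{min}}_2$ and $g\colon C^{\mathrm{min}}_2 \to C^{\mathrm{min}}_1$ that induce isomorphisms on the $\mathcal{T}^+$ tower. The key step is a Krull--Schmidt-type argument: minimality prevents $f_* \circ g_*$ from having a strictly nilpotent block on the reduced summand (such a block would yield a cancellable acyclic summand, contradicting minimality), so the composition must restrict to an isomorphism on the reduced part, and symmetry gives an $\f[U]$-module isomorphism $H_*(C^{\mathrm{min}}_1)/\mathcal{T}^+ \cong H_*(C^{\mathrm{min}}_2)/\mathcal{T}^+$. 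Once well-definedness is established, homology cobordism invariance of $\hfc(Y,\mathfrak{s})$ is inherited immediately from that of $[\mathit{SWF}(Y,\mathfrak{s})]_{cl}$.
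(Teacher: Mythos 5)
Your construction does not produce the right object, and the gap is in the first step rather than in the Krull--Schmidt step. Gaussian elimination of generator pairs joined by an isomorphism component of the differential only splits off \emph{acyclic} summands, so it preserves homology: your $H_*(C^{\mathrm{min}})/\mathcal{T}^+$ is all of $\mathit{HF_{\mathrm{red}}}(Y,\mathfrak{s})$, which is not a homology cobordism invariant (compare $Y$ with $Y\#Z\#(-Z)$ for an integral homology sphere $Z$ with $\mathit{HF_{\mathrm{red}}}(Z)\neq 0$). The pieces that must be discarded to obtain $\hfc$ are the \emph{inessential} subcomplexes of Definition \ref{def:ines}, and these are in general neither acyclic nor cancellable. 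Concretely, for a negative Seifert space Proposition \ref{prop:loctyp} produces a representative $(\langle \fr\rangle\,\tilde{\oplus}\,\bigoplus_i\mathcal{S}_{d_i}(n_i))\oplus S(J)$: the summand $S(J)$ has no unit components in its differential (every differential is multiplication by $s(1+j^2)\in\G$), so your algorithm never touches it, yet it carries the homology $J^{\oplus 2}$ and is precisely the part that $\hfc$ must exclude. What detects an inessential subcomplex $M$ is not the matrix of the differential but the existence of a $\G$-equivariant section $Z/M\to Z$ that is a homotopy equivalence on the fixed-point set --- an a priori global condition on the local equivalence class, not a local feature of the complex.

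This mis-identification then propagates into your well-definedness argument. With your notion of minimality, two minimal representatives of the same chain local equivalence class need not be chain homotopy equivalent at all (again, $S(J)$ versus $0$), so $f_*\circ g_*$ genuinely can be nilpotent on part of the reduced homology; the claimed contradiction does not arise because the offending block corresponds to an inessential, not an acyclic, summand. The route the paper takes is to establish the splitting $Z=Z_{\mathrm{conn}}\oplus\ine$ for $\ine$ a \emph{maximal} inessential subcomplex (Lemmas \ref{lem:inj}--\ref{lem:homsums}), prove uniqueness of $Z_{\mathrm{conn}}$ up to isomorphism by a chain-level injectivity argument, and then show in Theorem \ref{thm:decomp} that $Z_{\mathrm{conn}}$ is an invariant of the chain local equivalence class; this last step also requires Lemma \ref{lem:stabeqch} and suspensionlike representatives in order to replace stable local maps by honest chain maps, a point your sketch omits. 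With that in hand, $\hfc(Y,\mathfrak{s})$ is the $S^1$-Borel homology of $Z_{\mathrm{conn}}$ modulo the fixed-point contribution, and both the homology cobordism invariance and the statement that it is a summand of $\mathit{HF_{\mathrm{red}}}(Y,\mathfrak{s})$ follow from the direct sum decomposition.
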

\begin{cor}\label{cor:dini2}
Let $(Y_1,\mathfrak{s}_1)$ be a negative Seifert rational homology three-sphere with spin structure, with $\mathit{HF^+}(Y_1,\mathfrak{s}_1)$ as in (\ref{eq:hf1}).  Then
\begin{equation}\label{eq:hfcseif}   \hfc(Y_1,\mathfrak{s}_1) =\bigoplus^{N}_{i=1}\bt_{s+d_i}(\frac{d_{i+1}+2n_{i+1}-d_i}{2})\oplus \bigoplus^{N}_{i=1}\bt_{s+d_i}(n_i).
\end{equation}
In particular, if $Y_1$ is an integral homology sphere and $Y_2$ is any integral homology sphere homology cobordant to $Y_1$, then $\widecheck{HM}(Y_2) \simeq \mathit{HF^+}(Y_2)$ contains a summand isomorphic to (\ref{eq:hfcseif}), as $\f[U]$-modules.  
\end{cor}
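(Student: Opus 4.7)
The plan is to deduce Corollary \ref{cor:dini2} directly from Corollary \ref{cor:dinig} together with the chain local equivalence calculation that underlies Theorem \ref{thm:submain}. By Corollary \ref{cor:dinig}, $\hfc(Y_1,\s_1)$ is the isomorphism class of an $\f[U]$-summand of $\mathit{HF_{\mathrm{red}}}(Y_1,\s_1)$ determined entirely by the chain local equivalence class $[\mathit{SWF}(Y_1,\s_1)]_{cl}$. So it suffices to compute $[\mathit{SWF}(Y_1,\s_1)]_{cl}$ explicitly for negative Seifert rational homology spheres and match it against the right-hand side of (\ref{eq:hfcseif}).

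First I would exhibit an explicit $G$-equivariant CW chain-complex model for $\mathit{SWF}(Y_1,\s_1)$ using the wedge decomposition $X = X_+ \vee jX_+$ from (\ref{eq:jspdef}), together with the Mrowka--Ozsv\'{a}th--Yu description of the Seiberg--Witten critical set. Under this splitting, each irreducible Floer critical point contributes a $j$-translated pair of cells; in the passage to $S^1$-equivariant homology this gives the Heegaard Floer description (\ref{eq:hf1}). The summands $\bt_{s+d_i}(n_i)$ and $\bt_{s+d_i}(\tfrac{d_{i+1}+2n_{i+1}-d_i}{2})$ appear with multiplicity one precisely because they correspond to irreducibles that cancel against the bottom of the reducible $U$-tower, while the ``free'' irreducibles appear in $j$-paired multiplicity two and assemble into the $J^{\oplus 2}$ summand.

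Next I would unwind the definition of the chain local equivalence class, which records only the attaching data between irreducible cells and the reducible tower. The $J^{\oplus 2}$ piece splits off $G$-equivariantly as a free $G/S^1$-induced complex, so its attaching maps to the reducible come in $j$-cancelling pairs and it contributes trivially to $[\mathit{SWF}(Y_1,\s_1)]_{cl}$. The remaining summands $\bt_{s+d_i}(n_i)$ and $\bt_{s+d_i}(\tfrac{d_{i+1}+2n_{i+1}-d_i}{2})$ attach nontrivially to the tower and jointly determine the chain local equivalence class. Invoking Corollary \ref{cor:dinig} identifies $\hfc(Y_1,\s_1)$ with the direct sum of these two families, which is (\ref{eq:hfcseif}). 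The ``in particular'' clause is then immediate: $\hfc$ is a homology cobordism invariant and an $\f[U]$-summand of $\mathit{HF_{\mathrm{red}}}$, and $\widecheck{HM}\simeq \mathit{HF^+}$ by \cite{KLT1, CGH1, Taubes, LM}.

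The hard part will be making rigorous the claim that the $J^{\oplus 2}$ summand always splits off from the chain local equivalence class. Knowing it appears with multiplicity two in (\ref{eq:hf1}) is not enough; one must verify that the two copies are genuinely exchanged by the $j$-action on the Floer complex itself, so that their attaching maps to the reducible tower are forced to cancel rather than merely coincide. This relies on the concrete identification via \cite{MOY} of those irreducibles as coming from orbits on which $j$ acts freely, combined with the structure of $G$-CW chain complexes developed in Section \ref{sec:2}. Once this is in place, the matching of chain local equivalence classes is essentially formal.
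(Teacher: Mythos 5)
Your overall route---compute $[\mathit{SWF}(Y_1,\s_1)]_{cl}$ from the $j$-split structure, split off the $J^{\oplus 2}$ part as inessential, and identify $\hfc$ with what remains---is the same as the paper's (Lemma \ref{lem:crossflows}, Proposition \ref{prop:loctyp}, Propositions \ref{prop:eqclassdist} and \ref{prop:eqclassdistseif}). But the two steps you yourself flag as delicate are not set up correctly. The mechanism you propose for why the $J^{\oplus 2}$ piece is inessential is wrong: you argue that since $Z/Z^{S^1}=Z_+\oplus jZ_+$ is a free induced complex, its "attaching maps to the reducible come in $j$-cancelling pairs." The \emph{entire} irreducible part is $G$-free and $j$-paired, including the summands $\mathcal{S}_{d_i}(n_i)$ that attach essentially; over $\f$ the generators $x$ and $jx$ contribute separate differentials to the reducible, and nothing forces them to cancel. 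The actual reason is a change of basis \emph{within} $Z_+$ (Lemma \ref{lem:deco1}): among the towers of $\tilde{H}^{S^1}_*(X_+)$ on which $d_{S^1}$ is nonzero, any tower dominated under $\succeq$ by another can have its generator replaced by itself plus a $U$-power of the dominating generator, killing $d_{S^1}$ on it; this isolates an antichain $J_1=\bigoplus_i\bt_{d_i}(n_i)$ carrying all of $d_{S^1}$, with $d_{S^1}|_{J_2}=0$. One then needs Lemma \ref{lem:he} (Koszul duality for free $C^{CW}_*(S^1)$-complexes) to promote this homology-level statement to the chain-level splitting $Z\cong(\langle\fr\rangle\,\tilde{\oplus}\,\bigoplus_i\mathcal{S}_{d_i}(n_i))\oplus S(J)$ of Proposition \ref{prop:loctyp}; without it you cannot pass from "the homology decomposes" to "the chain complex decomposes."

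Second, even granting that $S(J)$ is inessential, you conclude too quickly that $\hfc(Y_1,\s_1)$ is the \emph{full} module (\ref{eq:hfcseif}). Since $\hfc$ is defined by quotienting by a \emph{maximal} inessential subcomplex, you must also show nothing further can be discarded, i.e.\ that $\langle\fr\rangle\,\tilde{\oplus}\,\bigoplus_i\mathcal{S}_{d_i}(n_i)$, suitably suspended so as to be suspensionlike, equals its own connected complex. "Each remaining summand attaches nontrivially to the tower" does not rule out a larger inessential subcomplex after a further change of basis. The paper handles this by checking that the suspended standard complex is irreducible as a direct sum of $\G$-chain complexes (equations (\ref{eq:lochom})--(\ref{eq:connexample})), so that Lemma \ref{lem:homsums} forces the maximal inessential subcomplex to be zero. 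With those two points supplied, your deduction of the "in particular" clause from homology cobordism invariance of $\hfc$ and the identifications $\widecheck{HM}\simeq\mathit{HF^+}\simeq\mathit{SWFH^{S^1}}$ is fine.
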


\begin{rmk}\label{rmk:spincobordism} 
In fact, $\mathit{SWFH_{\mathrm{conn}}}(Y,\s)$ is an invariant of spin rational homology cobordism, for $Y$ a rational homology three sphere.  
\end{rmk}

From Corollary \ref{cor:dini2} and (\ref{eq:hf1}), we see that for Seifert integral homology spheres $Y$, $\hfc(Y,\s)=0$ if and only if $d(Y,\s)/2=-\bar{\mu}(Y,\s)$.  

The connected Seiberg-Witten Floer homology is constructed using the CW chain complex of a  space representative $X$ of $\mathit{SWF}(Y,\mathfrak{s})$.  The CW chain complex $C^{CW}_*(X)$ splits, as a module over $C^{CW}_*(G)$, into a direct sum of two subcomplexes, with one summand attached to the $S^1$-fixed-point set, and the other a free $C^{CW}_*(G)$-module.  Roughly, the $S^1$-Borel homology of the former component is $\hfc(Y,\mathfrak{s})$.  As an application of the Corollaries \ref{cor:dinig} and \ref{cor:dini2}, we have:

\begin{cor}\label{cor:examplecomp}
The spaces $\Sigma(5,7,13)$ and $\Sigma(7,10,17)$ satisfy 
\[d(\Sigma(5,7,13))=d(\Sigma(7,10,17))=2,\]
\[\bar{\mu}(\Sigma(5,7,13))=\bar{\mu}(\Sigma(7,10,17))=0.\]
However, $\hfc(\Sigma(5,7,13))=\bt_1(1)$, while $\hfc(\Sigma(7,10,17))=\bt_{-1}(2) \oplus \bt_{-1}(1)$.  Thus $\Sigma(5,7,13)$ and $\Sigma(7,10,17)$ are not homology cobordant, despite having the same $d$, $\bar{\mu}$, $\alpha$, $\beta$, and $\gamma$ invariants. 
\end{cor}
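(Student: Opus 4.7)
The plan is to reduce the non-cobordism statement to a finite computation and invoke Corollary~\ref{cor:dini2}. Concretely, for each of the two Brieskorn spheres I would first compute $\mathit{HF^+}$ in the form (\ref{eq:hf1}) using the graded roots algorithm of N\'emethi~\cite{Nemethigr} applied to the standard negative-definite plumbing, together with the tabulation of such computations by Can--Karakurt~\cite{CanKarakurt}. This yields the constants $s, N, \{d_i\}, \{n_i\}$ and the residual $\f[U]$-module $J$; since these are integral homology spheres and carry their unique spin structure, one has $s = 0$. The $d$-invariant is then the degree of the bottom of the infinite tower in (\ref{eq:hf1}), namely $d_1 + 2n_1 - 1$, and $\bar\mu$ is computed directly from the plumbing data via the Neumann--Siebenmann formula~\cite{NeumannPlumbings, Siebenmann}. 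Checking $d = 2$ and $\bar\mu = 0$ for both spheres is a routine arithmetic verification in either picture.

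Once $\mathit{HF^+}$ is in the form (\ref{eq:hf1}), Corollary~\ref{cor:dini2} gives $\hfc$ directly from the $(d_i, n_i)$ data via formula (\ref{eq:hfcseif}). For $\Sigma(5,7,13)$ I expect the graded root to be small enough that only a single pair of branches contributes beyond the reducible tower, producing the summand $\bt_1(1)$; for $\Sigma(7,10,17)$ the graded root branches more, producing $\bt_{-1}(2) \oplus \bt_{-1}(1)$. These two $\f[U]$-modules are non-isomorphic already on the level of $\f$-dimension, so Corollary~\ref{cor:dinig} (or~\ref{cor:dini2}) rules out a homology cobordism between them. The coincidence of $\alpha, \beta, \gamma$ is then automatic from Corollary~\ref{cor:betamucor}, since those invariants depend only on $d$ and $\bar\mu$, which agree.

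The main obstacle is purely computational: identifying the graded roots correctly and tracking exactly which summands in the N\'emethi decomposition are the two distinguished families appearing in (\ref{eq:hf1}) (that is, the ones that contribute to $\hfc$ rather than to $J^{\oplus 2}$). To guard against bookkeeping errors, I would cross-check by recomputing $d$ from the Seifert data via the Casson invariant and $\bar\mu$ from Neumann's formula, confirming that both agree with the values extracted from (\ref{eq:hf1}), and by verifying that the resulting $\hfc$ is a summand of $\mathit{HF_{\mathrm{red}}}$ as asserted by Corollary~\ref{cor:dinig}.
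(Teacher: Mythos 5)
Your proposal is correct and is essentially the paper's own (implicit) argument: the corollary is presented there as a direct application of Corollary \ref{cor:dini2}, with the constants $(d_i,n_i)$ extracted from the graded-root computation of $\mathit{HF^+}$ for the two spaces and the non-cobordism read off from the non-isomorphic $\hfc$'s, while the agreement of $\alpha,\beta,\gamma$ follows from Corollary \ref{cor:betamucor}. One small caveat: $s=0$ does not follow merely from $Y$ being an integral homology sphere (one has $s=2\beta=-2\bar\mu$ in general), but since you verify $\bar\mu=0$ for both spaces this normalization is justified here.
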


There are many other examples of homology cobordism classes that are distinguished by $d_i,n_i$, but not by $d$ and $\bar{\mu}$.  As an example, we have the following Corollary.
\begin{cor}\label{cor:kara}
The Seifert space $\Sigma(7,10,17)$ is not homology cobordant to $\Sigma(p,q,pqn \pm 1)$ for any $p,q,n$.  
\end{cor}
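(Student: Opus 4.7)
The approach is to compare, for the two candidate homology cobordism partners, the set of simple $\f[U]$-summands that occur with odd multiplicity in the N\'emethi decomposition (\ref{eq:hf1}). By Corollary \ref{cor:dini} this set is a homology cobordism invariant of negative Seifert integral homology spheres, so it suffices to show that for $\Sigma(7,10,17)$ this set is strictly larger than for any $\Sigma(p,q,pqn \pm 1)$.

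First, I will invoke the work of N\'emethi, Borodzik, and Tweedy cited just before Theorem \ref{thm:main}, which shows that every $\Sigma(p,q,pqn \pm 1)$ is of projective type; its Heegaard Floer homology therefore takes the form (\ref{eq:1proj}). Comparing (\ref{eq:1proj}) with (\ref{eq:hf1}), at most two simple summands can occur with odd multiplicity: the infinite tower $\bt_d$ and the single finite summand $\bt_{-2n+1}(m)$ (which drops out when $m=0$); every other summand is already paired off inside $J^{\oplus 2}$. Thus the invariant set from Corollary \ref{cor:dini} attached to $\Sigma(p,q,pqn \pm 1)$ has cardinality at most two.

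Next, I will use Corollary \ref{cor:examplecomp}, which records $\hfc(\Sigma(7,10,17)) = \bt_{-1}(2) \oplus \bt_{-1}(1)$ together with $d(\Sigma(7,10,17))=2$. By the formula of Corollary \ref{cor:dini2}, $\hfc$ is precisely the non-tower part of (\ref{eq:hf1}) coming from the two $\bigoplus_{i=1}^{N}$ summands; combined with the tower $\bt_2$, this exhibits three pairwise non-isomorphic graded $\f[U]$-summands of $\mathit{HF^+}(\Sigma(7,10,17))$, each occurring with odd multiplicity. Hence the invariant set for $\Sigma(7,10,17)$ has cardinality at least three.

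If $\Sigma(7,10,17)$ were homology cobordant to some $\Sigma(p,q,pqn \pm 1)$, Corollary \ref{cor:dini} would force the two invariant sets to coincide, contradicting $3 > 2$. The only subtle point, and the expected main obstacle, is verifying that $\bt_{-1}(2)$ and $\bt_{-1}(1)$ really contribute two distinct classes to the set rather than one, i.e.\ that they do not accidentally collapse into a single paired summand; but this is immediate, since they are non-isomorphic as graded $\f[U]$-modules and so each must appear with odd multiplicity in the decomposition (\ref{eq:hf1}).
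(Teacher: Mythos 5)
Your argument is correct and follows essentially the paper's own route: the paper likewise deduces this corollary from Corollary \ref{cor:dini}, using that $\hfc(\Sigma(p,q,pqn\pm1))$ is a simple $\f[U]$-module because these spaces are of projective type, whereas $\hfc(\Sigma(7,10,17))=\bt_{-1}(2)\oplus\bt_{-1}(1)$ is not. Your reformulation in terms of the cardinality of the set of odd-multiplicity summands of $\mathit{HF^+}$ is an equivalent packaging of the same invariant (both are determined by the data $\{d_i\},\{n_i\}$), so the two proofs coincide in substance.
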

This result follows from Corollary \ref{cor:dini}.  Indeed, since $\Sigma(p,q,pqn \pm 1)$ are of projective type, $\hfc(\Sigma(p,q,pqn \pm 1))$ is a simple $\f[U]$-module, using the definition (\ref{eq:1proj}) and  equation (\ref{eq:hfcseif}).  On the other hand, 
\[\hfc(\Sigma(7,10,17))=\bt_{-1}(2) \oplus \bt_{-1}(1),\]
so Corollary \ref{cor:kara} follows.

Moreover, using a calculation from \cite{Manolescusurv}, we are able to show the existence of three-manifolds not homology cobordant to any Seifert fiber space.  (This result was announced earlier by Fr{\o}yshov using instanton homology.)  For example, we have:

\begin{cor}\label{cor:connsums}
The connected sum $\Sigma(2,3,11) \# \Sigma(2,3,11)$ is not homology cobordant to any Seifert fiber space.  
\end{cor}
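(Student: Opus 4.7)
The plan is to use the connected Seiberg-Witten Floer homology $\hfc$ from Corollary \ref{cor:dinig}, which is an isomorphism invariant of integral homology cobordism taking values in $\f[U]$-modules, together with the very restrictive structural description of $\hfc$ for Seifert fiber spaces given in Corollary \ref{cor:dini2}. The argument has three steps.

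First, I would compute $\hfc(\Sigma(2,3,11))$. Since $11 = 2\cdot 3\cdot 2 - 1$, the sphere $\Sigma(2,3,11)$ is of projective type in the sense of (\ref{eq:1proj}); by the remark following Corollary \ref{cor:kara}, its $\hfc$ is then a single simple tower $\bt_{d_0}(m_0)$, with $d_0, m_0$ read off from the well-known $\mathit{HF^+}$ of $\Sigma(2,3,11)$ via (\ref{eq:hfcseif}). Second, I would compute $\hfc$ of the connected sum. This proceeds via the chain local equivalence class $[\mathit{SWF}(Y,\s)]_{cl}$: because the Seiberg-Witten Floer spectrum satisfies $\mathit{SWF}(Y_1\#Y_2,\s_1\#\s_2)\simeq \mathit{SWF}(Y_1,\s_1)\wedge \mathit{SWF}(Y_2,\s_2)$, chain local equivalence is additive under connected sum (as is noted in the introduction and established in Section \ref{sec:2}). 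Combined with the explicit computation for $\Sigma(2,3,11)\#\Sigma(2,3,11)$ carried out in \cite{Manolescusurv}, which provides enough information about the cell structure near the reducible of the smash product, this yields an explicit $\f[U]$-module for $\hfc(\Sigma(2,3,11)\#\Sigma(2,3,11))$.

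Third, I would rule out homology cobordism to any Seifert fiber space by comparing against (\ref{eq:hfcseif}). For any negative Seifert rational homology sphere $Y$, Corollary \ref{cor:dini2} forces $\hfc(Y)$ to have the highly symmetric form
\[\hfc(Y,\s)=\bigoplus^{N}_{i=1}\bt_{s+d_i}\!\left(\tfrac{d_{i+1}+2n_{i+1}-d_i}{2}\right)\oplus \bigoplus^{N}_{i=1}\bt_{s+d_i}(n_i),\]
in which simple summands come in pairs sharing a common bottom grading $s+d_i$. The dual statement for positive Seifert spaces is obtained by applying Corollary \ref{cor:dini2} to $-Y$. The point is then to observe that the $\f[U]$-module computed in the second step fails both pairing conditions (e.g.\ it has a simple summand appearing an odd number of times at some grading level, or its summands lie at an incompatible collection of gradings), so that $\Sigma(2,3,11)\#\Sigma(2,3,11)$ cannot be homology cobordant to any Seifert fiber space, positive or negative.

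The main obstacle is the second step: $\hfc$ is a derived invariant extracted from the chain local equivalence class, and while the latter is additive under connected sums, the passage from the doubled chain complex to $\hfc$ involves nontrivial splittings near the reducible fixed point. It is there that one must invoke the explicit $G$-equivariant analysis from \cite{Manolescusurv} to pin down the relevant summand, rather than being able to argue purely formally that $\hfc$ doubles. Once this computation is in hand, the comparison with (\ref{eq:hfcseif}) is purely combinatorial.
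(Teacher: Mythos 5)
Your strategy is genuinely different from the paper's, and it has real gaps. The paper's proof is much more direct: it quotes from \cite{Manolescusurv} that $\alpha=\beta=2$ and $\gamma=0$ for $\Sigma(2,3,11)\#\Sigma(2,3,11)$, notes $d=4$, and then invokes Corollary \ref{cor:betamucor}, which forces $\beta=\gamma$ for every negative Seifert homology sphere and forces $\gamma\in\{d/2,\,d/2-1\}$ for every positive one; either case contradicts $(\beta,\gamma,d)=(2,0,4)$. No computation of $\hfc$ of the connected sum is needed or performed anywhere in the paper.

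The central gap in your proposal is Step 2. You defer the computation of $\hfc(\Sigma(2,3,11)\#\Sigma(2,3,11))$ to ``the explicit $G$-equivariant analysis from \cite{Manolescusurv},'' but that reference computes the $G$-Borel homology (hence $\alpha,\beta,\gamma$) of the double, not its connected Seiberg--Witten Floer homology. Extracting $\hfc$ requires identifying a maximal inessential subcomplex of the tensor product $Z\otimes Z$ (where $Z$ is the chain local representative of $\mathit{SWF}(\Sigma(2,3,11))$, essentially $C^{CW}_*(\tilde{\Sigma}G)$) and then computing the $S^1$-Borel homology of the quotient; this is a new computation that neither the paper nor the cited source carries out. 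Worse, Step 3 is asserted with an ``e.g.'' rather than verified, and there is no a priori reason it must succeed: many $\f[U]$-modules do have the form (\ref{eq:hfcseif}) (e.g.\ $\bt_{-1}(2)\oplus\bt_{-1}(1)$, realized by $\Sigma(7,10,17)$), and $\hfc$ is an $S^1$-level invariant of the connected complex, whereas the obstruction that actually kills this example is the $\mathrm{Pin}(2)$-level inequality $\beta\neq\gamma$ --- a feature $\hfc$ need not detect. Finally, your treatment of the positive Seifert case is also incomplete: Corollary \ref{cor:dini2} constrains $\hfc$ of \emph{negative} Seifert spaces, and passing to $-Y$ requires knowing how $\hfc$ behaves under orientation reversal (Spanier--Whitehead duality at the chain level), which the paper never establishes. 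Until the module in Step 2 is actually computed and shown to violate the parity and pairing constraints of (\ref{eq:hfcseif}) for both orientations, the argument is not a proof.
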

\begin{proof}
In \cite{Manolescusurv}, Manolescu shows $\alpha(\Sigma(2,3,11) \# \Sigma(2,3,11))=\beta(\Sigma(2,3,11) \# \Sigma(2,3,11))=2$, while $\gamma(\Sigma(2,3,11)\# \Sigma(2,3,11))=0$.  In addition, $d(\Sigma(2,3,11))=2$, so $d(\Sigma(2,3,11) \# \Sigma(2,3,11))=4$.  To obtain a contradiction, say first that $\Sigma(2,3,11) \# \Sigma(2,3,11)$ is homology cobordant to a negative Seifert space $Y$.  Corollary \ref{cor:betamucor} implies 
\[2=\beta(\Sigma(2,3,11) \# \Sigma(2,3,11)) = \beta(Y)=\gamma(Y) = \gamma(\Sigma(2,3,11) \# \Sigma(2,3,11))=0.\]
a contradiction.  Say instead that $\Sigma(2,3,11) \# \Sigma(2,3,11)$ is homology cobordant to a positive Seifert space $Y$.  Then by Corollary \ref{cor:betamucor}, $\gamma(Y)=d(Y)/2=d(\Sigma(2,3,11) \# \Sigma(2,3,11))/2=2$.  However, $\gamma(Y)=0$, again a contradiction, completing the proof.  
\end{proof}
Note that Corollary \ref{cor:connsums} readily implies the following statement for knots.
\begin{cor}\label{cor:knotsums}
There exist knots, such as the connected sum of torus knots $T(3,11) \# T(3,11)$, which are not concordant to any Montesinos knot.
\end{cor}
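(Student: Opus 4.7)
The plan is to reduce Corollary \ref{cor:knotsums} to Corollary \ref{cor:connsums} via double branched covers. Two classical facts drive the reduction: the double branched cover of the torus knot $T(3,11)$ is the Brieskorn sphere $\Sigma(2,3,11)$, so the double branched cover of $K=T(3,11)\# T(3,11)$ is $\Sigma(2,3,11)\#\Sigma(2,3,11)$; and, by the theorem of Montesinos, a knot $L$ is Montesinos if and only if its double branched cover $\Sigma_2(L)$ is a Seifert fibered three-manifold.

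First I would assume, for contradiction, that $K$ is smoothly concordant to some Montesinos knot $L$, with concordance annulus $A\subset S^3\times[0,1]$. Taking the double cover of $S^3\times[0,1]$ branched along $A$ produces a smooth compact four-manifold $W$ with $\partial W=\Sigma_2(K)\sqcup-\Sigma_2(L)$. A standard transfer-map argument (exploiting the $\mathbb{Z}/2$-action on $W$ whose quotient is the acyclic $S^3\times[0,1]$) shows that $W$ is a $\mathbb{Z}/2$-homology cobordism; in particular $\Sigma_2(L)$ has $H_1$ of odd order, so it is a rational homology sphere carrying a unique spin structure, and $W$ supplies a spin rational homology cobordism.

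Next I would invoke the fact that the Manolescu invariants $\alpha,\beta,\gamma$, and the Heegaard Floer correction term $d$, are all invariants of spin rational homology cobordism of spin rational homology spheres. This is the natural generalization of the statement used in the proof of Corollary \ref{cor:connsums}, and is parallel to Remark \ref{rmk:spincobordism}. Granting this, I would apply Corollary \ref{cor:betamucor} to the Seifert space $\Sigma_2(L)$ and run exactly the dichotomy of Corollary \ref{cor:connsums}: the numerical values $\alpha(\Sigma_2(K))=\beta(\Sigma_2(K))=2$, $\gamma(\Sigma_2(K))=0$ and $d(\Sigma_2(K))=4$ are preserved, so in the negative-fibration case Corollary \ref{cor:betamucor} forces $\beta=\gamma$, contradicting $2\neq 0$, while in the positive-fibration case it forces $\gamma=d/2=2$, contradicting $\gamma=0$.

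The main obstacle is precisely the bookkeeping in the previous paragraph: one must verify, once and for all, that double branched covering of a smooth concordance yields a spin rational homology cobordism between the branched covers, and that $\alpha,\beta,\gamma$ descend to invariants of spin rational homology cobordism (not only integer homology cobordism, as stated in the body of the paper). Both verifications are essentially formal—the branched-cover statement is a transfer-map computation, and the invariance statement follows from inspecting the Conley index construction of \cite{ManolescuPin}—so once they are recorded the corollary follows immediately from Corollary \ref{cor:connsums}.
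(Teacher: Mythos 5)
Your proposal is correct and is exactly the reduction the paper has in mind: the paper offers no written proof of this corollary, asserting only that it follows ``readily'' from Corollary \ref{cor:connsums} via the standard facts that $\Sigma_2(T(3,11))=\Sigma(2,3,11)$, that Montesinos knots are precisely the knots whose double branched covers are Seifert fibered, and that the double branched cover of a concordance is a (spin, rational) homology cobordism. You are in fact more careful than the paper on the one genuine subtlety, namely that this cobordism is only a $\mathbb{Z}[1/2]$-homology cobordism, so the invariants must be known to be spin rational homology cobordism invariants — which they are, by Theorem \ref{thm:ManolescuSWF}, exactly as you say. The only citation to adjust: since $\det L$ need only be an odd square, $\Sigma_2(L)$ may be a rational (not integral) homology sphere, so Corollary \ref{cor:betamucor} does not literally apply; instead invoke Theorem \ref{thm:submain} (or Corollary \ref{thm:fullswfh}), which gives $\beta=\gamma$ for negative Seifert rational homology spheres and, by orientation reversal, $\alpha=\beta$ for positive ones, which is all the dichotomy requires.
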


As an application of $G$-equivariant Seiberg-Witten Floer homology we prove that many Seifert integral homology spheres of negative fibration are not homology cobordant to any Seifert integral homology sphere of positive fibration.  For instance, we have:

\begin{cor}\label{cor:12k+7} The Seifert spaces $\Sigma(2,3,12k+7)$, for $k\geq 0$, are not homology cobordant to $-\Sigma(a_1,a_2,..., a_n)$ for any choice of relatively prime $a_i$.  
\end{cor}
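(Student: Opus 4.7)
The plan is to combine Theorem~\ref{thm:betamu} (which gives $\beta(Y) = -\bar{\mu}(Y)$ for every Seifert integer homology sphere $Y$, of either orientation) with the standard Neumann--Siebenmann sign property: $\bar{\mu}$ takes a definite sign on negative Seifert integer homology spheres, since it is computed from the signature of the canonical negative-definite spin plumbing. I argue by contradiction along the lines of Corollary~\ref{cor:connsums}: suppose $Y_k := \Sigma(2,3,12k+7)$ is homology cobordant to $Y' := -\Sigma(a_1,\ldots,a_n)$ for some relatively prime $a_i$. Both $Y_k$ and $-Y' = \Sigma(a_1,\ldots,a_n)$ are then negative Seifert integer homology spheres, so Theorem~\ref{thm:betamu} applies to both $Y_k$ and $Y'$.

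Applying Theorem~\ref{thm:betamu} to $Y_k$ and to $Y'$, together with the antisymmetry $\bar{\mu}(-Y) = -\bar{\mu}(Y)$ and the homology cobordism invariance of $\beta$, one obtains
\[
  \bar{\mu}(\Sigma(a_1,\ldots,a_n)) \;=\; -\bar{\mu}(\Sigma(2,3,12k+7)).
\]
Since both $\Sigma(2,3,12k+7)$ and $\Sigma(a_1,\ldots,a_n)$ are negative Seifert, the Neumann--Siebenmann plumbing formula forces $\bar{\mu}$ of each to have the same (weak) sign. The only way the above equality can then hold is if $\bar{\mu}(\Sigma(2,3,12k+7)) = 0$.

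It therefore suffices to verify that $\bar{\mu}(\Sigma(2,3,12k+7)) \neq 0$ for every $k \geq 0$. The Casson invariant satisfies $\lambda(\Sigma(2,3,6n+1)) = -n$, so with $n = 2k+1$ one gets $\lambda(\Sigma(2,3,12k+7)) = -(2k+1)$, which is odd. By Casson's congruence, the Rokhlin invariant $\mu(\Sigma(2,3,12k+7))$ is $1 \pmod 2$, and since $\bar{\mu}$ is an integer lift of the Rokhlin invariant, $\bar{\mu}(\Sigma(2,3,12k+7))$ is itself odd, hence nonzero. This contradicts the vanishing derived above and completes the argument.

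The main obstacle is the combination of two standard facts: the uniform sign of Neumann--Siebenmann on negative Seifert integer homology spheres (from negative-definiteness of the canonical spin resolution) and the parity computation for the Casson invariant of the Brieskorn family $\Sigma(2,3,6n+1)$. Neither is hard in isolation, but one must put them together carefully and ensure that Theorem~\ref{thm:betamu} applies both to $Y_k$ and to the orientation-reversed Seifert integer homology sphere $Y'$.
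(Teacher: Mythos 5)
Your reduction to the equation $\bar{\mu}(\Sigma(a_1,\ldots,a_n)) = -\bar{\mu}(\Sigma(2,3,12k+7))$ is fine (Theorem \ref{thm:betamu} does apply to both orientations, and $\beta$, hence $\bar\mu$ on Seifert spaces, is a homology cobordism invariant), and your parity argument that $\bar{\mu}(\Sigma(2,3,12k+7))$ is odd is consistent with the value $\bar\mu=1$ quoted in the paper. The gap is the sign claim: it is \emph{not} true that $\bar{\mu}$ has a definite (weak) sign on negative Seifert integral homology spheres. In the conventions of this paper, $\bar{\mu}(\Sigma(2,3,5)) = -\beta(\Sigma(2,3,5)) = -1$ (the $E_8$-plumbing is spin, so $w=0$ and $\bar\mu = \sigma/8 = -1$), while $\bar{\mu}(\Sigma(2,3,7)) = +1$; both are negative fibrations. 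The plumbing being negative definite does not control the sign of $\sigma(W) - w^2$, because the Wu class term $w^2$ can dominate $\sigma(W)$ in either direction. Consequently your displayed equation is satisfiable — e.g.\ nothing in your argument rules out $\Sigma(2,3,7)$ being homology cobordant to $-\Sigma(2,3,5)$, since $\bar\mu(\Sigma(2,3,5)) = -1 = -\bar\mu(\Sigma(2,3,7))$ — and the contradiction never materializes.

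The underlying issue is that $\beta$ alone (equivalently $\bar\mu$ alone) cannot separate negative from positive Seifert fibrations; one needs the interplay with $d$. The paper's proof uses the full strength of Corollary \ref{cor:betamucor}: for a negative fibration $\gamma = -\bar\mu$ and $\alpha \in \{d/2,\, d/2+1\}$, whereas for a positive fibration $\alpha = -\bar\mu$ and $\gamma \in \{d/2,\, d/2-1\}$; matching all three of $\alpha,\beta,\gamma$ across a homology cobordism forces $d/2 = -\bar\mu$. Since $d(\Sigma(2,3,12k+7)) = 0$ while $-\bar\mu(\Sigma(2,3,12k+7)) = -1$, the obstruction is the mismatch $d/2 \neq -\bar\mu$, not the sign of $\bar\mu$. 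To repair your argument you would need to bring $d$ (equivalently $\alpha$ or $\gamma$) into play, at which point you are essentially running the paper's proof.
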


This Corollary is a direct consequence of Corollary \ref{cor:betamucor}, which shows that if $Y$ is a negative Seifert space with $d(Y)/2 \neq -\bar{\mu}(Y)$, then $Y$ is not homology cobordant to any positive Seifert space.  We note $d(\Sigma(2,3,12k+7))=0$ and $\bar{\mu}(\Sigma(2,3,12k+7))=1$, and the Corollary follows.  This should be compared with a result of Fintushel-Stern \cite{FintushelStern85} that gives a similar conclusion: If $R(a_1,...,a_n)>0$, then $\Sigma(a_1,...,a_n)$ is not oriented cobordant to any connected sum of positive Seifert homology spheres by a positive definite cobordism $W$, where $H_1(W;\mathbb{Z})$ contains no $2$-torsion.  However, there are examples with $R<0$, but $d/2 \neq -\bar{\mu}$, so we can apply Corollary \ref{cor:betamucor}.  For instance, $\Sigma(2,3,7)$ has $R$-invariant $-1$, but $d \neq -\bar{\mu}$.  Thus, Corollary \ref{cor:12k+7} is not detected by the $R$-invariant.\\

The organization of the paper is as follows.  In Section \ref{sec:2} we provide the necessary equivariant topology constructions and define local and chain local equivalence.  In Section \ref{sec:jsplit} we compute the $G$-Borel homology of $j$-split spaces.  In Section \ref{sec:gauge} we review the finite-dimensional approximation of \cite{ManolescuPin}.  In Section \ref{sec:seifmod} we recall the results of \cite{MOY} and prove Theorems \ref{thm:submain}, \ref{thm:betamu}, and \ref{thm:main}.  In Section \ref{sec:example} we provide applications and examples of the homology calculation.  Throughout the paper all homology will be taken with $\mathbb{F}=\mathbb{Z}/2$ coefficients, unless stated otherwise.

\section*{Acknowledgements}
The author is particularly grateful to Ciprian Manolescu for his guidance and constant encouragement. The author would also like to thank Cagri Karakurt, Tye Lidman, and Francesco Lin for helpful conversations.  
\section{Spaces of type SWF}\label{sec:2}
\subsection{$G$-CW Complexes}
	In this section we recall the definition of spaces of type SWF from \cite{ManolescuPin}, and introduce local equivalence.  Spaces of type SWF are the output of the construction of the Seiberg-Witten Floer stable homotopy type of \cite{ManolescuPin} and \cite{ManolescuK}; see Section \ref{sec:gauge}. 
	
	First, we recall some basics of equivariant algebraic topology from \cite{tomDieck}.  The reader is encouraged to consult both \cite{ManolescuPin} and \cite{tomDieck} for a fuller discussion.  For now, $G$ will denote a compact Lie group.  We define a $G$-equivariant $k$-cell as a copy of $G/H \times D^k$, where $H$ is a closed subgroup of $G$.  A (finite) equivariant $G$-CW decomposition of a relative $G$-space $(X,A)$, where the action of $G$ takes $A$ to itself, is a filtration $(X_n| n \in \mathbb{Z}_{\geq 0})$ such that 
	\begin{itemize}
	\item $A \subset X_0$ and $X=X_n$ for $n$ sufficiently large.
	\item The space $X_n$ is obtained from $X_{n-1}$ by attaching $G$-equivariant $n$-cells.
	\end{itemize}
When $A$ is a point, we call $(X,A)$ a pointed $G$-CW complex.  

Let $EG$ be the total space of the universal bundle of $G$. For two pointed $G$-spaces $X_1$ and $X_2$, write: 
\[X_1 \wedge_G X_2 = (X_1 \wedge X_2) / (gx_1 \times x_2 \sim x_1 \times gx_2).\]  
The Borel homology of $X$ is given by
\[\tilde{H}_*^G(X) =\tilde{H}_*(EG_+ \wedge_G X),\]
where $EG_+$ is $EG$ with a disjoint basepoint.
Similarly, we define Borel cohomology:
\[\tilde{H}_G^*(X) = \tilde{H}^*(EG_+ \wedge_G X).\]
Additionally, we have a map given by projecting onto the first factor:
\[f: EG_+ \wedge _G X \rightarrow BG_+.\]
From $f$ we have a map $p_G= f^*:H^*(BG) \rightarrow \tilde{H}^*_G(X)$.  Then $H^*(BG)$ acts on $\tilde{H}_*^G(X)$, by composing $p_G$ with the cap product action of $\tilde{H}^*_G(X)$ on $\tilde{H}^G_*(X)$.  We may also define the unpointed version of the above constructions in an apparent way.

As an example, consider the case $G=S^1$.  Here $BS^1=\mathbb{C}P^{\infty}$, so $H^*(BS^1)=\f[U]$, with $\mathrm{deg} \; U = 2$.  Then $\f[U]$ acts on $H^{S^1}(X)$, for $X$ any $S^1$-space.  

From now on we let $G=\mathrm{Pin}(2)$.  The group $G=\mathrm{Pin}(2)$ is the set $S^1 \cup jS^1 \subset \mathbb{H}$, where $S^1$ is the unit circle in the $\langle 1, i \rangle$ plane.  The group action of $G$ is induced from the group action of the unit quaternions.  In order to agree with the conventions of \cite{ManolescuPin} we deal with left $G$-spaces.  Manolescu shows in \cite{ManolescuPin} that $H^*(BG)=\f[q,v]/(q^3)$, where $\mathrm{deg} \; q =1$ and $\mathrm{deg} \; v =4$, so $H^G_*(X)$ is naturally an $\f[q,v]/(q^3)$-module for $X$ a $G$-space.  Moreover $S^\infty=S(\mathbb{H}^\infty)$ has a free action by the quarternions, making $S^\infty$ a free $G$-space.  Since $S^\infty$ is contractible, we identify $EG=S^\infty$.  We may view $EG=S^\infty$ also as $ES^1$ (as an $S^1$-space) by forgetting the action of $j$.  

We will also need to relate $G$-Borel homology and $S^1$-Borel homology.  Consider 
\[f: \mathbb{C}P^\infty = BS^1 \rightarrow BG, \]
the map given by quotienting by the action of $j \in G$ on $BS^1=ES^1/S^1$.  Then we have the following fact (for a proof, see \cite[Example 2.11]{ManolescuPin}):  

\begin{fact}\label{fct:bs1bg}
The natural map
\[f^*=\mathrm{res}^G_{S^1}: H^*(BG) \rightarrow H^*(BS^1) \]
is an isomorphism in degrees divisible by $4$, and zero otherwise.  In particular, $v \rightarrow U^2$.  
\end{fact}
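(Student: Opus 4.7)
The plan is to use the Serre spectral sequence of the fibration coming from the short exact sequence of topological groups
\[ 1 \to S^1 \to G \to \mathbb{Z}/2 \to 1, \]
in which $S^1$ is the identity component of $G$ and $\mathbb{Z}/2$ is generated by the class of $j$. Taking classifying spaces produces a fibration $BS^1 \to BG \to B\mathbb{Z}/2$ in which $f$ is precisely the fiber inclusion, so $f^*$ can be read off as the edge homomorphism.

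First I would identify the $E_2$-page with $\f$-coefficients. The monodromy of $\pi_1(B\mathbb{Z}/2) = \mathbb{Z}/2$ on $H^*(BS^1)$ is induced by the conjugation action of $j$ on $S^1 \subset G$, namely inversion $z \mapsto z^{-1}$. Integrally this negates the generator of $H^2(BS^1;\mathbb{Z})$, so it is trivial with $\f$-coefficients. Hence $E_2 \cong \f[q] \otimes \f[U]$ as a bigraded $\f$-algebra, with $q$ in bidegree $(1,0)$ and $U$ in bidegree $(0,2)$. Since the abutment $H^*(BG) = \f[q,v]/(q^3)$ is already known, the class $q^3 \in E_2^{3,0}$ must eventually vanish. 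Inspection of bidegrees shows that the only differential that can possibly kill it is $d_3 \colon E_3^{0,2} \to E_3^{3,0}$, forcing $d_3(U) = q^3$. The Leibniz rule in characteristic two then yields $d_3(U^n) = n\, U^{n-1} q^3$, so even powers $U^{2k}$ are permanent cycles while odd powers $U^{2k+1}$ die, dragging the classes $q^3 U^{2k}$ with them. Consequently $E_\infty = \f[q, U^2]/(q^3)$, concentrated in columns $0, 1, 2$.

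The mild obstacle is pinning down $d_3(U) = q^3$ rather than $0$ without circular reasoning; this I would handle via the Poincar\'e-series comparison against the given ring $H^*(BG)$. Once the spectral sequence is in hand, $f^*$ is the edge map
\[ H^n(BG) \twoheadrightarrow E_\infty^{0,n} \hookrightarrow E_2^{0,n} = H^n(BS^1), \]
so $f^*(v) = U^2$ (identifying $v$ with its class on the associated graded) and $f^*(q) = 0$. A case analysis by degree modulo $4$ then finishes the argument: in degree $4k$ both sides are one-dimensional over $\f$ and $v^k \mapsto U^{2k}$ is an isomorphism; in degree $4k+2$ the target $\f\langle U^{2k+1} \rangle$ is nonzero, but $U^{2k+1}$ was killed by $d_3$, so the edge map lands in $E_\infty^{0,4k+2} = 0$; and in degrees $4k+1$ and $4k+3$ one or both groups vanish, so the map is zero trivially.
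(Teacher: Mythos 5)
Your argument is correct. For the record, the paper does not prove this fact itself: it quotes it and points to \cite[Example 2.11]{ManolescuPin}, where the computation is carried out by analyzing $BG$ directly as a quotient of $\mathbb{C}P^{\infty}$ by the involution induced by $j$. Your route through the Serre spectral sequence of $BS^1 \to BG \to B\mathbb{Z}/2$ is a legitimate, self-contained alternative, and all the key checkpoints are in order: the monodromy is induced by conjugation by $j$ on $S^1$, i.e.\ inversion, which acts by $(-1)^k$ on $H^{2k}(BS^1;\mathbb{Z})$ and hence trivially mod $2$, so $E_2=\f[q]\otimes\f[U]$; the class $q^3\in E_2^{3,0}$ can only be killed by $d_3$ from $E_3^{0,2}$ (the $d_2$'s vanish for parity reasons and $E_r^{3-r,r-1}=0$ for $r\geq 4$), and since $H^3(BG)=0$ it must be killed, forcing $d_3(U)=q^3$; the Leibniz rule in characteristic two then gives $E_\infty=\f[q,U^2]/(q^3)$. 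There is no circularity in using $H^*(BG)\cong\f[q,v]/(q^3)$ as input, since the statement being proved concerns only the map $f^*$ and the ring structure is quoted from \cite{ManolescuPin} independently. One small point worth making explicit if you write this up: in total degree $4k$ the only nonzero $E_\infty$ term is $E_\infty^{0,4k}$, so there is no extension or filtration ambiguity and the edge homomorphism $H^{4k}(BG)\to E_\infty^{0,4k}\hookrightarrow H^{4k}(BS^1)$ is genuinely an isomorphism of one-dimensional spaces, giving $v\mapsto U^2$ on the nose rather than merely on the associated graded.
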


Moreover, for $X$ a $G$-space, we have a natural map
\[g: EG_+ \wedge_{S^1} X \rightarrow EG_+ \wedge_G X.\]
The map $g$ induces a map 
\[g_*=\co^{S^1}_G: \tilde{H}^{S^1}_*(X) \rightarrow \tilde{H}^G_*(X),\]
called the corestriction map.  As a Corollary of Fact \ref{fct:bs1bg}, we have a relationship between the action of $U$ and $v$ (see \cite[\S III.1]{tomDieck}):
\begin{fact}\label{fct:bs1bg2}
Let $X$ be a $G$-space.  Then, for every $x \in H^{S^1}_*(X)$,
\[v(\co^{S^1}_G(x))=\co^{S^1}_G(U^2x).\]
\end{fact}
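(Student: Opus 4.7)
The statement is essentially a naturality property of the cap product with respect to the change of structure group, combined with the identification of classes from Fact~\ref{fct:bs1bg}. I would proceed as follows.

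First, I would unpack the definition of the module structures in terms of the classifying maps. The $U$-action on $\tilde{H}^{S^1}_*(X)$ arises as the cap product action coming from $p_{S^1} = f_{S^1}^* : H^*(BS^1) \to \tilde{H}^*_{S^1}(X)$, where $f_{S^1} : ES^1_+ \wedge_{S^1} X \to BS^1_+$ is projection onto the first factor. Analogously, the $v$-action on $\tilde{H}^G_*(X)$ comes from $p_G = f_G^*$ with $f_G : EG_+ \wedge_G X \to BG_+$. The key observation is that there is a commutative square
\[
\begin{tikzcd}
EG_+ \wedge_{S^1} X \ar[r, "g"] \ar[d, "f_{S^1}"'] & EG_+ \wedge_G X \ar[d, "f_G"] \\
BS^1_+ \ar[r, "f"] & BG_+
\end{tikzcd}
\]
where $g$ is the quotient map by the residual $j$-action (inducing $\co^{S^1}_G$) and $f$ is the corresponding quotient on classifying spaces from Fact~\ref{fct:bs1bg}. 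Here I use $EG = ES^1$ as noted just before the Fact.

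Next, I would apply naturality of the cap product to the map $g$. For any class $\alpha \in H^*(BG)$ and any $x \in \tilde{H}^{S^1}_*(X)$, naturality gives
\[
g_*\bigl((g^*\, p_G(\alpha)) \cap x\bigr) = p_G(\alpha) \cap g_*(x).
\]
Commutativity of the square above identifies $g^* p_G(\alpha) = g^* f_G^* \alpha = f_{S^1}^* f^* \alpha = p_{S^1}(f^*\alpha)$. Specializing to $\alpha = v$ and invoking Fact~\ref{fct:bs1bg}, which tells us $f^*(v) = U^2$, this becomes
\[
g_*\bigl(p_{S^1}(U^2) \cap x\bigr) = p_G(v) \cap g_*(x),
\]
which in the notation of the Fact is exactly $\co^{S^1}_G(U^2 x) = v \, \co^{S^1}_G(x)$.

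The proof is effectively a diagram chase, so there is no single "hard step"; the main thing to verify carefully is that $g$ is honestly a well-defined map covering $f$ (so that the square commutes on the nose), which follows from the explicit models $EG = ES^1 = S^\infty$ with the forgetful identification of group actions. Since this is precisely the setup of Fact~\ref{fct:bs1bg}, no further work is needed, and the argument is entirely formal once that input is in hand.
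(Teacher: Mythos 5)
Your argument is correct: the projection formula $g_*(g^*\alpha \cap x) = \alpha \cap g_*(x)$ applied to the quotient map $g$, together with the commutative square over $f\colon BS^1 \to BG$ and the identification $f^*(v) = U^2$ from Fact \ref{fct:bs1bg}, gives exactly the claimed identity. The paper offers no proof of its own, only the citation to \cite[\S III.1]{tomDieck}, and your diagram chase is precisely the standard argument behind that reference.
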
  

We shall use that Borel homology with $\mathbb{F}$ coefficients behaves well with respect to suspension.  If $V$ is a finite-dimensional (real) representation of $G$, let $V^+$ be the one-point compactification, where $G$ acts trivially on $V^+-V$.  Then $\Sigma^VX=V^{+} \wedge X$ will be called the suspension of $X$ by the representation $V$.       

We mention the following representations of $G$:

\begin{itemize}
\item Let $\tilde{\mathbb{R}}^s$ be the vector space $\mathbb{R}^s$ on which $j$ acts by $-1$, and $e^{i\theta}$ acts by the identity, for all $\theta$.
\item We let $\tilde{\mathbb{C}}$ be the representation of $G$ on  $\mathbb{C}$ where $j$ acts by $-1$, and $e^{i\theta}$ acts by the identity for all $\theta$.
\item The quaternions $\mathbb{H}$, on which $G$ acts by multiplication on the left.
\end{itemize}
 
\begin{defn}\label{thm:swfdefn}
Let $s \in \mathbb{Z}_{\geq 0}$.  A \emph{space of type SWF} at level $s$ is a pointed finite $G$-CW complex $X$ with
\begin{itemize}
\item The $S^1$-fixed-point set $X^{S^1}$ is $G$-homotopy equivalent to $(\tilde{\mathbb{R}}^s)^+$.
\item The action of $G$ on $X-X^{S^1}$ is free.  
\end{itemize}
\end{defn}	

\begin{rmk}
We list some examples of spaces of type SWF.  The simplest space of type SWF is $S^0$.  More interesting examples may be produced as follows.  Let $X$ be a free, finite $G$-CW complex.  Then, let \[\tilde{\Sigma}X=X \times [0,1] / (0,x) \sim (0,x')\; \mathrm{and}\; (1,x) \sim (1,x')\; \mathrm{for\;all}\; x, x' \in X.\]
We call $\tilde{\Sigma}X$ the unreduced suspension of $X$.  Here $G$ acts on $\tilde{\Sigma}X$ by multiplication on the first factor.  We fix one of the cone points as the base point.  Then the $S^1$-fixed-point set is precisely $S^0 \subset \tilde{\Sigma}X$, and $\tilde{\Sigma}X$ is a space of type SWF.  As a particular example, let $X=G$, where $G$ acts on $X$ by multiplication on the left, as usual.  Then $\tilde{\Sigma}X$ is, topologically, the suspension of two disjoint circles.
\end{rmk}

We also find it convenient to recall the definition of \emph{reduced Borel homology}, for spaces $X$ of type SWF:
\begin{equation}\label{eq:s1redhom}
\tilde{H}^{S^1}_{*,\mathrm{red}}(X)=\tilde{H}^{S^1}_*(X)/\mathrm{Im} \, U^N,
\end{equation}
for $N \gg 0$.  Indeed, for all $N$ sufficiently large $\mathrm{Im} \, U^N=\mathrm{Im} \, U^{N+1}$, so $\tilde{H}^{S^1}_{*,\mathrm{red}}(X)$ is well-defined.

Associated to a space $X$ of type SWF at level $s$, we take the Borel cohomology $\tilde{H}_G^*(X)$, from which we define $a(X),b(X),$ and $c(X)$ as in \cite{ManolescuPin}:
\begin{equation}\label{eq:adef}
a(X)=\mathrm{min}\{ r \equiv s \;\mathrm{mod}\; 4 \mid \exists \, x \in \tilde{H}^r_G(X), v^lx \neq 0 \; \mathrm{ for\; all} \; l \geq 0 \}, \
\end{equation}
\[
b(X)=\mathrm{min}\{ r \equiv s+1 \;\mathrm{mod}\; 4 \mid \exists \, x \in \tilde{H}^r_G(X), v^lx \neq 0 \; \mathrm{for\; all}\; l \geq 0 \}-1,  
\]
\[
c(X)=\mathrm{min}\{ r \equiv s+2 \;\mathrm{mod}\; 4 \mid \exists \, x \in \tilde{H}^r_G(X), v^lx \neq 0 \;\mathrm{for\; all}\; l \geq 0 \}-2. 
\]
The well-definedness of $a,b$, and $c$ follows from the Equivariant Localization Theorem (see \cite{tomDieck}).  We also list an equivalent definition of $a,b,$ and $c$ from \cite{ManolescuPin}, using homology: 
\begin{equation}\label{eq:aaltdef}
a(X)=\mathrm{min}\; \{ r \equiv s \; \mathrm{mod}\; 4 \mid \exists \, x \in \tH^G_r(X), x \in \mathrm{Im}\, v^l \;\mathrm{for\; all}\; l \geq 0 \},
\end{equation}
\[
b(X)=\mathrm{min}\; \{ r \equiv s+1 \; \mathrm{mod}\; 4 \mid \exists \, x \in \tH^G_r(X), x \in \mathrm{Im}\, v^l \;\mathrm{for\; all}\; l \geq 0 \}-1,
\]\[
c(X)=\mathrm{min}\; \{ r \equiv s+2 \; \mathrm{mod}\; 4 \mid \exists \, x \in \tH^G_r(X), x \in \mathrm{Im}\, v^l \;\mathrm{for\; all}\; l \geq 0 \}-2.
\]
\begin{rmk}  The Manolescu invariants of \cite{ManolescuPin} are defined in terms of $a,b$, and $c$, as we will review in Section \ref{sec:gauge}. \end{rmk}
\begin{defn}[see \cite{ManolescuK}]
Let $X$ and $X'$ be spaces of type SWF, $m,m' \in \mathbb{Z}$, and $n,n' \in \mathbb{Q}$.  We say that the triples $(X,m,n)$ and $(X',m',n')$ are \emph{stably equivalent} if $n-n' \in \mathbb{Z}$ and there exists a $G$-equivariant homotopy equivalence, for some $r \gg 0$ and some nonnegative $M \in \mathbb{Z}$ and $N \in \mathbb{Q}$:
\begin{equation}\label{eq:gstabdef} \Sigma^{r\mathbb{R}} \Sigma^{(M-m)\tilde{\mathbb{R}}}\Sigma^{(N-n)\mathbb{H}}X \rightarrow \Sigma^{r\mathbb{R}} \Sigma^{(M'-m')\tilde{\mathbb{R}}}\Sigma^{(N-n')\mathbb{H}}X'.\end{equation}
\end{defn}
Let $\mathfrak{E}$ be the set of equivalence classes of triples $(X,m,n)$ for $X$ a space of type $\mathrm{SWF}$, $m\in \mathbb{Z}$, $n \in \mathbb{Q}$, under the equivalence relation of stable $G$-equivalence\footnote{This convention is slightly different from that of \cite{ManolescuK}.  The object $(X,m,n)$ in the set of stable equivalence classes $\mathfrak{E}$, as defined above, corresponds to $(X,\frac{m}{2},n)$ in the conventions of \cite{ManolescuK}.}.  The set $\mathfrak{E}$ may be considered as a subcategory of the $G$-equivariant Spanier-Whitehead category \cite{ManolescuPin}, by viewing $(X,m,n)$ as the formal desuspension of $X$ by $m$ copies of $\tilde{\mathbb{R}}^+ $ and $n$ copies of $\mathbb{H}^+$.  For $(X,m,n),(X',m',n') \in \mathfrak{E}$, a map $(X,m,n) \rightarrow (X',m',n')$ is simply a map as in (\ref{eq:gstabdef}) that need not be a homotopy equivalence.  We define Borel homology for $(X,m,n) \in \E$ by 
\begin{equation}\label{eq:borelgeneral}
\tilde{H}^{G}_*((X,m,n))=\tilde{H}^G_*(X)[m+4n].
\end{equation}

\begin{defn}\label{def:loceq}
We call $X_1,X_2 \in \E$ \emph{locally equivalent} if there exist $G$-equivariant (stable) maps
\[\phi: X_1 \rightarrow X_2, \]
\[\psi: X_2 \rightarrow X_1, \]
 which are $G$-homotopy equivalences on the $S^1$-fixed-point set.  For such $X_1,X_2$, we write $X_1 \equiv_{l} X_2$, and let $\mathfrak{LE}$ denote the set of local equivalence classes.
 
\end{defn} 
 Local equivalence is easily seen to be an equivalence relation.  The set $\mathfrak{LE}$ comes with an abelian group structure, with multiplication given by smash product.  One may check that inverses are given by Spanier-Whitehead duality.

\subsection{$G$-CW decompositions of $G$-spaces}\label{subsec:23}
Throughout this section $X$ will denote a space of type SWF.  Here we will give example $G$-CW decompositions and construct a $G$-CW structure on smash products of $G$-spaces.

For $W$ a CW complex, we write $C^{CW}_*(W)$ for the corresponding cellular (CW) chain complex.  We fix a convenient CW decomposition of $G$.  The $0$-cells are the points $1,j,j^2,j^3$ in $G$, and the $1$-cells are $s,js,j^2s,j^3s$, where $s=\{e^{i\theta} \in S^1 \mid \theta \in (0,\pi)\}$.  We identify each of the cells of this CW decomposition with its image in $C^{CW}_{*}(G)$, the corresponding $CW$ chain complex of $G$.  Then $\partial(s)=1+j^2$.  To ease notation, we will refer to $C^{CW}_{*}(G)$ by $\mathcal{G}$.  

We will use that this CW decomposition also induces a CW decomposition of $S^1$, for which $C^{CW}_*(S^1)$ is the subcomplex of $\G$ generated by $1,j^2,s,j^2s$.  

A $G$-CW decomposition of $X$ also induces a CW decomposition of $X$, using the decomposition of $G$ into cells as above, which we will call a $G$-compatible CW decomposition of $X$. 

\begin{example}\label{ex:rsusp} Note that the representation $(\tilde{\mathbb{R}}^s)^+$ admits a $G$-CW decomposition with $0$-skeleton a copy of $S^0$ on which $G$ acts trivially, and an $i$-cell $c_i$ of the form $D^i \times \mathbb{Z}/2$ for $i \leq s$.  One of the two points of the $0$-skeleton of $(\tilde{\mathbb{R}}^s)^+$ is fixed as the basepoint.  \end{example}  
In particular, any space of type SWF has a $G$-CW decomposition with a subcomplex as in Example \ref{ex:rsusp}.  
\begin{example}\label{ex:hsusp}We find a CW decomposition for $\mathbb{H}^+$ as a $G$-space.  We write elements of $\mathbb{H}$ as pairs of complex numbers $(z,w)=(r_1e^{i\theta_1},r_2e^{i\theta_2})$ in polar coordinates.  The action of $j$ is then given by $j(z,w)=(-\bar{w},\bar{z})$.  Fix the point at infinity as the base point.  We let $(0,0)$ be the ($G$-invariant) $0$-cell labelled $r_0$.  We let $y_1$ be the $G$-$1$-cell given by the orbit of $\{(r_1,0) \mid r_1 > 0\}$:
\[\{(r_1e^{i\theta},r_2e^{i\theta})\mid \, r_1r_2=0\}.\]
We take $y_2$ the $G$-$2$-cell given by the orbit of $\{(r_1,r_2) \mid r_1r_2 \neq 0.\}$:
\[\{(r_1e^{i\theta_1},r_2e^{i\theta_2}) \mid \, \theta_1=\theta_2 \;\mathrm{or}\; \theta_1=\theta_2+\pi \;\mathrm{mod}\; 2\pi\}.\] 
Finally, $y_3$ consists of the orbit of $\{(r_1e^{i\theta_1},r_2) \mid \theta_1\in (0,\pi),\;r_1r_2 \neq 0.\}:$
\[\{(r_1e^{i\theta_1},r_2e^{i\theta_2}) \mid \, \theta_1 \neq \theta_2 \; \mathrm{mod} \; \pi\}.\] 
\end{example}

We now give $X_1 \wedge X_2$ a $G$-CW structure for $X_1$ and $X_2$ spaces of type SWF.  To do so, we proceed cell by cell on both factors, so we need only find a $G$-CW structure on $G \times G$, $\mathbb{Z}/2 \times G$, and $\mathbb{Z}/2 \times \mathbb{Z}/2$, each with the diagonal $G$-action.  The space $\mathbb{Z}/2 \times G$ has a $G$-CW decomposition as $G \amalg G$, as may be seen directly, and $\mathbb{Z}/2 \times \mathbb{Z}/2$ may be written as a disjoint union of $G$-$0$-cells $\mathbb{Z}/2 \amalg \mathbb{Z}/2$.  
 
\begin{example}\label{ex:gsmag}The $G$-CW structure on $G\times G$ is more complicated.  We choose a homotopy $\phi_t: G \times G \rightarrow G \times G$ as in Figure \ref{fig:gtg}, with $t \in [0,1]$, $\phi_0=\mathrm{Id},$ and $\phi_1(G \times G)$ shown.  The arrows depict the action of $S^1$.  On the left, the diagonal lines show the $G$-action before homotopy.  For example, as shown in Figure \ref{fig:gtg}, the homotopy $\phi$ takes the line $\ell=\{(e^{i\theta} \times e^{i\theta} \mid \, \theta \in (0,\pi) \}$, the first half of the diagonal in $S^1 \times S^1$, to the sum of CW cells:
   \[s \otimes 1 + j^2 \otimes s. \]The arrows on the right show the $G$-action on $G \times G$ given by 
\begin{equation}\label{eq:gconj}g(g_1 \times g_2) = \phi_1(g \phi_1^{-1}(g_1 \times g_2)).\end{equation} The action (\ref{eq:gconj}) is clearly cellular with respect to the product CW structure of $G \times G$.  \begin{figure}
  \input{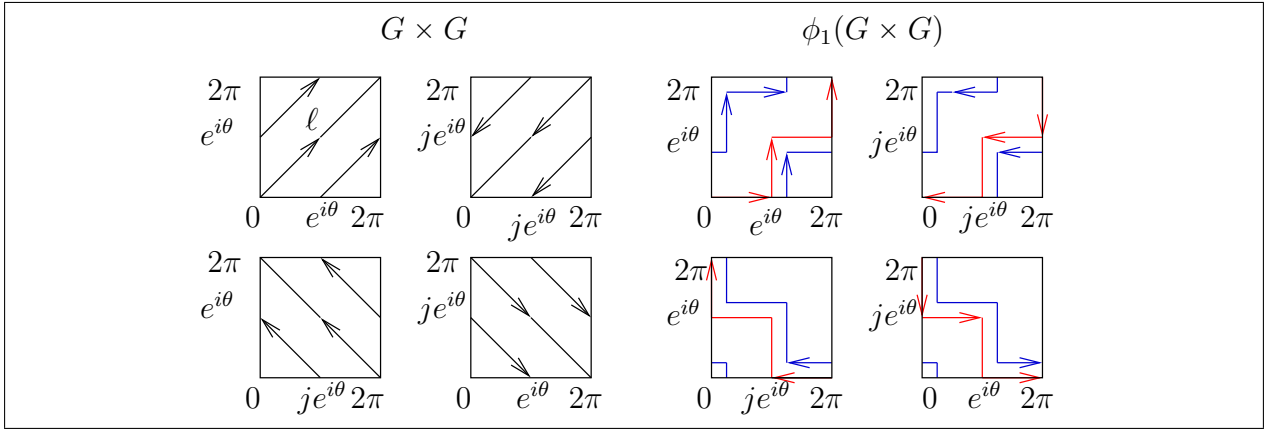}
  \caption{Homotopy of the action of $G$ on $G \times G$.}
  \label{fig:gtg}
\end{figure} 

   Since the $G$-equivariant homotopy type of the $G$-CW complex of $G \times G$ is invariant under homotopy, we let $G$ act on $G \times G$ by (\ref{eq:gconj}).  With this action, $G$ acts cellularly on the product CW decomposition of $G \times G$.  Then $G \times G$ admits the product CW-decomposition as a $G$-CW decomposition.  
\end{example}
Now, let $X_1$ and $X_2$ be spaces of type SWF.  We then give $X_1 \wedge X_2$ a $G$-CW decomposition proceeding cell-by-cell.  That is, for $G$-cells $e_1 \subseteq X_1, e_2 \subseteq X_2$ we give $e_1 \wedge e_2$ the appropriate $G$-CW decomposition as constructed above.  This is possible because the cells $e_i$ are necessarily of the form: $D^k, D^k \times \mathbb{Z}/2$, or $D^k \times G$.  In particular, the construction of a $G$-CW structure on $X_1 \wedge X_2$ gives us a $G$-CW structure for suspensions.  For $V$ a finite-dimensional $G$-representation which is a direct sum of copies of $\mathbb{R},\tilde{\mathbb{R}},$ and $\mathbb{H}$, we have $\Sigma^VX=V^+ \wedge X$, and so give $\Sigma^VX$ the smash product $G$-CW decomposition.  

Finally, we construct a CW structure for the $G$-smash product $X_1 \wedge_G X_2= (X_1 \wedge X_2)/G$.  More generally, we describe a CW structure for the quotient $W/G$ for $W$ a $G$-CW complex.  Indeed, let $W=\bigcup e_i$ a $G$-CW complex, where $e_i=D^{k(i)} \times G/H_i$ are equivariant $G$-cells for some function $k$, and $H_i \subseteq G$ are subgroups.  Then $W/G$ admits a CW decomposition given by $W= \bigcup e_i/G=\bigcup D^{k(i)}$.  
\subsection{Modules from $G$-CW decompositions.}\label{subsec:modfromg}
Throughout this section $X$ will denote a space of type SWF.  Here we will show that the CW chain complex of $X$ inherits a module structure from the action of $G$, and we will define chain local equivalence.

From the group structure of $G$, $C^{CW}_{*}(G)=\G$ acquires an algebra structure.  The algebra structure is determined by the multiplication map $G \times G\rightarrow G$ from which we obtain a map $C^{CW}_*(G)\otimes_\f C^{CW}_*(G) \rightarrow C^{CW}_*(G)$.  Here, we have used the product $G$-CW decomposition of $G\times G$, from Example \ref{ex:gsmag}, for which the multiplication map is cellular.  One may then directly compute the algebra structure of $\G$.  Explicitly, 
\[C^{CW}_{*}(G) \cong \mathbb{F}[s,j]/(sj=j^3s,s^2=0,j^4=1).\]

For any $G$-compatible decomposition of $X$, the relative CW chain complex $C^{CW}_*(X,\p)$ inherits the structure of a $\G$-chain complex.  That is, $C^{CW}_*(X,\p)$ is a module over $\G$, such that, for $z \in C^{CW}_*(X,\p)$, and $a\in\G$, $\partial(az)=a\partial(z)+\partial(a)z$.  The $\G$-module structure comes from the maps:
\begin{equation}\label{eq:modstrucprf}C^{CW}_*(G) \otimes C^{CW}_*(X) \rightarrow C^{CW}_*(X),\end{equation}
coming from the $G$-action.  The condition that the CW decomposition of $X$ be $G$-compatible ensures that the map $G \times X \rightarrow X$ is cellular, so that (\ref{eq:modstrucprf}) is well-defined.

  We find the module structure for the Examples \ref{ex:rsusp}-\ref{ex:gsmag} of Section \ref{subsec:23}.  
\begin{example}\label{ex:rsusp2}
Consider the $\G$-chain complex structure of $C^{CW}_*((\tilde{\mathbb{R}}^s)^+,\p)$ from Example \ref{ex:rsusp}.  Identifying $c_i$ with its image in $C^{CW}_*((\tilde{\mathbb{R}}^s)^+,\p)$, we have $\partial (c_0)=0, \partial (c_1)=c_0,$ and $\partial(c_i)=(1+j)c_{i-1}$ for $i\geq 2$.  One may easily check that the action of $\G$ is given by the relations $jc_0=c_0$, $j^2c_i=c_i$ for $i\geq 1$, and $sc_i=0$ for all $i$ (in particular, the CW cells of $((\tilde{\mathbb{R}}^s)^+,\p)$ are precisely $c_0,c_1,...c_s$ and $jc_1,...,jc_s$, and all of these are distinct).  
\end{example}
\begin{example}\label{ex:hsusp2}
We also find the $\G$-chain complex structure of $C^{CW}_*(\mathbb{H}^+,\p)$ from Example \ref{ex:hsusp}.  One may check that the differentials are given by 
\begin{equation}\partial(r_0)=0,\; \partial y_1 = r_0, \;\partial y_2 = (1+j)y_1, \; \mathrm{and} \;\partial y_3 = sy_1+(1+j)y_2.\end{equation}  The $\G$-action on the fixed-point set, $r_0$, is necessarily trivial.  However, elsewhere the $G$-action on $(\mathbb{H}^+,\p)$ is free, and so the submodule (not a subcomplex, however) of $C^{CW}_*(\mathbb{H}^+,\p)$ generated by $y_1,y_2,y_3$ is $\G$-free, specifying the $\G$-module structure of $C^{CW}_*(\mathbb{H}^+,\p)$.
\end{example}
\begin{example}\label{ex:gtimesg2}
The CW chain complex of the usual product CW structure on $G \times G$ becomes a $\G$-module via:
  \[C^{CW}_*(G \times G)= C^{CW}_*(G) \otimes_\f C^{CW}_*(G),\]
   where the action of $\G$ is given by: 
\begin{equation}\label{eq:gac} s(a \otimes b) =(sa \otimes b) + (j^2a \otimes sb), \end{equation}
\[j(a \otimes b)=(ja \otimes jb).\]   
The differentials are induced by those of the usual product CW structure on $G \times G$.
\end{example}  
For $X_1 \wedge X_2$ with the $G$-CW decomposition described in Section \ref{subsec:23}, we have:
\begin{equation}\label{eq:smashform} C^{CW}_*(X_1 \wedge X_2, \p)=C^{CW}_*(X_1,\p) \otimes_\f C^{CW}_*(X_2,\p),\end{equation}
as $\G$-chain complexes.

  Furthermore the CW chain complex for the $G$-smash product $X_1 \wedge_G X_2 $ is given by:
  \begin{equation}\label{eq:smashdec2} C^{CW}_*(X_1 \wedge_G X_2,\p) \simeq C^{CW}_*(X_1\wedge X_2,\p) /\G. \end{equation}
We will write elements of the latter as $x_1 \otimes_\G x_2$.  Note that Borel homology $\tilde{H}^G_*(X)$ is calculated using a $G$-smash product, and so may be computed from the following chain complex:
\begin{equation}\label{eq:defofborel}
\tilde{H}^G_*(X)=H(C^{CW}_*(EG) \otimes_\G C^{CW}_*(X,\p),\partial).
\end{equation}
In (\ref{eq:defofborel}), we choose some fixed $G$-CW decomposition of $EG$ to define $C^{CW}_*(EG)$.  Equation (\ref{eq:defofborel}) shows that we may define $G$-Borel homology for $\G$-chain complexes $Z$ by:
\begin{equation}\label{eq:chainborel} H^G_*(Z)=H(C^{CW}_*(EG) \otimes_\G Z,\partial),\end{equation}
and similarly for $S^1$-Borel homology:
\begin{equation}\label{eq:chains1borel}
H^{S^1}_*(Z)=H(C^{CW}_*(EG) \otimes_{C^{CW}_*(S^1)} Z,\partial),
\end{equation}  
where $C^{CW}_*(S^1)$ is viewed as a subcomplex of $\G$.  

For $R$ a ring, $M$ an $R$-module with a fixed basis $\{B_i\}$, we say that an element $m \in M$ \emph{contains} $b \in \{ B_i \}$ if when $m$ is written in the basis $\{B_i\}$ it contains a nontrivial $b$ term. 

\begin{defn}\label{def:typswf}
We call a $\G$-chain complex $Z$ \emph{a chain complex of type SWF} at level $s$ if $Z$ is isomorphic to a chain complex (perhaps with a grading shift) generated by 
\begin{equation}\label{eq:gmoddef}  \{ c_0,c_1,c_2,...,c_s \} \cup \bigcup_{i \in I} \{x_i\},\end{equation}  
subject to the following conditions.  The element $c_i$ is of degree $i$, and $I$ is some finite index set. The only relations are $j^2c_i=c_i,sc_i=0,jc_0=c_0$.  The differentials are given by $\partial{c_1}=c_0$, and $\partial{c_i}=(1+j)c_{i-1}$ for $2\leq i \leq s-1$.  Further, $\partial(c_s)$ contains $(1+j)c_{s-1}$.  The submodule generated by $\{ x_i \}_{i \in I}$ is free under the action of $\G$.  We call the submodule generated by $\{c_i\}_i$ the \emph{fixed-point set} of $Z$.
\end{defn}

Chain complexes of type SWF are to be thought of as reduced $G$-CW chain complexes of spaces of type SWF.  Indeed, all spaces $X$ of type SWF have a $G$-CW decomposition with reduced $G$-CW chain complex a complex of type SWF.  To see this, we first decompose $X^{S^1}\simeq (\tilde{\mathbb{R}}^s)^+$ using the CW decomposition of Example \ref{ex:rsusp} for $(\tilde{\mathbb{R}}^s)^+$.  We note next that $X^{S^1}$ is a $G$-CW subcomplex of $X$, and all cells of $(X,X^{S^1})$ are free $G$-cells, since $X$ is a space of type SWF.  Label these cells $\{x_i\}$ for $i$ in some index set.  Then it is clear that the corresponding CW chain complex is as in Definition \ref{def:typswf}.    

To introduce chain local equivalence, we will consider the CW chain complexes coming from suspensions.  For a module $M$ and a subset $S \subseteq M$, we let $\langle S \rangle \subseteq M$ denote the subset generated by $S$. 

Note that, by Example \ref{ex:rsusp2} and the CW decomposition constructed in Section \ref{subsec:23} for suspensions, for $X$ a complex of type SWF: 
\begin{equation}\label{eq:rtilsuspcw}\Sigma^{\tilde{\mathbb{R}}}C^{CW}_*(X,\p)=\langle c_0,c_1 \rangle \otimes_\f C^{CW}_*(X,\p), \end{equation}
with relations $\partial c_1 = c_0$, $j^2c_1=c_1, jc_0=c_0,sc_0=sc_1=0$.  The differential on the right is  given by $\partial (a \otimes b)=\partial(a)\otimes b+ a\otimes\partial(b)$.   
Similarly, using Example \ref{ex:hsusp2}:
\[C^{CW}_*(\Sigma^\mathbb{H}X,\p)=\langle r_0,y_1,y_2,y_3 \rangle \otimes_\mathbb{F} C^{CW}_*(X,\p),\]
with the product differential on the right, and differentials for the $y_i$ given as in Example \ref{ex:hsusp2}.    

For $V=\mathbb{H}$, $\tilde{\mathbb{R}}$, or $\mathbb{R}$, we define $\Sigma^VZ$ so that if $Z=C^{CW}_*(X,\p)$ for $X$ a space of type SWF, then $\Sigma^VZ = C^{CW}_*(\Sigma^VX,\p)$.   We set:
\begin{equation}\label{eq:vsusp} \Sigma^{V}Z = C^{CW}_*(V^+,\p)\otimes_\f Z, \end{equation}
with $\G$-action given by:
\begin{equation}\label{eq:gactformula2}
s(a \otimes b)=(sa \otimes b)+(j^2a \otimes sb), \end{equation}
\[j(a \otimes b)=ja \otimes jb,\]
for $V=\mathbb{H},\tilde{\mathbb{R}}$, or $\mathbb{R}$.  The chain complexes $C^{CW}_*(\mathbb{H}^+,\p)$ and $C^{CW}_*(\tilde{\mathbb{R}}^+,\p)$ were given in Examples \ref{ex:rsusp2} and \ref{ex:hsusp2}, respectively.  Also, $C^{CW}_*(\mathbb{R}^+,\p)=\langle c_1 \rangle $, where $jc_1=c_1,sc_1=0$, and $\mathrm{deg} \; c_1=1$.  Hence, for example:
\begin{equation}\label{eq:rsusp} \Sigma^{\mathbb{R}}Z= Z[-1]. \end{equation}

\begin{lem}\label{lem:cwsus}
Let $V=\mathbb{H}$, $\tilde{\mathbb{R}}$, or $\mathbb{R}$.  If $Z=C^{CW}_*(X,\p)$ for $X$ a space of type SWF, then $\Sigma^VZ = C^{CW}_*(\Sigma^VX,\p)$. 
\end{lem}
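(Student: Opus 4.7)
The plan is to verify the lemma directly from the constructions already made in Section \ref{subsec:23}. The essential tool is equation (\ref{eq:smashform}), which gives, for spaces $X_1,X_2$ of type SWF,
\[C^{CW}_*(X_1\wedge X_2,\p)=C^{CW}_*(X_1,\p)\otimes_{\f}C^{CW}_*(X_2,\p)\]
as $\G$-chain complexes, with the $\G$-action determined in Example \ref{ex:gtimesg2} by
\[s(a\otimes b)=(sa\otimes b)+(j^2a\otimes sb),\qquad j(a\otimes b)=ja\otimes jb.\]
This is precisely the formula (\ref{eq:gactformula2}) used to define $\Sigma^V Z$. Hence, once we identify $V^+$ as a $G$-CW complex with chain complex matching the one used in (\ref{eq:vsusp}), the lemma follows by setting $X_1=V^+$, $X_2=X$ in (\ref{eq:smashform}).

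For $V=\tilde{\mathbb{R}}$, the space $V^+$ is a space of type SWF at level $1$, equipped with the $G$-CW decomposition of Example \ref{ex:rsusp}; its chain complex was computed in Example \ref{ex:rsusp2} to be $\langle c_0,c_1\rangle$ with the stated relations. For $V=\mathbb{H}$, the space $V^+$ is a space of type SWF at level $0$, equipped with the $G$-CW decomposition of Example \ref{ex:hsusp}, whose chain complex was computed in Example \ref{ex:hsusp2}. In both cases the tensor product formula (\ref{eq:smashform}) applies directly and matches (\ref{eq:vsusp}) on the nose, including the Leibniz differential $\partial(a\otimes b)=\partial a\otimes b+a\otimes\partial b$ (there are no signs since we are working with $\f=\mathbb{Z}/2$ coefficients).

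The case $V=\mathbb{R}$ requires a brief extra comment because $\mathbb{R}^+\cong S^1$ with trivial $G$-action is not a space of type SWF. However, the cell-by-cell construction of a $G$-CW structure on $X_1\wedge X_2$ in Section \ref{subsec:23} only uses that each cell is of the form $D^k$, $D^k\times\mathbb{Z}/2$, or $D^k\times G$, and so applies verbatim to $\mathbb{R}^+\wedge X$ once we fix on $\mathbb{R}^+$ the obvious decomposition with a basepoint $0$-cell and a single trivial $G$-$1$-cell $c_1$. This yields $C^{CW}_*(\mathbb{R}^+\wedge X,\p)=\langle c_1\rangle\otimes_{\f} C^{CW}_*(X,\p)$, agreeing with the definition (\ref{eq:vsusp}) and consistent with the degree shift (\ref{eq:rsusp}).

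There is no real obstacle; the content of the proof is that the algebraic formula (\ref{eq:gactformula2}) was designed to reproduce the topological $G$-action on a smash product, which was the point of the homotopy $\phi_t$ in Example \ref{ex:gsmag}. The only thing to check carefully is that, in forming the smash product $G$-CW structure cell-by-cell, every product cell $e_1\wedge e_2$ in $V^+\wedge X$ is assigned the module structure and differential dictated by (\ref{eq:gactformula2}) and the Leibniz rule; this is immediate from the construction.
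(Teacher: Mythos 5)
Your proof is correct and follows essentially the same route as the paper's, which simply cites the smash-product $G$-CW structure of Section \ref{subsec:23} together with (\ref{eq:smashform}); you have merely spelled out the details, including the (correct) observation that the case $V=\mathbb{R}$ needs separate handling since $\mathbb{R}^+$ is not a space of type SWF.
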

\begin{proof}
This follows from the CW chain complex structure given for suspensions in Section \ref{subsec:23}, and (\ref{eq:smashform}).    
\end{proof}

For $V=\mathbb{H}^i\oplus \tilde{\mathbb{R}}^j\oplus \mathbb{R}^k$ for some constants $i,j,k$, we define $\Sigma^V Z$ by:
\begin{equation}\label{eq:iter}
\Sigma^V Z = (\Sigma^{\mathbb{H}})^i (\Sigma^{\tilde{\mathbb{R}}})^j (\Sigma{\mathbb{R}})^k Z.
\end{equation} 
where $(\Sigma^{\mathbb{H}})^i$ denotes applying $\Sigma^{\mathbb{H}}$ $i$ times, and so for $\tilde{\mathbb{R}}$ and $\mathbb{R}$.  It is then clear that:
\begin{equation}\label{eq:repcommu}
\Sigma^V\Sigma^W Z \cong \Sigma^W \Sigma^V Z,
\end{equation}
for two $G$-representations $V,W$.  
\begin{defn}\label{def:compst}
Let $Z_i$ be chain complexes of type SWF, $m_i \in \mathbb{Z}, n_i \in \mathbb{Q}$, for $i=1,2$.  We call $(Z_1,m_1,n_1)$ and $(Z_2,m_2,n_2)$ \emph{chain stably equivalent} if $n_1-n_2 \in \mathbb{Z}$ and there exist $M \in \mathbb{Z}, N \in \mathbb{Q}$ and maps
\begin{equation}\label{eq:compst}
\Sigma^{(N-n_1)\mathbb{H}}\Sigma^{(M-m_1)\tilde{\mathbb{R}}} Z_1 \rightarrow 
\Sigma^{(N-n_2)\mathbb{H}}\Sigma^{(M-m_2)\tilde{\mathbb{R}}} Z_2
\end{equation}
\begin{equation}\label{eq:compst2}
\Sigma^{(N-n_1)\mathbb{H}}\Sigma^{(M-m_1)\tilde{\mathbb{R}}} Z_1 \leftarrow 
\Sigma^{(N-n_2)\mathbb{H}}\Sigma^{(M-m_2)\tilde{\mathbb{R}}} Z_2,
\end{equation}
which are chain homotopy equivalences.    
\end{defn}
\begin{rmk} We do not consider suspensions by $\mathbb{R}$, unlike in the case of stable equivalence for spaces, since by (\ref{eq:rsusp}), no new maps are obtained by suspending by $\mathbb{R}$.  
\end{rmk}
Chain stable equivalence is an equivalence relation, and we denote the set of chain stable equivalence classes by $\ce$.  

\begin{lem}\label{lem:assocchains}
Associated to an element $(X,m,n) \in \mathfrak{C}$ there is a well-defined element $(C^{CW}_*(X,\p),m,n)\in \mathfrak{CE}$.
\end{lem}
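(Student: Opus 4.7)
The plan is to verify two things: first that any space $X$ of type SWF admits a $G$-compatible CW decomposition whose reduced cellular chain complex is of type SWF in the sense of Definition~\ref{def:typswf}, and second that different choices of decomposition, as well as representatives within the same stable equivalence class $(X,m,n)\in\E$, produce chain stably equivalent outputs.

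For the first point, I would reuse the construction outlined immediately after Definition~\ref{def:typswf}: decompose $X^{S^1}\simeq (\tilde{\mathbb{R}}^s)^+$ using the model of Example~\ref{ex:rsusp}, extend this to a $G$-CW decomposition of $X$, and observe that the cells outside $X^{S^1}$ are free $G$-cells since $G$ acts freely on $X-X^{S^1}$. Example~\ref{ex:rsusp2} identifies the fixed-point part of the chain complex with the $\{c_i\}$ piece of Definition~\ref{def:typswf}, and the free cells give the $\{x_i\}$ piece; the differentials $\partial c_1=c_0$ and $\partial c_i = (1+j)c_{i-1}$ for $2\le i\le s-1$ are forced, and the condition that $\partial c_s$ contains $(1+j)c_{s-1}$ follows because $(\tilde{\mathbb{R}}^s)^+$ sits inside $X$ as a subcomplex.

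For well-definedness under change of $G$-CW structure, I would appeal to equivariant cellular approximation: any two $G$-CW structures on $X$ are related through the identity map, which can be made $G$-cellular after a $G$-homotopy, and cellular $G$-maps induce $\G$-chain maps. Applying this in both directions yields a $\G$-chain homotopy equivalence between the two resulting chain complexes of type SWF, so the chain stable equivalence class in $\ce$ is independent of the decomposition chosen. The same mechanism handles the case of a stable equivalence (\ref{eq:gstabdef}) between $(X,m,n)$ and $(X',m',n')$: after $G$-cellular approximation, the $G$-homotopy equivalence
\[
\Sigma^{r\mathbb{R}} \Sigma^{(M-m)\tilde{\mathbb{R}}}\Sigma^{(N-n)\mathbb{H}}X \longrightarrow \Sigma^{r\mathbb{R}} \Sigma^{(M'-m')\tilde{\mathbb{R}}}\Sigma^{(N-n')\mathbb{H}}X'
\]
and a homotopy inverse become $\G$-chain homotopy equivalences at the level of CW chain complexes. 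Lemma~\ref{lem:cwsus} and its iterate (\ref{eq:iter}) let me identify these chain complexes with $\Sigma^{r\mathbb{R}}\Sigma^{(M-m)\tilde{\mathbb{R}}}\Sigma^{(N-n)\mathbb{H}}C^{CW}_*(X,\p)$ and its primed counterpart, and by (\ref{eq:rsusp}) the common $\Sigma^{r\mathbb{R}}$ factor is just a shared grading shift that can be stripped off, producing the maps required in (\ref{eq:compst}) and (\ref{eq:compst2}) for chain stable equivalence of $(C^{CW}_*(X,\p),m,n)$ and $(C^{CW}_*(X',\p),m',n')$.

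The main obstacle is justifying that the cellular approximations can be chosen compatibly so that they genuinely descend to $\G$-chain homotopy equivalences rather than just weak equivalences, and verifying that the smash-product $G$-CW structure of Section~\ref{subsec:23} is respected by the iterated suspension identification in (\ref{eq:iter}). This is essentially bookkeeping: one checks that the smash product of two cellular $G$-maps is $G$-cellular in the chosen decomposition of Example~\ref{ex:gsmag}, and that the induced map on tensor products agrees with the one coming from (\ref{eq:smashform}). Everything else reduces to standard equivariant CW theory applied through the explicit models of Examples~\ref{ex:rsusp2}--\ref{ex:gtimesg2}.
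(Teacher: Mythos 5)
Your proposal is correct and follows essentially the same route as the paper: homotope the stable $G$-homotopy equivalences in both directions to cellular maps via the Equivariant Cellular Approximation Theorem, observe that the induced $\G$-chain maps are chain homotopy equivalences, and identify chain complexes of suspensions with suspensions of chain complexes to land in the form required by Definition~\ref{def:compst}. The extra points you flag (independence of the $G$-CW structure, handled by approximating the identity map, and stripping the common $\Sigma^{r\mathbb{R}}$ factor as a grading shift) are exactly the details the paper treats, respectively, in its parenthetical remark and via (\ref{eq:rsusp}).
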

\begin{proof}
Say that $[(X_1,m_1,n_1)]=[(X_2,m_2,n_2)] \in \mathfrak{C}$ with $G$-CW decompositions $C_i$ of $X_i$.  We will show that 
\begin{equation}\label{eq:cwident}[(C^{CW}_*(X_1,\p),m_1,n_1)]=[(C^{CW}_*(X_2,\p),m_2,n_2)] \in \mathfrak{CE},\end{equation}
where $C^{CW}_*(X_i,\p)$ is the $\G$-chain complex associated to the $G$-CW decomposition $C_i$ of $X_i$.  (In the case $X_1\simeq X_2$, and $m_1=m_2,n_1=n_2$, we are showing that the corresponding element in $\mathfrak{CE}$ does not depend on the choice of $G$-CW decomposition).
By hypothesis, there are homotopy equivalences $f$ and $g$:
\[ f: \Sigma^{(N-n_1)\mathbb{H}}\Sigma^{(M-m_1)\tilde{\mathbb{R}}}X_1 \rightarrow \Sigma^{(N-n_2)\mathbb{H}}\Sigma^{(M-m_2)\tilde{\mathbb{R}}}X_2,\]
\[ g: \Sigma^{(N-n_2)\mathbb{H}}\Sigma^{(M-m_2)\tilde{\mathbb{R}}}X_2 \rightarrow \Sigma^{(N-n_1)\mathbb{H}}\Sigma^{(M-m_1)\tilde{\mathbb{R}}}X_1.\]
By the Equivariant Cellular Approximation Theorem (see \cite{Waner}), we may homotope $f$ and $g$ to cellular maps (where the cell structures of suspensions are given as in (\ref{eq:vsusp})):
\[ f^{CW}: \Sigma^{(N-n_1)\mathbb{H}}\Sigma^{(M-m_1)\tilde{\mathbb{R}}}C_1 \rightarrow \Sigma^{(N-n_2)\mathbb{H}}\Sigma^{(M-m_2)\tilde{\mathbb{R}}}C_2,\]
\[ g^{CW}: \Sigma^{(N-n_2)\mathbb{H}}\Sigma^{(M-m_2)\tilde{\mathbb{R}}}C_2 \rightarrow \Sigma^{(N-n_1)\mathbb{H}}\Sigma^{(M-m_1)\tilde{\mathbb{R}}}C_1.\]
Since $f$ and $g$ are homotopy equivalences, so are $f^{CW}$ and $g^{CW}$.  The cellular maps $f^{CW}$ and $g^{CW}$ induce maps $f_*$ and $g_*$:
\[ f_*: \Sigma^{(N-n_1)\mathbb{H}}\Sigma^{(M-m_1)\tilde{\mathbb{R}}}C^{CW}_*(X_1,\p) \rightarrow \Sigma^{(N-n_2)\mathbb{H}}\Sigma^{(M-m_2)\tilde{\mathbb{R}}}C^{CW}_*(X_2,\p),\]
\[ g_*: \Sigma^{(N-n_2)\mathbb{H}}\Sigma^{(M-m_2)\tilde{\mathbb{R}}}C^{CW}_*(X_2,\p) \rightarrow \Sigma^{(N-n_1)\mathbb{H}}\Sigma^{(M-m_1)\tilde{\mathbb{R}}}C^{CW}_*(X_1,\p).\]
These are chain homotopy equivalences, by construction, and so we obtain (\ref{eq:cwident}), as needed.  
\end{proof}

\begin{defn}\label{def:compstl}
Let $Z_i$ be chain complexes of type SWF, $m_i \in \mathbb{Z}, n_i \in \mathbb{Q}$, for $i=1,2$.  We call $(Z_1,m_1,n_1)$ and $(Z_2,m_2,n_2)$ \emph{chain locally equivalent} if there exist $M \in \mathbb{Z}, N \in \mathbb{Q}$ and maps
\begin{equation}\label{eq:compstl}
\Sigma^{(N-n_1)\mathbb{H}}\Sigma^{(M-m_1)\tilde{\mathbb{R}}} Z_1 \rightarrow 
\Sigma^{(N-n_2)\mathbb{H}}\Sigma^{(M-m_2)\tilde{\mathbb{R}}} Z_2
\end{equation}
\begin{equation}\label{eq:compstl2}
\Sigma^{(N-n_1)\mathbb{H}}\Sigma^{(M-m_1)\tilde{\mathbb{R}}} Z_1 \leftarrow 
\Sigma^{(N-n_2)\mathbb{H}}\Sigma^{(M-m_2)\tilde{\mathbb{R}}} Z_2,
\end{equation}
which are chain homotopy equivalences on the fixed-point sets.  
\end{defn}
We call a map as in (\ref{eq:compstl}) or (\ref{eq:compstl2}) a \emph{chain local equivalence}.  Elements $Z_1,Z_2 \in \ce$ are chain locally equivalent if and only if there are chain local equivalences $Z_1 \rightarrow Z_2$ and $Z_2 \rightarrow Z_1$.  There are pairs of chain complexes with a chain local equivalence in one direction but not the other; these are not chain locally equivalent complexes.    
Chain local equivalence is an equivalence relation, and we write $(Z_1,m_1,n_1) \equiv_{cl} (Z_2,m_2,n_2)$ if $(Z_1,m_1,n_1)$ is chain locally equivalent to $(Z_2,m_2,n_2)$.  Denote the set of chain local equivalence classes by $\mathfrak{CLE}$.  The set $\mathfrak{CLE}$ is naturally an abelian group, with multiplication given by the tensor product (over $\f$, with $\G$-action as above).  As in Lemma \ref{lem:assocchains}, associated to an element $(X,m,n) \in \mathfrak{LE}$ there is a well-defined element $(C^{CW}_*(X,\p),m,n) \in \mathfrak{CLE}$.  

It follows from the definitions that the $\G$-module $C^{CW}_*(X,\p)$ determines $\tilde{H}^G_*(X)$ for $X$ a space of type SWF.  

\subsection{Calculating the chain local equivalence class}
In this section we will obtain a description of $\CEL$ more amenable to calculations than the definition.  Throughout this section $Z$ will denote a chain complex of type SWF.  

In order to determine if $(Z_1,m_1,n_1)$ and $(Z_2,m_2,n_2)$ are chain locally equivalent without checking all possible $M,N$, we will use the forthcoming Lemma \ref{lem:stabeqch}.  To prove Lemma \ref{lem:stabeqch} we will first need Lemma \ref{lem:eqfreu}, a result on chain homotopy classes of maps between fixed-point sets.  For two $\G$-chain complexes $Z'_1$ and $Z'_2$, let $[Z'_1,Z'_2]$ denote the set of chain homotopy classes of maps from $Z'_1$ to $Z'_2$.  We have an algebraic anologue of the Equivariant Freudenthal Suspension Theorem (Theorem 3.3 of \cite{Adams}), as follows.  We recall that for $Z$ a chain complex of type SWF at level $s$, the fixed point set $R \subset Z$ is isomorphic, as a $\G$-chain complex, to 
\begin{equation}\label{eq:rsuspcomplex}C^{CW}_*(\tilde{\mathbb{R}}^s,\p)\cong \langle c_0,...,c_s\rangle , \end{equation}
with relations  $jc_0=sc_0=0$ and, for $i>0$, $j^2c_i=c_i$, while $sc_i=0$.  The differentials in (\ref{eq:rsuspcomplex}) are given by $\partial(c_i)=(1+j)c_{i-1}$ for $2 \leq i \leq s$, and $\partial(c_1)=c_0$, $\partial(c_0)=0$.  
\begin{lem}\label{lem:eqfreu} 
Let $R_1 \cong R_2 \cong C^{CW}_*(\tilde{\mathbb{R}}^s,\p)$, where $\cong$ denotes isomorphism of $\G$-chain complexes.  Then the map
 \begin{equation}\label{eq:equivfre}[R_1,R_2] \rightarrow [\Sigma^\mathbb{H} R_1 , \Sigma^\mathbb{H} R_2],\end{equation}
obtained by suspension by $\mathbb{H}$ is an isomorphism.
\end{lem}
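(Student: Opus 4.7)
I will prove Lemma \ref{lem:eqfreu} as an algebraic analogue of the equivariant Freudenthal suspension theorem: since $R_1, R_2$ are concentrated on the $S^1$-fixed part and the $\G$-free part of $\Sigma^{\mathbb{H}}$ is acyclic in an appropriate sense, suspending by $\mathbb{H}$ changes neither the set of maps modulo chain homotopy nor the relation of chain homotopy itself. Concretely, recall from Example \ref{ex:hsusp2} that as a $\G$-module $\Sigma^{\mathbb{H}} R_i = \langle r_0\rangle\otimes_\f R_i \,\oplus\, \langle y_1,y_2,y_3\rangle \otimes_\f R_i$, where $\iota_i\colon R_i\hookrightarrow \Sigma^{\mathbb{H}}R_i$, $c_k\mapsto r_0\otimes c_k$, identifies $R_i$ with the fixed-point subcomplex, and the quotient $F_i := \Sigma^{\mathbb{H}}R_i/\iota_i(R_i) \cong \langle y_1,y_2,y_3\rangle\otimes_\f R_i$ is $\G$-free. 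For any $\G$-chain map $f\colon R_1\to R_2$, the suspension $\Sigma^{\mathbb{H}}f$ restricts to $f$ along $\iota_1,\iota_2$, so the map in (\ref{eq:equivfre}) is well-defined; I will prove it is surjective and injective.

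\textbf{Surjectivity.} Let $\phi\colon \Sigma^{\mathbb{H}}R_1\to \Sigma^{\mathbb{H}}R_2$ be a $\G$-chain map. Because $\iota_i(R_i)$ is exactly the submodule annihilated by $s$ and fixed by $j$ on which $sc_k=0, jc_0=c_0, j^2c_k=c_k$, $\G$-equivariance forces $\phi(\iota_1(R_1)) \subset \iota_2(R_2)$, producing a restriction $\phi_0\colon R_1\to R_2$. Set $\psi := \phi - \Sigma^{\mathbb{H}}\phi_0$, a $\G$-chain map vanishing on $\iota_1(R_1)$. I will construct a $\G$-equivariant chain null-homotopy $H$ of $\psi$ by induction on the basis elements $y_i\otimes c_k \in F_1$ (in order of increasing total degree), defining $H$ to be zero on $\iota_1(R_1)$. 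At each step, using the $\G$-action formula (\ref{eq:gactformula2}) and freeness of $F_1$, it suffices to specify $H(y_i\otimes c_k)$ on one representative per $\G$-orbit so that $\partial H(y_i\otimes c_k) = \psi(y_i\otimes c_k) - H(\partial(y_i\otimes c_k))$; the right-hand side is a cycle whose boundedness I will verify by explicit computation using the differentials of Example \ref{ex:hsusp2}, which encode the contractibility of the free part of $\mathbb{H}^+$.

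\textbf{Injectivity.} Suppose $f\colon R_1\to R_2$ satisfies $\Sigma^{\mathbb{H}}f \simeq 0$ via a $\G$-chain homotopy $H\colon \Sigma^{\mathbb{H}}R_1\to \Sigma^{\mathbb{H}}R_2$. Restrict $H$ along $\iota_1$ to obtain a $\G$-linear degree-$+1$ map $H_0 := H\circ\iota_1 \colon R_1 \to \Sigma^{\mathbb{H}}R_2$. For each $c_k\in R_1$, the element $H_0(c_k) = H(r_0\otimes c_k)$ is $j$-fixed (when $k=0$) or $j^2$-fixed and $s$-annihilated (when $k\geq 1$), hence by the $\G$-action analysis of $\Sigma^{\mathbb{H}}R_2$, lies in the fixed-point subcomplex $\iota_2(R_2)$. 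Thus $H_0$ defines a map $R_1\to R_2$, and the identity $\partial H + H\partial = \Sigma^{\mathbb{H}}f$ restricts on $\iota_1(R_1)$ to $\partial H_0 + H_0\partial = f$, so $[f]=0$ in $[R_1,R_2]$.

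\textbf{Main obstacle.} The principal difficulty is the inductive construction in the surjectivity step: verifying that at each basis cell $y_i\otimes c_k$ the obstruction cycle, built from $\psi$ and the partially defined $H$, is actually a $\G$-equivariant boundary in $\Sigma^{\mathbb{H}}R_2$. This reduces to a finite, explicit computation with the differentials $\partial y_1 = r_0$, $\partial y_2=(1+j)y_1$, $\partial y_3 = sy_1+(1+j)y_2$ of Example \ref{ex:hsusp2} and the relations on $R_2$ from (\ref{eq:rsuspcomplex}); packaged abstractly, it is the statement that $F_i$, viewed as a free $\G$-chain complex, is $\G$-chain-contractible, which is the algebraic shadow of the fact that $\mathbb{H}^+$ is $G$-equivariantly contractible relative to its $S^1$-fixed-point set in a suitably strong sense.
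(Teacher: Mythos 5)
Your strategy (restrict a map to the fixed-point subcomplex, then kill the difference on the free part by obstruction theory) rests on a structural claim that is false. You assert that $\G$-equivariance forces a chain map $\phi\colon\Sigma^{\mathbb{H}}R_1\to\Sigma^{\mathbb{H}}R_2$ (and likewise a chain homotopy $H$) to carry $\iota_1(R_1)=\langle r_0\rangle\otimes_{\f}R_1$ into $\iota_2(R_2)$, on the grounds that the fixed-point subcomplex is exactly the locus of $s$-annihilated, $j^2$-fixed elements. It is not. The element $u=(1+j^2)sy_1\otimes c_0\in\Sigma^{\mathbb{H}}R_2$ has degree $2$, satisfies $su=0$ (because $s\cdot sj^a=0$) and $j^2u=u$ (because $j^2s=sj^2$), and is even a cycle (since $\partial\bigl((1+j^2)s\bigr)=(1+j^2)^2=0$ in characteristic $2$ and $sr_0=0$), yet it lies in the free part. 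So for $s\geq 2$ nothing prevents $\phi(r_0\otimes c_2)$, or $H(r_0\otimes c_1)$, from containing such terms, and neither your restriction $\phi_0=\phi|_{\iota_1(R_1)}$ nor your restricted homotopy $H_0=H\circ\iota_1$ is defined as a map into $R_2$. Degree reasons do not rescue this: in your decomposition the fixed part occupies degrees $0,\dots,s$ while the free part occupies degrees $1,\dots,s+4$, so the two overlap.

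There is a second gap in the inductive null-homotopy: $F_i$ is not $\G$-chain-contractible, and $\mathbb{H}^+$ is not $G$-contractible rel its $S^1$-fixed points. From the long exact sequence of the pair one gets $H_{s+4}(F_i)\cong H_{s+4}(\Sigma^{\mathbb{H}}R_i)\cong\f$, generated by the fundamental class $s(1+j)^3y_3\otimes(1+j)c_s$, which sits in the top free cell. Thus exactly at the last stage of your induction the obstruction cycle lands in a nonzero homology group, and its vanishing amounts to $\phi$ and $\Sigma^{\mathbb{H}}\phi_0$ inducing the same map on $H_{s+4}$ — essentially the content of the lemma, not an input to it. The paper's proof avoids both problems by a different route: it fits the suspension map into a commutative square with $\Sigma^{\mathbb{H}}$ and $\Sigma^{\tilde{\mathbb{R}}^s}$ applied to $[C^{CW}_*(S^0,\p),C^{CW}_*(S^0,\p)]\cong\mathbb{Z}/2$, where the fixed part is concentrated in degree $0$, and then uses the explicit homotopy equivalence $\Sigma^{\tilde{\mathbb{R}}^s}F\simeq F[-s]$ to move the free part strictly above the fixed part, after which the splitting of any chain map genuinely is forced by degrees. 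To salvage your approach you would need an analogous preliminary rearrangement of $\Sigma^{\mathbb{H}}R_i$ up to homotopy equivalence, plus a separate argument tying the induced map on top homology to the restriction to the fixed-point set.
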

\begin{proof}
To show that the map in (\ref{eq:equivfre}) is an isomorphism, we consider the commutative diagram:
\begin{equation}\label{eq:suspdiag} \begin{CD}
[\Sigma^{\mathbb{H}}C^{CW}_*(S^0,\p),\Sigma^{\mathbb{H}}C^{CW}_*(S^0,\p)] @>\Sigma^{\tilde{\mathbb{R}}^s}>> [\Sigma^\mathbb{H}R_1,\Sigma^\mathbb{H}R_2] \\
@A\Sigma^\mathbb{H}AA @A\Sigma^{\mathbb{H}}AA \\
[C^{CW}_*(S^0,\p),C^{CW}_*(S^0,\p)] @>\Sigma^{\tilde{\mathbb{R}}^s}>> [R_1,R_2]
\end{CD} \end{equation}
We have used the isomorphisms $R_1 \cong R_2 \cong \Sigma^{\tilde{\mathbb{R}}^s}C^{CW}_*(S^0,\p)$ in writing the right column.  In (\ref{eq:suspdiag}), the composition is precisely \[\Sigma^{\mathbb{H}\oplus \tilde{\mathbb{R}}^s}:[C^{CW}_*(S^0,\p),C^{CW}_*(S^0,\p)] \rightarrow [\Sigma^{\mathbb{H}}R_1,\Sigma^\mathbb{H}R_2].\]  In writing (\ref{eq:suspdiag}), we have also used that for $Z$ a $\G$-chain complex, and $V, W$ representations of $G$, $\Sigma^V\Sigma^WZ \cong \Sigma^W\Sigma^VZ$, as in (\ref{eq:repcommu}).  We will show that the maps:
\begin{equation}\label{eq:firstmapiso}
\Sigma^{\mathbb{H}}:[C^{CW}_*(S^0,\p),C^{CW}_*(S^0,\p)] \rightarrow [\Sigma^{\mathbb{H}}C^{CW}_*(S^0,\p),\Sigma^{\mathbb{H}}C^{CW}_*(S^0,\p)],
\end{equation}
\begin{equation}\label{eq:thirdmapiso}
\Sigma^{\tilde{\mathbb{R}}^s}:[C^{CW}_*(S^0,\p),C^{CW}_*(S^0,\p)] \rightarrow [R_1,R_2],
\end{equation}
and
\begin{equation}\label{eq:secondmapiso}
\Sigma^{\tilde{\mathbb{R}}^s}: [\Sigma^{\mathbb{H}}C^{CW}_*(S^0,\p),\Sigma^{\mathbb{H}}C^{CW}_*(S^0,\p)] \rightarrow [\Sigma^\mathbb{H}R_1,\Sigma^\mathbb{H}R_2]
\end{equation}
are isomorphisms.  Then, since three of the four maps in (\ref{eq:suspdiag}) are isomorphisms, so is the fourth, which is exactly the map from (\ref{eq:equivfre}), proving the Lemma.

We show that (\ref{eq:firstmapiso}) is an isomorphism.  We use the notation of Example \ref{ex:hsusp2} for $\Sigma^\mathbb{H}C^{CW}_*(S^0,\p)$, writing $c_0$ for the generator of $C^{CW}_*(S^0,\p)$.  Let $f\in [\Sigma^\mathbb{H}C^{CW}_*(S^0,\p),\Sigma^\mathbb{H}C^{CW}_*(S^0,\p)]$.  Then $f(r_0 \otimes c_0)=r_0 \otimes c_0$ or $f(r_0 \otimes c_0)=0$, for degree reasons.  In the former case, $f(y_1 \otimes c_0)=u_1y_1 \otimes c_0$ where $u_1$ is a unit in $\G$.  Indeed, this follows from the requirement:
\[\partial(f(y_1 \otimes c_0))=f(\partial(y_1 \otimes c_0))=f(r_0 \otimes c_0)=r_0\otimes c_0.\]
Similarly, we obtain, perhaps after a homotopy, 
\begin{equation}\label{eq:posthom}
f(y_i \otimes c_0)=u_iy_i \otimes c_0,
\end{equation} where $u_i$ is a unit in $\G$ for $i=1,2,3$.  Indeed, this follows from $H_*(\Sigma^{\mathbb{H}}\langle c_0 \rangle)$ being concentrated in grading $4$.  For instance, $f(y_2\otimes c_0)+u_1y_2\otimes c_0$ must be a cycle in $\Sigma^{\mathbb{H}}\langle c_0 \rangle$, since $\partial(f(y_2\otimes c_0))=f(\partial(y_2\otimes c_0))=(1+j)u_1y_1\otimes c_0$.  Then, by $H_2(\Sigma^{\mathbb{H}}\langle c_0 \rangle)=0$, the element $f(y_2\otimes c_0)+u_1y_2\otimes c_0$ is a boundary, and we may choose a homotopy $h$, vanishing in grading $1$, so that $(\partial h +h \partial)(y_2\otimes c_0)=\partial h(y_2\otimes c_0) =f(y_2\otimes c_0)+u_1y_2\otimes c_0$.  This establishes (\ref{eq:posthom}) for $i=2$, and $i=3$ follows similarly.  

We show that $f \simeq \, \mathrm{Id}_{\Sigma^\mathbb{H}C^{CW}_*(S^0,\p)}$.  We define a homotopy $h:\Sigma^{\mathbb{H}}C^{CW}_*(S^0,\p) \rightarrow \Sigma^{\mathbb{H}}C^{CW}_*(S^0,\p)$ from $f$ to $\mathrm{Id}_{\Sigma^\mathbb{H}C^{CW}_*(S^0,\p)}$, proceeding by defining it in each grading.  First, let $h(r_0 \otimes c_0)=0$.  Then choose $h$ in grading $1$ so that $\partial h(y_1 \otimes c_0)=(1+u_1)y_1 \otimes c_0$, and extend $\G$-linearly.  This is possible, because $(1+u_1)y_1\otimes c_0$ is a boundary in $\Sigma^\mathbb{H}C^{CW}_*(S^0,\p)$ for any unit $u_1$.  An elementary calculation shows that $h$ may be extended over degree $2$ and degree $3$. 
In the case that $f(r_0 \otimes c_0)=0$, an explicit homotopy as above shows that $f$ is homotopic to the zero map.  This shows that (\ref{eq:firstmapiso}) is surjective.

To show that (\ref{eq:firstmapiso}) is injective, we note that $[C^{CW}_*(S^0,\p),C^{CW}_*(S^0,\p)]=[\langle c_0\rangle,\langle c_0 \rangle]$ is exactly $\mathbb{Z}/2$ as there is precisely one nontrivial map, $c_0 \rightarrow c_0$.  Then we need only show the identity map has nontrivial suspension.  But $\Sigma^{\mathbb{H}}\mathrm{Id}_{C^{CW}_*(S^0,\p)}=\mathrm{Id}_{\mathbb{H}^+}$, which induces an isomorphism in homology, and so is not null-homotopic.  Then, indeed, we obtain that the map in (\ref{eq:firstmapiso}) is an isomorphism.  

The proof of the isomorphism (\ref{eq:thirdmapiso}) is parallel to the proof of (\ref{eq:firstmapiso}), and is left to the reader. 

We show that the map in (\ref{eq:secondmapiso}) is an isomorphism.  Note that $\Sigma^{\mathbb{H}}C^{CW}_*(S^0,\p)\simeq C^{CW}_*(\mathbb{H}^+,\p)$.  We let $\tilde{\oplus}$ denote a direct sum of $\G$-modules that is not necessarily a direct sum of chain complexes (i.e. there may be differentials between the summands).  Then $C^{CW}_*(\mathbb{H}^+,\p)=\langle c_0 \rangle \tilde{\oplus} F$, for $F$ a $\G$-free submodule, which is not a subcomplex.  We have:
\begin{equation}\label{eq:pregam2}
\Sigma^{\tilde{\mathbb{R}}^s}C^{CW}_*(\mathbb{H}^+,\p) = \Sigma^{\tilde{\mathbb{R}}^s}\langle c_0 \rangle \tilde{\oplus} \Sigma^{\tilde{\mathbb{R}}^s}F.
\end{equation}
However, $\Sigma^{\tilde{\mathbb{R}}^s}F \simeq F[-s]$.  Indeed, we have a map $\gamma:F[-s] \rightarrow \Sigma^{\tilde{\mathbb{R}}^s}F$ defined by $\gamma(x[-s])=C \otimes x$, where $C$ is the fundamental class of $(\tilde{\mathbb{R}}^s)^+$.  If $Z$ is of type SWF at level $0$, then $C=c_0$, while if $Z$ is of type SWF at level $s>0$, we have $C=(1+j)c_s$, where we use the notation from Example \ref{ex:rsusp2}.  Also, $\gamma$ is a chain map, as the reader may verify.  Furthermore, it is clear that $\gamma$ induces a quasi-isomorphism.  We show it is, in fact, a homotopy equivalence.  We construct a homotopy inverse 
\begin{equation}\label{eq:taudef}
\tau: \Sigma^{\tilde{\mathbb{R}}^s}F \rightarrow F[-s],
\end{equation}
so that $\tau(C \otimes x)=x[-s]$ for $x \in F$.  We treat the case $s=1$; for $s>1$ we apply:
\begin{equation}\label{eq:tauinduc}
\Sigma^{\tilde{\mathbb{R}}^{s}}F =(\Sigma^{\tilde{\mathbb{R}}})^sF\simeq F[-s].
\end{equation}
Fix a $\G$-basis $x_i$ of $F$.  Assume we have defined $\tau(c_k \otimes x_i)$ for $k=0,1$, for all $x_i$ such that $\mathrm{deg}\;x_i \leq m-1$, for some $m$.  For generators $x_i$ of degree $m$ we define:
\begin{equation}\label{eq:moretau}
\tau(c_0 \otimes x_i) = \tau(c_1 \otimes \partial x_i),
\end{equation}
\begin{equation*}
\tau(c_1 \otimes x_i) =0,
\end{equation*}
\begin{equation*}
\tau(jc_1 \otimes x_i) =x_i[-1],
\end{equation*}
and extend by linearity.  The reader may check that the extension (\ref{eq:moretau}) is a chain map.  Further, $(1+j)c_1 \otimes x \rightarrow x[-1]$ for all $x \in F$ by definition, so $\tau\gamma=1_{F[-s]}$, where $1_{F[-s]}$ is the identity on $F[-s]$.  

We find a homotopy $H$ from $\gamma \tau$ to $\mathrm{Id}_{\Sigma^{\tilde{\mathbb{R}}}F}$, to show that $\gamma$ is a homotopy equivalence.  Fix generators $x_i$ as in the definition of $\tau$. Define $H$ by $H(c_0 \otimes x_i) =c_1 \otimes x_i$, for all $x_i$, and by $H(c_1 \otimes x)=0=H(jc_1 \otimes x)$ for all $x \in F$, and extending linearly.  We check that $H$ is a chain homotopy between $\gamma \tau$ and $\mathrm{Id}_{\Sigma^{\tilde{\mathbb{R}}}F}$, on the basis of generators $x_i$.  That is, we must check:
\begin{equation}\label{eq:hcheck1}
(\partial H + H\partial) (c_0 \otimes x_i) = \gamma \tau (c_0 \otimes x_i) + c_0 \otimes x_i,
\end{equation}
\begin{equation}\label{eq:hcheck2}
(\partial H + H\partial) (c_1 \otimes x_i)= \gamma \tau (c_1 \otimes x_i ) + c_1 \otimes x_i,
\end{equation}
and
\begin{equation}\label{eq:hcheck3}
(\partial H + H\partial) (jc_1 \otimes x_i)=\gamma \tau (jc_1 \otimes x_i ) + jc_1 \otimes x_i.
\end{equation}
We suppose inductively that (\ref{eq:hcheck1})-(\ref{eq:hcheck3}) are true for all $x_i$ with $\mathrm{deg}\; x_i \leq N$ for some $N$.  The inductive hypothesis is true (vacuously) for $N$ sufficiently small, since $F$ is a bounded-below complex.  Fix $x_i$ of degree $N+1$.  We show that (\ref{eq:hcheck1})-(\ref{eq:hcheck3}) hold for $x_i$.

First, consider:
\begin{equation}\label{eq:hz1}
(\partial H + H \partial )(c_0 \otimes x_i)=\partial (c_1 \otimes x_i) +H(c_0 \otimes \partial x_i),
\end{equation}
where we have used the definition $H(c_0 \otimes x_i ) = c_1 \otimes x_i$.  Also:
\begin{equation}\label{eq:hz2}
(\partial H +H\partial) (c_1 \otimes \partial x_i)= \partial H(c_1 \otimes \partial x_i) + H(c_0 \otimes \partial x_i) = \gamma\tau(c_1 \otimes \partial x_i ) +c_1 \otimes \partial x_i,
\end{equation}
by the inductive hypothesis.  Rearranging (\ref{eq:hz2}), we have:
\begin{equation}\label{eq:hz3}
H(c_0 \otimes \partial x_i)=\gamma \tau (c_1 \otimes \partial x_i ) +c_1 \otimes \partial x_i + \partial H(c_1 \otimes \partial x_i).
\end{equation}
By the definition of $\tau$, we have $\tau (c_1 \otimes \partial x_i) = \tau(c_0 \otimes x_i)$, so, using (\ref{eq:hz1}), we obtain:
\begin{equation}\label{eq:hz4}
(\partial H+H \partial)(c_0\otimes x_i ) = \gamma \tau (c_0 \otimes x_i)+c_0 \otimes x_i + \partial H (c_1 \otimes \partial x_i).
\end{equation}
But $H(c_1 \otimes \partial x_i)=0$ by definition, so:
\begin{equation}\label{eq:hz5}
(\partial H+H \partial)(c_0 \otimes x_i)=\gamma \tau (c_0 \otimes x_i)+c_0 \otimes x_i,
\end{equation}
verifying (\ref{eq:hcheck1}).  

Next, we investigate $(\partial H+ H\partial)(c_1 \otimes x_i)$:
\begin{equation}\label{eq:hz6}
(\partial H+ H \partial)(c_1 \otimes x_i )= \partial H (c_1 \otimes x_i ) + H(c_0 \otimes x_i) + H(c_1\otimes \partial x_i)=H(c_0 \otimes x_i)=c_1 \otimes x_i,
\end{equation}
using $H(c_1 \otimes x_i)=0$ and $H(c_1 \otimes \partial x_i)=0$.  Using $\tau (c_1 \otimes x_i ) =0$, we obtain (\ref{eq:hcheck2}) from (\ref{eq:hz6}).  

We also check $(\partial H+H\partial)(jc_1 \otimes x_i):$
\begin{equation}\label{eq:hz7}
(\partial H+H\partial)(jc_1 \otimes x_i)=\partial H(jc_1 \otimes x_i)+H(c_0 \otimes x_i) + H(jc_1 \otimes \partial x_i)=H(c_0 \otimes x_i)=c_1 \otimes x_i,
\end{equation}
since $H(jc_1 \otimes x_i)=0$ and $H(jc_1 \otimes \partial x_i)=0$ by definition.  Additionally, $\tau(jc_1 \otimes x_i)= x_i[-1]$, and $\gamma(x_i[-1])=(1+j)c_1 \otimes x_i$.  Then $c_1 \otimes x_i =\gamma\tau(jc_1 \otimes x_i)+jc_1 \otimes x_i$, and (\ref{eq:hcheck3}) follows from (\ref{eq:hz7}).  

Then $H$ is a chain homotopy between $\gamma \tau$ and $\mathrm{Id}_{\Sigma^{\tilde{\mathbb{R}}}F}$, as needed, and so $\gamma$ and $\tau$ are homotopy equivalences.

We let $\mbi$ denote the identity map on $\Sigma^{\tilde{\mathbb{R}}^s}\langle c_0\rangle$.  We have a homotopy equivalence: 
\begin{equation}\label{eq:gamma2}
\Sigma^{\tilde{\mathbb{R}}^s}\langle c_0 \rangle \tilde{\oplus} \Sigma^{\tilde{\mathbb{R}}^s}F \xrightarrow{\mbi \tilde{\oplus} \tau} \Sigma^{\tilde{\mathbb{R}}^s}\langle c_0 \rangle \tilde{\oplus} F[-s].
\end{equation}
Further, there is an isomorphism 
\[
[\Sigma^{\tilde{\mathbb{R}}^s}C^{CW}_*(\mathbb{H}^+,\p),\Sigma^{\tilde{\mathbb{R}}^s}C^{CW}_*(\mathbb{H}^+,\p)]\rightarrow [\Sigma^{\tilde{\mathbb{R}}^s}\langle c_0\rangle \tilde{\oplus} F[-s],\Sigma^{\tilde{\mathbb{R}}^s}\langle c_0 \rangle \tilde{\oplus} F[-s]],
\]
given by \[f \rightarrow (\mbi \tilde{\oplus} \tau) f (\mbi \tilde{\oplus} \gamma).\]  Here, the map $(\mbi \tilde{\oplus} \gamma)$ acts by the identity on the first summand, and by $\gamma$ on the second.  

We first prove surjectivity of (\ref{eq:secondmapiso}).  Fix an element $f \in [\Sigma^{\tilde{\mathbb{R}}^s}C^{CW}_*(\mathbb{H}^+,\p),\Sigma^{\tilde{\mathbb{R}}^s}C^{CW}_*(\mathbb{H}^+,\p)]$.  Let $f'=(\mbi \tilde{\oplus} \tau) f (\mbi \tilde{\oplus} \gamma)$.  We find $g: C^{CW}_*(\mathbb{H}^+,\p) \rightarrow C^{CW}_*(\mathbb{H}^+,\p)$ so that $\Sigma^{\tilde{\mathbb{R}}^s}g \simeq f$.  We define $g$ separately on the two summands $C^{CW}_*(S^0,\p)$ and $F$.  

Let $g_1 \in [C^{CW}_*(S^0,\p),C^{CW}_*(S^0,\p)]$ so that $\Sigma^{\tilde{\mathbb{R}}^s}g_1 \simeq f|_{\langle c_0,...,c_s \rangle}$.  Such a $g_1$ exists by (\ref{eq:thirdmapiso}).  Further, note that there is a natural isomorphism $[F,F]=[F[-s],F[-s]]$, and let $g_2 \in [F,F]$ be the element corresponding to $f'|_{F[-s]} \in [F[-s],F[-s]]$.  Define $g: \langle c_0 \rangle \tilde{\oplus} F \rightarrow \langle c_0 \rangle \tilde{\oplus} F$ by \begin{equation}\label{eq:lit}g=g_1 \tilde{\oplus} g_2. \end{equation}  We need to check that $g$ is a chain map.  This is clear for $g|_{F}$ and $g|_{\langle c_0 \rangle}$, by the construction of $g$, so we need only check $x \in F$ so that $\partial x \in \langle c_0 \rangle$.  In this case, we have, where $C$ is the fundamental class of $(\tilde{\mathbb{R}}^s)^+$:
\begin{equation}
(1_{\Sigma^{\tilde{\mathbb{R}}^s}} \otimes g)(C \otimes \partial(x))=f'|_{\Sigma^{\tilde{\mathbb{R}}^s}}(C \otimes \partial(x)),
\end{equation}
by (\ref{eq:lit}).  Moreover,
\begin{equation}\label{eq:f'}
f'|_{\Sigma^{\tilde{\mathbb{R}}^s}}(C \otimes \partial(x))=\partial(f(C \otimes x))=\partial(\gamma f' \tau(C \otimes x)).
\end{equation}
Here, we have used that $x \in F$, and the definition of $f'$.  Additionally, $\gamma f' \tau(C \otimes x) \in F[-s]$ for degree reasons.  Then,   
\begin{align}
\gamma f' \tau(C \otimes x)&=\gamma(f'(x[-s])) \label{eq:complic33}\\ &=\gamma(g_2(x)[-s]) \nonumber \\&=C \otimes g_2(x)\nonumber \\
&=(1_{\Sigma^{\tilde{\mathbb{R}}^s}} \otimes g)(C \otimes x). \nonumber
\end{align}
Composing these equalities, that is, substituting (\ref{eq:complic33}) into (\ref{eq:f'}), we obtain:
\begin{equation}
(1_{\Sigma^{\tilde{\mathbb{R}}^s}} \otimes g)(C \otimes \partial(x))=\partial((1_{\Sigma^{\tilde{\mathbb{R}}^s}} \otimes g)(C \otimes x))=C \otimes \partial(g(x)).
\end{equation}
It follows that $g(\partial(x))=\partial(g(x))$, establishing that $g$ is a chain map.  By construction, \[(\mbi \tilde{\oplus}\tau )\Sigma^{\tilde{\mathbb{R}}^s}g(\mbi \tilde{\oplus}\gamma) \simeq f',\] and so $\Sigma^{\tilde{\mathbb{R}}^s}g \simeq f$, as needed. 

Finally, we check injectivity of (\ref{eq:secondmapiso}).  We have $[\langle c_0\rangle,\langle c_0 \rangle]=[\Sigma^{\mathbb{H}}\langle c_0\rangle,\Sigma^{\mathbb{H}}\langle c_0 \rangle]$ is $\mathbb{Z}/2$, with nontrivial map given by the identity $\mathrm{Id}_{\mathbb{H}^+}$.  We need only show then that the map $\Sigma^{\tilde{\mathbb{R}}^s}\mathrm{Id}_{\mathbb{H}^+}$ is not null-homotopic.  Indeed, it induces a nontrivial map on homology by construction, so is not null-homotopic.  
Then (\ref{eq:secondmapiso}) is an isomorphism, as needed.  
\end{proof}
\begin{rmk}
We have $[C^{CW}_*(S^0,\p),C^{CW}_*(S^0,\p)]=\mathbb{Z}/2$, as remarked in the proof.  Hence Lemma \ref{lem:eqfreu} implies $[\Sigma^{\mathbb{H}}R_1,\Sigma^{\mathbb{H}}R_2]\cong \mathbb{Z}/2$.  
\end{rmk}
\begin{lem}\label{lem:stabeqch}
Let $Z_1$ and $Z_2$ be locally equivalent chain complexes of type SWF.  Let $R_i \subset Z_i$ be the corresponding fixed-point sets.  
Additionally, for all nonzero homogeneous $r \in R_i$, we require $\mathrm{deg} \, r < \mathrm{deg}\, x$ for all nonzero homogeneous 
\[x \in Z_i/R_i,\]
for $i=1,2$.  Then there exist chain maps
\begin{equation}\label{eq:chaincalcequation}
Z_1 \rightarrow  Z_2,
\end{equation}\[
Z_1 \leftarrow Z_2,\]
that are chain homotopy equivalences on the fixed-point sets.  
\end{lem}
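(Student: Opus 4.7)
The plan is to build the desired chain maps by first desuspending the hypothesized local equivalence on fixed-point sets using Lemma \ref{lem:eqfreu}, and then extending up through the non-fixed generators of $Z_i$ in order of increasing degree, with the suspension-level chain map supplying control over the obstruction at each step.

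First, from chain local equivalence pick maps $f\colon \Sigma^V Z_1 \to \Sigma^V Z_2$ and $g\colon \Sigma^V Z_2 \to \Sigma^V Z_1$ that are chain homotopy equivalences on fixed-point sets, where $V$ is an appropriate sum of copies of $\mathbb{H}$ and $\tilde{\mathbb{R}}$. Since $Z_1, Z_2$ are locally equivalent, their fixed-point sets $R_1, R_2$ have the same level $s$ and are therefore both isomorphic to $C^{CW}_*(\tilde{\mathbb{R}}^s,\p)$. Restricting $f$ yields a chain homotopy equivalence $\Sigma^V R_1 \to \Sigma^V R_2$, and by a desuspension argument patterned on Lemma \ref{lem:eqfreu} (iterated over the copies of $\mathbb{H}$; the copies of $\tilde{\mathbb{R}}$ are absorbed into the level), this descends to an unsuspended chain homotopy equivalence $\bar{f}\colon R_1 \to R_2$ satisfying $\Sigma^V \bar{f} \simeq f|_{\text{fix}}$. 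Similarly produce $\bar{g}\colon R_2 \to R_1$.

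Next, extend $\bar{f}$ to a chain map $F\colon Z_1 \to Z_2$. Fix a $\G$-basis $\{x_i\}$ of the free quotient $Z_1/R_1$, ordered by increasing degree, and construct $F(x_i)$ inductively starting from $F|_{R_1} = \bar{f}$. At each step $F(\partial x_i)$ is a well-defined cycle in $Z_2$, and one must exhibit $y_i \in Z_2$ with $\partial y_i = F(\partial x_i)$. Using an inductive chain homotopy $h_{i-1}\colon \Sigma^V F|_{Z_1^{(i-1)}} \simeq f|_{\Sigma^V Z_1^{(i-1)}}$, where $Z_1^{(i-1)}$ is the subcomplex generated by $R_1$ and $x_1, \ldots, x_{i-1}$, the suspension of $F(\partial x_i)$ is identified modulo a boundary with $\partial f(\Sigma^V x_i)$, and hence represents zero in $H_*(\Sigma^V Z_2)$; the suspension isomorphism in ordinary homology then forces $F(\partial x_i)$ to be a boundary in $Z_2$, yielding $y_i$. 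Extend $h_{i-1}$ to $h_i$ by a standard obstruction argument using the chosen $y_i$. A symmetric construction starting from $\bar{g}$ produces the reverse chain map.

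The main obstacle is this inductive bookkeeping: the homotopy $h_i$ must be maintained alongside $F$, since without it the suspension argument at step $i+1$ has no handle on $\Sigma^V F(\partial x_{i+1})$. The degree hypothesis is essential to both the base and inductive cases: in the base case one has $\partial x_i \in R_1$, so the needed boundary identification reduces directly to $\Sigma^V \bar{f} \simeq f|_{\text{fix}}$, the very property secured via Lemma \ref{lem:eqfreu}; in general, the hypothesis keeps all obstructions confined to the image of $Z_1^{(i-1)}$, where the homotopy $h_{i-1}$ provides the necessary correction and one has freedom to adjust both $y_i$ and $h_i$ within their respective affine spaces of choices to keep the induction running.
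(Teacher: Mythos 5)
Your overall strategy (desuspend on the reducible via Lemma \ref{lem:eqfreu}, then extend over the free cells by induction on degree) is reasonable, but your first step has a genuine gap, and it sits exactly where the paper's proof does its real work. A chain local equivalence $f\colon \Sigma^V Z_1 \to \Sigma^V Z_2$ is only required to restrict to a homotopy equivalence on the \emph{fixed-point sets}, i.e.\ the submodules generated by the $c_i$'s. It does \emph{not} follow that $f$ carries $\Sigma^V R_1$ into $\Sigma^V R_2$: writing $Z_2=R_2\,\tilde{\oplus}\,F_2$, the suspension $\Sigma^{\mathbb{H}}F_2$ contains free cells such as $r_0\otimes x$ and $y_1\otimes x$ in degrees as low as $\deg x$ and $\deg x +1$, which overlap the degree range $[0,s+4]$ of $\Sigma^{\mathbb{H}}R_1$; so $f|_{\Sigma^V R_1}$ is merely a map into $\Sigma^V Z_2$, and there is no chain projection $Z_2\to R_2$ to correct this (the differential runs from $F_2$ into $R_2$). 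Hence $\bar{f}$ is not produced by your recipe, and the homotopy $\Sigma^V\bar{f}\simeq f|_{\Sigma^V R_1}$ \emph{as maps into} $\Sigma^V Z_2$ --- which is what your base case and every later inductive step consume --- is not established. The paper removes this obstacle by first conjugating $\phi$ with the homotopy equivalences $\gamma,\tau$ between $\Sigma^{\mathbb{H}}F_i$ and $F_i[-4]$; only after that replacement does the suspensionlike hypothesis force the conjugated map to preserve the splitting $(\Sigma^{\mathbb{H}}R_i)\,\tilde{\oplus}\,F_i[-4]$ for degree reasons, since $F_i[-4]$ now lives entirely above degree $s+4$. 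In your write-up the degree hypothesis is used only for the fact that $\partial x_i$ lies in lower-degree cells, which holds automatically once the basis is ordered by degree; an argument in which the hypothesis is not load-bearing should be treated with suspicion here.

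A secondary issue is that the ``standard obstruction argument'' extending $h_{i-1}$ to $h_i$ is not standard: the quotient $\Sigma^V Z_1^{(i)}/\Sigma^V Z_1^{(i-1)}\cong C^{CW}_*(V^+,\p)\otimes \G$ is not acyclic, so extending a chain homotopy over it is obstructed by classes in $H_*(\Sigma^V Z_2)$, and these classes are unaffected by altering $y_i$ by a cycle (since $r_0\otimes z=\partial(y_1\otimes z)$ for $z$ a cycle). The obstructions do vanish here --- for instance, over the cell $r_0\otimes x_i$ one may take $h_i(r_0\otimes x_i)=y_1\otimes y_i+f(y_1\otimes x_i)+h_{i-1}(y_1\otimes \partial x_i)$, using $r_0=\partial y_1$ --- but this requires explicit verification of the kind the paper carries out in (\ref{eq:hcheck1})--(\ref{eq:hz7}), not an appeal to generalities. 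If you repair the first step by importing the conjugation with $\gamma$ and $\tau$, your cell-by-cell extension can be completed; but at that point the conjugated map already splits as a sum of a map $\Sigma^{\mathbb{H}}R_1\to\Sigma^{\mathbb{H}}R_2$ and a map $F_1[-4]\to F_2[-4]$, and the paper's direct desuspension of each summand is shorter.
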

\begin{proof}
Let $Z_i(N,M)$ denote $\Sigma^{N\mathbb{H}}\Sigma^{M\tilde{\mathbb{R}}} Z_i$.  
By hypothesis there exist maps which are homotopy equivalences on the fixed-point sets:
\begin{equation}\label{eq:stabeqch}
Z_1(N,M)\rightarrow Z_2(N,M), \end{equation}
\[ 
Z_1(N,M)\leftarrow Z_2(N,M), \]
for $M,N$ sufficiently large.  \\
\begin{claim}\label{clm:vsusp} Let $V=\mathbb{H}$ or $\tilde{\mathbb{R}}$.  Let $Z_i$, for $i=1,2$, as in the Lemma, and take $\phi$ a map which is a chain homotopy equivalence on fixed-point sets:
\[ \phi: \Sigma^{V}
Z_1 \rightarrow 
\Sigma^{V}Z_2.\]
Then $\phi$ is chain homotopic to the suspension of a map, also a chain homotopy equivalence on fixed-point sets:
\[ 
\phi_0: Z_1\rightarrow 
Z_2. \]
\end{claim}
Since $\Sigma^VZ_i$ also satisfy the conditions of the Lemma, it follows from Claim \ref{clm:vsusp} that any map which is a homotopy equivalence on fixed-point sets, for $M_0,N_0 \geq 0$:
\[ \phi: 
Z_1(N_0,M_0) \rightarrow 
Z_2(N_0,M_0)\]
is homotopic to the suspension of a map:
\[ \phi_0: 
Z_1 \rightarrow 
Z_2,\]
which implies the existence of the maps as in (\ref{eq:chaincalcequation}).  

We prove Claim \ref{clm:vsusp} for $V=\mathbb{H}$; the case of $V=\tilde{\mathbb{R}}$ is similar, but easier.  
 
We let $\tilde{\oplus}$ denote a direct sum of $\G$-modules that is not necessarily a direct sum of chain complexes.

Let $F_i$ be the $\G$-free submodule (not necessarily a subcomplex) of $Z_i$ generated by elements $x$ of degree greater than $\mathrm{deg} \; r$ for all $r$ in the fixed-point set $R_i$.  We will also consider $F_i$ as a $\G$-chain complex so that the projection 
\[ Z_i \rightarrow Z_i/R_i \simeq F_i \]
is a map of complexes.  Then we have $Z_i =R_i \tilde{\oplus} F_i$.  For a given local equivalence $\phi:\Sigma^\mathbb{H}Z_1 \rightarrow \Sigma^\mathbb{H}Z_2$, we have the diagram: 
\[ \begin{CD} 
\Sigma^\mathbb{H}(R_1 \tilde{\oplus} F_1) @>\phi>> \Sigma^\mathbb{H}(R_2 \tilde{\oplus} F_2) \\
@VVV   @VVV \\
(\Sigma^\mathbb{H}R_1)\tilde{\oplus}(\Sigma^{\mathbb{H}}F_1) @>>> (\Sigma^\mathbb{H}R_2) \tilde{\oplus}(\Sigma^{\mathbb{H}}F_2) 
\end{CD} \]
However, $\Sigma^{\mathbb{H}}F_i$ is homotopy equivalent to $\Sigma^{\mathbb{R}^4}F_i = F_i[-4]$.  To see this, we use the notation for suspension by $\mathbb{H}$ as in Example \ref{ex:hsusp2} and write $ \gamma:  F_i[-4] \rightarrow \Sigma^{\mathbb{H}}F_i$, where $\gamma(x[-4])=s(1+j)^3y_3 \otimes x$.  The term $s(1+j)^3y_3$ appears as it is the fundamental class of $S^4\simeq \mathbb{H}^+$.  Furthermore, $\gamma$ is a chain map, as the reader may verify.  It is clear that $\gamma$ is a quasi-isomorphism, and it is, in fact, a homotopy equivalence.  There is a homotopy inverse $\tau$, whose construction is analogous to that in (\ref{eq:moretau}), so that $\tau(s(1+j)^3y_3 \otimes x)=x[-4]$.  We obtain a map:
\[ \phi '=(1_{\Sigma^{\mathbb{H}}R_2} \tilde{\oplus}\tau)\phi (1_{\Sigma^{\mathbb{H}}R_1} \tilde{\oplus}\gamma): (\Sigma^{\mathbb{H}}R_1) \tilde{\oplus} (F_1[-4]) \rightarrow (\Sigma^{\mathbb{H}}R_2) \tilde{\oplus} (F_2[-4]).
\]
Here we write $1_{\Sigma^{\mathbb{H}}R_1} \tilde{\oplus} \gamma$ for the map that acts as the identity on the $\Sigma^{\mathbb{H}}R_1$ factor, and as $\gamma$ on the free factor, and similarly for $1_{\Sigma^{\mathbb{H}}R_2} \tilde{\oplus} \tau$.
For degree reasons, $\phi ' $ sends $\Sigma^{\mathbb{H}}R_1 \rightarrow \Sigma^{\mathbb{H}}R_2$ and $F_1[-4] \rightarrow F_2[-4]$.  By (\ref{eq:equivfre}), we have:
\begin{equation}\label{eq:equivfre2}[R_1,R_2] \stackrel{\Sigma^{\mathbb{H}}}{\longrightarrow} [\Sigma^\mathbb{H} R_1 , \Sigma^\mathbb{H} R_2]\end{equation}
is an isomorphism.  Also, $[F_1[-4],F_2[-4]]=[F_1,F_2]$, clearly.  Define $\phi_0|_{R_1}$ by the element of $[R_1,R_2]$ corresponding to $\phi'|_{\Sigma^\mathbb{H}R_1} \in [\Sigma^\mathbb{H}R_1,\Sigma^\mathbb{H}R_2]$.  Similarly, define $\phi_0|_{F_1}$ by the element of $[F_1,F_2]$ corresponding to $\phi'|_{F_1[-4]}\in [F_1[-4],F_2[-4]]$.  Then we have a map, of $\G$-modules:
\[\phi_0: R_1 \oplus F_1 \rightarrow R_2 \oplus F_2.\]
We show that $\phi_0$ is actually a map of $\G$-chain complexes:
\[\phi_0: Z_1 = R_1 \tilde{\oplus} F_1 \rightarrow R_2 \tilde{\oplus} F_2=Z_2,\]
by showing $\phi_0(\partial x)=\partial \phi_0 x $ for $x \in F_1$.  We proceed as in the proof of Lemma \ref{lem:eqfreu}.  Indeed, we need only check those $x\in F_1$ with $\partial x \in R_1$, since $\phi_0|_{F_1}$ is a chain map by construction.  For $x \in F_1$ such that $\partial x \in R_1$, we have \begin{equation}\label{eq:hchain}(1_{\Sigma^\mathbb{H}} \otimes \phi_0)(s(1+j)^3y_3 \otimes\partial x)=\phi'|_{\Sigma^\mathbb{H}R_1}(s(1+j)^3y_3 \otimes \partial x),\end{equation} by the definition of $\phi_0$.  Here we write $1_{\Sigma^\mathbb{H}}\otimes \phi_0$ for the map obtained by suspending $\phi_0$ by $\mathbb{H}$, acting as the identity on $\mathbb{H}$.  Also, $\phi'$ is a chain map, so
\[\phi'|_{\Sigma^\mathbb{H}R_1}(s(1+j)^3y_3 \otimes \partial x)=\phi(s(1+j)^3y_3 \otimes\partial x)=\partial \gamma\phi'\tau(s(1+j)^3y_3 \otimes x).\]
However, $\gamma\phi'(\tau(s(1+j)^3y_3 \otimes x)) \in F_2[-4]$ for degree reasons, so 
\begin{align}\gamma \phi'(\tau(s(1+j)^3y_3 \otimes x))&=\gamma(\phi'(x[-4])) \label{eq:lem3trickeq} \\&=\gamma(\phi '(x)[-4])\nonumber\\&=s(1+j)^3y_3 \otimes \phi '(x) \nonumber \\ & = (1_{\Sigma^{\mathbb{H}}} \otimes \phi_0)(s(1+j)^3y_3 \otimes x) \nonumber.
\end{align}  Composing the equalities, we have:
\[((1_{\Sigma^\mathbb{H}} \otimes \phi_0)(s(1+j)^3y_3 \otimes \partial x)) =  \partial((1_{\Sigma^\mathbb{H}} \otimes \phi_0) (s(1+j)^3y_3 \otimes x))=s(1+j)^3y_3 \otimes \partial \phi_0(x),\]
which gives:
\[\phi_0(\partial x) = \partial \phi_0 (x).\]
This shows that $\phi_0$ is a chain map.  By construction, $(1_{\Sigma^{\mathbb{H}}R_2} \tilde{\oplus}\tau) \Sigma^\mathbb{H} \phi_0 \simeq \phi'(1_{\Sigma^{\mathbb{H}}R_1} \tilde{\oplus}\tau)$.  Finally, $(1_{\Sigma^{\mathbb{H}}R_2} \tilde{\oplus}\tau) \phi (1_{\Sigma^{\mathbb{H}}R_1} \tilde{\oplus}\gamma)=\phi'$, so $\phi=(1_{\Sigma^{\mathbb{H}}R_2} \tilde{\oplus}\gamma) \phi' (1_{\Sigma^{\mathbb{H}}R_1} \tilde{\oplus}\tau)= \Sigma^{\mathbb{H}}\phi_0$, as needed.    
\end{proof}

For $Z$ a chain complex of type SWF, we will let $Z$ also denote the element $(Z,0,0) \in \ce$.  
\begin{defn}Let $R$ be the fixed-point set of $Z$.  If $\mathrm{deg} \, r < \mathrm{deg}\, x$ for all nonzero homogeneous $x \in (Z/R)$ and $r \in R$, we say that the chain complex $Z$ is a \emph{suspensionlike complex}.  
\end{defn} 
\begin{rmk}
Let $X$ be a free, finite $G$-CW complex.  Then the reduced $G$-CW chain complex of $\tilde{\Sigma}X$, the unreduced suspension of $X$, is a suspensionlike chain complex.  Conversely, any suspensionlike chain complex with fixed-point set $R=\langle c_0 \rangle$ may be realized as the $G$-CW chain complex of an unreduced suspension.  Further, any suspensionlike chain complex of type SWF is chain stably equivalent to $C^{CW}_*(X,\p)$ for some space $X$ of type SWF.
\end{rmk}
Lemma \ref{lem:stabeqch} states that if $\Sigma^{(N_0-n_i)\mathbb{H}}\Sigma^{(M_0-m_i)\tilde{\mathbb{R}}}Z_i$ are suspensionlike, then all local (stable) maps between $(Z_1,m_1,n_1)$ and $(Z_2,m_2,n_2)$ are realized as genuine chain maps by suspending the complexes $Z_i$ by $N_0 \mathbb{H} \oplus M_0 \tilde{\mathbb{R}}$.

\subsection{Inessential subcomplexes and connected quotient complexes}\label{subsec:iness}
\begin{defn}\label{def:ines}
Take $Z$ a chain complex of type SWF, and let $R \subset Z$ be the fixed-point set.  For any subcomplex $M \subset Z$ such that $M \cap R = \{0\}$, the projection $Z \rightarrow Z/M$ is a chain homotopy equivalence on $R$.  If there exists a map of chain complexes $Z/M \rightarrow Z$ that is a chain homotopy equivalence on $R$, we say that $M$ is an \emph{inessential subcomplex}.
\end{defn}
If $M$ is inessential, then $Z/M \equiv_{cl} Z$.  
We order inessential subcomplexes by inclusion, $N \leq M$ if $N \subseteq M$.  We show that there is a unique ``minimal" model $Z/N$ locally equivalent to $Z$.   
\begin{lem}\label{lem:inj}
Let $M \subset Z$ be an inessential subcomplex, maximal with respect to inclusion.  Then a map $f:Z/M \rightarrow Z$ which is a homotopy equivalence on fixed-point sets is injective.
\end{lem}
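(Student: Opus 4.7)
The plan is to argue by contradiction. Suppose $f : Z/M \to Z$ is a chain homotopy equivalence on the fixed-point set $R$ but has nonzero kernel $K \subset Z/M$. I will produce a strictly larger inessential subcomplex $M' \supsetneq M$, contradicting the maximality of $M$. The natural candidate is
\[
M' := \pi^{-1}(K), \qquad \pi : Z \to Z/M,
\]
because then $f$ vanishes on $M'/M = K$ and so factors through an injective chain map $\bar f : Z/M' \to Z$; moreover the quotient $Z/M \to Z/M'$ is the identity on the embedded copy of $R$, so $\bar f$ remains a chain homotopy equivalence on the fixed-point set, witnessing that $M'$ is inessential. The delicate step is verifying $M' \cap R = 0$, which reduces to showing $K \cap R = 0$, i.e.\ that $f|_R$ is injective as an $\f$-linear map, not merely as a chain homotopy equivalence.

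The injectivity of $f|_R$ rests on the extreme rigidity of $\G$-linear chain maps from $R \cong C^{CW}_*(\tilde{\mathbb{R}}^s,\mathrm{pt})$. Using the presentation of $R$ from Definition \ref{def:typswf}, any $\G$-linear chain map $R \to R$ must send $c_0 \mapsto a c_0$ and $c_i \mapsto b_i c_i + b_i' j c_i$, and the chain map condition together with $\partial c_1 = c_0$ and $\partial c_i = (1+j)c_{i-1}$ forces $b_i + b_i' = a$ for every $i \geq 1$. The chain homotopy equivalence hypothesis forces an isomorphism on $H_s(R) \cong \f\langle c_s + jc_s\rangle$, which pins down $a = 1$ and hence $(b_i, b_i') \in \{(0,1),(1,0)\}$ for each $i$. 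Every such map permutes an $\f$-basis of $R$, and so is literally injective as a linear map. A mild wrinkle is that $f(R)$ \emph{a priori} lies in the $s$-annihilator of $Z$, namely $R \tilde{\oplus} sF$, rather than in $R$ itself; this is resolved by applying the rigidity argument to $p \circ f|_R$, where $p : Z \to R$ is the $\G$-module projection in the decomposition $Z = R \tilde{\oplus} F$, and a short check (using $d(R) \subset R$) shows $p \circ f|_R$ is still a chain map even though $p$ alone is not.

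I expect the main obstacle to be the rigidity step itself, since in a generic chain complex a chain homotopy equivalence need not be injective as a linear map; here the conclusion is peculiar to the $\G$-module structure of $R$ together with the fact that $H_*(R)$ is $\f$ concentrated in degree $s$, which cuts the list of candidate chain homotopy equivalences down to a finite set of basis permutations. Once injectivity of $f|_R$ is in hand, $K \cap R = 0$ follows; then $M' = \pi^{-1}(K)$ satisfies $M' \cap R = 0$ and strictly contains $M$ (since $K \neq 0$), so the factoring $\bar f$ exhibits $M'$ as an inessential subcomplex strictly larger than $M$, the desired contradiction.
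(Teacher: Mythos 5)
Your proof is correct and follows the same route as the paper's: assuming $\ker f \neq 0$, both arguments promote $\pi^{-1}(\ker f)$ to an inessential subcomplex strictly containing $M$, contradicting maximality. The only difference is that you spell out why $(\ker f)\cap R = 0$ via the rigidity of $\G$-equivariant chain self-maps of $C^{CW}_*((\tilde{\mathbb{R}}^s)^+,\mathrm{pt})$ (a step the paper asserts without comment), and that elaboration is itself correct.
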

\begin{proof}
Indeed, say $f:Z/M \rightarrow Z$ is a local equivalence with nonzero kernel.  Let $R_1$ denote the fixed-point set of $Z/M$ and $R_2$ denote the fixed-point set of $Z$.  Since $f$ restricts to a homotopy equivalence on the fixed-point sets, $(\mathrm{ker}\, f) \cap R_1 =\{ 0\}$.  Let $\pi: Z \rightarrow Z/M$ be the projection map.  Then $f$ induces a map $Z/(\pi^{-1}(\mathrm{ker}\, f)) \rightarrow Z$, and by $(\mathrm{ker}\, f) \cap R_1 =\{ 0 \}$, this map is a homotopy-equivalence on fixed-point sets.  Additionally, we have $\pi^{-1}(\mathrm{ker}\, f) \cap R_2 =\{ 0 \}$.  Then $\pi^{-1}(\mathrm{ker}\, f)$ is an inessential submodule, and it (strictly) contains $M$, contradicting that $M$ was maximal.  Then $f$ was injective, as needed.  
\end{proof}
\begin{lem}\label{lem:conniso}
Let $Z$ be a chain complex of type SWF and let $M,N \subset Z$ be inessential subcomplexes, with $M$ and $N$ maximal with respect to inclusion.  Then $Z/M \cong Z/N$, where $\cong$ denotes isomorphism of $\G$-chain complexes.
\end{lem}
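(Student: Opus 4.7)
The plan is to construct an explicit isomorphism $Z/M \xrightarrow{\cong} Z/N$ by composing the inessentiality maps for $M$ and $N$ with the projections $\pi_M\colon Z \to Z/M$ and $\pi_N\colon Z \to Z/N$, after first proving the key auxiliary fact that a maximal inessential quotient admits no further nontrivial inessential subcomplex.

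I would first establish that if $M$ is maximal inessential in $Z$, then $Z/M$ contains no nontrivial inessential subcomplex. Given such $M'\subset Z/M$ with accompanying map $h\colon (Z/M)/M' \to Z/M$, form $\widetilde M=\pi_M^{-1}(M')\supseteq M$. Since $M\cap R=\{0\}$ the projection $\pi_M$ embeds $R$ into $Z/M$, so $M'\cap R_{Z/M}=\{0\}$ forces $\widetilde M\cap R=\{0\}$. Moreover $Z/\widetilde M\cong(Z/M)/M'$, and composing $h$ with the inessentiality map $f\colon Z/M\to Z$ for $M$ produces a chain map $Z/\widetilde M\to Z$ which is a chain homotopy equivalence on $R$. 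Hence $\widetilde M$ is inessential, and maximality of $M$ forces $\widetilde M=M$, so $M'=0$.

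Next, let $f\colon Z/M\to Z$ and $g\colon Z/N\to Z$ be inessentiality maps, and set $\alpha=\pi_Nf$ and $\beta=\pi_Mg$. Both are chain maps and chain homotopy equivalences on $R$, as compositions of such. The self-map $\beta\alpha\colon Z/M\to Z/M$ has kernel $K$ a subcomplex with $K\cap R=\{0\}$ (by the injectivity-on-$R$ fact discussed below), and $\beta\alpha$ factors through the quotient to give $(Z/M)/K\to Z/M$, still a chain homotopy equivalence on $R$. Thus $K$ is inessential in $Z/M$, whence $K=0$ by the previous step, so $\beta\alpha$ is injective. Since $Z$ has only finitely many $G$-cells, $Z/M$ is finite-dimensional over $\f$ in each degree, so injectivity promotes $\beta\alpha$ to an isomorphism. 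The symmetric argument for $\alpha\beta\colon Z/N\to Z/N$ then makes $\alpha$ both injective (via $\beta\alpha$) and surjective (via $\alpha\beta$), yielding the desired $\G$-chain isomorphism $Z/M\cong Z/N$.

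The main obstacle is the purely algebraic fact that every chain homotopy equivalence $\phi\colon R\to R$ is automatically injective on $R$. This is not a formal consequence of being a homotopy equivalence---chain homotopy equivalences of acyclic complexes can be highly noninjective---so one must exploit the explicit structure of $R\cong C^{CW}_*((\tilde{\mathbb R}^s)^+,\p)$. Since $H_*(R)\cong\f$ is generated in top degree by $(1+j)c_s$ (or by $c_0$ when $s=0$), any homotopy equivalence must send this class to itself, which pins down $\phi(c_s)\in\{c_s,jc_s\}$. Descending through the relations $\partial c_i=(1+j)c_{i-1}$ together with $\phi\partial=\partial\phi$ yields $\phi(c_i)\in\{c_i,jc_i\}$ for $i\ge 1$ and $\phi(c_0)=c_0$, so $\phi|_R$ is in fact a bijection. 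Applying this to $\phi=(\beta\alpha)|_R$ gives the needed $K\cap R=\{0\}$ and completes the proof.
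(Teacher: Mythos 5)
Your proof is correct and follows essentially the same route as the paper's, which applies Lemma \ref{lem:inj} to the composites $Z/N \to Z/M \to Z$ and $Z/M \to Z/N \to Z$ to get injectivity in both directions and then concludes by finite-dimensionality; your auxiliary fact that $Z/M$ admits no nontrivial inessential subcomplex is a repackaging of Lemma \ref{lem:inj}, proved by the same pullback-of-the-kernel maximality argument. The only added content is your explicit check that a $\G$-equivariant chain homotopy equivalence of $R \cong C^{CW}_*((\tilde{\mathbb{R}}^s)^+,\p)$ is injective, a point the paper's proof of Lemma \ref{lem:inj} asserts without argument.
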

\begin{proof}
Indeed, if there exist maps $\alpha: Z/M \rightarrow Z,$ and $\beta: Z/N \rightarrow Z$ as above, consider the composition:
\[\phi: Z/N \rightarrow Z \rightarrow Z/M.\]
In particular, we have a map $\alpha \phi: Z/N \rightarrow Z$, which is injective by Lemma \ref{lem:inj}.  It then follows that $\phi$ is injective.  We also have:
\[\psi: Z/M \rightarrow Z \rightarrow Z/N. \]
As before, $\psi$ is injective.  Then, since we have injective chain maps between $Z/N$ and $Z/M$, finite-dimensional $\f$-complexes, the two chain complexes must have the same dimension.  An injective map between complexes of the same dimension is bijective, and, finally, a bijective $\G$-chain complex map is a $\G$-chain complex isomorphism.   
\end{proof}
\begin{lem}\label{lem:homsums}
Let $Z$ be a chain complex of type SWF and $M$ a maximal inessential subcomplex of $Z$.  We have a (noncanonical) decomposition of $Z$:
\begin{equation}\label{eq:connsubdef}Z=(Z/M) \oplus M,\end{equation}
where the isomorphism class of $Z/M$ is an invariant of $Z$, independent of the choice of maximal inessential subcomplex $M \subseteq Z$.  
\end{lem}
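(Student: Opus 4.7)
The uniqueness of the isomorphism class of $Z/M$ is immediate from Lemma \ref{lem:conniso}, which shows that $Z/M \cong Z/N$ as $\G$-chain complexes for any two maximal inessential subcomplexes $M,N \subseteq Z$. The real content of the lemma is therefore the direct-sum splitting $Z \cong (Z/M) \oplus M$, so I focus on that.

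To produce the splitting, I would fix a local equivalence $f\colon Z/M \to Z$ guaranteed by the inessentiality of $M$, and let $\pi\colon Z \to Z/M$ denote the projection. By Lemma \ref{lem:inj}, $f$ is injective, so $f(Z/M) \subset Z$ is a $\G$-subcomplex isomorphic to $Z/M$. The key intermediate step is to show that the composition $\pi \circ f\colon Z/M \to Z/M$ is an automorphism of $\G$-chain complexes. Once this is in hand, the splitting follows formally: given $z \in Z$, set $w = (\pi \circ f)^{-1}(\pi z)$, whereupon $z - f(w) \in \ker \pi = M$ and $Z = f(Z/M) + M$; and if $f(w) \in M$, then $(\pi \circ f)(w) = 0$, forcing $w = 0$, so $f(Z/M) \cap M = \{0\}$. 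This gives $Z = f(Z/M) \oplus M \cong (Z/M) \oplus M$ as $\G$-chain complexes.

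To establish that $\pi \circ f$ is an automorphism, I plan to first show that $\{0\}$ is the (unique) maximal inessential subcomplex of $Z/M$, and then invoke Lemma \ref{lem:inj} applied to $Z/M$ together with finite-dimensionality over $\f$. Suppose $N' \subseteq Z/M$ were a nonzero inessential subcomplex, with witnessing local equivalence $g\colon (Z/M)/N' \to Z/M$. Let $N = \pi^{-1}(N') \subseteq Z$. Since $\pi$ restricts to a chain homotopy equivalence on the fixed-point set and $N'$ intersects the fixed-point set of $Z/M$ trivially, one verifies that $N \cap R = \{0\}$; moreover the composition $f \circ g\colon Z/N \cong (Z/M)/N' \to Z$ is a chain homotopy equivalence on fixed-point sets. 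Thus $N$ would be an inessential subcomplex of $Z$ strictly containing $M$, contradicting maximality of $M$. Applying Lemma \ref{lem:inj} to $Z/M$ then shows $\pi \circ f$ is injective, and finite-dimensionality over $\f$ upgrades injectivity to bijectivity.

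The main anticipated obstacle is the routine but careful verification, in the previous step, that $\pi^{-1}(N')$ is genuinely inessential in $Z$: one must check both $\pi^{-1}(N') \cap R = \{0\}$ and that $f \circ g$ defines a chain homotopy equivalence on fixed-point sets, each of which uses precisely that $\pi$ is a chain homotopy equivalence (not merely a chain map) on $R$. Beyond this bookkeeping, the argument is formal.
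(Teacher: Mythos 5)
Your proposal is correct, and the uniqueness half is handled exactly as in the paper (via Lemma \ref{lem:conniso}). For the splitting, however, you take a genuinely different route to the key point that $\pi\circ f$ is injective. The paper's proof observes that the composite $\beta\pi\beta\colon Z/M\to Z$ (your $f\pi f$) is itself a local equivalence $Z/M\to Z$, hence injective by a second application of Lemma \ref{lem:inj} to $Z$ with the \emph{same} maximal inessential subcomplex $M$; injectivity of $\pi\beta$ falls out immediately, and then the paper assembles the splitting via the map $\beta\oplus i\colon (Z/M)\oplus M\to Z$ and a dimension count. You instead prove the auxiliary claim that $\{0\}$ is the unique (hence maximal) inessential subcomplex of $Z/M$, by pulling a putative nonzero inessential $N'\subseteq Z/M$ back to $\pi^{-1}(N')\supsetneq M$ and contradicting maximality; this lets you apply Lemma \ref{lem:inj} to $Z/M$ itself. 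Your pullback verification is sound: $\pi^{-1}(N')\cap R=\{0\}$ because $\pi|_R$ is an isomorphism onto the fixed-point set of $Z/M$ and $N'$ meets it trivially, and $f\circ g$ is a composite of maps that are homotopy equivalences on fixed-point sets. What the paper's trick buys is economy --- it never needs to know anything about inessential subcomplexes of $Z/M$; what your route buys is the (independently useful, and implicit in the paper's later use of $Z_{\mathrm{conn}}$) observation that the connected complex has no nonzero inessential subcomplexes. Your formal derivation of $Z=f(Z/M)\oplus M$ from bijectivity of $\pi\circ f$ is a correct, slightly more explicit substitute for the paper's dimension count.
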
 
\begin{proof}
Let $\beta: Z/M \rightarrow Z$ be a homotopy equivalence on fixed-point sets.  The map $\beta$ is injective by Lemma \ref{lem:inj}.  Let $\pi$ be the projection $Z \rightarrow Z/M$.  We note that $\beta\pi\beta$ is a map $Z/M \rightarrow Z$ which is a homotopy equivalence on the fixed point set, and so by Lemma \ref{lem:inj}, $\beta\pi\beta$ is injective.  Then $\pi\beta$ is also injective.   

We have a map $\beta\oplus i: (Z/M) \oplus M \rightarrow Z$, where $i$ is the inclusion $i:M \rightarrow Z$.  We check that $\beta \oplus i$ is injective.  Indeed, if $(\beta \oplus i)(z\oplus m)=0$, we have $\beta(z)=m$.  By definition, $\pi(m)=0$, while $\pi\beta$ is injective.  It follows that $m=z=0$, and $\beta \oplus i$ is injective.  Then $Z/M \oplus M \rightarrow Z$ is an injective map of $\f$-vector spaces of the same dimension, and so is an isomorphism (of $\G$-chain complexes).  Since, by Lemma \ref{lem:conniso}, the isomorphism class of $Z/M$ is independent of $M$, we obtain that the isomorphism class of $Z/M$ is a well-defined invariant of $Z$.  
\end{proof}
\begin{defn}\label{def:connplex}
For $Z$ a chain complex of type SWF, let $Z_{\mathrm{conn}}$ denote $Z/\ine$, for $\ine\subseteq Z$ a maximal inessential subcomplex.  We call $Z_{\mathrm{conn}}$ the \emph{connected complex} of $Z$.  
\end{defn}
\begin{thm}\label{thm:decomp}
Let $Z$ be a suspensionlike chain complex of type SWF.  Then for $W$ another suspensionlike complex of type SWF, $Z \equiv_{cl} W$ if and only if $Z_{\mathrm{conn}}\cong W_{\mathrm{conn}}$.  
\end{thm}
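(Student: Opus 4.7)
The plan is to prove both implications, with the reverse direction being a straightforward application of Lemma \ref{lem:homsums} and the forward direction requiring more work.

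For the ``if'' direction, suppose $Z_{\mathrm{conn}} \cong W_{\mathrm{conn}}$. Lemma \ref{lem:homsums} yields splittings $Z \cong Z_{\mathrm{conn}} \oplus M_Z$ and $W \cong W_{\mathrm{conn}} \oplus M_W$ as $\G$-chain complexes, with $M_Z, M_W$ maximal inessential. Since inessential subcomplexes meet the fixed-point set trivially by definition, the inclusion and projection coming from each splitting are chain maps that restrict to an isomorphism on fixed-point sets, showing $Z \equiv_{cl} Z_{\mathrm{conn}}$ and $W \equiv_{cl} W_{\mathrm{conn}}$. Composing with the given isomorphism finishes this direction.

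For the ``only if'' direction, I first apply Lemma \ref{lem:stabeqch}, which crucially uses the suspensionlike hypothesis, to upgrade the chain local equivalence into honest chain maps $f:Z \to W$ and $g:W \to Z$ that are chain homotopy equivalences on fixed-point sets. Using the splittings above with inclusions $i_Z, i_W$ and projections $\pi_Z, \pi_W$, I define
\[
\phi := \pi_W \circ f \circ i_Z : Z_{\mathrm{conn}} \to W_{\mathrm{conn}}, \qquad \psi := \pi_Z \circ g \circ i_W : W_{\mathrm{conn}} \to Z_{\mathrm{conn}}.
\]
Under the canonical identifications $R_Z \cong R_{Z_{\mathrm{conn}}}$ and $R_W \cong R_{W_{\mathrm{conn}}}$, both $\phi$ and $\psi$ are $\G$-chain maps that restrict to chain homotopy equivalences on fixed points.

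The main obstacle is showing $\phi$ and $\psi$ are injective; once this is done, finite-dimensionality of $Z_{\mathrm{conn}}$ and $W_{\mathrm{conn}}$ as $\f$-vector spaces forces them to have the same dimension, so $\phi$ becomes a bijective $\G$-chain map, hence a $\G$-chain isomorphism. To establish injectivity I would first prove the auxiliary claim that $0$ is the maximal inessential subcomplex of $Z_{\mathrm{conn}}$: any nonzero inessential $N \subset Z_{\mathrm{conn}}$ would pull back under the splitting to an inessential subcomplex $\pi_Z^{-1}(N) = N \oplus M_Z$ of $Z$ strictly containing $M_Z$ (the required lift being the composition of the lift of $N$ with $i_Z$), contradicting the maximality of $M_Z$. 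With this in hand, if $\ker \phi \neq 0$, then composing the canonical injection $Z_{\mathrm{conn}}/\ker\phi \hookrightarrow W_{\mathrm{conn}}$ with $\psi$ produces a chain map $Z_{\mathrm{conn}}/\ker\phi \to Z_{\mathrm{conn}}$ that is a chain homotopy equivalence on fixed points, exhibiting $\ker\phi$ as a nontrivial inessential subcomplex of $Z_{\mathrm{conn}}$ and contradicting the auxiliary claim. Symmetrically $\psi$ is injective, completing the argument.
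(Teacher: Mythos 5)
Your proof is correct and follows essentially the same route as the paper: both directions use the splitting of Lemma \ref{lem:homsums}, the suspensionlike hypothesis enters only through Lemma \ref{lem:stabeqch} to realize the local equivalences as honest chain maps, and injectivity of the induced maps between connected complexes is deduced from maximality of the inessential subcomplex. Your reformulation via the auxiliary claim that $Z_{\mathrm{conn}}$ has trivial maximal inessential subcomplex is just a repackaging of the paper's direct appeal to Lemma \ref{lem:inj} after composing with the inclusion $Z_{\mathrm{conn}}\rightarrow Z$.
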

\begin{proof}
By Lemma \ref{lem:homsums}, we may write $Z=Z_{\mathrm{conn}} \oplus Z_{\mathrm{iness}}, W=W_{\mathrm{conn}}  \oplus W_{\mathrm{iness}}$, with $Z_{\mathrm{iness}},W_{\mathrm{iness}}$ maximal inessential subcomplexes.  Say we have local equivalences (we need not consider suspensions, by Lemma \ref{lem:stabeqch})
\[ \phi: Z_{\mathrm{conn}}  \oplus Z_{\mathrm{iness}} \rightarrow W_{\mathrm{conn}}  \oplus W_{\mathrm{iness}},\]
\[ \psi:W_{\mathrm{conn}}  \oplus W_{\mathrm{iness}} \rightarrow Z_{\mathrm{conn}}  \oplus Z_{\mathrm{iness}}.\] 
We restrict $\phi$ and $\psi$ to $Z_{\mathrm{conn}} $ and $W_{\mathrm{conn}} $, since it is clear that $Z_{\mathrm{conn}}  \oplus Z_{\mathrm{iness}}$ is chain locally equivalent to $Z_{\mathrm{conn}} $, and likewise for $W_{\mathrm{conn}} $.  Further, we project the image of $\phi$ and $\psi$ to $W_{\mathrm{conn}} $ and $Z_{\mathrm{conn}} $, respectively.  Call the resulting maps $\phi_0$ and $\psi_0$.  If $\phi_0$ had a nontrivial kernel, then we would obtain by composition a local equivalence:
\[\psi_0 \phi_0: Z_{\mathrm{conn}} /\mathrm{ker}\, \phi_0 \rightarrow Z_{\mathrm{conn}} .\]
Composing with the inclusion $\iota: Z_\mathrm{conn} \rightarrow Z$ gives a chain local map $\iota \psi_0\phi_0:Z_{\mathrm{conn}}/\mathrm{ker} \; \phi_0 \rightarrow Z$, so by Lemma \ref{lem:inj}, $\iota \psi_0 \phi_0$ is injective.  Thus, $\phi_0$ is injective. Similarly $\psi_0$ is injective, so we obtain an isomorphism of chain complexes $Z_{\mathrm{conn}}  \cong W_{\mathrm{conn}} $.  Conversely, a homotopy equivalence $Z_{\mathrm{conn}}  \rightarrow W_{\mathrm{conn}} $ yields a local equivalence $Z \rightarrow W$ by the composition \[Z \xrightarrow{\pi} Z_{\mathrm{conn}} \rightarrow W_{\mathrm{conn}} \rightarrow W,\] 
where $\pi:Z \rightarrow Z_{\mathrm{conn}}$ is projection to the first summand.  
\end{proof}

The next Corollary allows us to view the chain local equivalence type of a space of type SWF in $\ce$ instead of $\CEL$.
\begin{cor}\label{cor:reducetohe}
In the language of Theorem \ref{thm:decomp}, there is an injection $B: \CEL \rightarrow \ce$ given by $[(Z,m,n)] \rightarrow [(Z_{\mathrm{conn}},m,n)]$, for $(Z,m,n)$ a representative of the class $[(Z,m,n)]$ with $Z$ suspensionlike.
\end{cor}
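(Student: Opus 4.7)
The plan is to verify that $B$ is well-defined and injective, using the splitting $Z = Z_\mathrm{conn} \oplus Z_\mathrm{iness}$ of Lemma \ref{lem:homsums} together with Theorem \ref{thm:decomp}.

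For well-definedness, I will start with two suspensionlike representatives $(Z_1,m_1,n_1)$ and $(Z_2,m_2,n_2)$ of the same class in $\CEL$ and aim to show that $(Z_{1,\mathrm{conn}},m_1,n_1)$ and $(Z_{2,\mathrm{conn}},m_2,n_2)$ are chain stably equivalent. First, I will observe that the inclusion $Z_{i,\mathrm{conn}} \hookrightarrow Z_i$ and the projection $Z_i \twoheadrightarrow Z_{i,\mathrm{conn}}$ are chain local equivalences of triples, as both summands have the same fixed-point set as $Z_i$; so $(Z_{1,\mathrm{conn}},m_1,n_1) \equiv_{cl} (Z_{2,\mathrm{conn}},m_2,n_2)$. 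Two features of the conn summand will be essential for the next step: it is suspensionlike, as a $\G$-direct summand of a suspensionlike complex, and by maximality of $Z_{i,\mathrm{iness}}$ it contains no proper inessential subcomplex.

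Then I will suspend both triples to a common bigrading $(M,N)$ taken large enough that the suspended complexes remain suspensionlike. The chain local equivalence guaranteed by Definition \ref{def:compstl} is realized as an honest chain map by Lemma \ref{lem:stabeqch}, and Theorem \ref{thm:decomp} identifies the conn parts of the two suspensions as isomorphic $\G$-chain complexes. Since the property of having no proper inessential subcomplex is preserved by the relevant free suspension---any inessential subcomplex of $\Sigma^V Z_{i,\mathrm{conn}}$ descends to one of $Z_{i,\mathrm{conn}}$ via the $\G$-free homotopy equivalence $\Sigma^V F \simeq F[-\dim V]$ used in the proof of Lemma \ref{lem:eqfreu}---the conn parts equal the whole suspended complexes, so the two suspensions are chain isomorphic and thus chain stably equivalent.

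For injectivity, assume $B([(Z_1,m_1,n_1)]) = B([(Z_2,m_2,n_2)])$, i.e., the conn triples are chain stably equivalent. A chain homotopy equivalence is automatically a chain local equivalence, so $(Z_{1,\mathrm{conn}},m_1,n_1) \equiv_{cl} (Z_{2,\mathrm{conn}},m_2,n_2)$, and composing with the chain local equivalences $(Z_i,m_i,n_i) \equiv_{cl} (Z_{i,\mathrm{conn}},m_i,n_i)$ from the summand decomposition yields $[(Z_1,m_1,n_1)] = [(Z_2,m_2,n_2)]$ in $\CEL$. The hard part, I expect, is the bookkeeping in the second paragraph: arranging a simultaneous $(M,N)$ at which both suspensions are suspensionlike, and verifying that the ``no proper inessential subcomplex'' property is preserved by those suspensions. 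Both should follow from the grading analysis used in the proof of Lemma \ref{lem:stabeqch} together with the $\G$-free desuspension isomorphism.
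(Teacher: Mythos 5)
Your proof is correct and follows essentially the same route as the paper's: both arguments reduce to Theorem \ref{thm:decomp} applied after suspending to a common suspensionlike level, combined with the fact that suspension commutes with passing to the connected complex (the paper asserts this as $\Sigma^V Z_{\mathrm{conn}} \simeq (\Sigma^V Z)_{\mathrm{conn}}$, while your version --- that having no proper inessential subcomplex is preserved by $\Sigma^V$ --- is the same fact applied to $Z_{\mathrm{conn}}$, stated with about the same level of justification). The only differences are cosmetic: you pass to the connected summand before suspending rather than after, and your injectivity argument (chain homotopy equivalences are chain local equivalences, composed with $Z_i \equiv_{cl} Z_{i,\mathrm{conn}}$) is marginally more direct than the paper's.
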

\begin{proof} 
Fix $[(Z,m,n)]=[(Z',m',n')] \in \CEL$ with $Z$ and $Z'$ suspensionlike; we will show that $[(Z_{\mathrm{conn}},m,n)]=[(Z'_{\mathrm{conn}},m',n')]$ in $\ce$.  First, we observe that, for $V=\mathbb{H},\tilde{\mathbb{R}}:$ 
\begin{equation}\label{eq:concom} \Sigma^V Z_{\mathrm{conn}} \simeq (\Sigma^V Z)_{\mathrm{conn}}.\end{equation}
 We have, for $M,N$ sufficiently large:
\[\Sigma^{(M-m)\tilde{\mathbb{R}}}\Sigma^{(N-n)\mathbb{H}}Z \leftrightarrows \Sigma^{(M-m')\tilde{\mathbb{R}}}\Sigma^{(N-n')\mathbb{H}}Z'.\]

Here the maps in both directions are local equivalences.   Choosing $M \geq \mathrm{max}\{m,m'\}\,$ and $N \geq \mathrm{max} \, \{n,n'\}$ guarantees that both
\[\Sigma^{(M-m)\tilde{\mathbb{R}}}\Sigma^{(N-n)\mathbb{H}}Z \mathrm{\; and \;} \Sigma^{(M-m')\tilde{\mathbb{R}}}\Sigma^{(N-n')\mathbb{H}}Z'\]
are suspensionlike.  Then, by Theorem \ref{thm:decomp}, we have a homotopy equivalence:

\[(\Sigma^{(M-m)\tilde{\mathbb{R}}}\Sigma^{(N-n)\mathbb{H}}Z)_{\mathrm{conn}} \rightarrow  (\Sigma^{(M-m')\tilde{\mathbb{R}}}\Sigma^{(N-n')\mathbb{H}}Z')_{\mathrm{conn}}.\]

However, by (\ref{eq:concom}), we obtain a homotopy equivalence:

\[\Sigma^{(M-m)\tilde{\mathbb{R}}}\Sigma^{(N-n)\mathbb{H}}(Z_{\mathrm{conn}}) \rightarrow  \Sigma^{(M-m')\tilde{\mathbb{R}}}\Sigma^{(N-n')\mathbb{H}}(Z'_{\mathrm{conn}}).\]

Then $[(Z_\mathrm{conn},m,n)]=[(Z'_{\mathrm{conn}},m',n')] \in \ce$, as needed.  Finally, we show $B$ is injective.  If $(Z_\mathrm{conn},m,n)$ is stably equivalent to $(Z'_\mathrm{conn},m',n')$, then $(Z,m,n)$ and $(Z',m',n')$ are locally equivalent, by Theorem \ref{thm:decomp} and (\ref{eq:concom}).
\end{proof} 
By Corollary \ref{cor:reducetohe}, instead of considering the relation given by chain local equivalence, we need only consider chain homotopy equivalences.  

\begin{defn}
The \emph{connected $S^1$-homology} of $(Z,m,n)\in \ce$, denoted by $\hsc ((Z,m,n))$, for $Z$ a suspensionlike chain complex of type SWF, is the quotient $(H^{S^1}_*(Z)/(H_*^{S^1}(Z^{S^1})+H^{S^1}_*(\ine)))[m+4n]$, where $\ine \subseteq Z$ is a maximal inessential subcomplex.  By Theorem \ref{thm:decomp}, the graded $\f[U]$-module isomorphism class of $\hsc ((Z,m,n))$ is an invariant of the chain local equivalence class of $(Z,m,n)$.  
\end{defn}
\begin{rmk}
We could have instead considered the quotient $(H^{S^1}_*(Z)/H^{S^1}_*(\ine))[m+4n]$, which is isomorphic to $\hsc((Z,m,n))\oplus \bt_d$, for some $d$.  As defined above, $\hsc((Z,m,n))$ has no infinite $\f[U]$-tower, because of the quotient by $H^{S^1}_*(Z^{S^1})$.  
\end{rmk}

\section{$j$-split spaces}\label{sec:jsplit}
In this section we introduce $j$-split spaces of type SWF, and compute their $G$-Borel homology.  We will see in Lemma \ref{lem:crossflows} that the Seiberg-Witten Floer spectra of Seifert spaces are $j$-split.  The computation of this section will then provide the $G$-equivariant Seiberg-Witten Floer homology of Seifert spaces.   

\begin{defn}\label{def:split}
We call a space $X$ of type SWF \emph{$j$-split} if $X/X^{S^1}$ may be written:
\[ X/X^{S^1} \simeq X_+ \vee X_- ,\]
for some $S^1$-space $X_+$, where $\simeq$ denotes $G$-equivariant homotopy equivalence, and $j$ acts on the right-hand side by interchanging the factors (that is, $jX_+=X_-$).  
Similarly, we call a $\G$-chain complex $(Z,\partial)$ of type SWF \emph{$j$-split} if $(1)-(3)$ below are satisfied.
\begin{enumerate}
\item There exists $\fr \in Z$ such that $\langle \fr \rangle$ is the fixed-point set, $Z^{S^1}$, of $Z$.  Furthermore $s\fr=0,j\fr=\fr$.  In particular, $Z$ is of type SWF at level $0$.  \item The fixed-point set $Z^{S^1}$ is a subcomplex of $Z$ (that is, $\partial(\fr)=0$).
\item We have:
\[Z/Z^{S^1}=(Z_+ \oplus jZ_+) ,\]
where $Z_+$ is a $C^{CW}_*(S^1)$ chain complex, and $j$ acts on the right-hand side by interchanging the factors.  
\end{enumerate}
\end{defn}
Recall that $\tilde{\oplus}$ denotes a direct sum of $\G$-modules that is not necessarily a direct sum of chain complexes.  For a $j$-split chain complex $Z$ we may write, referring to $jZ_+$ by $Z_-$:
\[Z=(Z_+ \oplus Z_-) \tilde{\oplus} \langle \fr \rangle.\]
In the above, $Z$ is to be thought of as the reduced CW chain complex of a $G$-space $X$, and $\fr$ is to be thought of as the chain corresponding to the $S^1$-fixed subset of $X$.  The requirement that $Z$ be a chain complex of type SWF at level $0$ will be used in Section \ref{subsec:chain} to calculate the chain local equivalence class of $j$-split chain complexes.

A $j$-split space $X$ with $X^{S^1} \simeq S^0$ admits a CW chain complex which is a $j$-split chain complex.  For $X$ a $j$-split space of type SWF at level $s$, we use the following Lemma to relate the CW chain complex of $X$ to $j$-split complexes.  

\begin{lem}\label{lem:jsplitcomplexes}
Let $X$ be a $j$-split space of type SWF at level $s$.  Then
\[[C^{CW}_*(X,\p)]=[(Z,-s,0)] \in \ce,
\]
for some $j$-split chain complex $Z$.
\end{lem}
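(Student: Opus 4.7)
My approach is to construct the $j$-split chain complex $Z$ by a chain-level $s$-fold $\tilde{\mathbb{R}}$-desuspension of the CW chains of $X$, and exhibit a chain homotopy equivalence of $\G$-chain complexes
\[\Sigma^{s\tilde{\mathbb{R}}}Z \simeq C^{CW}_*(X,\p).\]
By Definition \ref{def:compst}, such an equivalence directly witnesses $[(C^{CW}_*(X,\p),0,0)]=[(Z,-s,0)] \in \ce$, taking $M=N=0$.

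I begin by fixing a $G$-CW decomposition of $X$ in which $X^{S^1}\simeq (\tilde{\mathbb{R}}^s)^+$ carries the standard structure of Example \ref{ex:rsusp}, and in which, using $j$-splitness together with equivariant cellular approximation applied to the homotopy equivalence $X/X^{S^1}\simeq_G X_+\vee jX_+$, every free $G$-cell of $X\setminus X^{S^1}$ either lies in $X_+$ or is the $j$-translate of one in $X_+$. This yields the decomposition $C^{CW}_*(X,\p)=R\,\tilde{\oplus}\,F$ with $R=\langle c_0,\ldots,c_s\rangle$ the fixed-point subcomplex and $F=F_+\oplus jF_+$, where $F_+$ is the $C^{CW}_*(S^1)$-subcomplex spanned by the $X_+$-cells; let $\delta:F\to R$ denote the cross differential. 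I then set $Z$, as a $\G$-module, equal to $\langle \fr\rangle\,\tilde{\oplus}\,F[s]$, with $\fr$ in degree $0$ satisfying $s\fr=0$ and $j\fr=\fr$, and with the internal differential on $F[s]$ inherited from that of $F$. Taking $Z_+ := F_+[s]$ realizes the $j$-split decomposition $Z/Z^{S^1}=Z_+\oplus jZ_+$ required by Definition \ref{def:split}(3).

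The remaining step is to define a cross differential $\delta':F[s]\to \langle \fr\rangle$ whose suspension by $s\tilde{\mathbb{R}}$ recovers $\delta$. I exploit the explicit chain homotopy equivalence $\Sigma^{s\tilde{\mathbb{R}}}F[s]\simeq F$ of $\G$-chain complexes produced inside the proof of Lemma \ref{lem:eqfreu} (via the maps $\gamma(x)=C\otimes x$, with $C$ the fundamental class of $(\tilde{\mathbb{R}}^s)^+$, and its homotopy inverse $\tau$), together with the canonical identification $\Sigma^{s\tilde{\mathbb{R}}}\langle \fr\rangle=R$. Adapting the surjectivity argument of Lemma \ref{lem:eqfreu} from fixed-point sources to $\G$-free sources shows that the suspension map $[F[s],\langle \fr\rangle]\to [F,R]$ is a bijection, and I define $\delta'$ to be any representative in the preimage of $[\delta]$. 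Assembling the summand-wise equivalences then produces the desired chain homotopy equivalence $\Sigma^{s\tilde{\mathbb{R}}}Z\simeq C^{CW}_*(X,\p)$.

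The main obstacle is the bijection $[F[s],\langle \fr\rangle]\to [F,R]$: Lemma \ref{lem:eqfreu} establishes the analogue when both source and target are shifts of $\langle c_0\rangle$, whereas here the source is the $\G$-free module $F[s]$. The key input $\Sigma^{s\tilde{\mathbb{R}}}F[s]\simeq F$ is already supplied by that lemma's proof, so the extension reduces to a diagram chase using the explicit formulas for $\gamma$ and $\tau$. A secondary verification is $\partial^2=0$ on $Z$, which follows from $\partial^2=0$ on $C^{CW}_*(X,\p)$ together with the chain-map property of the constructed $\delta'$ relative to the internal differential on $F[s]$.
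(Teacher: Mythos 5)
Your proposal is correct and is in essence the paper's own argument: the same splitting $C^{CW}_*(X,\p)=R\,\tilde{\oplus}\,F$ with $F=F_+\oplus jF_+$, the same candidate $Z=\langle \fr\rangle\,\tilde{\oplus}\,F[s]$, and the same key input $\Sigma^{\tilde{\mathbb{R}}^s}F[s]\simeq F$ via the maps $\gamma,\tau$ from the proof of Lemma \ref{lem:eqfreu}. The only real divergence is how the cross differential $\delta\colon F\to R$ is transported to $Z$. The paper normalizes $\delta$ first by an explicit inductive basis change (replacing a generator $x_i$ of degree $N+1\leq s$ by $x_i+c_{N+1}$ whenever $\pi_N(\partial x_i)\neq 0$, using that the only nonzero cycle of $R$ in degree $N<s$ is the boundary $\partial c_{N+1}$), after which the cross differential visibly factors through the fundamental class $(1+j)c_s$ and descends to $F[s]\to\langle\fr\rangle$ via (\ref{eq:rfdel1})--(\ref{eq:rfdel2}). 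You instead lift the chain homotopy class of $\delta$ through the suspension map $[F[s],\langle\fr\rangle]\to[F,R]$. This is legitimate: only surjectivity is needed (not the full bijection), and it holds because $F$ is a bounded-below free $\G$-complex and the inclusion $\langle(1+j)c_s\rangle\hookrightarrow R$ is a quasi-isomorphism, so $\delta$ is homotopic to a $\G$-chain map supported in degree $s+1$ with image in the fundamental class --- which is exactly the paper's normalization, obtained abstractly rather than cell by cell. Both versions then assemble the summand-wise equivalences into an equivalence of the total complexes by the standard argument for a square commuting up to a specified homotopy, so your secondary verifications ($\partial^2=0$ on $Z$, well-definedness up to homotopy of the choice of $\delta'$) go through.
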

\begin{proof}
The chain complex $C^{CW}_*(X,\p)$ may be written 
\begin{equation}\label{eq:jrfspl} C^{CW}_*(X,\p)=R \tilde{\oplus} F,\end{equation}
where $R=C^{CW}_*(X^{S^1},\p)\cong C^{CW}_*((\tilde{\mathbb{R}}^s)^+,\p)$ is a subcomplex and $F$ is a free $\G$-chain complex.  Since $X$ is $j$-split, the decomposition (\ref{eq:jrfspl}) may be chosen so that
\begin{equation}\label{eq:fsplit}
F=F_+ \oplus jF_+,
\end{equation}
where $F_+$ is a $C^{CW}_*(S^1)$-chain complex, and $j$ acts on $F$ by interchanging $F_+$ and $jF_+$.

We first show that we may choose $F$ satisfying (\ref{eq:jrfspl}) and (\ref{eq:fsplit}) and so that, for $x \in F$ homogeneous, 
\begin{equation}\label{eq:fspl2}(\partial x) |_R=0,\end{equation} if $\mathrm{deg} \; x \neq s+1$.  

Indeed, fix some $F$ satisfying (\ref{eq:jrfspl}) and (\ref{eq:fsplit}), and let $\{x_i\}$ be a homogeneous basis for $F$.  Let $F(n)$ denote the $\G$-chain complex generated by $x_i$ of degree less than or equal to $n$.  We define new chain complexes $F'(n)$ so that $R \tilde{\oplus}F'(n)=R\tilde{\oplus}F(n)$, and so that $F'=\bigcup_n F'(n)$ satisfies (\ref{eq:jrfspl})-(\ref{eq:fspl2}).  Let $\pi_n$ denote projection $\pi_n: R \tilde{\oplus}F'(n) \rightarrow R$ onto the first factor.  Set $F'(0)=0$.  Assume we have defined $F'(n)$ for $n \leq N < s$, so that (\ref{eq:fspl2}) holds for all $x \in F'(n)$.  

We define $F'(N+1)$ by defining generators $x_i'$ of $F'(N+1)/F'(N)$ corresponding to the generators $x_i$ of $F(N+1)/F(N)$.  For each $x_i$ of degree $N+1$ so that $\pi_N(\partial x_{i})=0$, let $x_i' =x_i$.  If instead $x_i$ is of degree $N+1$ and $\pi_N(\partial x_{i})\neq 0$, then 
\[\partial(\pi_N(\partial x_i))=\pi_N(\partial^2(x_i))=0.\]
So, $\pi_N(\partial x_i) = (1+j)c_N$, since $(1+j)c_N$ is the only nonzero cycle of $R$ in grading $N$ (or, when $N=0$, $\pi_N(\partial x_i)=c_0$).  However, by assumption, $N<s$, so $\pi_N(\partial x_i)=\partial c_{N+1}$.  Then, we let $x_i'=x_i+c_{N+1}$.  

Let \[F'(N+1)=\langle F'(N), \bigcup_{\{i \mid \mathrm{deg}\; x_i=N+1\}} x_i' \rangle.\]  By construction $R \tilde{\oplus}F'(N+1)=R \tilde{\oplus}F'(N)$, and (\ref{eq:fspl2}) holds for all $x \in F'(N+1)$.  

For $N \geq s$, define $F'(N+1)$ by $F'(N+1)=\langle F'(N), \bigcup_{\{i \mid \mathrm{deg}\; x_i=N+1\}} x_i \rangle$.  

From the construction, it is clear that $F'$ satisfies (\ref{eq:jrfspl})-(\ref{eq:fspl2}), as needed.  

Take $F$ satisfying (\ref{eq:jrfspl})-(\ref{eq:fspl2}).  Consider the $\G$-chain complex $Z=C^{CW}_*(S^0,\p) \tilde{\oplus} F[s]$, where $C^{CW}_*(S^0,\p)=\langle c_0 \rangle$ is a subcomplex.  To define the differentials from $F[s]$ to $C^{CW}_*(S^0,\p)$ in $Z$, we set, for $x[s] \in F[s]$:
\begin{equation}\label{eq:rfdel1}(\partial x[s] )|_{C^{CW}_*(S^0,\p)} = c_0,\end{equation}  if $(\partial x)|_R=(1+j)c_s,$ and
\begin{equation}\label{eq:rfdel2}(\partial x[s] )|_{C^{CW}_*(S^0,\p)} = 0\end{equation} if $(\partial x)|_R=0$.

By the construction of $F$, (\ref{eq:rfdel1}) and (\ref{eq:rfdel2}) determine the differential on $Z$.  

Finally, consider the suspension: \[\Sigma^{\tilde{\mathbb{R}}^s}Z=\Sigma^{\tilde{\mathbb{R}}^s}(C^{CW}_*(S^0,\p))\tilde{\oplus}\Sigma^{\tilde{\mathbb{R}}^s}(F[s])\simeq R \tilde{\oplus}\Sigma^{\tilde{\mathbb{R}}^s}F[s].\]

We note, as in the proof of Lemma \ref{lem:eqfreu}, that $\Sigma^{\tilde{\mathbb{R}}^s}F[s] \simeq F[0]=F.$  Then, there is a homotopy equivalence, constructed exactly as in the proofs of Lemmas \ref{lem:eqfreu} and \ref{lem:stabeqch}:
\begin{equation}\label{eq:ctw}
\Sigma^{\tilde{\mathbb{R}}^s}Z\simeq R \tilde{\oplus}F.
\end{equation} 
It follows that $[(Z,-s,0)] = [C^{CW}_*(X,\p)] \in \ce$, as needed.  
\end{proof}
Note also that any $j$-split chain complex occurs as the CW chain complex of some $j$-split space.  

\begin{rmk} $j$-splitness is not the same as Floer $K_G$-splitness of \cite{ManolescuK}.  
\end{rmk}
\subsection{Calculation of $\tilde{H}^G_*(X)$}\label{subsec:61}
In this section we will compute the $G$-equivariant homology of a $j$-split space in terms of its $S^1$-homology.     

Let $X$ be a $j$-split space of type SWF at level $m$ with $X/X^{S^1}=X_+ \vee X_-$.  
The Puppe sequence 
\[ X^{S^1} \rightarrow X \rightarrow X/X^{S^1} \rightarrow \Sigma X^{S^1}\]
leads to a commutative diagram, where the rows are exact:
\begin{equation}\label{eq:pretrans}
\begin{CD}
EG_+ \wedge_{S^1} X^{S^1} @>>> EG_+ \wedge_{S^1} X @>>> EG_+ \wedge_{S^1} (X_+ \vee X_-) @>>> EG_+ \wedge_{S^1} \Sigma X^{S^1} \\
@VVV @VVV @VVV @VVV \\
EG_+ \wedge_G X^{S^1} @>>> EG_+ \wedge_G X @>>> EG_+ \wedge_G X/X^{S^1} @>>> EG_+ \wedge_G \Sigma X^{S^1}. \\
\end{CD}
\end{equation}
In (\ref{eq:pretrans}) the vertical maps are obtained by taking the quotient by the action of $j \in G$.  The diagram (\ref{eq:pretrans}) itself yields a commutative diagram for Borel homology, where the rows are exact:
\begin{equation}\label{eq:s1gtrans}
\begin{CD}
\tilde{H}^{S^1}_*(X^{S^1}) @>>> \tilde{H}^{S^1}_*(X) @>>> \tilde{H}^{S^1}_*(X_+) \oplus \tilde{H}^{S^1}_*(X_-) @>d_{S^1}[-1]>> \tilde{H}^{S^1}_{*}(\Sigma X^{S^1})\\
@V\phi_1VV @V\phi_2VV @V\phi_3VV @V\Sigma\phi_1VV \\
\tilde{H}^G_*(X^{S^1}) @>\iota_G>> \tilde{H}^G_*(X) @>\pi_G>> \tilde{H}^G_*(X/X^{S^1}) @>d_G[-1]>> \tilde{H}^G_{*}(\Sigma X^{S^1}). \\
\end{CD}
\end{equation}
Specifically, we view (\ref{eq:s1gtrans}) as a diagram of $\f[q,v]/(q^3)$ modules, where $v$ acts on the top row by $U^2$ and $q$ annihilates the top row.  An $\f[U]$-module $M$ viewed as an $\f[q,v]/(q^3)$-module this way is denoted $\re^{\f[U]}_{\f[q,v]/(q^3)}M$.  More precisely, let $\phi: \f[q,v]/(q^3) \rightarrow \f[U]$ be $v \rightarrow U^2,\, q \rightarrow 0$, and let $\re^{\f[U]}_{\f[q,v]/(q^3)}$ be the corresponding restriction functor.  The restriction takes the simple $\f[U]$-module $\bt_d(n)$ to 
\begin{equation}\label{eq:restri}   \re^{\f[U]}_{\f[q,v]/(q^3)}\bt_d(n)=\bv_d(\lfloor \frac{n+1}{2} \rfloor ) \oplus \bv_{d+2}(\lfloor \frac{n}{2} \rfloor). \end{equation}
We define the maps $d_{S^1}:\tilde{H}^{S^1}_*(X_+) \rightarrow \tilde{H}^{S^1}_*(X^{S^1})$ and $d_G: \tilde{H}^G_*(X/X^{S^1}) \rightarrow \tilde{H}^G_*(X^{S^1})$ by shifting by $1$ the degree of the horizontal maps on the right of diagram (\ref{eq:s1gtrans}).  (So that $d_{S^1}$ and $d_G$ are maps of degree $-1$.)  
\begin{fact}\label{fct:phi1iso}The map $\phi_1$ in (\ref{eq:s1gtrans}) is precisely the corestriction map $\co^{S^1}_G$, and is an isomorphism in degrees congruent to $m \, \mathrm{mod} \, 4$, and vanishes otherwise. \end{fact}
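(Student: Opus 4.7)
The identification $\phi_1=\co^{S^1}_G$ is immediate from the construction of diagram~(\ref{eq:pretrans}): its leftmost vertical arrow is, by definition, the natural quotient map $g:EG_+\wedge_{S^1}X^{S^1}\to EG_+\wedge_G X^{S^1}$ whose induced map on reduced homology is the corestriction.

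For the degree statement, I will reduce to the case $m=0$ via the $G$-equivariant Thom isomorphism. Because $X^{S^1}\simeq_G(\tilde{\mathbb{R}}^m)^+$, we may write
\[EG_+\wedge_G X^{S^1}\;\cong\; \mathrm{Th}(EG\times_G\tilde{\mathbb{R}}^m),\qquad EG_+\wedge_{S^1}X^{S^1}\;\cong\;\mathrm{Th}(EG\times_{S^1}\tilde{\mathbb{R}}^m).\]
Both are Thom spaces of rank-$m$ real vector bundles over $BG$ and $BS^1$, respectively, and every real bundle is $\mathbb{F}=\mathbb{Z}/2$-orientable, so the Thom isomorphism gives
\[\tilde{H}^G_*(X^{S^1})\cong H_{*-m}(BG),\qquad \tilde{H}^{S^1}_*(X^{S^1})\cong H_{*-m}(BS^1).\]
Since $\tilde{\mathbb{R}}^m$ restricts to the trivial $S^1$-representation, $EG\times_{S^1}\tilde{\mathbb{R}}^m$ is just the pullback of $EG\times_G\tilde{\mathbb{R}}^m$ along the quotient $BS^1\to BG$; consequently the Thom classes are compatible under pullback, and $\phi_1$ is identified, up to the degree shift by $m$, with the map
\[\co^{S^1}_G:H_*(BS^1)\longrightarrow H_*(BG)\]
induced by $BS^1\to BG$.

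This last map is the $\mathbb{F}$-linear dual, in each degree, of the restriction map $H^*(BG)\to H^*(BS^1)$. By Fact~\ref{fct:bs1bg}, the latter is an isomorphism in degrees divisible by $4$ and zero otherwise; since both cohomology rings are of finite type over $\mathbb{F}$, the same property holds for the dual map on homology. Incorporating the degree shift by $m$, we conclude that $\phi_1$ is an isomorphism in degrees $\equiv m\pmod 4$ and vanishes in all other degrees. The one subtle step is verifying compatibility of the $G$- and $S^1$-Thom isomorphisms with the corestriction map; if one wishes to avoid this, an entirely equivalent route is to compute both sides directly from the explicit $G$-CW chain complex $R=\langle c_0,\ldots,c_m\rangle$ of $(\tilde{\mathbb{R}}^m)^+$ from Example~\ref{ex:rsusp2}, reading off the answer by comparing the complexes $C^{CW}_*(EG)\otimes_{C^{CW}_*(S^1)}R$ and $C^{CW}_*(EG)\otimes_{\G}R$ through the natural quotient of bar-type resolutions.
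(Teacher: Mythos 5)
Your proof is correct and follows essentially the same route as the paper, whose entire argument is the one-line observation that the claim ``follows from the construction of the $\phi_i$ and the dual of Fact \ref{fct:bs1bg}''; you have simply made explicit the two implicit steps, namely the degree shift by $m$ (which you handle via the Thom isomorphism for $EG_+\wedge_G(\tilde{\mathbb{R}}^m)^+$, equivalent to the suspension invariance of Borel homology with $\f$-coefficients that the paper invokes) and the dualization of the restriction map over the field $\f$.
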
\begin{proof} This follows from the construction of the $\phi_i$ and the dual of Fact \ref{fct:bs1bg}. \end{proof}
\begin{fact}\label{fct:phi3iso}
\begin{equation}\label{eq:phi3s}\phi_3|_{\tilde{H}^{S^1}_*(X_+)}: \tilde{H}^{S^1}_*(X_+) \rightarrow \tilde{H}^G_*(X/X^{S^1})\end{equation}
is an isomorphism (of $\f[q,v]/(q^3)$-modules).\end{fact}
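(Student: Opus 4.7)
The plan is to identify $X/X^{S^1}$ with a space induced from an $S^1$-space, then realize $\phi_3|_{\tilde{H}^{S^1}_*(X_+)}$ as a homeomorphism of Borel constructions. Since $X$ is $j$-split, we have $X/X^{S^1} = X_+ \vee jX_+$ with $j$ swapping the wedge summands. This is precisely the data of a pointed $G$-equivariant homeomorphism
\[
X/X^{S^1} \;\cong\; G_+ \wedge_{S^1} X_+,
\]
where $X_+$ is regarded as a pointed $S^1$-space. Indeed, the right-hand side has underlying pointed space $X_+ \vee jX_+$, and the residual $G$-action (coming from left multiplication on $G$) is exactly the one specified by Definition \ref{def:split}.

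First I would apply the standard Borel-construction adjunction $EG_+ \wedge_G (G_+ \wedge_{S^1} X_+) \cong EG_+ \wedge_{S^1} X_+$, which gives a pointed homeomorphism
\[
\Psi\colon EG_+ \wedge_{S^1} X_+ \;\xrightarrow{\ \cong\ }\; EG_+ \wedge_G (X/X^{S^1}).
\]
Concretely, $\Psi$ is the inclusion of $X_+$ into $X_+ \vee jX_+ = X/X^{S^1}$ followed by the quotient map $EG_+ \wedge_{S^1}(X/X^{S^1}) \to EG_+ \wedge_G(X/X^{S^1})$, because the $j$-action on the $S^1$-Borel construction identifies the two wedge summands with one another (using that $j$ acts freely on $EG$). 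The map $\phi_3$ in diagram (\ref{eq:s1gtrans}) is induced by this same quotient, so $\Psi_* = \phi_3|_{\tilde H^{S^1}_*(X_+)}$ on homology, establishing that $\phi_3|_{\tilde H^{S^1}_*(X_+)}$ is an $\f$-vector space isomorphism.

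It remains to verify that this isomorphism respects the $\f[q,v]/(q^3)$-module structure. Compatibility with the $v$-action follows from Fact \ref{fct:bs1bg2}: since $\Psi_*$ factors the corestriction on the $X_+$ summand, we have $v \cdot \Psi_*(\alpha) = \Psi_*(U^2 \cdot \alpha)$ for all $\alpha \in \tilde H^{S^1}_*(X_+)$, which is precisely the action of $v$ on the source after restriction along $v \mapsto U^2$. For the $q$-action, on the source $q$ acts by zero by definition of $\re^{\f[U]}_{\f[q,v]/(q^3)}$. On the target, the action of $q$ comes from the classifying map $f\colon EG_+ \wedge_G (X/X^{S^1}) \to BG_+$. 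Using the identification $\Psi$, this map factors as
\[
EG_+ \wedge_{S^1} X_+ \;\longrightarrow\; BS^1_+ \;\xrightarrow{\ Bi\ }\; BG_+,
\]
where $Bi$ is induced by $S^1 \hookrightarrow G$. By Fact \ref{fct:bs1bg}, $(Bi)^*\colon H^*(BG) \to H^*(BS^1)$ sends $q$ to zero, so $q$ acts as zero on $\tilde H^G_*(X/X^{S^1})$ pulled back along $\Psi$. Hence $\Psi_*$ intertwines the two $\f[q,v]/(q^3)$-module structures and is the desired isomorphism.

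The only substantive step is recognizing the $G$-homeomorphism $X/X^{S^1} \cong G_+ \wedge_{S^1} X_+$; everything else is formal manipulation of Borel constructions together with the facts already recorded in Section \ref{sec:2}. There is no serious obstacle, but one must be careful that the $j$-splitting in Definition \ref{def:split} is precisely what is needed to identify $X/X^{S^1}$ with an induced space, and that the corestriction $\phi_3$ really does restrict to the homeomorphism $\Psi_*$ on the $X_+$ summand rather than, say, to $\Psi_* + j_*\Psi_*$ on the sum.
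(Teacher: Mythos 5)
Your proof is correct and takes essentially the same route as the paper: the paper likewise observes that, by $j$-splitness and freeness of the action away from the fixed points, both $\tilde H^{S^1}_*(X_+)$ and $\tilde H^G_*(X/X^{S^1})$ are identified with $\tilde H_*(X_+/S^1)$, under which identification $\phi_3|_{\tilde H^{S^1}_*(X_+)}$ is a bijection, and then invokes the fact that it is an $\f[q,v]/(q^3)$-module map. Your rendering via the induction isomorphism $X/X^{S^1}\cong G_+\wedge_{S^1}X_+$ and the Borel adjunction is just a more explicit version of that identification, and your direct verification that $q$ acts by zero on the target is a detail the paper instead records as a consequence of the Fact.
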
\begin{proof} Since $X$ is $j$-split, both domain and target are isomorphic, as vector spaces, to $H_*(X_+/S^1)$.  The map $\phi_3$ is a bijection and an $\f[q,v]/(q^3)$-module map, and so is an isomorphism.  \end{proof}  In particular, Fact \ref{fct:phi3iso} shows that the $q$-action on $\tilde{H}^G_*(X/X^{S^1})$ is trivial. 
     Since $\phi_3|_{\tilde{H}^{S^1}(X_+)}$ is an isomorphism, we have:
 \begin{equation}\label{eq:quotrel}
 \re^{\f[U]}_{\f[q,v]/(q^3)}\tilde{H}^{S^1}_*(X_+)=\tilde{H}^G_*(X/X^{S^1}).
\end{equation}  
\begin{fact}\label{fct:uqvequiv}  The maps $d_{S^1}$ and $d_G$ are $\f[U]$ and $\f[q,v]/(q^3)$-equivariant, respectively. \end{fact}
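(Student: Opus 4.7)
The plan is to deduce both equivariance statements from the naturality of the cap-product action of $H^*(BG)$ (respectively $H^*(BS^1)$) on Borel homology with respect to the boundary map of a cofiber sequence whose three terms all lie over a common base. I would carry this out in three short steps.

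First, I would unwind the geometric origin of $d_G$. The cofiber sequence $X^{S^1} \hookrightarrow X \to X/X^{S^1}$ remains a cofiber sequence after applying $EG_+ \wedge_G(-)$, giving
\[
EG_+ \wedge_G X^{S^1} \longrightarrow EG_+ \wedge_G X \longrightarrow EG_+ \wedge_G (X/X^{S^1}).
\]
Each of these three spaces carries a canonical projection $p$ to $BG_+$ obtained by collapsing the $X$-factor, and every map in the cofiber sequence strictly commutes with $p$. The map $d_G$ is, by its definition in diagram (\ref{eq:s1gtrans}), the connecting homomorphism of this cofiber sequence (up to the built-in degree shift).

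Second, I would invoke naturality of the cap product. By definition, an element $\alpha \in H^*(BG) = \f[q,v]/(q^3)$ acts on $\tilde{H}^G_*(Y)$ (for any $G$-space $Y$) by cap product with the pullback $p^*\alpha \in \tilde{H}^*_G(Y)$. The standard naturality formula for the cap product with respect to the connecting homomorphism of a cofibration says that if a cohomology class on the total space restricts to the given classes on the subspace and quotient, then the boundary map commutes with capping by it. In our situation, the three relevant classes are all literally pullbacks of the same universal $\alpha \in H^*(BG)$ along a compatible system of maps to $BG_+$. Consequently $d_G$ commutes with $-\smallfrown p^*\alpha$ for every $\alpha$, which is precisely the $\f[q,v]/(q^3)$-equivariance.

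Third, the identical argument, with $G$ replaced by $S^1$ and $H^*(BG)$ replaced by $H^*(BS^1) = \f[U]$, proves that $d_{S^1}$ is $\f[U]$-equivariant, using the top row of (\ref{eq:s1gtrans}) in place of the bottom row. I do not anticipate any substantive obstacle: the entire proof is a naturality calculation, and the degree-shift conventions $d_G[-1]$ and $d_{S^1}[-1]$ in (\ref{eq:s1gtrans}) are already absorbed into the definitions of $d_G$ and $d_{S^1}$, so no bookkeeping issue arises. The only point requiring a moment's care — and the one I would flag explicitly in the write-up — is that the three projections to $BG_+$ (resp.\ $BS^1_+$) arising from $X^{S^1}$, $X$, and $X/X^{S^1}$ really do fit into a strictly commuting diagram with the maps of the cofiber sequence; this is immediate from the fact that the projection only depends on the $EG$-factor.
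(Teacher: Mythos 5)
Your proposal is correct and is essentially the paper's argument: the paper's one-line proof observes that $d_{S^1}$ and $d_G$ are induced on Borel homology by genuine equivariant maps of spaces (the Puppe map $X/X^{S^1}\to\Sigma X^{S^1}$), and module-equivariance then follows exactly as you describe, from the compatibility of the projections to $BS^1_+$ and $BG_+$ and naturality of the cap product. You have simply unpacked the details of that observation.
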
\begin{proof}
The fact follows since the maps $d_{S^1}$ and $d_G$ are induced on Borel homology, respectively, from $S^1$ and $G$-equivariant maps.
\end{proof} By (\ref{eq:s1gtrans}), 
\begin{equation}\label{eq:dgrho2}
d_G \phi_3 = \phi_1 d_{S^1}.
\end{equation}
\begin{lem}\label{lem:3s1}
We have: 
\begin{equation}\label{eq:s1split} \tilde{H}^{S^1}_*(X)=\mathrm{coker}\; d_{S^1} \oplus \mathrm{ker} \; d_{S^1}.\end{equation}
\end{lem}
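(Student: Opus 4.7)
The plan is to extract a short exact sequence of graded $\f[U]$-modules from the top row of diagram~(\ref{eq:s1gtrans}) and then argue that it splits by showing the cokernel term is an injective $\f[U]$-module.

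First I would unpack the top row of~(\ref{eq:s1gtrans}) as a long exact sequence and observe that, by exactness, $\mathrm{Im}\,\iota_{S^1} = \mathrm{coker}\, d_{S^1}$ and $\mathrm{Im}\,\pi_{S^1} = \ker d_{S^1}$, yielding the short exact sequence of graded $\f[U]$-modules
\[
0 \to \mathrm{coker}\, d_{S^1} \to \tilde{H}^{S^1}_*(X) \to \ker d_{S^1} \to 0.
\]
Any splitting of this sequence gives the conclusion.

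Next I would describe $\mathrm{coker}\, d_{S^1}$ explicitly. Since $X^{S^1}\simeq (\tilde{\mathbb{R}}^m)^+$ with trivial $S^1$-action, one computes $\tilde{H}^{S^1}_*(X^{S^1})\cong \tilde{H}_*(\Sigma^m BS^1_+)\cong \bt_m$. A direct inspection of the graded submodules of $\bt_m$ (which has one-dimensional homogeneous pieces) shows that they are precisely $\{0\}$, $\bt_m$ itself, and the finite truncations $\langle U^{-k}\rangle$ for $k \geq 0$. Consequently every graded quotient of $\bt_m$ is either $\{0\}$ or isomorphic to $\bt_{m'}$ for some $m' \geq m$, so $\mathrm{coker}\, d_{S^1}$ is of this form.

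Finally, each $\bt_{m'}$ is divisible as a graded $\f[U]$-module because every generator $U^{-k}$ equals $U \cdot U^{-k-1}$. The graded analogue of Baer's criterion, which applies since $\f[U]$ is a graded PID, then shows that $\bt_{m'}$ is injective in the category of graded $\f[U]$-modules. Hence $\mathrm{Ext}^1_{\f[U]}(\ker d_{S^1},\,\mathrm{coker}\, d_{S^1}) = 0$, the short exact sequence splits, and the claimed decomposition follows. The main item to verify carefully is the injectivity of $\bt_{m'}$ in the graded setting, but this is a standard consequence of divisibility once the submodule structure of $\bt_m$ is laid out.
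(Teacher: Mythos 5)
Your proof is correct and follows essentially the same route as the paper: both extract the short exact sequence $0 \to \mathrm{coker}\, d_{S^1} \to \tilde{H}^{S^1}_*(X) \to \ker d_{S^1} \to 0$ from the top row of (\ref{eq:s1gtrans}), identify $\mathrm{coker}\, d_{S^1}$ as a copy of $\bt_d$, and split the sequence by an $\mathrm{Ext}^1$-vanishing argument. The only (minor) difference is that the paper verifies $\mathrm{Ext}^1_{\f[U]}(\bt_{d_i}(n_i),\bt_d)=0$ by direct calculation against the finite summands of $\ker d_{S^1}$, whereas you deduce it from divisibility and hence graded injectivity of $\bt_d$; both are valid.
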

\begin{proof}
Using the top row of (\ref{eq:s1gtrans}), we have an exact sequence:
\[ 0 \rightarrow\mathrm{coker}\; d_{S^1} \rightarrow \tilde{H}^{S^1}_*(X) \rightarrow \mathrm{ker} \; d_{S^1}\rightarrow 0,\]
so $\tilde{H}^{S^1}_*(X)$ is an extension of $\mathrm{ker} \; d_{S^1}$ by $\mathrm{coker}\;d_{S^1}$.  Note that $\mathrm{coker}\; d_{S^1}$ is isomorphic to $\bt_d$ for some integer $d$.  A calculation shows $\mathrm{Ext}^1_{\f[U]}(\bt_{d_i}(n_i),\bt_d)=0$ for all $d, d_i,n_i$.  Thus, any extension of $\mathrm{ker} \; d_{S^1}$ by $\mathrm{coker} \; d_{S^1}$ is trivial, and we obtain the Lemma.  
\end{proof}
We also write (\ref{eq:s1split}) as the homology of the complex $\tilde{H}^{S^1}_*(X^{S^1}) \oplus \tilde{H}^{S^1}_*(X/X^{S^1})$ with differential $d_{S^1}$.   
\begin{lem}\label{lem:3g}  
We have:
\begin{equation}\label{eq:firstgdecomp}\tilde{H}^G_*(X)\cong\mathrm{coker}\, d_G \oplus \mathrm{ker}\, d_G.\end{equation}
as $\f[v]$-vector spaces.  The subspace $\mathrm{coker}\, d_G$ is a $\f[q,v]/(q^3)$-submodule, and $q$ acts on $x\in \mathrm{ker}\, d_G$ by $qx=0$ if $x\in \mathrm{Im}\, \phi_2|_{\mathrm{ker} d_{S^1}}$ (using the decomposition of $\tilde{H}^{S^1}_*(X)$ in Lemma \ref{lem:3s1}).  Also, $qx\neq 0 \in \mathrm{coker}\, d_G$ if $x\in \mathrm{ker} \, d_G$ but $x\not\in \mathrm{Im} \, \phi_2|_{\mathrm{ker} \, d_{S^1}}$.  As there is at most one homogeneous element of each degree in $\mathrm{coker} \, d_G$, $qx$ is uniquely specified for all $x \in \mathrm{ker}\, d_G$ in the decomposition (\ref{eq:firstgdecomp}).  \end{lem}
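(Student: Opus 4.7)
The proof proceeds in three steps: extracting a short exact sequence from the bottom row of (\ref{eq:s1gtrans}), proving it splits as $\f[v]$-modules, and pinning down the $q$-action using the commutative diagram together with a Gysin-type sequence.

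First, exactness of the bottom row of (\ref{eq:s1gtrans}) at $\tilde H^G_*(X)$ gives the short exact sequence
\[
0 \to \mathrm{coker}\,d_G \xrightarrow{\bar{\iota}_G} \tilde H^G_*(X) \xrightarrow{\bar{\pi}_G} \mathrm{ker}\,d_G \to 0,
\]
after identifying $\mathrm{coker}\,d_G$ with $\mathrm{Im}\,\iota_G$ and $\mathrm{ker}\,d_G$ with $\mathrm{Im}\,\pi_G$. To split this as $\f[v]$-modules, I would run the $\mathrm{Ext}^1$-vanishing argument of Lemma \ref{lem:3s1}: since $\tilde H^G_*(X^{S^1})$ restricts via Fact \ref{fct:bs1bg} to an $\f[v]$-module containing an infinite $v$-tower, the quotient $\mathrm{coker}\,d_G$ again contains infinite $v$-towers, and $\mathrm{Ext}^1_{\f[v]}$ against each $\bv_d(n)$ summand of $\mathrm{ker}\,d_G$ vanishes by direct computation.

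For the $q$-action: $\mathrm{coker}\,d_G$ is a $\f[q,v]/(q^3)$-submodule because $\bar{\iota}_G$ is $\f[q,v]/(q^3)$-equivariant. For any $x \in \mathrm{ker}\,d_G$ lifted to $\tilde H^G_*(X)$ via the splitting, Fact \ref{fct:phi3iso} identifies $\tilde H^G_*(X/X^{S^1})$ with $\tilde H^{S^1}_*(X_+)$, an $\f[U]$-module on which $q$ acts as zero; hence $\bar{\pi}_G(qx) = q\bar{\pi}_G(x) = 0$, so $qx \in \mathrm{Im}\,\bar{\iota}_G = \mathrm{coker}\,d_G$. If moreover $x = \phi_2(y)$ with $y \in \mathrm{ker}\,d_{S^1}$, then $\f[q,v]/(q^3)$-equivariance of $\phi_2 = \co^{S^1}_G$ combined with $qy = 0$ (since $y$ lives in the $\f[U]$-module top row) yields $qx = \phi_2(qy) = 0$.

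For the converse direction, I would invoke the Gysin-type long exact sequence associated to the double cover $BS^1 \to BG$,
\[
\tilde H^{S^1}_*(X) \xrightarrow{\phi_2} \tilde H^G_*(X) \xrightarrow{\cdot q} \tilde H^G_{*-1}(X) \to \tilde H^{S^1}_{*-1}(X),
\]
which identifies $\mathrm{ker}(\cdot q) = \mathrm{Im}\,\phi_2$. Given $x \in \mathrm{ker}\,d_G$ with $qx = 0$, write $x = \phi_2(y_1) + \phi_2(y_2)$ with $y_1 \in \mathrm{coker}\,d_{S^1}$, $y_2 \in \mathrm{ker}\,d_{S^1}$ using Lemma \ref{lem:3s1}. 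The contribution $\phi_2(y_1)$ lies in $\phi_2(\mathrm{Im}\,\iota_{S^1}) = \iota_G(\mathrm{Im}\,\phi_1) \subseteq \mathrm{coker}\,d_G$ and can be absorbed into the splitting, so $x$ represents the class of $\phi_2(y_2) \in \mathrm{Im}(\phi_2|_{\mathrm{ker}\,d_{S^1}})$, contradicting the hypothesis. Uniqueness of $qx$ in each degree follows because $\mathrm{coker}\,d_G$ is a quotient of $\tilde H^G_*(X^{S^1})$, which has at most one $\f$-generator per degree. The main obstacle is the Gysin step, which requires either citing the standard sequence for the $\mathbb{Z}/2$-cover $BS^1 \to BG$ or replicating it at the chain level using the $\G$-module structure of Section \ref{subsec:modfromg}; the remaining bookkeeping for the splitting is routine.
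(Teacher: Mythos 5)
Your overall route is the same as the paper's: both hinge on the dualized Gysin sequence to detect when $q$ acts nontrivially, and your observation that $\pi_G(qx)=q\,\pi_G(x)=0$ (because $q$ kills $\tilde{H}^G_*(X/X^{S^1})$ by Fact \ref{fct:phi3iso}) is actually a cleaner way to see $qx\in\mathrm{coker}\,d_G$ than the paper's argument, which instead uses the next term of the Gysin sequence, $\mathrm{Im}(q\cap -)=\ker((1+j)\cdot -)$, together with injectivity of $(1+j)$ on $\ker d_G$ coming from $j$-splitness. The step (a) computation $q\phi_2(y)=\phi_2(qy)=0$ is also fine.

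The genuine soft spot is the splitting and the phrase ``can be absorbed into the splitting.'' The lemma's statement only makes sense relative to a lift $M\subset\tilde{H}^G_*(X)$ of $\ker d_G$ that actually \emph{contains} $\phi_2(\ker d_{S^1})$; an arbitrary $\f[v]$-module splitting produced by your $\mathrm{Ext}^1$ argument need not have this property, and your contrapositive for (b) breaks without it: from $qx=0$ and the Gysin sequence you get $x=\phi_2(y_1)+\phi_2(y_2)$ with $\phi_2(y_1)\in\mathrm{Im}\,\iota_G$, but if $M$ is generic you cannot conclude $\phi_2(y_1)=0$, only that $x$ agrees with $\phi_2(y_2)$ modulo $\mathrm{coker}\,d_G$ --- which is weaker than $x\in\mathrm{Im}\,\phi_2|_{\ker d_{S^1}}$. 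To repair this you must show that $\phi_2(\ker d_{S^1})$ meets $\mathrm{Im}\,\iota_G$ trivially and that $\pi_G$ carries it isomorphically onto $(\ker d_G)_i$ in every degree $i\not\equiv 3+m\pmod 4$ (in degrees $\equiv 3+m$ one has $(\mathrm{coker}\,d_G)_i=0$, so any lift works there). This is exactly what the paper proves, and it is not formal: it uses surjectivity of $\phi_3$, injectivity of $\phi_1$ in degrees $\not\equiv 2+m\pmod 4$ (Fact \ref{fct:phi1iso}), and the relation $d_G\phi_3=\phi_1 d_{S^1}$ to show $(\ker d_G)_i\subseteq\mathrm{Im}(\phi_3|_{\ker d_{S^1}})_i$ in those degrees. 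Once you construct $M$ degree by degree as $\phi_2(\ker d_{S^1})_i$ for $i\not\equiv 3+m$ and all of $\tilde{H}^G_i(X)$ for $i\equiv 3+m$ (as the paper does), the explicit lift replaces both your $\mathrm{Ext}^1$ step and the absorption step, and the rest of your argument goes through.
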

\begin{proof}
As in the proof of Lemma \ref{lem:3s1}, we see that $\tilde{H}^G_*(X)$ is an extension of \[\mathrm{ker}\, d_G \subseteq \mathrm{res}^{\f[U]}_{\f[q,v]/(q^3)}\tilde{H}^{S^1}_*(X_+)\] by $\mathrm{coker} \, d_G=\tilde{H}^G_*(X^{S^1})/(\mathrm{Im}\, d_G)$.  We will first show that the extension is trivial as an $\f[v]$-extension.  
  
We construct $M \subset \tilde{H}^G_*(X)$ a vector space lift of $\mathrm{ker} \; d_G \subset \tilde{H}^G_*(X/X^{S^1})$, so that $\phi_2(\mathrm{ker} \; d_{S^1})\subseteq M$, using the decomposition of $\tilde{H}^{S^1}_*(X)$ in (\ref{eq:s1split}).

Specifically, we define $M$ in each degree $i$ by: \[M_i=\begin{cases}(\phi_2(\mathrm{ker} \; d_{S^1}))_i \; \mathrm{for} \; i \not\equiv 3 +m\;\mathrm{mod}\;4,\\
\tilde{H}^G_i(X) \; \mathrm{for} \; i \equiv 3+m \; \mathrm{mod} \; 4. \end{cases}\]
We next show that $\pi_G|_M:M\rightarrow \mathrm{ker} \, d_G$ is an isomorphism.  

We have $(\mathrm{coker} \; d_G)_i=0$ for $i\equiv 3+m \; \mathrm{mod} \; 4$, since $\tilde{H}^G_*(X^{S^1})\cong H_*(BG)[-m]$, so
\begin{equation}\label{eq:deg3surj}\pi_G:\tilde{H}^G_i(X) \rightarrow (\mathrm{ker} \; d_G)_i\end{equation}
is an isomorphism for all $i \equiv 3+m \; \mathrm{mod} \; 4$.  

We now show that $\pi_G:(\mathrm{Im}\, \phi_2|_{\mathrm{ker} \; d_{S^1}})_i\rightarrow (\mathrm{ker} \; d_G)_i$ is an isomorphism for $i \not\equiv 3+m\; \mathrm{mod}\;4.$  It suffices to show $\mathrm{ker}\, d_G \subseteq \mathrm{Im} \, \phi_3|_{\mathrm{ker}\, d_{S^1}}$ in degrees not congruent to $3+m \,\mathrm{mod} \, 4$.  Indeed, $\phi_3$ is surjective by (\ref{eq:phi3s}).  Furthermore, by Fact \ref{fct:phi1iso}, $\phi_1$ is injective in degrees not congruent to $2+m \, \mathrm{mod} \, 4$.  By (\ref{eq:dgrho2}), if $y \in \mathrm{ker} \, d_G$ with $\mathrm{deg}\, (y) \not\equiv 3+m \; \mathrm{mod} \; 4$, and $y=\phi_3(x)$, for $x \in \tilde{H}^{S^1}_*(X/X^{S^1})$, then $\phi_1(d_{S^1}x)=0$.  By the injectivity of $\phi_1$, we have $d_{S^1}x=0$, and we obtain:
\[y \in \mathrm{Im}\,(\phi_3|_{\mathrm{ker}\, d_{S^1}}).\]  That is, $(\mathrm{Im} \, \phi_3|_{\mathrm{ker} d_{S^1}})_i=(\mathrm{ker}\; d_G)_i$ for $i \not\equiv 3+m \; \mathrm{mod} \; 4$.  Then, $\pi_G(\mathrm{Im}\,\phi_2|_{\mathrm{ker}\;d_{S^1}})_i=(\mathrm{ker} \; d_G)_i$, as needed.

We have then established that $\tilde{H}^G_*(X)= \mathrm{coker}\; d_G \oplus M$ as $\f$-vector spaces.  

We next determine the $\f[q,v]/(q^3)$-action on $M \subset \tilde{H}^G_*(X)$.  Since $\mathrm{ker} \; d_{S^1} \subset \tilde{H}^{S^1}_*(X)$ is an $\f[q,v]/(q^3)$-submodule, so is its image in $\tilde{H}^G_*(X)$.  Then, for $x \in M$ homogeneous of degree not congruent to $3 +m \; \mathrm{mod} \; 4$, we have $qx,vx \in M$.  In fact, $qx=0$, since $q$ acts trivially on $\tilde{H}^{S^1}_*(X)$.  Moreover, for $x \in M$ of degree congruent to $3+m \; \mathrm{mod} \; 4$, $vx \in \tilde{H}^G_*(X)$ is also of degree congruent to $3+m$, and, in particular, we see $vx \in M$.  So we need only determine $qx$ for $x \in M$ with $\mathrm{deg} \; x \equiv 3+m \; \mathrm{mod} \; 4$.  

As in \cite{tomDieck}[III.2] there exists a Gysin sequence: 
\begin{equation}\begin{tikzcd}\tilde{H}^*_G(X) \arrow{r} &\tilde{H}^*_{S^1}(X)\arrow{r} &\tilde{H}^*_G(X)\arrow{r}{q\cup -} &\tilde{H}^{*+1}_G(X)\arrow{r}&\dots,\end{tikzcd}\end{equation}
where $q\cup -$ denotes cup product with $q$.  Dualizing, we obtain an exact sequence:
\begin{equation}\label{eq:gysin}\begin{tikzcd}
\tilde{H}^G_*(X)\arrow{r}{(1+j)\cdot -} &\tilde{H}^{S^1}_*(X) \arrow{r}{\phi_2} &\tilde{H}^{G}_*(X) \arrow{r}{q\cap -}& \tilde{H}^G_{*-1}(X)\arrow{r} & \dots,
\end{tikzcd}\end{equation}
where $(1+j)\cdot -$ denotes the map obtained from multiplication (on the chain level) by $1+j \in \G$, and $q\cap -$ denotes cap product with $q$.   

From (\ref{eq:gysin}), we have that if $x \in M \subset \tilde{H}^G_*(X)$ is not in $\mathrm{Im} \, \phi_2|_{\mathrm{ker} \, d_{S^1}}$, then $qx \neq 0$.  We will show that $qx \in \mathrm{coker}\; d_G$.  

First, we see \begin{equation}\label{eq:intermedj}(1+j)\cdot \mathrm{coker}\; d_G\subset \mathrm{coker}\;d_{S^1}.\end{equation}  Indeed, (\ref{eq:intermedj}) follows from the commutativity of the diagram
\[ \begin{tikzcd}
\tilde{H}^G_*(X) \arrow{r}{(1+j)\cdot} & \tilde{H}^{S^1}_*(X)\\
\tilde{H}^G_*(X^{S^1})\arrow{u} \arrow{r}{(1+j)\cdot} & \tilde{H}^{S^1}_*(X^{S^1})\arrow{u}.
\end{tikzcd}\]Additionally, we see that \[\begin{tikzcd}\mathrm{ker}\, d_G \arrow{r}{(1+j)\cdot -}& \mathrm{ker}\, d_{S^1} \end{tikzcd}\] is injective by the $j$-splitness condition (Definition \ref{def:split}).  Then $\mathrm{ker}\; (1+j) \subset \tilde{H}^{G}_*(X)$ is, in fact, a subset of $\mathrm{coker}\; d_G$.  Thus, if $x \not \in \mathrm{Im} \; \phi_2 |_{\mathrm{ker}\; d_{S^1}}$, $qx$ must be the unique nonzero element in grading $\mathrm{deg} \, x -1$ in $\mathrm{coker}\, d_G$, completing the proof.  \end{proof}   

Our goal will be to relate (\ref{eq:s1split}) and (\ref{eq:firstgdecomp}), relying on (\ref{eq:quotrel}) and (\ref{eq:dgrho2}).  From this relationship we will be able to show that the $S^1$-homology (\ref{eq:s1split}) determines the $G$-homology (\ref{eq:firstgdecomp}).  In Lemmas \ref{lem:deco1} and \ref{lem:deco2} we compute $\tilde{H}^{S^1}_{*}(X)$ from $\tilde{H}^{S^1}_*(X/X^{S^1})$ and $d_{S^1}$.  In Lemmas \ref{lem:deco3}-\ref{lem:deco4}, we show how to compute $\tilde{H}^G_*(X)$ from the same information.  Then in Theorem \ref{thm:onjspaces} we compute $\tilde{H}^G_*(X)$ directly from $\tilde{H}^{S^1}_*(X)$.  

We begin by noting that any finite graded $\f[U]$-module may be written as a direct sum of copies of $\bt_{d_i}(n_i)$, as $\f[U]$ is a principal ideal domain.  In particular, $\tilde{H}^{S^1}_*(X/X^{S^1})$, since it has finite rank as an $\f$-module, is a direct sum of copies of the $\bt_{d_i}(n_i)$. 

\begin{lem}\label{lem:dimoftows}
On $\bt_d(n) \subset \tilde{H}^{S^1}_*(X/X^{S^1})$, the differential $d_{S^1}$ vanishes unless $2n+d \geq 3+m$ and $d \leq m+1$.  
\end{lem}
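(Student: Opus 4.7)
The plan is to use the $\f[U]$-equivariance of $d_{S^1}$ together with the fact that its target $\tilde{H}^{S^1}_*(X^{S^1})\cong \bt_m$ is a single infinite $U$-tower whose $U$-torsion is one-dimensional. I will restrict to the summand $\bt_d(n)$ and fix standard $\f[U]$-bases: call $\eta_0,\ldots,\eta_{n-1}$ the basis of $\bt_d(n)$, with $\deg \eta_k = d+2k$, $U\eta_k=\eta_{k-1}$ for $k\geq 1$, and $U\eta_0=0$; and $\xi_0,\xi_1,\ldots$ the basis of $\bt_m$, with $\deg \xi_k = m+2k$, $U\xi_k=\xi_{k-1}$ for $k\geq 1$, and $U\xi_0=0$.

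The crucial step is to observe that, since $U\eta_0=0$, $\f[U]$-equivariance forces $U\cdot d_{S^1}(\eta_0)=0$. But in $\bt_m$ the kernel of $U$ is spanned by $\xi_0$, so $d_{S^1}(\eta_0)$ must equal either $0$ or $\xi_0$. I will split into these two cases. If $d_{S^1}(\eta_0)=\xi_0$, then matching degrees gives $d-1=m$, i.e. $d=m+1$, and the condition $2n+d\geq m+3$ reduces to $n\geq 1$, which is automatic. If instead $d_{S^1}(\eta_0)=0$, then from $\eta_0 = U^{n-1}\eta_{n-1}$ I conclude $U^{n-1}d_{S^1}(\eta_{n-1})=0$; assuming $d_{S^1}|_{\bt_d(n)}$ is not identically zero, this forces $d_{S^1}(\eta_{n-1})=\xi_k$ for some $0\leq k\leq n-2$, and a degree count $m+2k = d+2n-3$ then gives $k\geq 0 \Rightarrow 2n+d\geq m+3$ and $k\leq n-2 \Rightarrow d\leq m-1$.

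Combining the two cases yields both $d\leq m+1$ and $2n+d\geq m+3$ whenever $d_{S^1}|_{\bt_d(n)}\neq 0$, which is the contrapositive of the Lemma. The argument is essentially mechanical, and I do not anticipate any real obstacle: the target $\bt_m$ is simple enough that $\f[U]$-equivariance pins $d_{S^1}|_{\bt_d(n)}$ down to one integer parameter (the exponent $k$), and it only remains to keep track of degrees and of the two possible values for $d_{S^1}(\eta_0)$.
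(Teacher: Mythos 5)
Your proof is correct and follows essentially the same route as the paper: both arguments pin down $d_{S^1}|_{\bt_d(n)}$ using $\f[U]$-equivariance together with a degree count against the single tower $\bt_m$, the only cosmetic difference being that you organize the case analysis around the bottom element $\eta_0$ while the paper applies $U^{(d+2n-m-3)/2}$ to the top generator. No gaps.
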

\begin{proof}  Let $U^{-k}$ denote the unique nonzero element of $\bt_m$ in degree $m+2k$.  Let $x_{d+2n-2}$ be an $\f[U]$-module generator of $\bt_d(n)$, with $\mathrm{deg}\; (x_{d+2n-2}) =d+2n-2$.  Then either $d_{S^1}$ vanishes on $\bt_d(n)$ or $d_{S^1}(x_{d+2n-2})$ is nonzero.  In this latter case, because of the grading, $d_{S^1}(x_{d+2n-2})=U^{-\frac{d+2n-m-3}{2}}$.  If $2n+d<3+m$, then $\bt_d(n)$ has no elements in degree greater than $m$, and so has no nontrivial maps to $\bt_m$.  Similarly, for $d>m+1$, $d_{S^1}(\bt_d(n))=0$.  Indeed, if $d_{S^1}(\bt_d(n)) \neq 0$, then \[d_{S^1}x_{d+2n-2}=U^{-\frac{d+2n-m-3}{2}}.\]  Then, by Fact \ref{fct:uqvequiv}, $d_{S^1}(U^{\frac{d+2n-m-3}{2}}x_{d+2n-2}) = U^{0} \neq 0 \in \bt_m$.  However, if $d>m+1$, then $U^{\frac{d+2n-m-3}{2}}x_{d+2n-2}=0$, a contradiction.  
\end{proof}

\begin{lem}\label{lem:deco1}
There exists a decomposition 
\begin{equation}\label{eq:deco1}
\tilde{H}^{S^1}_*(X_+) = J_1  \oplus J_2,
\end{equation} as a direct sum of $\f[U]$-modules $J_1$ and $J_2$, where $d_{S^1}$ vanishes on $J_2$ and
\begin{equation*}
J_1 = \bigoplus^N_{i=1} \bt_{d_i}(n_i),
\end{equation*}
with $2n_i +d_i> 2n_{i+1}+d_{i+1}$, and $d_{i+1} > d_i$, for some $N$.  Moreover, $d_N \leq 1+m, 2n_N+d_N \geq 3+m$, and $d_{S^1}$ is nonvanishing on each summand $\bt_{d_i}(n_i)$.
\end{lem}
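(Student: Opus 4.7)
The plan is to prove the lemma by constructing the decomposition algorithmically: I start from an arbitrary cyclic decomposition and then refine it by a sequence of controlled changes of basis, extracting $J_1$ greedily from the top of the image of $d_{S^1}$.

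First, I fix any decomposition $\tilde{H}^{S^1}_*(X_+) = \bigoplus_\alpha \bt_{e_\alpha}(m_\alpha)$ as a direct sum of cyclic $\f[U]$-modules (via the structure theorem for modules over the PID $\f[U]$), with top generators $h_\alpha \in \bt_{e_\alpha}(m_\alpha)$ of degree $e_\alpha + 2m_\alpha - 2$. Since $d_{S^1}$ is $\f[U]$-linear by Fact 3.3 and the target $\tilde{H}^{S^1}_*(X^{S^1})$ is isomorphic to the infinite tower $\bt_m$, the value $d_{S^1}(h_\alpha)$ is either $0$ or a uniquely determined $U^{-k_\alpha}$, with the image $\mathrm{Im}(d_{S^1})$ itself cyclic of the form $\langle U^{-K}\rangle$. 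A preliminary cleanup will combine summands sharing a top degree: if $h_\alpha$ and $h_{\alpha'}$ both map to $U^{-k}$, then, assuming $m_\alpha \leq m_{\alpha'}$, replacing $h_{\alpha'}$ by $h_\alpha + h_{\alpha'}$ yields a new cyclic summand of the same type on which $d_{S^1}$ vanishes. After iterating, the $d_{S^1}$-nonvanishing summands have pairwise distinct top degrees.

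I then process the $d_{S^1}$-nonvanishing summands in order of decreasing top degree (equivalently, decreasing $k_\alpha$), building $J_1$ one summand at a time. A candidate summand $\bt_e(n)$ is appended to the sequence $(d_i,n_i)$ in $J_1$ if its bottom $e$ strictly exceeds every previously recorded $d_i$; otherwise there is some $\bt_{d_i}(n_i)$ already in $J_1$ with $d_i \geq e$, and I perform the basis change $h_e \mapsto h_e' = h_e + U^{k_i - k_e} h_i$. A direct calculation will show that $d_{S^1}(h_e') = 0$, that $\langle h_e'\rangle \cong \bt_e(n)$, and that the resulting cyclic submodule is a direct summand of the whole module; this modified summand is then absorbed into $J_2$. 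The ordering constraint $d_{i+1} > d_i$ is enforced by the greedy rule and $2n_i + d_i > 2n_{i+1} + d_{i+1}$ is automatic from the decreasing-top ordering, while the constraints $d_N \leq m+1$ and $2n_N + d_N \geq m+3$ follow from Lemma 3.4 applied to each summand of $J_1$.

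The main obstacle will be verifying that each basis change preserves a direct sum decomposition of the entire module, not merely of the two summands directly involved. This reduces to showing that the annihilator of $h_e'$ remains $U^n$ and that $\langle h_e'\rangle \cap \bt_{d_i}(n_i) = 0$; both follow from the key inequality $k_i - k_e \geq n_i - n$, which rewrites via $d = m + 2k - 2n + 3$ as precisely the condition $d_i \geq e$ that characterizes when the modification is legitimate. Once this is in place, the chain ordering on $J_1$ is enforced by the algorithm and the vanishing of $d_{S^1}$ on $J_2$ follows from $\f[U]$-linearity together with the fact that each top generator in $J_2$ has been arranged to map to zero.
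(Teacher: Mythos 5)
Your proposal is correct and is essentially the paper's argument in different clothing: the basis change $h_e \mapsto h_e + U^{k_i-k_e}h_i$ is exactly the paper's substitution $x_2 \mapsto x_2 + U^{n_1-n_2+(d_1-d_2)/2}x_1$, your condition ``$d_i \geq e$ with larger top'' is precisely the paper's partial order $\succeq$, and the endpoint constraints come from the same Lemma \ref{lem:dimoftows}. The only difference is organizational (a greedy pass in decreasing top degree versus repeated elimination of comparable pairs), plus your welcome explicit verification that each modification preserves the global direct-sum decomposition.
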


\begin{proof}
To begin, set $\tilde{H}^{S^1}_*(X_+)=J_1 \oplus J_2$ for some choices of $J_1$ and $J_2$ so that $d_{S^1}|_{J_2}=0$, possibly by setting $J_2=0$.  We introduce a partial ordering $\succeq$ of (graded) $\f[U]$-modules.  We say 
\begin{equation*}
T_{d_1}(n_1) \succeq T_{d_2}(n_2)
\end{equation*}
if $2n_1+d_1 \geq 2n_2+d_2$ and $d_1 \geq d_2$.  Our goal is to arrange that the summands of $J_1$ are not comparable under this relation.  Suppose we have $\bt_{d_1}(n_1)\oplus \bt_{d_2}(n_2) \subset J_1$, and $\bt_{d_1}(n_1) \succeq \bt_{d_2}(n_2)$.  If one of the $\bt_{d_i}(n_i)$ has $d_{S^1}|_{\bt_{d_i}(n_i)}=0$, we move it to $J_2$.  Otherwise, we have that $d_{S^1}$ is nontrivial on both $\bt_{d_i}(n_i)$.  Let $\bt_{d_i}(n_i)$ be generated by $x_i$ for $i=1,2$.  Then $\langle x_1,U^{n_1-n_2+(d_1-d_2)/2}x_1+x_2 \rangle$ are new $\f[U]$-generators for $\bt_{d_1}(n_1) \oplus \bt_{d_2}(n_2) \subset J_1$, such that $d_{S^1}$ vanishes on $U^{n_1-n_2+(d_1-d_2)/2}x_1+x_2$, i.e. so that $d_{S^1}$ vanishes on the $\bt_{d_2}(n_2)$ submodule.  So we may choose a new decomposition $\tilde{H}^{S^1}_*(X_+)=J_1' \oplus J_2'$, where $J_2' \simeq J_2 \oplus \bt_{d_2}(n_2)$.  Thus, we may choose $J_1$ such that there is no submodule $X \oplus Y$ of $J_1$ with $X \succeq Y$.  Say $J_1=\bigoplus_{i=1}^N\bt_{d_i}(n_i)$ has been chosen so that all its summands are incomparable under $\succeq$ (and so that $d_{S^1}$ is nonvanishing on each $\bt_{d_i}(n_i)$).  Perhaps by reordering, let $d_{i+1} \geq d_{i}$.  If $d_{i+1}=d_{i}$, $\bt_{d_i}(n_i)$ and $\bt_{d_{i+1}}(n_{i+1})$ would be comparable, contradicting our choice of $J_1$.  Thus $d_{i+1}>d_i$.  Again using that the $\bt_{d_i}(n_i)$ are incomparable, we obtain $2n_i +d_i>2n_{i+1}+d_{i+1}$.  Finally, we saw in Lemma \ref{lem:dimoftows} that $d_{S^1}$ vanishes on any summand $\bt_d(n)$ with $d>1+m$ or $2n+d<3+m,$ so by the condition that $d_{S^1}$ is nonvanishing, we have $d_N \leq 1+m, 2n_N+d_N \geq 3+m$.  
\end{proof}

\begin{lem}\label{lem:deco2}
Let $\tilde{H}^{S^1}_*(X_+) = J_1 \oplus J_2$, with $J_1$ as in Lemma \ref{lem:deco1}.  Then
\begin{equation}\label{eq:deco2}
\tilde{H}^{S^1}_*(X)=\bt_{d_1+2n_1-1}\oplus\bigoplus^{N}_{i=1}\bt_{d_i}(\frac{d_{i+1}+2n_{i+1}-d_i}{2})\oplus \bigoplus^{N}_{i=1}\bt_{d_i}(n_i) \oplus J_2^{\oplus 2}.
\end{equation}
We interpret $d_{N+1}=m+1,n_{N+1}=0$.  The expression $\frac{d_{N+1}+2n_{N+1}-d_N}{2}$ may vanish, in which case $\bt_{d_N}(\frac{d_{N+1}+2n_{N+1}-d_N}{2})$ is the zero module.
\end{lem}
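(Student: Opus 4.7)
The plan is to apply Lemma~\ref{lem:3s1}, which gives $\tilde{H}^{S^1}_*(X)\cong\mathrm{coker}\,d_{S^1}\oplus\ker d_{S^1}$, and to compute each summand. Write $\tilde{H}^{S^1}_*(X/X^{S^1})=\tilde{H}^{S^1}_*(X_+)\oplus\tilde{H}^{S^1}_*(X_-)$ with each summand equal to $J_1\oplus J_2$. The differential $d_{S^1}$ vanishes on both copies of $J_2$ by the choice of $J_2$ in Lemma~\ref{lem:deco1}, so these contribute $J_2^{\oplus 2}$ to the kernel. On each simple summand $\bt_{d_i}(n_i)\subset J_1$ with generator $x_i$, non-vanishing of $d_{S^1}$, $\f[U]$-linearity, and the grading force $d_{S^1}(x_i)=U^{-k_i}\in\bt_m$, where $k_i=(d_i+2n_i-3-m)/2$. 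Since $\bt_m$ is one-dimensional in each nontrivial degree, the same formula holds simultaneously on $X_+$ and $X_-$, so $d_{S^1}(x_i^+)=d_{S^1}(x_i^-)$; after the change of basis $x_i^-\mapsto x_i^++x_i^-$ on $J_1^{\oplus 2}$, the diagonal copy $\langle x_i^++x_i^-\rangle$ lies in $\ker d_{S^1}$ and contributes the summand $\bigoplus_{i=1}^{N}\bt_{d_i}(n_i)$.

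For the cokernel, the image of $d_{S^1}$ is the $\f[U]$-submodule of $\bt_m$ generated by $\{U^{-k_i}\}_{i=1}^N$. The ordering $2n_i+d_i>2n_{i+1}+d_{i+1}$ of Lemma~\ref{lem:deco1} implies $k_1>k_2>\dots>k_N$, so this submodule is generated by $U^{-k_1}$ alone and consists of all elements of $\bt_m$ of degree at most $d_1+2n_1-3$. The quotient $\mathrm{coker}\,d_{S^1}$ is therefore the infinite tower $\bt_{d_1+2n_1-1}$.

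For the kernel restricted to one copy of $J_1$, set $k_{N+1}=-1$ and $x_{N+1}=0$, consistent with $d_{N+1}=m+1$ and $n_{N+1}=0$, and define
\[y_i=U^{k_i-k_{i+1}}x_i+x_{i+1},\qquad i=1,\dots,N.\]
A direct computation gives $d_{S^1}(y_i)=U^{-k_{i+1}}+U^{-k_{i+1}}=0$. Setting $M_i=(d_{i+1}+2n_{i+1}-d_i)/2$, the identity $M_i+k_i-k_{i+1}=n_i$ together with $M_i\geq n_{i+1}$ (which follows from $d_{i+1}\geq d_i+2$) yields $U^{M_i}y_i=U^{n_i}x_i+0=0$, so $\langle y_i\rangle\cong\bt_{d_i}(M_i)$.

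The main step is to show $\ker d_{S^1}|_{J_1}=\bigoplus_i\langle y_i\rangle$. In the $\f$-basis $\{U^\ell x_i\}$ of $J_1$, the $x_i$-component of $U^\ell y_i$ is $U^{\ell+k_i-k_{i+1}}x_i$; as $(i,\ell)$ varies over pairs with $1\leq i\leq N$ and $0\leq\ell\leq M_i-1$, these components occupy pairwise distinct basis vectors at indices $(i,k_i-k_{i+1}),\dots,(i,n_i-1)$, so $\{U^\ell y_i\}$ is $\f$-linearly independent. A dimension count gives $\sum_i M_i=(m+1-d_1)/2+\sum_{i=2}^N n_i=\sum_i n_i-(k_1+1)=\dim\ker d_{S^1}|_{J_1}$, forcing the inclusion $\bigoplus_i\langle y_i\rangle\subseteq\ker d_{S^1}|_{J_1}$ to be equality. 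Assembling the four summands via Lemma~\ref{lem:3s1} gives (\ref{eq:deco2}).
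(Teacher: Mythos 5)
Your proof is correct and follows essentially the same route as the paper's: the splitting of Lemma \ref{lem:3s1}, the cokernel computed from $\mathrm{Im}\,d_{S^1}=\f[U]\cdot U^{-k_1}$, the diagonal classes $(1+j)x_i$ contributing $\bigoplus_i\bt_{d_i}(n_i)$, and the very same elements $y_i=x_{i+1}+U^{k_i-k_{i+1}}x_i$ spanning the rest of the kernel (your linear-independence and dimension count make explicit what the paper only asserts). The one point to tighten is the justification of $d_{S^1}(x_i^+)=d_{S^1}(x_i^-)$: one-dimensionality of $\bt_m$ in each degree does not by itself rule out $d_{S^1}$ vanishing on the $X_-$ copy of $\bt_{d_i}(n_i)$, so you should appeal to the $j$-equivariance of $d_{S^1}$ (it is induced by a $G$-equivariant map and $j$ acts trivially on $\tilde{H}^{S^1}_*(\Sigma X^{S^1})$ with $\f$ coefficients), which is exactly how the paper phrases this step.
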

\begin{proof}
In the decomposition of Lemma $\ref{lem:deco1}$, we write $x_i$ for the generator of $\bt_{d_i}(n_i)$.  We choose a basis for $\mathrm{ker} \, d_{S^1}$, given by $\{y_i \}_i$ for $y_i=x_{i+1}+U^{n_i-n_{i+1}+(d_i-d_{i+1})/2}x_i$ for $i=1,...,n-1,$ and $y_N=U^{(d_N+2n_N-1)/2}x_N$.  Note that $y_N$ may be zero.

We have seen that $J_2 \subset \mathrm{ker}\; d_{S^1}$, and also $jJ_2 \subset \mathrm{ker}\; d_{S^1}$, giving the two copies of the $J_2$ summand in (\ref{eq:deco2}).  We see that $\f[U]  U^{-\frac{d_1+2n_1-m-3}{2}} = \mathrm{Im} \; d_{S^1}\subset \bt_m$, by Lemma \ref{lem:deco1}.  Then $\bt_{d_1+2n_1-1}=\mathrm{coker} \; d_{S^1}$.  Further, $(1+j)J_1$ contributes the summand $\bigoplus_{i=1}^N \bt_{d_i}(n_i)$, since $d_{S^1}$ is $j$-invariant, and so vanishes on multiples of $(1+j)$.  Finally, the set $\{y_i\}$ generates the $\bigoplus^N_{i=1} \bt_{d_i}(\frac{d_{i+1}+2n_{i+1}-d_{i}}{2})$ summand.  

For an example of how the new basis gives the Lemma, see Figures \ref{fig:fig62} and \ref{fig:62pt2}. 
\begin{figure}
\input{exampu.pdftex_t}
\caption{An example of $\tilde{H}^{S^1}_*(X)$ as in Lemma \ref{lem:deco2}.  The first four (finite) towers are $\bt_{-1}(3)^{\oplus 2} \oplus \bt_{1}(1)^{\oplus 2}$.  Then $J_1= \bt_{-1}(3) \oplus \bt_{1}(1)$ and $J_2 = \bt_{-1}(1)$ in (\ref{eq:deco1}) (keeping in mind that the action of $j$ interchanges the pairs of copies $\bt_{d_i}(n_i)$, so $\tilde{H}^{S^1}_*(X/X^{S^1})\simeq J_1 \oplus J_2 \oplus J_1 \oplus J_2$ as an $\f[U]$-module).  In particular, $d_1=-1,n_1=3,d_2=1,n_2=1$.  Here $m=0$.  The shaded-head arrows denote differentials while the open-head arrows denote $U$-actions.  }
\label{fig:fig62}
\end{figure}
\begin{figure}
\input{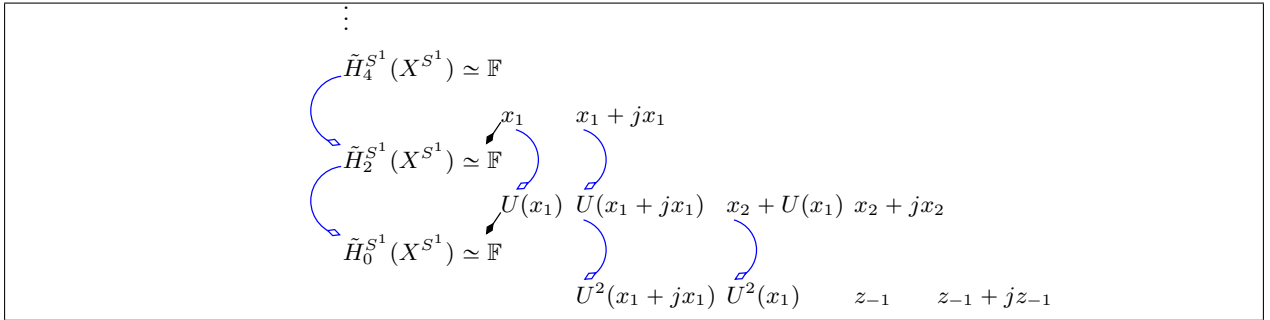}
\caption{Using the basis in the proof of Lemma \ref{lem:deco2} for the complex of Figure \ref{fig:fig62}.  Here the generator of $J_2$ is written $z_{-1}$.  The $x_i$ are generators of $\bt_{d_i}(n_i)$ for $i=1,2$.}
\label{fig:62pt2}
\end{figure} \end{proof} 
We now compute $\tilde{H}^{G}_*(X/X^{S^1})$.  To find $\mathrm{ker}\, d_G$, we write $\tilde{H}^{G}_*(X/X^{S^1}) = J_1' \oplus J_2'$, where $d_G$ vanishes on $J_2'$ ($J_2'$ need not be maximal, currently).  To find $J_1'$ and $J_2'$ in terms of $J_1$ and $J_2$, we use: 

\begin{lem}\label{lem:deco3}
Let $J_1,J_2$ and $d_i,n_i$ be as in Lemma \ref{lem:deco1}.  Then we may set $\tilde{H}^{G}_*(X/X^{S^1})=J_1' \oplus J_2'$, where 
\begin{equation*}
J_1' = \bigoplus_{\{i \mid d_i \equiv m+1\, \mathrm{mod} \,4 \}} \bv_{d_i}(\lfloor\frac{n_i+1}{2}\rfloor) \oplus \bigoplus_{\{i \mid d_i \equiv m+3 \,\mathrm{mod}\, 4 \}} \bv_{d_i+2}( \lfloor \frac{n_i}{2} \rfloor ),
\end{equation*}
\begin{equation*}
J_2' = \re^{\f[U]}_{\f[v]} J_2 \oplus \bigoplus_{\{i \mid d_i \equiv m+1 \,\mathrm{mod}\, 4 \}} \bv_{d_i+2}(\lfloor \frac{n_i}{2} \rfloor) \oplus \bigoplus_{\{i \mid d_i \equiv m+3 \,\mathrm{mod}\, 4 \}} \bv_{d_i}(\lfloor \frac{n_i+1}{2} \rfloor).
\end{equation*}
Moreover, $d_G$ is nonvanishing on each nontrivial summand of $J_1'$, and $d_G(J_2')=0$.
\end{lem}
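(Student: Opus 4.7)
My plan is to pull the statement back through the $\f[q,v]/(q^3)$-isomorphism $\phi_3|_{\tilde H^{S^1}_*(X_+)} \colon \tilde H^{S^1}_*(X_+) \xrightarrow{\sim} \tilde H^G_*(X/X^{S^1})$ of Fact~\ref{fct:phi3iso}, and then read off the behavior of $d_G$ from the commutativity $d_G \phi_3 = \phi_1 d_{S^1}$ of (\ref{eq:dgrho2}). Under $\phi_3$, the decomposition $\tilde H^{S^1}_*(X_+) = J_1 \oplus J_2$ from Lemma~\ref{lem:deco1} identifies $\tilde H^G_*(X/X^{S^1})$ with $\re^{\f[U]}_{\f[v]}(J_1 \oplus J_2)$ as an $\f[v]$-module, and (\ref{eq:restri}) splits each summand $\bt_{d_i}(n_i)$ of $J_1$ into $\bv_{d_i}(\lfloor (n_i+1)/2\rfloor) \oplus \bv_{d_i+2}(\lfloor n_i/2\rfloor)$. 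It remains to decide on which of these restricted summands $d_G$ vanishes.

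Since $d_{S^1}|_{J_2}=0$, the commutativity forces $d_G$ to vanish on $\re^{\f[U]}_{\f[v]} J_2$, so this whole piece lies in $J_2'$. For each $\bt_{d_i}(n_i) \subset J_1$, the proof of Lemma~\ref{lem:dimoftows} gives that $d_{S^1}(U^k x_i)$ is the unique nonzero element of $\bt_m$ in degree $d_i+2n_i-3-2k$, and by Fact~\ref{fct:phi1iso} the map $\phi_1$ is nonzero precisely in degrees $\equiv m \pmod{4}$. Hence $d_G(U^k x_i) \neq 0$ if and only if $d_i+2n_i-3-2k \equiv m \pmod{4}$. Lemma~\ref{lem:dimoftows} also forces $d_i \equiv m+1 \pmod 2$, so the analysis splits into $d_i \equiv m+1$ versus $d_i \equiv m+3 \pmod{4}$. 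A short mod-$4$ computation shows that in the first case the nonvanishing $k$ are exactly those for which $U^k x_i$ lies in the $\bv_{d_i}$-summand of the restriction, while in the second case they lie in the $\bv_{d_i+2}$-summand. Assigning the nonvanishing summand to $J_1'$ and the other to $J_2'$ produces the decomposition stated in the Lemma.

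To finish, I need to verify genuine nonvanishing and nontriviality on each summand of $J_1'$. The map $d_G$ is nonzero on the claimed $J_1'$-summand because $\phi_1 d_{S^1}(U^k x_i)$ is the unique nonzero element of $\tilde H^G_*(X^{S^1}) \cong H_*(BG)[-m]$ in its degree. The only potentially vacuous summand is $\bv_{d_i+2}(\lfloor n_i/2\rfloor)$ assigned to $J_1'$ when $d_i \equiv m+3 \pmod{4}$; however $d_i \le m+1$ combined with this congruence forces $d_i \le m-1$, and then $2n_i+d_i \ge m+3$ forces $n_i \ge 2$, so $\lfloor n_i/2\rfloor \ge 1$. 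The main obstacle is the mod-$4$ bookkeeping: correctly matching, for each congruence class of $d_i$, which of the two $\bv$-towers in the restriction receives the nonvanishing component of $d_G$, and then checking that this matches the indexing in the Lemma's statement. Everything else is a formal consequence of the isomorphism $\phi_3$, the degree restriction on $\phi_1$, and the commutative square~(\ref{eq:dgrho2}).
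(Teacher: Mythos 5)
Your proposal is correct and follows essentially the same route as the paper's proof: identify $\tilde{H}^G_*(X/X^{S^1})$ with $\re^{\f[U]}_{\f[q,v]/(q^3)}(J_1\oplus J_2)$ via $\phi_3$ and (\ref{eq:restri}), then use $d_G\phi_3=\phi_1 d_{S^1}$ together with the fact that $\phi_1$ is nonzero only in degrees $\equiv m \bmod 4$ to sort the two $\bv$-towers of each $\bt_{d_i}(n_i)$ into $J_1'$ and $J_2'$. The paper simply asserts the outcome of the mod-$4$ bookkeeping that you carry out explicitly (and your check that the relevant $J_1'$-summands are nontrivial is a correct, if optional, refinement).
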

\begin{proof}
We use (\ref{eq:restri}) and (\ref{eq:quotrel}) to conclude that 
\[\phi_3J_1=\bigoplus^N_{i=1}\bv_{d_i}(\lfloor \frac{n_i+1}{2} \rfloor)\oplus \bigoplus^N_{i=1}\bv_{d_i+2}(\lfloor \frac{n_i}{2} \rfloor). \]  We also use 
\begin{equation*}
\co^{S^1}_{G} d_{S^1} =d_G \phi_3 ,
\end{equation*}
as in (\ref{eq:dgrho2}) to obtain that $d_G$ is nonvanishing on each of $\bv_{d_i}(\lfloor \frac{n_i+1}{2} \rfloor)$, with $d_i \equiv m+1 \,\mathrm{mod}\,4$ and $\bv_{d_i+2}(\lfloor \frac{n_i}{2} \rfloor)$ with $d_i \equiv m+3 \,\mathrm{mod}\,4$.  To find $J_2'$ we apply (\ref{eq:quotrel}) again, to $J_2$, and we observe that $d_G$ is vanishing on each of $\bv_{d_i}(\lfloor \frac{n_i+1}{2} \rfloor)$, with $d_i \equiv m+3 \,\mathrm{mod}\,4$ and $\bv_{d_i+2}(\lfloor \frac{n_i}{2} \rfloor)$ with $d_i \equiv m+1 \,\mathrm{mod}\,4$.
\end{proof}
\begin{fact}\label{fct:onimageofphi2}
The $\f[v]$-submodule \[\bigoplus_{\{i \mid d_i \equiv m+1 \,\mathrm{mod}\, 4 \}} \bv_{d_i+2}(\lfloor \frac{n_i}{2} \rfloor) \oplus \bigoplus_{\{i \mid d_i \equiv m+3 \,\mathrm{mod}\, 4 \}} \bv_{d_i}(\lfloor \frac{n_i+1}{2} \rfloor)\]
in Lemma \ref{lem:deco3} is the component of $\tilde{H}^G_*(X/X^{S^1})$ not in the image of $\phi_2|_{\mathrm{ker} \, d_{S^1}}$.  
\end{fact}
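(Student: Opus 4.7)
The plan is to compute the image $\phi_3(\ker d_{S^1}) \subseteq \tilde{H}^G_*(X/X^{S^1})$ explicitly and show that, under the decomposition $\tilde{H}^G_*(X/X^{S^1}) = J_1' \oplus \re^{\f[U]}_{\f[v]}J_2 \oplus K$ from Lemma \ref{lem:deco3}, this image is precisely the summand $J_1' \oplus \re^{\f[U]}_{\f[v]}J_2$, so $K$ is the complementary component. By the commutativity of the square in (\ref{eq:s1gtrans}) and the fact that the projection $\tilde{H}^{S^1}_*(X) \to \tilde{H}^{S^1}_*(X/X^{S^1})$ restricts to the identity on the $\ker d_{S^1}$ summand of Lemma \ref{lem:3s1}, one has $\pi_G \circ \phi_2|_{\ker d_{S^1}} = \phi_3|_{\ker d_{S^1}}$, so it suffices to analyze $\phi_3$.

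By Lemma \ref{lem:deco2}, $\ker d_{S^1}$ decomposes as $\langle y_i\rangle \oplus (1+j)J_1 \oplus (J_2\oplus jJ_2)$. I apply $\phi_3$ to each piece. Since $\phi_3 \circ j = \phi_3$ (the corestriction factors through the quotient by $j$) and we are working in $\mathbb{F}_2$-coefficients, $\phi_3 \circ (1+j) = 2\phi_3 = 0$, so the $(1+j)J_1$ piece contributes nothing. The $J_2\oplus jJ_2$ piece maps onto $\re^{\f[U]}_{\f[v]}J_2$ via $(b_1,b_2) \mapsto b_1+b_2$ in the identification $\tilde{H}^G_*(X/X^{S^1}) \cong \re^{\f[U]}_{\f[v]}\tilde{H}^{S^1}_*(X_+)$ coming from (\ref{eq:quotrel}). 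The $\langle y_i\rangle$ piece, generated (as an $\f[U]$-module in $J_1$) by $y_i = x_{i+1} + U^{n_i-m_i'}x_i$ with $m_i' = (d_{i+1}+2n_{i+1}-d_i)/2$, maps into $\re J_1$; using Fact \ref{fct:bs1bg2} to track the $v$-action ($v \cdot \phi_3(x) = \phi_3(U^2 x)$), its image is generated in each degree by the restrictions of the $U^\ell y_i$.

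The key and most delicate step is to verify that this image in $\re J_1$ equals $J_1'$. The universal parity constraint $d_i \equiv m+1 \pmod 2$ (from the grading of $d_{S^1}$, cf.\ Lemma \ref{lem:dimoftows}) forces $d_i$ to lie in one of two residue classes mod $4$, and the relation $d_i + 2(m_i'-1) = d_{i+1}+2(n_{i+1}-1)$ means that the top of each cyclic summand $\langle U^\ell y_i\rangle$ can be placed in the relevant $\bv_{d_i}(\lfloor (n_i+1)/2\rfloor)$ piece when $d_i\equiv m+1\pmod 4$, or the $\bv_{d_i+2}(\lfloor n_i/2\rfloor)$ piece when $d_i\equiv m+3\pmod 4$ --- i.e., exactly into the summands that make up $J_1'$ in Lemma \ref{lem:deco3}. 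Summing the three contributions gives $\phi_3(\ker d_{S^1}) = J_1' \oplus \re^{\f[U]}_{\f[v]}J_2$, whose complement is $K$. The main obstacle is the parity case analysis: one must carefully match the residue classes and floor-functions of $n_i$ between the formula for $y_i$ and the Lemma \ref{lem:deco3} decomposition, which becomes a direct but somewhat tedious verification.
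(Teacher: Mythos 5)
Your setup is right and matches the only sensible route: reduce to $\phi_3$ via the commutative square, split $\ker d_{S^1}$ as $\bigoplus_i \f[U]y_i \oplus (1+j)J_1 \oplus (J_2\oplus jJ_2)$, observe $\phi_3((1+j)J_1)=0$ and $\phi_3(J_2\oplus jJ_2)=\re^{\f[U]}_{\f[v]}J_2$. The gap is in your "key step": the identity $\phi_3(\ker d_{S^1})=J_1'\oplus\re^{\f[U]}_{\f[v]}J_2$ is false, because $\phi_3(\f[U]y_i)=\re^{\f[U]}_{\f[v]}\bt_{d_i}\bigl(\tfrac{d_{i+1}+2n_{i+1}-d_i}{2}\bigr)$ — its length is governed by $d_{i+1}+2n_{i+1}$, not by $n_i$, and it contains a nonzero piece in degrees $\equiv m+3\pmod 4$, i.e.\ a piece sitting \emph{inside} the towers of the Fact's listed module, strictly below their tops. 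The paper's own running example (Figures 2--4: $m=0$, $J_1=\bt_{-1}(3)\oplus\bt_1(1)$, $J_2=\bt_{-1}(1)$, $y_1=x_2+Ux_1$) is a counterexample: there $\phi_3(\ker d_{S^1})=\langle \co^{S^1}_G x_2+\co^{S^1}_G Ux_1,\ \co^{S^1}_G U^2x_1,\ \co^{S^1}_G z_{-1}\rangle$, while $J_1'\oplus\re^{\f[U]}_{\f[v]}J_2=\langle \co^{S^1}_G Ux_1,\ \co^{S^1}_G x_2,\ \co^{S^1}_G z_{-1}\rangle$; these are different subspaces (neither $\co^{S^1}_G Ux_1$ nor $\co^{S^1}_G x_2$ alone is in the image, whereas $\co^{S^1}_G U^2x_1 = v\cdot\co^{S^1}_G x_1$ is). In particular the listed module $\bv_{-1}(2)=\langle \co^{S^1}_G x_1, v\,\co^{S^1}_G x_1\rangle$ meets the image nontrivially, so it cannot be a vector-space complement of the image, and no choice of splitting fixes this (the degree-$3$ generator is forced, hence so is its $v$-multiple).

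What is actually true, and all that Lemma \ref{lem:deco4} uses, is a statement about top-degree generators: the generator $\phi_3(u)$ of each summand $\bv_{d_i+2}(\lfloor n_i/2\rfloor)$ (resp.\ $\bv_{d_i}(\lfloor(n_i+1)/2\rfloor)$) is the image of the highest element $u$ of $\bt_{d_i}(n_i)$ in the residue class $m+3\bmod 4$; since $d_{S^1}(u)\neq 0$ (when $\deg u\geq m+1$) and $\phi_3$ is injective on $\tilde{H}^{S^1}_*(X_+)$ with $\phi_3(\ker d_{S^1})\cap \re^{\f[U]}_{\f[v]}J_1=\phi_3(\ker (d_{S^1}|_{J_1}))$, this generator lies outside the image — whereas the generators of $\re^{\f[U]}_{\f[v]}J_2$, of $J_{\mathrm{rep}}$, and of the $y_i$-part can be chosen inside it. Your argument should be reorganized to prove exactly this degreewise, generator-level claim (which then pins down the $q$-action via Lemma \ref{lem:3g}), rather than the global equality of submodules, which does not hold.
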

For an example of Lemma \ref{lem:deco3}, see Figure \ref{fig:63}.
\begin{figure}
\input{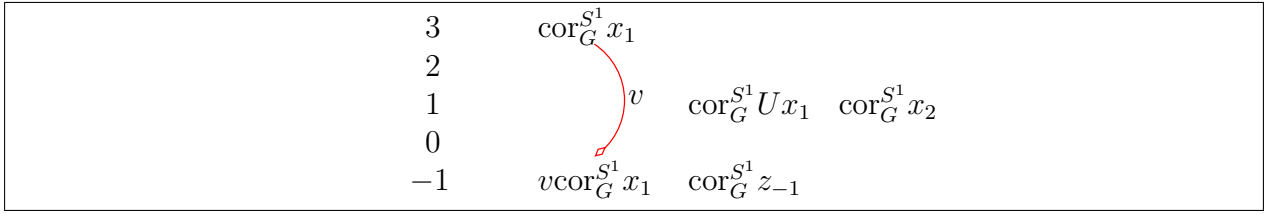}
\caption{Computing $\tilde{H}^G_*(X/X^{S^1})$ for the complex of Figures \ref{fig:fig62} and \ref{fig:62pt2}.  Here $J_1'=\bv_1(1)^{\oplus 2}$, and $J_2'=\bv_{-1}(2)\oplus \bv_{-1}(1).$  }
\label{fig:63}
\end{figure}

We define an order $\succeq$ on modules $\bv_d(n)$ with $d \equiv m+1 \, \mathrm{mod}\, 4$.  Note that all simple submodules $\bv_{d}(n)$ of $J_1'$ in Lemma \ref{lem:deco3} have $d \equiv m+1\, \mathrm{mod}\, 4$.  Let $\bv_{d_1}(n_1) \succeq \bv_{d_2}(n_2)$ if $d_1 \geq d_2$ and $d_1 +4n_1 \geq d_2+4n_2$.  Let $\mathcal{J}$ denote the set of distinct pairs $(a,b)$ for which $\bv_{a}(b)$ is a maximal summand of $J_1'$ as in Lemma \ref{lem:deco3}.  If $(a,b) \in \mathcal{J}$, set $m(a,b)+1$ to be the multiplicity with which $\bv_a(b)$ occurs as a summand of $J_1'$.  If $(a,b) \not\in \mathcal{J}$, set $m(a,b)$ to be the multiplicity with which $\bv_a(b)$ occurs in $J_1'$.  Then we define:
\begin{equation}\label{eq:jrep}
J_{\mathrm{rep}} = \bigoplus_{(a,b)} \bv_a(b)^{\oplus m(a,b)}  , 
\end{equation}
where summands of multiplicity $0,-1$ do not contribute to the sum. 
That is, $J_{\mathrm{rep}}$ counts the repeated summands (whence the ``rep") in $J_1'$, as well as those which are not contributing ``new" differentials targeting the reducible.  In the example of Figure \ref{fig:63}, $J_{\mathrm{rep}}=\bv_1(1)$.

 Arguing as in Lemma \ref{lem:deco1}, we obtain the following.    
\begin{lem}\label{lem:deco3.5}  
Let $\tilde{H}^{S^1}_*(X_+)$ be decomposed as in Lemma \ref{lem:deco1}, and let $\mathcal{J}$ be as in the preceding paragraphs.  Then we may set $\tilde{H}^{G}_*(X/X^{S^1})=J_1''\oplus J_2''$ with
\begin{equation*}
J_1''\simeq \bigoplus_{(a_i,b_i) \in \mathcal{J}} \bv_{a_i}(b_i),
\end{equation*}
\begin{equation*}
J_2'' \simeq \re^{\f[U]}_{\f[v]} J_2 \oplus \bigoplus_{\{i \mid d_i \equiv m+1 \,\mathrm{mod}\, 4 \}} \bv_{d_i+2}(\lfloor \frac{n_i}{2} \rfloor) \oplus \bigoplus_{\{i\mid d_i \equiv m+3 \,\mathrm{mod}\, 4 \}} \bv_{d_i}(\lfloor \frac{n_i+1}{2} \rfloor)\oplus J_{\mathrm{rep}}.
\end{equation*}
Moreover, $d_G$ is nonvanishing on each nontrivial summand of $J_1''$, and $d_G(J_2'')=0$.  Further, $a_i<a_{i+1}$ and $a_i+4b_i>a_{i+1}+4b_{i+1}$ for $i=1,...,N_0-1$, where $N_0=|\mathcal{J}|$.   
\end{lem}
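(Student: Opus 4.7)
The plan is to imitate the generator-swapping argument of Lemma \ref{lem:deco1}, but now on the $G$-equivariant side: I would start from the decomposition $\tilde{H}^G_*(X/X^{S^1}) = J_1' \oplus J_2'$ supplied by Lemma \ref{lem:deco3} and use the partial order $\succeq$ on $\f[v]$-modules of the form $\bv_d(n)$ with $d \equiv m+1 \, \mathrm{mod} \, 4$ to pare $J_1'$ down to a direct sum of pairwise incomparable, multiplicity-one summands, pushing everything else into $J_2''$.

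First I would unpack the structure of $d_G$ on $J_1'$. Since $\tilde{H}^G_*(X^{S^1}) \cong H_*(BG)[-m]$ restricted to degrees $\equiv m \, \mathrm{mod}\, 4$ is precisely a copy of $\bv_m$, and $d_G$ is $\f[v]$-equivariant by Fact \ref{fct:uqvequiv}, the action of $d_G$ on each cyclic summand $\bv_e(k) \subseteq J_1'$ is determined by its value on a top generator $z$, which must be of the form $d_G(z) = v^{-\alpha}$ with $\alpha = (e-1-m)/4 + (k-1) \geq 0$ (nonvanishing by Lemma \ref{lem:deco3}). The bound $e \leq m+1$ inherited from Lemma \ref{lem:deco1} then forces $\alpha \leq k-1$, which will be essential below.

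With this setup, the modification parallels the one in Lemma \ref{lem:deco1}. Suppose $\bv_{e_1}(k_1) \oplus \bv_{e_2}(k_2) \subseteq J_1'$ with $\bv_{e_1}(k_1) \succeq \bv_{e_2}(k_2)$, and let $z_1, z_2$ be top generators. Setting $\ell = (e_1 - e_2)/4 + (k_1 - k_2)$, a short calculation in $\bv_m$ using $\f[v]$-equivariance yields $d_G(v^\ell z_1) = v^{-\alpha_2} = d_G(z_2)$, while the inequalities $0 \leq \ell \leq \alpha_1 \leq k_1 - 1$ (which follow from $\succeq$ together with $e_i \leq m+1$) ensure $v^\ell z_1 \neq 0$. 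Hence $z_2' = z_2 + v^\ell z_1$ satisfies $d_G(z_2') = 0$, and $\langle z_1, z_2' \rangle$ still generates the same $\bv_{e_1}(k_1) \oplus \bv_{e_2}(k_2)$ submodule, so the summand spanned by $z_2'$ may be moved into $J_2'$ as a $\bv_{e_2}(k_2)$ on which $d_G$ vanishes.

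I would then iterate this procedure. Because $\bv_a(b) \succeq \bv_a(b)$ trivially, the iteration automatically forces multiplicity one; each maximal pair $(a,b) \in \mathcal{J}$ contributes a single copy to the residual $J_1''$, while the extra repeats and all $\bv_a(b)$ with $(a,b) \notin \mathcal{J}$ are shifted to $J_2''$, producing exactly the summand $J_{\mathrm{rep}}$ alongside the non-differential contributions already present in Lemma \ref{lem:deco3}. The ordering $a_i < a_{i+1}$ and $a_i + 4b_i > a_{i+1} + 4b_{i+1}$ (after relabeling) follows because any failure would make two surviving summands comparable under $\succeq$, contradicting the outcome of the iteration. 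The main delicacy is the $\f[v]$-equivariance computation in the middle step: one must verify the identity $v^\ell \cdot v^{-\alpha_1} = v^{-\alpha_2}$ in $\bv_m$ without running into the truncation relation $v \cdot 1 = 0$, and this is precisely what the bound $\alpha_1 \leq k_1 - 1$ (equivalently $e_1 \leq m+1$) guarantees. Once this check is in hand, the remainder is a formal bookkeeping copy of Lemma \ref{lem:deco1}.
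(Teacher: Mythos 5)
Your proposal is correct and follows essentially the same route as the paper: starting from the decomposition $J_1'\oplus J_2'$ of Lemma \ref{lem:deco3} and performing the triangular change of generators $z_2\mapsto z_2+v^{\ell}z_1$ (with $\ell=\frac{(e_1+4k_1)-(e_2+4k_2)}{4}$) to push non-maximal and repeated summands into $J_2''$, exactly as in the paper's imitation of Lemma \ref{lem:deco1}. The only difference is that you spell out the degree bookkeeping ($0\le \ell\le \alpha_1\le k_1-1$, so $v^{\ell}z_1\neq 0$) that the paper leaves implicit, which is a welcome but not essential addition.
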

\begin{proof}
We argue as in Lemma \ref{lem:deco1}, starting with the decomposition 
\[\tilde{H}^{G}_*(X/X^{S^1})=J_1'\oplus J_2'\]
given in Lemma \ref{lem:deco3}.  We will show that we may choose $J_1''=\bigoplus_{(a_i,b_i) \in \mathcal{J}} \bv_{a_i}(b_i)$, so that $\tilde{H}^G_*(X/X^{S^1})=J_1'' \oplus J_2''$ with $d_GJ_2''=0$.  Fix a direct sum decomposition $J_1'=\bigoplus_i \bv_{a_i}(b_i)$, for some $a_i,b_i$.  Say that $\bv_{e_1}(f_1) \subseteq J_1' $, where $(e_1,f_1) \not\in \mathcal{J}$ and choose $(e_2,f_2) \in \mathcal{J}$, with $\bv_{e_2}(f_2) \succeq \bv_{e_1}(f_1)$ and $\bv_{e_1}(f_1)\oplus \bv_{e_2}(f_2) \subseteq J_2'$.  Further, assume that $d_G$ is nontrivial on $\bv_{e_1}(f_1)$; if it were trivial, then we enlarge $J_2'$ by setting $J_2''=J_2' \oplus \bv_{e_1}(f_1)$.  Let $x_i$ be the generator of $\bv_{e_i}(f_i)$.  We choose new $\f[v]$-generators, $x_2$ of $\bv_{e_2}(f_2)$ and $v^{f_2-f_1+(e_2-e_1)/4}x_2+x_1$ of $\bv_{e_1}(f_1)$ so that $d_G$ vanishes on $\bv_{e_1}(f_1)$.  Again, then we may enlarge $J_2'$ by adding the $\bv_{e_1}(f_1)$ factor.  This shows that we can remove all summands $\bt_a(b)$ with $(a,b) \not \in \mathcal{J}$ from $J_1'$.  
Similarly, if $\bv_a(b)\oplus \bv_a(b) \subseteq J_1'$, with $(a,b)\in \mathcal{J}$ and with generators $x_1$ and $x_2$ such that $d_G(x_1)=d_G(x_2) \neq 0$, we choose the new basis $\langle x_1, x_2+x_1 \rangle$.  The differential $d_G$ is nonzero on the copy of $\bv_a(b)$ generated by $x_1$, while $d_G$ vanishes on the copy of $\bv_a(b)$ generated by $x_1+x_2$, and $J_2'$ may be enlarged.  Then we may choose $J_1''\simeq \bigoplus_{(a,b) \in \mathcal{J}} \bv_a(b)$.  The formula for $J_2''$ also follows once $J_1''$ is specified.  
\end{proof} 

In Figures \ref{fig:64} and \ref{fig:65}, we provide an example illustrating the proof of Lemma \ref{lem:deco3.5}.
\begin{figure}
\input{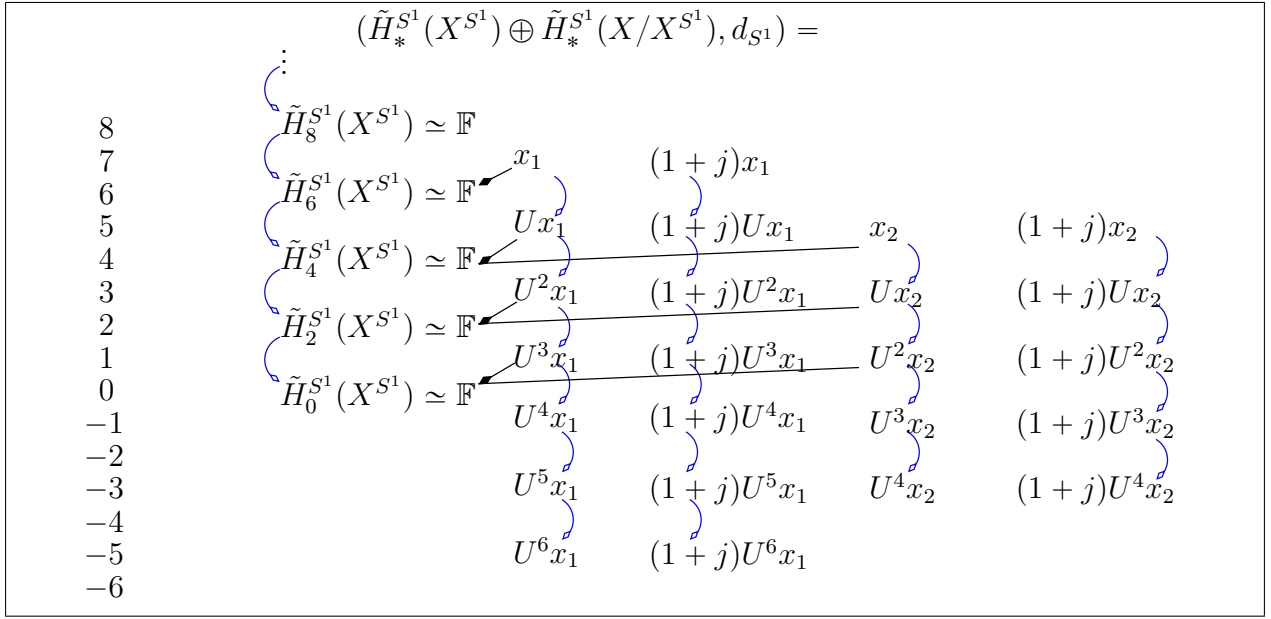}
\caption{ An example $\f[U]$-module $\tilde{H}^{S^1}_*(X^{S^1}) \oplus \tilde{H}^{S^1}_*(X/X^{S^1})$ for $X$ with $m=0$.  Here $d_1=-5,n_1=7$ and $d_2=-3,n_2=5$, and $J_2=0$.}
\label{fig:64}
\end{figure}

\begin{figure}
\input{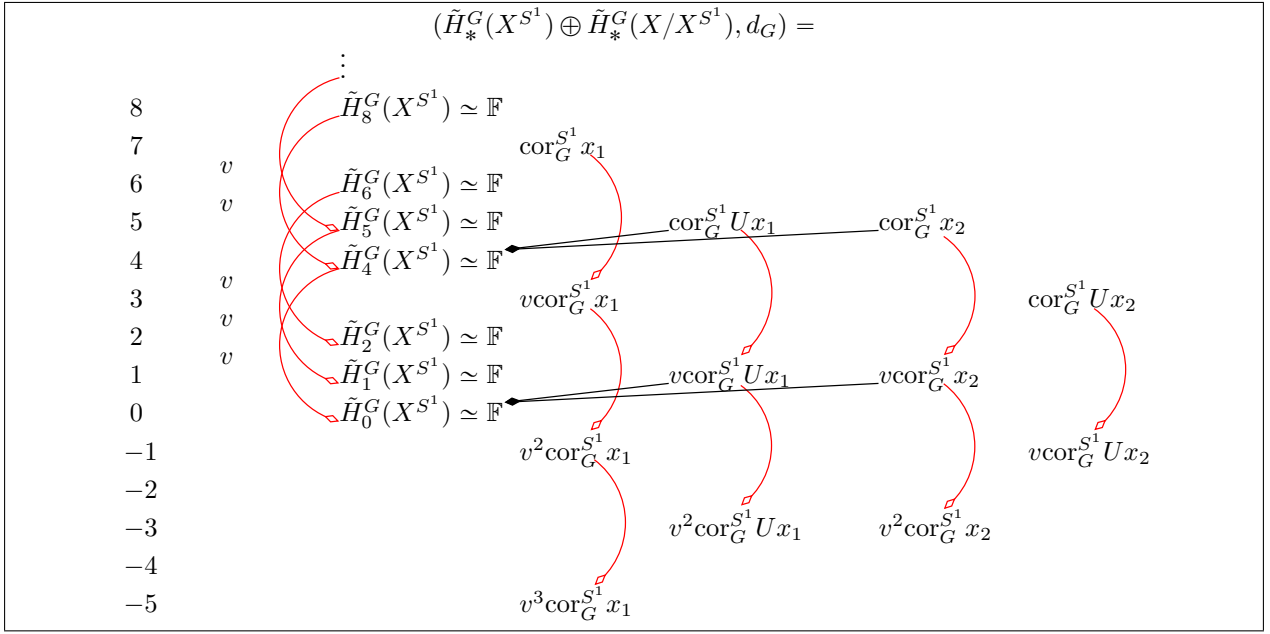}
\caption{ Here we show how to compute $(\tilde{H}^G_*(X^{S^1}) \oplus \tilde{H}^G_*(X/X^{S^1}),d_G)$, given $(\tilde{H}^{S^1}_*(X^{S^1}) \oplus \tilde{H}^{S^1}_*(X/X^{S^1}),d_{S^1})$, for the example complex given in Figure \ref{fig:64}.  The curved arrows denote the $v$-action.  Here, $J_\mathrm{rep}$ is $\bv_{-3}(3)$, and $J''_1=\bv_{-3}(3)$.  Then we have also $J_2''=\bv_{-3}(3) \oplus \bv_{-1}(2) \oplus \bv_{-5}(4)$.  If we have a basis of $\co^{S^1}_G Ux_1$, $\co^{S^1}_G x_2$ for $J_1'$, then $\co^{S^1}_GUx_1+\co^{S^1}_Gx_2$ would be a basis for $J_\mathrm{rep}$ produced by Lemma \ref{lem:deco3.5}.}
\label{fig:65}
\end{figure}
We may now compute $\tilde{H}^{G}_*(X)$ in terms of $\tilde{H}^{S^1}_*(X/X^{S^1})$ and the map $d_{S^1}$.
\begin{lem}\label{lem:deco4}
Let $\tilde{H}^{S^1}_*(X_+)$ be decomposed as in Lemma \ref{lem:deco1} and let $J_1'', J_2''$ be as in Lemma \ref{lem:deco3.5}.  Then:
\begin{eqnarray}\label{eq:splitcalc}
\tilde{H}^{G}_*(X) &=& \bv_{a_1+4b_1-1} \oplus \bv_{1+m} \oplus \bv_{2+m} \\ &&\oplus \bigoplus_{i=1}^{N_0} \bv_{a_i}(\frac{a_{i+1}+4b_{i+1}-a_i}{4})\oplus J_2'', \nonumber
\end{eqnarray}
as an $\f[v]$-module.  The $q$-action is given by the isomorphism $q: \bv_{2+m} \rightarrow \bv_{1+m}$ and the map $\bv_{1+m} \rightarrow \bv_{a_1+4b_1-1}$, which is an $\f$-vector space isomorphism in all degrees at least $a_1+4b_1-1$.  The action of $q$ annihilates $\bigoplus_{i=1}^{N_0} \bv_{a_i}(\frac{a_{i+1}+4b_{i+1}-a_i}{4})$ and $\re^{\f[U]}_{\f[v]}J_2 \oplus J_{\mathrm{rep}}\subseteq J_2''$.  

To finish specifying the $q$-action, let $x_i$ be a generator of $\bv_{d_i+2}(\lfloor \frac{n_i}{2} \rfloor)$ for $i$ such that $d_i \equiv m+1 \, \mathrm{mod}\, 4$ (respectively, let $x_i$ be a generator of $\bv_{d_i}(\lfloor \frac{n_i+1}{2} \rfloor)$ if $d_i \equiv m+3 \, \mathrm{mod}\, 4$).  Then $qx_i$ is the unique nonzero element of $H^G_*(X/X^{S^1})$ in grading $\mathrm{deg} \, x_i -1$, for all $i$.  In particular, $\tilde{H}^{S^1}_*(X/X^{S^1})$ and $d_{S^1}$ determine $\tilde{H}^{G}_*(X)$.  Here $a_{N_0+1}=m+1,b_{N_0+1}=0$.     
\end{lem}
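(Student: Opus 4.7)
The plan is to use Lemma~\ref{lem:3g} to reduce the computation of $\tilde{H}^G_*(X)$ to that of $\mathrm{coker}\, d_G$ and $\mathrm{ker}\, d_G$ separately, and then read off each piece from the description of $\tilde{H}^G_*(X/X^{S^1}) = J_1'' \oplus J_2''$ given by Lemma~\ref{lem:deco3.5}.

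For the cokernel, the first observation is that every summand $\bv_{a_i}(b_i)$ of $J_1''$ has $a_i \equiv m+1 \pmod{4}$, so $d_G(J_1'')$ lies entirely in the $\bv_m$-summand of $\tilde{H}^G_*(X^{S^1}) = \bv_m \oplus \bv_{m+1} \oplus \bv_{m+2}$. Since $d_G$ is nonvanishing on each $\bv_{a_i}(b_i)$, the top generator maps to the unique nonzero element of $\bv_m$ in degree $a_i+4b_i-5$; by $v$-equivariance, the image of this summand is the arithmetic progression of $\bv_m$-elements at degrees $\max(a_i-1,m), \max(a_i-1,m)+4, \ldots, a_i+4b_i-5$. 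The inequalities $a_1 < a_2 < \cdots$ and $a_1+4b_1 > a_2+4b_2 > \cdots$ imply that these progressions are nested inside the $i=1$ progression, so the total image in $\bv_m$ is an initial segment up to degree $a_1+4b_1-5$, and the cokernel in $\bv_m$ is the infinite tail $\bv_{a_1+4b_1-1}$. Combined with the untouched summands $\bv_{m+1} \oplus \bv_{m+2}$, this gives the first three summands of the statement.

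For the kernel, the part $J_2'' \subseteq \mathrm{ker}\, d_G$ passes directly. For the part inside $J_1''$, I will write down explicit kernel generators: with $x_i$ the top generator of $\bv_{a_i}(b_i)$, set
\[ y_i = v^{(a_i+4b_i - a_{i+1} - 4b_{i+1})/4} x_i + x_{i+1}, \qquad 1 \leq i \leq N_0 - 1, \]
and $y_{N_0} = v^{(a_{N_0}+4b_{N_0}-1-m)/4} x_{N_0}$, with the convention $a_{N_0+1}=m+1,\ b_{N_0+1}=0$. A direct check using $v$-equivariance of $d_G$ shows $d_G(y_i) = 0$, and $v^k y_i$ is nonzero precisely for $0 \leq k < (a_{i+1}+4b_{i+1}-a_i)/4$, so $y_i$ generates a copy of $\bv_{a_i}(\frac{a_{i+1}+4b_{i+1}-a_i}{4})$ inside the kernel. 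A degreewise dimension count, comparing $\dim J_1''$ with $\dim \mathrm{Im}(d_G|_{J_1''})$ computed from the nested image above, then verifies that these account for the entire kernel in $J_1''$, yielding the $\bigoplus_{i=1}^{N_0}$ summand.

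For the $q$-action, Lemma~\ref{lem:3g} and the $\f[q,v]/(q^3)$-module structure of $\tilde{H}^G_*(X^{S^1}) \cong H_*(BG)[-m]$ give the claimed $q$-isomorphism $\bv_{m+2} \to \bv_{m+1}$ and the map $\bv_{m+1} \to \bv_{a_1+4b_1-1}$, since quotienting by $\mathrm{Im}\, d_G$ annihilates exactly the source degrees below $a_1+4b_1$. For the kernel summands, I identify which elements come from $\mathrm{Im}(\phi_2|_{\mathrm{ker}\, d_{S^1}})$: the $y_i$'s above are $\co^{S^1}_G$-images of the analogous kernel basis constructed in Lemma~\ref{lem:deco2} (since the construction of $y_i$ mirrors the $S^1$-level construction under $U \mapsto v^{1/2}$-corestriction), and the same is true of $\re^{\f[U]}_{\f[v]}J_2$ and of $J_{\mathrm{rep}}$; Lemma~\ref{lem:3g} then forces $q$ to annihilate all of them. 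The remaining summands $\bv_{d_i+2}(\lfloor n_i/2 \rfloor)$ and $\bv_{d_i}(\lfloor (n_i+1)/2 \rfloor)$ of $J_2''$ do not lift to $\mathrm{ker}\, d_{S^1}$ by the construction in Lemma~\ref{lem:deco3.5}, so the Gysin argument of Lemma~\ref{lem:3g} forces $q$ to send their generators to the unique nonzero element of $\mathrm{coker}\, d_G$ one degree below. The main obstacle is bookkeeping --- verifying that the $y_i$ truly exhaust the kernel in $J_1''$ and that the $\mathrm{Im}(\phi_2|_{\mathrm{ker}\, d_{S^1}})$ versus complement classification of $J_2''$ summands matches the $q$-action claims on the nose.
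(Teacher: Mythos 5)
Your proposal follows essentially the same route as the paper's proof: the same splitting via Lemma \ref{lem:3g}, the identical explicit kernel basis $y_i=x_{i+1}+v^{b_i-b_{i+1}+(a_i-a_{i+1})/4}x_i$ (your exponent is just a rewriting of this), the same identification of $\mathrm{coker}\,d_G$ via the maximal power of $v$ hit by $d_G(x_1)$, and the same appeal to Lemma \ref{lem:3g} together with Fact \ref{fct:onimageofphi2} to sort the summands of $J_2''$ by whether they lie in $\mathrm{Im}\,\phi_2|_{\mathrm{ker}\,d_{S^1}}$. The argument is correct; your dimension count making explicit that the $y_i$ exhaust $\mathrm{ker}\,d_G\cap J_1''$ is a detail the paper leaves as "straightforward to check."
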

\begin{proof}
The proof is analogous to that of Lemma \ref{lem:deco2}.  We choose a basis for $\mathrm{ker}\, d_G$ as follows.   Write the generator of $\bv_{a_i}(b_i)$ as $x_i$.  Then set $y_i=x_{i+1}+v^{b_i-b_{i+1}+(a_i-a_{i+1})/4}x_i$ for $i=1,...,N_0-1,$ and $y_{N_0}=v^{(a_{N_0}+4b_{N_0}-1)/4}x_{N_0}$.  It is clear that $y_i \in \mathrm{ker} \, d_G$ for all $i$, and it is straightforward to check that $\{y_i\}$ generates $\mathrm{ker} \, d_G \cap J_1''$.  The $y_i$ generate the term $\bigoplus^{N_0}_{i=1} \bv_{a_i}(\frac{a_{i+1}+4b_{i+1}-a_{i}}{4})$ in (\ref{eq:splitcalc}).  Since $d_G$ is $q$-equivariant and $q$ annihilates $\tilde{H}^G_*(X/X^{S^1})$, the modules $\bv_1$ and $\bv_2 \subset H_*(BG)$ are disjoint from the image of $d_G$.  Moreover, $v^{-\frac{a_1+4b_1-5-m}{4}}=d_G(x_1)$, where $v^{-k}$ is the unique element $x$ of $H_*(BG)[-m]$ with $v^kx$ an $\f$-generator of $H_0(BG)[-m]$.  Since there are no elements $x \in J_1''$ with grading greater than $a_1+4b_1-4$, the maximal $k$ for which $v^{-k} \in \mathrm{Im} \; d_G$ is $\frac{a_1+4b_1-5-m}{4}$.  It follows that 
\[ \mathrm{coker}\; d_G = \bv_{a_1+4b_1-1} \oplus \bv_{1+m} \oplus \bv_{2+m}. \]
Furthermore, $J_2'' \subseteq \mathrm{ker} \; d_G$ by definition, contributing the $J_2''$ term of (\ref{eq:splitcalc}).  To determine the $q$-action on $\mathrm{ker} \, d_G$, we use Lemma \ref{lem:3g}.  Indeed, $q$ takes elements not in the image of $\phi_2|_{\mathrm{ker}\, d_{S^1}}$ to nontrivial elements of $\mathrm{coker}\, d_G$, and $q$ vanishes on $\mathrm{Im}\, \phi_2|_{\mathrm{ker}\,d_{S^1}}$.  Using Fact \ref{fct:onimageofphi2}, we obtain the $q$-action on $J_2''$ as in the Lemma. The $q$-action on $\mathrm{coker} \, d_G$ is given by that on $H_*(BG)$.   
\end{proof}

We combine Lemmas \ref{lem:deco1}-\ref{lem:deco4} to determine $\tilde{H}^{G}_*(X)$ from $\tilde{H}^{S^1}_*(X)$.  We record this as the following Theorem.

\begin{thm}\label{thm:onjspaces}
Let $X=(X',p,h/4)\in \mathfrak{E}$ and $X'$ be a $j$-split space of type SWF.  Then:
\begin{equation}\label{eq:hffullswfh}
\tilde{H}^{S^1}_*(X)=\bt_{s+d'_1+2n_1-1} \oplus \bigoplus^{N}_{i=1}\bt_{s+d'_i}(\frac{d'_{i+1}+2n_{i+1}-d'_i}{2})\oplus \bigoplus^N_{i=1} \bt_{s+d'_i}(n_i) \oplus J^{\oplus 2}[-s],
\end{equation}
for some constants $s,d'_i,n_i,N$ and some $\f[U]$-module $J$, where $2n_i+d'_i > 2n_{i+1}+d'_{i+1}$ and $d'_{i}<d'_{i+1}$ for all $i$,  $2n_N+d'_N \geq 3$, $d'_N \leq 1$, and $d'_{N+1}=1,n_{N+1}=0$.  Let $\mathcal{J}_0=\{(a_k,b_k)\}_k$ be the collection of pairs containing all $(d'_i,\lfloor \frac{n_i+1}{2} \rfloor)$ for $d'_i \equiv 1 \;\mathrm{mod}\; 4$ and all $(d'_i+2,\lfloor \frac{n_i}{2} \rfloor)$ for $d'_i \equiv 3 \; \mathrm{mod} \; 4$, counting multiplicity.  Let $(a,b) \succeq (c,d)$ if $a+4b \geq c+4d$ and $a \geq c$, and let $\mathcal{J}$ be the subset of $\mathcal{J}_0$ consisting of pairs maximal under $\succeq$ (not counted with multiplicity).  If $(a,b) \in \mathcal{J}$, set $m(a,b)+1$ to be the multiplicity of $(a,b)$ in $\mathcal{J}_0$.  If $(a,b) \not\in \mathcal{J}$, set $m(a,b)$ to be the multiplicity of $(a,b)$ in $\mathcal{J}_0$.  Let $|\mathcal{J}|=N_0$ and order the elements of $\mathcal{J}$ so that $\mathcal{J}=\{(a_i,b_i) \}_i $, with $a_i+4b_i > a_{i+1} +4b_{i+1}$.  We interpret $a_{N_0+1}=1,b_{N_0+1}=0$.  Then: 
\begin{eqnarray} \tilde{H}^G_*(X) & = & (\bv_{4\lfloor \frac{d'_1+2n_1+1}{4} \rfloor} \oplus \bv_{1} \oplus \bv_{2} \\ &&\oplus \bigoplus_{i=1}^{N_0} \bv_{a_i}(\frac{a_{i+1}+4b_{i+1}-a_i}{4}) \oplus \bigoplus_{(a,b) \in \mathcal{J}_0} \bv_a(b)^{\oplus m(a,b)} \oplus \re^{\f[U]}_{\f[v]}J \nonumber\\ & & \oplus \bigoplus_{\{i \mid d'_i \equiv 1\; \mathrm{mod}\;4\} } \bv_{d'_i+2}(\lfloor \frac{n_i}{2} \rfloor) \oplus \bigoplus_{\{i \mid d'_i \equiv 3 \; \mathrm{mod}\; 4 \} } \bv_{d'_i}(\lfloor \frac{n_i+1}{2} \rfloor) )[-s].\nonumber
\end{eqnarray}

The $q$-action is given by the isomorphism $q:\bv_2[-s] \rightarrow \bv_1[-s]$ and the map $q:\bv_1[-s] \rightarrow \bv_{4\lfloor \frac{d'_1+2n_1+1}{4}\rfloor}[-s]$ which is an $\f$-vector space isomorphism in all degrees (in $\bv_1[-s]$) greater than or equal to $4\lfloor\frac{d'_1+2n_1+1}{4}\rfloor+s+1$, and vanishes on elements of $\bv_1[-s]$ of degree less than $4\lfloor\frac{d'_1+2n_1+1}{4}\rfloor+s+1$.

The action of $q$ annihilates $\bigoplus_{i=1}^{N_0} \bv_{a_i}(\frac{a_{i+1}+4b_{i+1}-a_i}{4})[-s]$ and $(\bigoplus_{(a,b) \in \mathcal{J}_0} \bv_a(b)^{\oplus m(a,b)}\oplus \re^{\f[U]}_{\f[v]}J)[-s]$.  

To finish specifying the $q$-action, let $x_i$ be a generator of $\bv_{d_i'+2}(\lfloor \frac{n_i}{2} \rfloor)[-s]$ for $i$ such that $d_i' \equiv 1 \, \mathrm{mod}\, 4$ (respectively, let $x_i$ be a generator of $\bv_{d_i'}(\lfloor \frac{n_i+1}{2} \rfloor)[-s]$ if $d_i' \equiv 3 \, \mathrm{mod}\, 4$).  Then $qx_i$ is the unique nonzero element of $(\bv_{4\lfloor \frac{d'_1+2n_1+1}{4} \rfloor} \oplus \bv_{1} \oplus \bv_{2})[-s]$ in grading $\mathrm{deg} \, x_i -1$, for all $i$.
\end{thm}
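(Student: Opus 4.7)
The plan is to assemble the statement from Lemmas \ref{lem:deco1}--\ref{lem:deco4}, keeping careful track of how the data $(d'_i,n_i,J)$ appearing in the hypothesis (\ref{eq:hffullswfh}) is the \emph{same} data one reads off by applying Lemma \ref{lem:deco1} to $X'$, and how the grading shift encoded by $(X',p,h/4) \in \E$ is absorbed into the final answer via (\ref{eq:borelgeneral}).

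First, by Lemma \ref{lem:jsplitcomplexes} I may replace $X=(X',p,h/4)$ by $(Z,-m+p,h/4)$ for a $j$-split chain complex $Z$ at level $0$, and use (\ref{eq:chainborel})--(\ref{eq:chains1borel}) to carry out all the Borel homology computations at the chain level. The overall grading shift to worry about is an integer $s$, chosen so that $\bt_{s+\cdot}$ and $\bv_{s+\cdot}$ appear as in the statement; concretely $s$ is the shift dictated by $(p,h/4)$ in (\ref{eq:borelgeneral}). After this reduction, I apply Lemma \ref{lem:deco1} to $\tilde{H}^{S^1}_*(X'_+)$ to obtain a decomposition $J_1 \oplus J_2$ with $J_1 = \bigoplus \bt_{d_i}(n_i)$ satisfying the $d_i<d_{i+1}$, $2n_i+d_i>2n_{i+1}+d_{i+1}$ constraints, and $d_{S^1}$ nonvanishing on each summand. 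Lemma \ref{lem:deco2} then identifies $\tilde{H}^{S^1}_*(X)$ with the module displayed in (\ref{eq:hffullswfh}); by uniqueness of such a decomposition (the sets $\{d'_i\}$, $\{n_i\}$ are determined from $\tilde{H}^{S^1}_*(X)$ by the ``maximality" argument inside the proof of Lemma \ref{lem:deco1}), the $d'_i$ in the theorem are equal to the $d_i$ coming from Lemma \ref{lem:deco1}, and the $J$ in (\ref{eq:hffullswfh}) is $J_2$.

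Next, feed this $J_1 \oplus J_2$ into Lemma \ref{lem:deco3} to obtain a decomposition of $\tilde{H}^G_*(X/X^{S^1}) = J_1' \oplus J_2'$ (remembering, via (\ref{eq:restri}) and (\ref{eq:quotrel}), that the restriction functor $\re^{\f[U]}_{\f[v]}$ splits each $\bt_d(n)$ into the two $\bv$-summands appearing there). Then apply Lemma \ref{lem:deco3.5} to refine to $J_1'' \oplus J_2''$, and here observe that the set $\mathcal{J}$ constructed in the theorem statement \emph{matches exactly} the set of pairs in Lemma \ref{lem:deco3.5}: the set $\mathcal{J}_0$ of the theorem is precisely the collection of summands of $J_1'$, and the partial order $\succeq$ is exactly the one used in the proof of Lemma \ref{lem:deco3.5} to extract a maximal subsystem on which $d_G$ is nonvanishing. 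The summands of $J_1'$ that are \emph{not} maximal contribute to $J_{\mathrm{rep}}$ of (\ref{eq:jrep}), which accounts for the $\bigoplus_{(a,b) \in \mathcal{J}_0} \bv_a(b)^{\oplus m(a,b)}$ factor in the statement.

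Finally, apply Lemma \ref{lem:deco4} verbatim, with $m=0$ (since after the reduction $X'$ is at level $0$), to obtain the $\f[v]$-module structure of $\tilde{H}^G_*(X)$: the ``tower + two towers'' piece $\bv_{a_1+4b_1-1}\oplus \bv_1 \oplus \bv_2$, the finite truncated towers $\bv_{a_i}(\tfrac{a_{i+1}+4b_{i+1}-a_i}{4})$ coming from $\mathrm{ker}\, d_G \cap J_1''$, and the $J_2''$ piece. Shifting the grading by $s$ through (\ref{eq:borelgeneral}) replaces $\bv_{a_1+4b_1-1}$ by $\bv_{4\lfloor (d'_1+2n_1+1)/4 \rfloor}$ (this is the identity $a_1+4b_1-1 = 4\lfloor(d'_1+2n_1+1)/4\rfloor$, which one checks case-by-case according to $d'_1 \bmod 4$), and puts every other summand in the claimed degree. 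The $q$-action in the theorem is then read off directly from Lemmas \ref{lem:3g} and \ref{lem:deco4}: $q$ acts as stated on $\bv_1 \oplus \bv_2 \oplus \bv_{4\lfloor(d'_1+2n_1+1)/4\rfloor}$ (the image of the Gysin connecting map), annihilates everything in $\mathrm{Im}\,\phi_2|_{\mathrm{ker}\, d_{S^1}}$ (the truncated towers and $\re^{\f[U]}_{\f[v]}J \oplus J_{\mathrm{rep}}$), and sends each generator of the remaining $\bv_{d'_i+2}(\lfloor n_i/2 \rfloor)$ or $\bv_{d'_i}(\lfloor (n_i+1)/2 \rfloor)$ to the unique element of the cokernel in degree one lower. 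The only real work is bookkeeping: verifying that, after the $s$-shift, the generators $a_i,b_i$ produced by Lemma \ref{lem:deco3.5} agree with the $\mathcal{J}$ in the statement, and that the ``parity case'' for $\bv_{a_1+4b_1-1}$ collapses to $\bv_{4\lfloor (d'_1+2n_1+1)/4 \rfloor}$ in all four residues of $d'_1$ mod $4$. This last computation is the only substantive obstacle, but it is an elementary check with no hidden geometric content.
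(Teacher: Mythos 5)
Your overall route is the same as the paper's: reduce to a level-zero $j$-split chain complex via Lemma \ref{lem:jsplitcomplexes}, chain together Lemmas \ref{lem:deco1}--\ref{lem:deco4}, and absorb the $(p,h/4)$ shift at the end. That part is fine, and the bookkeeping you single out as the "only substantive obstacle" (the identity $a_1+4b_1-1=4\lfloor(d'_1+2n_1+1)/4\rfloor$, checked by residue of $d'_1$ mod $4$) is indeed routine.

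The genuine gap is in the step you dispose of parenthetically: the claim that $s$, $\{d'_i\}$, $\{n_i\}$ and $J$ are determined by $\tilde{H}^{S^1}_*(X)$. You attribute this to "the maximality argument inside the proof of Lemma \ref{lem:deco1}", but that argument takes as input the pair $(\tilde{H}^{S^1}_*(X_+),d_{S^1})$ --- i.e.\ the differential onto the reducible tower --- and only shows that one can \emph{choose} a decomposition whose summands are incomparable under $\succeq$ and on which $d_{S^1}$ is nonvanishing. It does not show that the resulting constants can be recovered from the graded $\f[U]$-module $\tilde{H}^{S^1}_*(X)$ alone, which is what the theorem requires: the statement computes $\tilde{H}^G_*(X)$ purely from the right-hand side of (\ref{eq:hffullswfh}), so without this well-definedness the conclusion is ambiguous --- and this recovery is the real content of the theorem, since Lemmas \ref{lem:deco3}--\ref{lem:deco4} already compute $\tilde{H}^G_*(X)$ from the mapping-cone data. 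The paper supplies it by a separate argument: the pairs $(d'_i,n_i)$ are detected as the isomorphism classes $\bt_d(x)$ occurring with \emph{odd} multiplicity among the simple summands of $\tilde{H}^{S^1}_*(X)$ (using $n_i>n_{i+1}+\frac{d'_{i+1}-d'_i}{2}$ to decide which of the at most two odd-multiplicity classes in a given bottom degree is $\bt_{s+d'_i}(n_i)$), after which $J$ is determined; and $s$ is pinned down by observing that $\tilde{H}^{S^1}_{*,\mathrm{red}}(X)$ has odd $\f$-rank precisely in degrees $d\equiv s+1 \pmod 2$ with $s<d<s+d'_1+2n_1$ and even rank elsewhere. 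You need to add this argument (or an equivalent one); as written, your proof establishes the formula for one particular presentation of $\tilde{H}^{S^1}_*(X)$ but not that the answer is independent of the presentation, and it never addresses the uniqueness of $s$ at all.
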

\begin{proof}
We show that for $M$ an $\f[U]$-module of the form (\ref{eq:deco2}), the sets $\{n_i\}$,$\{d'_i\}$, and the module $J_2$, are determined by the (graded) isomorphism type of $M$, to establish that all the constants in (\ref{eq:hffullswfh}) are well-defined (independent of the choice of direct sum decomposition of $\tilde{H}^{S^1}_*(X)$).  For a fixed $d$, there are at most two distinct isomorphism classes $\bt_d(x)$, each appearing as summands of $M$ that occur an odd number of times in the decomposition of $M$ into simple submodules (not including the infinite tower).  Such a submodule $\bt_d(x)$ will be called a submodule \emph{occurring with odd multiplicity}.  For any $d$ such that there is at least one isomorphism class $\bt_d(x)$ with odd multiplicity, then $d=s+d'_i$ for some $i$, using (\ref{eq:deco2}).  Consider the case that there are exactly two such isomorphism classes $\bt_d(x_1)$ and $\bt_d(x_2)$ with, say, $x_1 < x_2$.  Setting $d=s+d'_i$ for a fixed $i$, and using (\ref{eq:deco2}), we see that $x_2=n_i$, since $n_i>n_{i+1}+\frac{d'_{i+1}-d'_i}{2}$ for all $i$.  If instead there is one (graded) isomorphism class $T_d(x)$ with odd multiplicity, Lemma \ref{lem:deco2} shows $x=n_N$.  If, for a fixed $d$, there are no isomorphism classes $\bt_d(x)$ occurring with odd multiplicity, then $d \not\in \{s+d'_i\}$.  Thus, we see that $\{d_i\}$ and $\{n_i\}$ are determined by the isomorphism type of $M$ as a graded $\f[U]$-module.  It is then easy to see that $J_2$ is also determined by the isomorphism type of $M$.  

In addition, we find that $s$ in (\ref{eq:hffullswfh}) exists and is uniquely determined.  First, we check that there is an $s$ so that (\ref{eq:hffullswfh}) holds.  Observe that $\tilde{H}^{S^1}_*(X)=\tilde{H}^{S^1}_*(X')[p+h]$.  Say that $X'$ is a space of type SWF at level $m$, and set $d_i'=d_i-m$.  Then Lemma \ref{lem:deco2} shows that (\ref{eq:hffullswfh}) holds for this choice of $d'_i$, and $s=m-p-h$.  We next show that there is a unique $s$ so that (\ref{eq:hffullswfh}) holds.  To see this, observe that $\tilde{H}^{S^1}_{*,\mathrm{red}}(X)$, as in (\ref{eq:hffullswfh}), is an $\f$-module of \emph{odd} rank in degrees $d$ such that $d \equiv s+1 \, \mathrm{mod} \, 2$, with $s<d<s+d'_1+2n_1$, and of even rank (possibly zero) in all other degrees (Recall from (\ref{eq:s1redhom}) the definition of $\tilde{H}^{S^1}_{*,\mathrm{red}}$).  Then, for $M$ an $\f[U]$-module that is the homology of $(X',p,h/4)$ with $X'$ $j$-split, we have that $s=m-p-h$ is determined by $M$.  

As in (\ref{eq:s1split}),
\[ \tilde{H}^{S^1}_*(X)=\mathrm{coker} d_{S^1} \oplus \mathrm{ker} d_{S^1}.  \]
Additionally, given $M$, we have determined the sets $\{d'_i\},\{n_i\}$ appearing in Lemma \ref{lem:deco1}.  Then Lemmas \ref{lem:deco3} and \ref{lem:deco3.5} show that $J_1''=\oplus_{(a_i,b_i) \in \mathcal{J}}\bv_{a_i}(b_i)$, for $a_i,b_i$ as in the statement of the Theorem, and that 
\begin{equation}\label{eq:thj2form}
J_2'' = \re^{\f[U]}_{\f[v]} J \oplus \bigoplus_{\{i| d_i \equiv 1 \,\mathrm{mod}\, 4 \}} \bv_{d_i+2}(\lfloor \frac{n_i}{2} \rfloor) \oplus \bigoplus_{\{i| d_i \equiv 3 \,\mathrm{mod}\, 4 \}} \bv_{d_i}(\lfloor \frac{n_i+1}{2} \rfloor)\oplus \bigoplus_{(a,b) \in \mathcal{J}_0} \bv_a(b)^{\oplus m(a,b)}.
\end{equation}
Here we have replaced the notation $\re^{\f[U]}_{\f[q,v]/(q^3)}$ by $\re^{\f[U]}_{\f[v]}$ since $q$ acts by $0$.  Finally, Lemma \ref{lem:deco4} determines $\tilde{H}^G_*(X)$ given $J_1''$ and $J_2''$.  This completes the proof of the Theorem.

\end{proof}
\begin{rmk}\label{rmk:thomonjspacescomplexes}
Since every $j$-split chain complex of type SWF is the cellular chain complex of some space of type SWF, Theorem \ref{thm:onjspaces} also applies to $j$-split chain complexes. 
\end{rmk}

We give an example illustrating the steps of the proof of Theorem \ref{thm:onjspaces}.  Let $X$ be a $j$-split space, and say that $\tilde{H}^{S^1}_*((X,p,h/4))$ is given as in Figure \ref{fig:6thm1}; that is:
\begin{equation*}
\tilde{H}^{S^1}_*((X,p,h/4)) \simeq \bt_6 \oplus \bt_{-5}(6) \oplus \bt_{-5}(5) \oplus \bt_{-3}(4)\oplus \bt_{-3}(3) \oplus \bt_{-1}(2) \oplus \bt_{-1}(1) .
\end{equation*}

\begin{figure}
\input{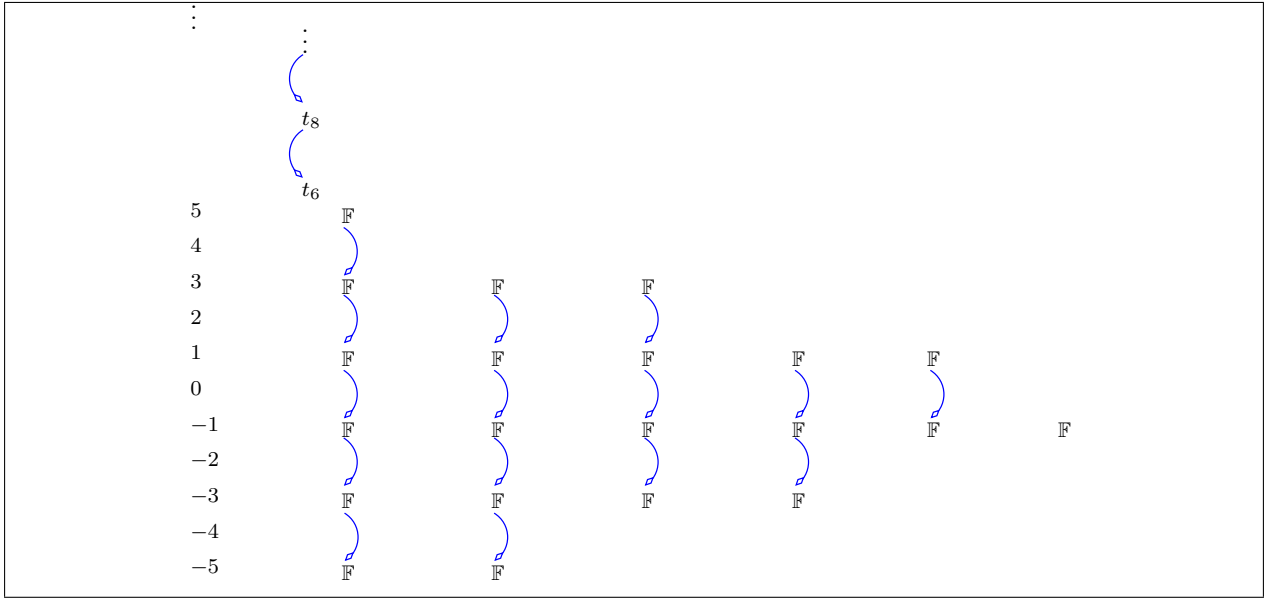}
\caption{The $S^1$-Borel Homology of $(X,p,h/4) \in \E$.  The variables $t_i$ stand for entries of the infinite tower in grading $i$.  }
\label{fig:6thm1}
\end{figure}

We calculate $d'_i,n_i$.  As specified in the proof of Theorem \ref{thm:onjspaces}, we see $\{d'_i+m-p-h\}=\{-5,-3,-1\}$, and $\{n_i\}=\{6,4,2\}$.  We see that $m-p-h=0$ because $\tilde{H}^{S^1}_{-1,\mathrm{red}}((X,p,h/4))$ (i.e. the contribution in degree $-1$ not coming from the tower) is of even rank, while $\tilde{H}^{S^1}_{1,\mathrm{red}}((X,p,h/4))$ has odd rank.  So $s=0$ in Theorem \ref{thm:onjspaces}.  Then $\{d'_i\}=\{-5,-3,-1\}$.  Furthermore, we see $J_2=0$.  Then we recover $( \tilde{H}^{S^1}_*((X/X^{S^1},p,h/4)) \oplus \tilde{H}^{S^1}_*((X^{S^1},p,h/4)),d_{S^1})$, as in Figure \ref{fig:6thm2}.

\begin{figure}
\input{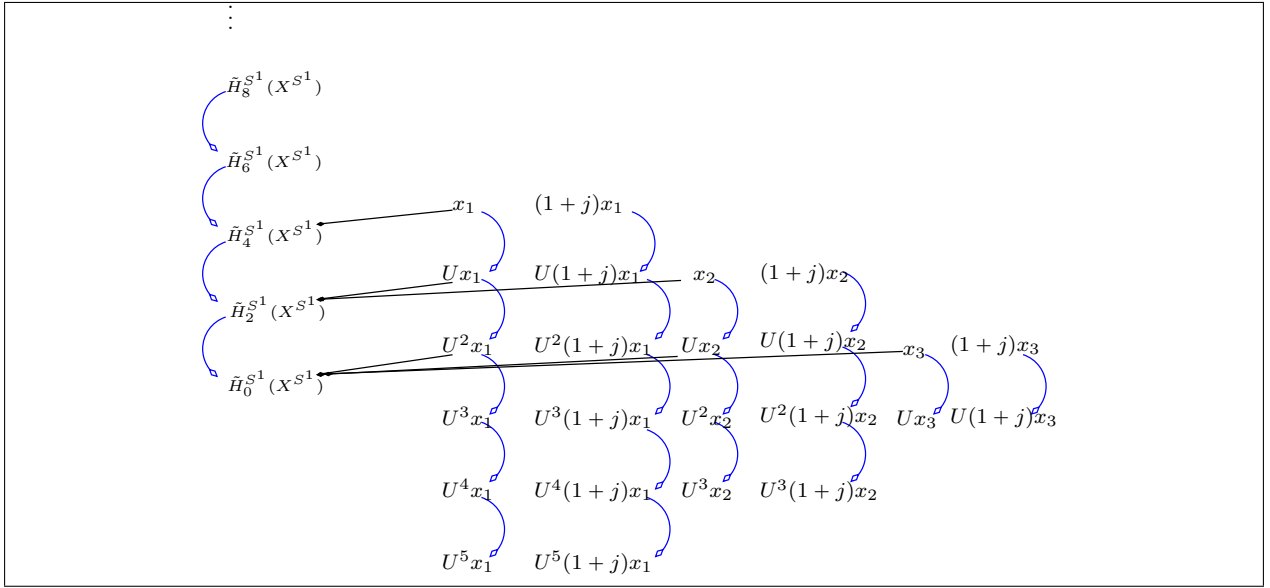}
\caption{The complex $(\tilde{H}^{S^1}_*(X/X^{S^1})[p+h] \oplus \tilde{H}^{S^1}_*((X^{S^1},p,h/4)),d_{S^1})$ corresponding to Figure \ref{fig:6thm1}.}
\label{fig:6thm2}
\end{figure}

Using Lemma \ref{lem:deco3}, we have $J_1'=\bv_{-3}(3)\oplus \bv_{-3}(2) \oplus \bv_{1}(1) $ and $J_2'=\bv_{-5}(3) \oplus \bv_{-1}(2) \oplus \bv_{-1}(1)$, as in Figure \ref{fig:6thm3}.  We see that $\bv_{-3}(2)$ is not maximal in $J_1'$, so $m(-3,2) =1$, while $m(-3,3)=0$, since $\bv_{-3}(3)$ is maximal under $\succeq$.  Similarly, $\bv_1(1)$ is maximal, so $m(1,1)=0$.  Then $J_\mathrm{rep}=\bv_{-3}(2)$, using (\ref{eq:jrep}).  

\begin{figure}
\input{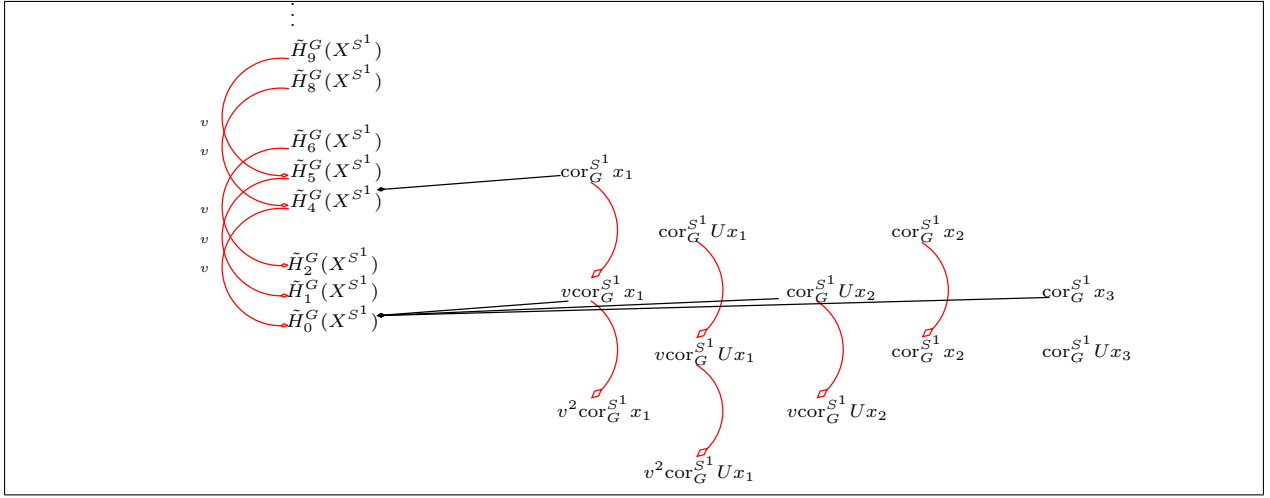}
\caption{The complex $(\tilde{H}^{G}_*(X/X^{S^1})[p+h] \oplus \tilde{H}^{G}_*((X^{S^1},p,h/4)),d_{G})$ corresponding to Figure \ref{fig:6thm1}. } 
\label{fig:6thm3}
\end{figure}

In Figure \ref{fig:6thm3}, $J_1''= \bv_{-3}(3)\oplus \bv_1(1)$.  Then Lemma \ref{lem:deco4} allows us to compute $\tilde{H}^G_*(X)$, as in Figure \ref{fig:6thm4}.

\begin{figure} 
\input{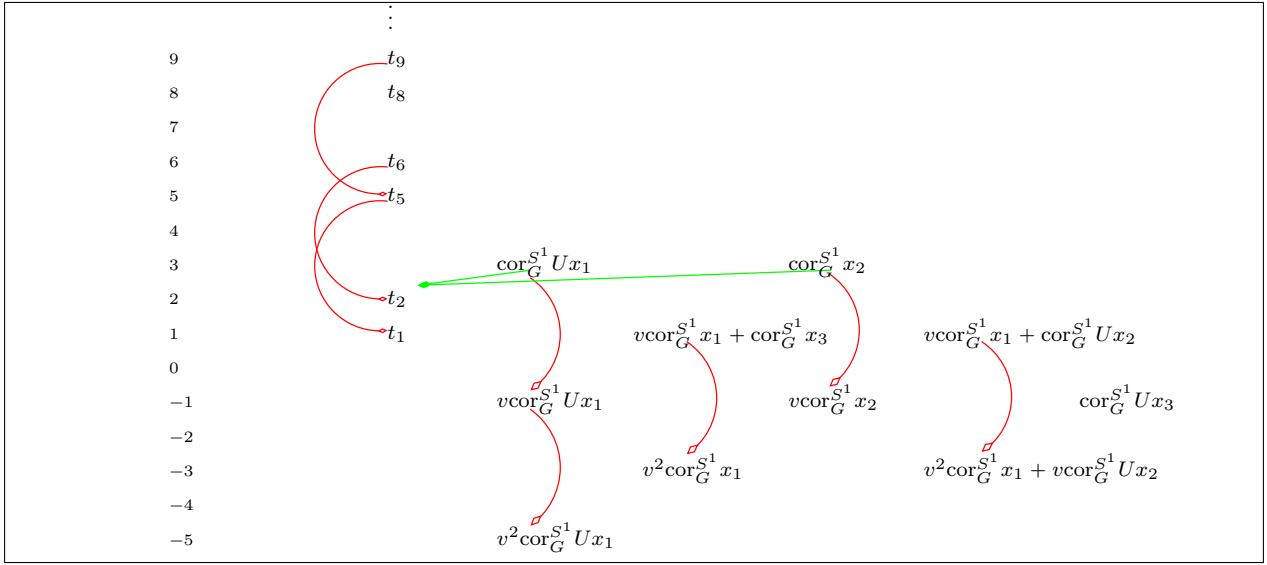}
\caption{Finishing the calculation of $\tilde{H}^G_*(X)$ for the example of Figure \ref{fig:6thm1}.  The curved arrows again represent the $v$-action.  The straight arrows indicate a nontrivial $q$-action.}
\label{fig:6thm4}
\end{figure}

We find $\tilde{H}^G_*(X)=\bv_8 \oplus \bv_1 \oplus \bv_2\oplus \bv_{-5}(3) \oplus \bv_{-3}(2)^{\oplus 2} \oplus \bv_{-1}(2) \oplus \bv_{-1}(1)$, in accordance with Theorem \ref{thm:onjspaces}.

\subsection{Chain local equivalence and $j$-split spaces}\label{subsec:chain}

Using Theorem \ref{thm:onjspaces}, we can determine the chain local equivalence class of $j$-split spaces.  We start with some results on $j$-split chain complexes.  
First, write $\mathcal{S}_{d}(n)$ for the free $\G$-module generated by 
\[\langle x_{d},x_{d+2},...,x_{d+2n-2} \rangle,\]
 with $x_i$ of degree $i$ and $\partial(x_i)=s(1+j^2)x_{i-2}$.  A quick computation gives $H^{S^1}_*(\mathcal{S}_{d}(n))=\bt_{d}(n)^{\oplus 2}$ as $\f[U]$-modules, where $H^{S^1}_*(Z)$ is defined as in (\ref{eq:chains1borel}).  Moreover, for an $\f[U]$-module $J=\bigoplus_i \bt_{e_i}(m_i)$, let $S(J)=\bigoplus_i \mathcal{S}_{e_i}(n_i)$.

\begin{prop}\label{prop:loctyp}
Let $C= \langle \fr \rangle \tilde{\oplus} (C_+ \oplus C_-)$ be a $j$-split chain complex and
\begin{equation}\label{eq:formloctyp}
H^{S^1}_*(C)=\bt_{d_1+2n_1-1} \oplus\bigoplus^{N}_{i=1}\bt_{d_i}(\frac{d_{i+1}+2n_{i+1}-d_i}{2})\oplus \bigoplus^N_{i=1} \bt_{d_i}(n_i) \oplus J^{\oplus 2},
\end{equation} 
where $d_{i+1}> d_i$ and $2n_i+d_i>2n_{i+1}+d_{i+1}$, $2n_{N}+d_{N}\geq 3$, and $d_N \leq 1$.  We interpret $d_{N+1}=1$, $n_{N+1}=0$.  Then $C$ is homotopy equivalent to the chain complex
\begin{equation}\label{eq:locstandardtyp}
( \langle \fr \rangle \tilde{\oplus} (\bigoplus_i \mathcal{S}_{d_i}(n_i))) \oplus S(J), \end{equation} where $\partial(\fr)=0$, $j\fr=\fr,\, s\fr=0$, and $\mathrm{deg}\, (\fr)=0$.  Furthermore, let each factor $\mathcal{S}_{d_i}(n_i)$ have generators $x^i_j$, with $\mathrm{deg} \; x^i_j =j$.  Then $\partial x^i_1=\fr+s(1+j^2)x^i_{-1}$ for all $i$.  
\end{prop}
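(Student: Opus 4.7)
The plan is to exhibit a $\G$-equivariant chain homotopy equivalence between $C$ and the model complex by carefully choosing $C^{CW}_*(S^1)$-generators of the ``positive half'' $C_+$ that realize the summand decomposition of $H^{S^1}_*(C_+)$ provided by Lemma~\ref{lem:deco1}, and ensuring the chain-level differentials take the prescribed canonical form.

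First I would use the $j$-splitness to write $C = \langle \fr \rangle \tilde{\oplus}(C_+ \oplus jC_+)$. Since $C$ is of type SWF at level $0$ the $G$-action on $C \setminus \langle \fr\rangle$ is free, so $C_+$ is a free $C^{CW}_*(S^1)$-module. Applying Lemma~\ref{lem:deco1} to the summand $H^{S^1}_*(C_+) \subseteq H^{S^1}_*(C)$, fixes a decomposition $H^{S^1}_*(C_+) \cong \bigoplus_{i=1}^{N}\bt_{d_i}(n_i) \oplus J$ in which $d_{S^1}$ is non-vanishing on each $\bt_{d_i}(n_i)$ and vanishes on $J$; by Lemma~\ref{lem:deco2}, the numerical data $d_i,n_i$ here coincide with those appearing in (\ref{eq:formloctyp}).

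Second, I would construct $C^{CW}_*(S^1)$-generators $x^i_j \in C_+$ for each $j \in \{d_i, d_i+2, \ldots, d_i+2n_i-2\}$ (and analogously for the $J$-summand) by descending induction on degree. Starting from the top-degree generator, lifted as an $S^1$-cycle representing the top of $\bt_{d_i}(n_i)$, the Borel-homology class of the lift in each lower degree is forced to be the corresponding $U$-multiple; any two valid lifts differ by an $S^1$-boundary which can be absorbed into the next generator down. This arranges $\partial x^i_j = s(1+j^2) x^i_{j-2}$ for all $j > 1$; the analogous construction in the $J$-summand yields generators with the same canonical $\mathcal{S}$-type differential and no interaction with $\fr$, giving the $S(J)$ summand of the model.

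Third, the critical step is the degree-$1$ case. Because $d_{S^1}$ is non-zero on $\bt_{d_i}(n_i)$, the $\langle \fr \rangle$-component of $\partial x^i_1$ must represent the generator of $H_0(\langle \fr \rangle)=\f$, hence equals $\fr$ up to an exact term. Absorbing this exact term into $x^i_{-1}$ (or, when $x^i_{-1}$ is not a generator of $\mathcal{S}_{d_i}(n_i)$, into lower-degree $\mathcal{S}$ and $J$-generators) yields $\partial x^i_1 = \fr + s(1+j^2)x^i_{-1}$. The assignment sending each model generator to the chosen $x^i_j$, extended $\G$-equivariantly by $j$ and sending $\fr \mapsto \fr$, defines a $\G$-chain map from the model to $C$; it induces an isomorphism on $S^1$-Borel homology and a chain homotopy equivalence on the fixed-point set, and combined with the $C^{CW}_*(S^1)$-freeness of $C_+$ this upgrades to a $\G$-equivariant chain homotopy equivalence.

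The main obstacle will be the bookkeeping in this final step: simultaneously arranging that every $\partial x^i_1$ takes exactly the form $\fr + s(1+j^2)x^i_{-1}$ without introducing spurious cross-terms between different $\mathcal{S}_{d_i}(n_i)$ summands or with $S(J)$. Managing this cleanly requires ordering the induction using both the filtration $d_1 < d_2 < \cdots < d_N$ and the strict ordering $2n_i + d_i > 2n_{i+1}+d_{i+1}$, and exploiting the non-vanishing of $d_{S^1}|_{\bt_{d_i}(n_i)}$ to guarantee that the needed corrections can always be absorbed without disrupting the canonical form already achieved at earlier stages.
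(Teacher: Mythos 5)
Your overall skeleton matches the paper's: first pin down $C_+$ as a $C^{CW}_*(S^1)$-complex using the decomposition of $H^{S^1}_*(C_+)$ from Lemma~\ref{lem:deco1}, then use the (non)vanishing of $d_{S^1}$ on each summand to force $\partial x^i_1=\fr+s(1+j^2)x^i_{-1}$ and to kill all other components of the differential landing in $\langle\fr\rangle$; your second and third steps are essentially identical to the paper's. The real divergence is in how the first step is executed. The paper does not build generators by hand: it invokes Lemma~\ref{lem:he}, which says that two free finite $C^{CW}_*(S^1)$-complexes with isomorphic $S^1$-Borel homology are chain homotopy equivalent, proved by passing through Koszul duality (translating the $\f[U]$-module $H^{S^1}_*(C_+)$ into the $\f[\bar{s}]/(\bar{s}^2)$-module $H_*(C_+)$) and then applying Whitehead's theorem for dg-modules. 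This immediately gives $C_+\simeq \bigoplus_i\mathcal{S}^{S^1}_{d_i}(n_i)\oplus S^{S^1}(J)$, and $j$-equivariance transports the identification to $C_-$.

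Your replacement for that step --- a descending induction choosing lifts whose ``Borel-homology class in each lower degree is forced to be the corresponding $U$-multiple,'' with discrepancies ``absorbed into the next generator down'' --- is exactly the content that Lemma~\ref{lem:he} makes rigorous, and as written it has a soft spot. The $U$-action lives on $C^{CW}_*(EG)\otimes_{C^{CW}_*(S^1)}C_+$, not on $C_+$ itself, so ``forced to be the $U$-multiple'' does not directly constrain the chain-level differential of $C_+$; in particular it is not automatic that $\partial x^i_j$ is divisible by the fundamental class $s(1+j^2)$ modulo boundaries. To make your induction honest you would need to first convert the $\f[U]$-structure on $H^{S^1}_*(C_+)$ into the $\f[\bar s]/(\bar s^2)$-structure on $H_*(C_+)$ (this is the Koszul duality step) and then argue that a free complex over $\f[\bar s]/(\bar s^2)$ with prescribed homology module admits the canonical tower basis --- at which point you have reproved Lemma~\ref{lem:he}. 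So the proposal is correct in outline, but the inductive basis construction should either be fleshed out along these lines or replaced by a citation of that lemma; your concern about cross-terms between different $\mathcal{S}_{d_i}(n_i)$ summands in the final step is handled in the paper simply because, once $C_+\oplus C_-$ is in canonical form, the only remaining freedom is the component of $\partial$ into $\langle\fr\rangle$, which is completely determined degree by degree by $d_{S^1}$.
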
   
\begin{rmk}
By Lemma \ref{lem:deco1}, for $C$ any $j$-split chain complex, a decomposition as in (\ref{eq:formloctyp}) is possible.  
\end{rmk}
Before giving the proof we establish a Lemma.   
\begin{lem}\label{lem:he}
Let $F_1,F_2$ be two free, finite $C^{CW}_*(S^1)$-complexes such that $H^{S^1}_*(F_1) \cong H^{S^1}_*(F_2)$ as $\f[U]$-modules.  Then $F_1 \simeq F_2$, where $\simeq$ denotes homotopy equivalence.   
\end{lem}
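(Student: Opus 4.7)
The plan is to show that every free finite $C^{CW}_*(S^1)$-complex $F$ is chain homotopy equivalent to a canonical ``standard'' model $\mathcal{T}(H^{S^1}_*(F))$ depending only on the isomorphism type of $H^{S^1}_*(F)$ as an $\f[U]$-module. Once this is done, the conclusion $F_1\simeq F_2$ is immediate: $F_1\simeq \mathcal{T}(H^{S^1}_*(F_1)) \cong \mathcal{T}(H^{S^1}_*(F_2))\simeq F_2$.

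First I would construct the standard model on the level of $\f[U]$-simples. Since any finitely generated torsion graded $\f[U]$-module decomposes as $\bigoplus_k \bt_{d_k}(n_k)$, it suffices to build, for each $(d,n)$, a free finite $C^{CW}_*(S^1)$-complex $\mathcal{T}_d(n)$ with $H^{S^1}_*(\mathcal{T}_d(n))\cong\bt_d(n)$. Take $\mathcal{T}_d(n)=C^{CW}_*(S^1)\otimes_{\f}\langle y_d,y_{d+2},\dots,y_{d+2n-2}\rangle$ with $\partial y_d=0$ and $\partial y_{d+2k}=s(1+j^2)y_{d+2k-2}$ for $k\ge 1$, paralleling the construction of $\mathcal{S}_d(n)$ made just before this lemma for the $\G$-module case. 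A direct computation using (\ref{eq:chains1borel}) verifies the homology and $U$-action.

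Next I would reduce an arbitrary free finite $F$ to a ``minimal'' form by repeated cancellation. Specifically, if some $C^{CW}_*(S^1)$-free generator $x$ of $F$ satisfies $\partial x = y + (\text{terms in the ideal }(s,1+j^2))$ with $y$ another free generator, I would perform a change of $C^{CW}_*(S^1)$-basis to eliminate the pair $(x,y)$ via a standard deformation retract onto a smaller free subcomplex. Iterating this yields a homotopy equivalent $F'$ in which the differentials on the chosen free generators lie entirely in $(s,1+j^2)\cdot F'$. For such a minimal $F'$, the set of free generators in degree $i$ maps bijectively onto a fixed $\f$-basis of $(H^{S^1}_*(F')/U\!\cdot\!H^{S^1}_*(F'))_i$, and the $s(1+j^2)$-components of the differential read off the $U$-action on $H^{S^1}_*(F')$. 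Choosing an $\f[U]$-module isomorphism $H^{S^1}_*(F)\cong\bigoplus_k\bt_{d_k}(n_k)$, I can lift the $\f[U]$-generators to free generators of $F'$ and assemble a chain map
\[
\Phi: \mathcal{T}(H^{S^1}_*(F))=\bigoplus_k \mathcal{T}_{d_k}(n_k)\longrightarrow F'
\]
inducing the given isomorphism on $H^{S^1}_*$.

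Finally I would show that $\Phi$ is a chain homotopy equivalence. Both source and target are free finite $C^{CW}_*(S^1)$-complexes, so the mapping cone $\mathrm{Cone}(\Phi)$ is again a free finite $C^{CW}_*(S^1)$-complex, with $H^{S^1}_*(\mathrm{Cone}(\Phi))=0$ by construction. The main obstacle is therefore the step of showing that \emph{a free finite $C^{CW}_*(S^1)$-complex with vanishing $S^1$-Borel homology is chain contractible}; this I would prove by induction on the number of free generators, using the minimal-model reduction above to peel off an acyclic free summand at each stage. Equivalently, one can invoke the Koszul duality between $C^{CW}_*(S^1)$ (quasi-isomorphic to the exterior algebra $\Lambda_{\f}[\alpha]$, $|\alpha|=1$) and $\f[U]$, which identifies the homotopy category of free finite $C^{CW}_*(S^1)$-complexes with that of finitely generated torsion graded $\f[U]$-modules via $F\mapsto H^{S^1}_*(F)$, and directly implies the lemma. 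With this contractibility in hand, $\Phi$ is a homotopy equivalence, and composing the equivalences $F_1\simeq\mathcal{T}(H^{S^1}_*(F_1))\cong\mathcal{T}(H^{S^1}_*(F_2))\simeq F_2$ completes the proof.
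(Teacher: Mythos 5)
Your argument is correct in substance, and it is really an explicit, hands-on unwinding of the same mathematics the paper invokes by citation: the paper notes that $C^{CW}_*(S^1)\simeq \f[\bar s]/(\bar s^2)$, appeals to Koszul duality (GKM) to convert the hypothesis on $H^{S^1}_*$ into an isomorphism of $H_*$ as modules over the exterior algebra, and then quotes Whitehead-type theorems from Weibel (10.4.5 and 10.4.8) to upgrade this to a quasi-isomorphism and then a chain homotopy equivalence of free complexes. You instead build the equivalence directly: standard models $\mathcal{T}_d(n)$ (which are exactly the paper's $\mathcal{S}^{S^1}_d(n)$), Gaussian elimination to a minimal model whose differential lies in the augmentation ideal $(s,1+j^2)$, a comparison map $\Phi$ from the standard model, and contractibility of acyclic free finite complexes. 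What your route buys is self-containedness and an explicit normal form for $F$; what the paper's route buys is brevity. Both are legitimate, and you even note the citation route as an equivalent alternative at the end.

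One assertion in your sketch is wrong as stated and should be repaired, though it is not load-bearing. For a minimal free complex $F'$, the free generators in degree $i$ do \emph{not} biject with a basis of $(H^{S^1}_*(F')/U\!\cdot\!H^{S^1}_*(F'))_i$; they biject with a basis of $H^{S^1}_i(F')$ itself. (Since $F'$ is a bounded complex of frees, $H^{S^1}_*(F')$ is computed by $\f\otimes_{C^{CW}_*(S^1)}F'$, which for minimal $F'$ has vanishing differential and is spanned by the generators; the $U$-action is then the secondary operation read off from the $s(1+j^2)$-component of $\partial$.) Your own model $\mathcal{T}_d(n)$ already shows the discrepancy: it has $n$ generators, while $\bt_d(n)/U\bt_d(n)$ is one-dimensional. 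With the corrected statement, the rest goes through: a minimal complex with vanishing Borel homology has no generators, which gives the contractibility of $\mathrm{Cone}(\Phi)$ you need, and the construction of $\Phi$ itself is a routine (if slightly tedious) induction over the generators of each $\mathcal{T}_{d_k}(n_k)$ compatible with the differentials.
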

\begin{proof}
First, we note that $C^{CW}_*(S^1)$ is chain homotopy equivalent to the algebra $\f[\bar{s}]/(\bar{s}^2)$ where $\mathrm{deg}\,(\bar{s})=1$ and $\partial(\bar{s})=0$.   Koszul Duality \cite{GKM} states that $H_*(F_1)$ and $H_*(F_2)$ are isomorphic as $\f[\bar{s}]/(\bar{s}^2)$ modules if and only if $H^{S^1}_*(F_1)$ and $H^{S^1}_*(F_2)$ are isomorphic as $\f[U]$-modules.  Indeed, our original hypothesis was $H^{S^1}_*(F_1) \simeq H^{S^1}_*(F_2)$, so we see that $H_*(F_1)$ and $H_*(F_2)$ are isomorphic as $\f[\bar{s}]/(\bar{s})^2$-modules.  We next observe that two chain complexes over $\f[\bar{s}]/(\bar{s}^2)$ are quasi-isomorphic if and only if they have isomorphic homology, see Whitehead's Theorem 10.4.5 in \cite{Weibel}.  Thus, $F_1$ and $F_2$ are quasi-isomorphic.  Finally, by Theorem 10.4.8 of \cite{Weibel}, quasi-isomorphic free chain complexes are chain homotopy equivalent, and so $F_1$ and $F_2$ are chain homotopy equivalent.  This establishes the Lemma.  
\end{proof}  

\emph{Proof of Proposition \ref{prop:loctyp}.} The proof is in two steps: first, we show that $C_+$ is chain homotopy equivalent to a chain complex of a certain form, and then we investigate differentials from $C_+$ to $\langle \fr \rangle$.  

Note that the complex $C_+$ is a $C^{CW}_*(S^1)$-complex.  Let $\mathcal{S}^{S^1}_d(n)$ be the $C^{CW}_*(S^1)$-submodule of $\mathcal{S}_d(n)$ generated (as a $C^{CW}_*(S^1)$-module) by $\langle x_d,x_{d+2},...,x_{d+2n-2} \rangle$.  As for $\mathcal{S}_d(n)$, a quick calculation shows $H^{S^1}_*(\mathcal{S}^{S^1}_d(n))=\bt_{d}(n)$.  Similarly, for an $\f[U]$-module $J=\bigoplus_i \bt_{e_i}(m_i)$, let $S^{S^1}(J)=\bigoplus_i \mathcal{S}^{S^1}_{e_i}(n_i)$.  We see: \begin{equation}\label{eq:jproj}S(J)\cong S^{S^1}(J) \oplus S^{S^1}(J),\end{equation} as $\G$-complexes, for all $\f[U]$-modules $J$, where the action of $j$ on the right is given  by interchanging the factors.  

Recall, by the proof of Theorem \ref{thm:onjspaces}, that $H^{S^1}_*(C_+ \oplus C_-)$ is determined by $H^{S^1}_*(C)$ for $C$ a $j$-split chain complex (see Remark \ref{rmk:thomonjspacescomplexes}).  That is, from (\ref{eq:formloctyp}):
\[H^{S^1}_*(C_+)=\bigoplus_{i=1}^N\bt_{d_i}(n_i) \oplus J.\]
Lemma \ref{lem:he} then implies $C_+=\mathcal{S}^{S^1}_d(n) \oplus S^{S^1}(J)$ as a $C^{CW}_*(S^1)$-complex.  Since $j:C_+ \rightarrow C_-$ is an isomorphism, we have from (\ref{eq:jproj}):

\begin{equation}\label{eq:c+form}
C_+\oplus C_- \cong \bigoplus_i \mathcal{S}_{d_i}(n_i) \oplus S(J). 
\end{equation}
Moreover, $H^{S^1}_*(C)$ determines the map $d_{S^1}: H^{S^1}_*(C_+) \rightarrow H^{S^1}_*(\langle \fr \rangle)$.  We compute $d_{S^1}$ a different way, by using the differential from $C_+$ to $\langle \fr \rangle$, and the form of $C_+$ determined by (\ref{eq:c+form}).  Fix a pair of integers $(d,n)$.  If $x_i$ is the generator of a copy of $\mathcal{S}_d(n)$ in degree $i$ and $x_i \in C_+$, then $d_{S^1}: H^{S^1}_*(\mathcal{S}_d(n))\cong \bt_d(n) \rightarrow \bt$ is nontrivial if and only if $\partial(x_1)=\fr+s(1+j^2)x_{-1}$.  Thus, since $d_{S^1}$ is nonvanishing on the factors $\bt_{d_i}(n_i)\subset H^{S^1}_*(C_+)$ and vanishing elsewhere, each generator $x^i_1$, with $\mathrm{deg}\; x^i_1=1$ of $\mathcal{S}_{d_i}(n_i)$ in (\ref{eq:c+form}) must have $\partial(x^i_1)=\fr+s(1+j^2)x^i_{-1}$, and all other differentials $C_+ \rightarrow \langle \fr \rangle$ vanish.  Thus, in particular, $\partial(S(J)) \subset S(J)$.  The decomposition (\ref{eq:locstandardtyp}) follows.  \qed

\begin{prop}\label{prop:eqclassdist}
Let $(X,p,h/4)\in \mathfrak{E}$ with $X$ a $j$-split space of type SWF at level $m$, and 
\begin{equation}\label{eq:propclass}
\tilde{H}^{S^1}_*((X,p,h/4))= \bt_{s+d_1+2n_1+1}\oplus\bigoplus^{N}_{i=1}\bt_{s+d_i}(\frac{d_{i+1}+2n_{i+1}-d_i}{2})\oplus \bigoplus^N_{i=1} \bt_{s+d_i}(n_i) \oplus J^{\oplus 2}[-s],\end{equation}
where $d_{i+1}> d_i$ and $2n_i+d_i>2n_{i+1}+d_{i+1}$, as well as $2n_{N}+d_{N}\geq 3$, and $d_N \leq 1$. Then the chain local equivalence type $[(C^{CW}_*(X,\p),p,h/4)]_{cl} \in \CEL$ is the equivalence class of \begin{equation}\label{eq:locstandardtyp2}
C(p-m,h/4,\{ d_i \}_i,\{ n_i \}_i)= (( \langle \fr \rangle \tilde{\oplus} (\bigoplus_i \mathcal{S}_{d_i}(n_i))),p-m,h/4)\in \CEL. \end{equation}
The connected $S^1$-homology of $(X,p,h/4)$ is given by: \begin{equation}\label{eq:connhom}
\hsc((X,p,h/4))=\bigoplus^{N}_{i=1}\bt_{s+d_i}(\frac{d_{i+1}+2n_{i+1}-d_i}{2})\oplus \bigoplus^N_{i=1} \bt_{s+d_i}(n_i).\end{equation} 
Further, $s$ in (\ref{eq:propclass}) is $m-p-h$.  Moreover, $C(p,h/4,\{ d_i \},\{ n_i \})$ is chain locally equivalent to $C(p',h'/4,\{ d'_i \},\{ n'_i \})$ if and only if $p=p'$, $h=h'$, $\{d_i\} = \{d'_i\}$, and $\{n_i\}=\{n'_i\}$.  
\end{prop}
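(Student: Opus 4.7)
The plan is to replace $X$ by a standard chain model, peel off an evidently inessential piece, and verify that the remaining complex is connected.

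\emph{Standardization.} By Lemma~\ref{lem:jsplitcomplexes}, $[(C^{CW}_*(X,\p),p,h/4)] = [(Z,p-m,h/4)]$ in $\CEL$ for some $j$-split chain complex $Z$ at level $0$, so the chain local equivalence class of interest is that of $Z$, suitably shifted. Proposition~\ref{prop:loctyp}, applied to the decomposition (\ref{eq:propclass}), identifies $Z$ up to $\G$-chain homotopy equivalence with $C_0\oplus S(J)$, where $C_0 := \langle\fr\rangle\tilde{\oplus}\bigoplus_i\mathcal{S}_{d_i}(n_i)$ carries the specified differentials $\partial x^i_1 = \fr + s(1+j^2)x^i_{-1}$, and $\partial(S(J))\subseteq S(J)$.

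\emph{Inessential splitting.} Because $\partial(S(J))\subseteq S(J)$, the summand $S(J)$ is a $\G$-subcomplex disjoint from the fixed-point set $\langle\fr\rangle$; the inclusion $C_0\hookrightarrow Z$ is a $\G$-chain section of the projection $\pi:Z\to Z/S(J)\cong C_0$ which is the identity on $\langle\fr\rangle$, so $S(J)$ is inessential in the sense of Definition~\ref{def:ines}.

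\emph{Maximality and computation of $\hsc$.} To conclude $Z_{\mathrm{conn}}\cong C_0$, and hence (\ref{eq:locstandardtyp2}), I must show $C_0$ itself admits no nonzero inessential subcomplex. Suppose $M\subseteq C_0$ is inessential with $M\neq 0$; since $M\cap\langle\fr\rangle=0$, Lemma~\ref{lem:homsums} yields a $\G$-chain splitting $C_0\cong (C_0/M)\oplus M$, so $H^{S^1}_*(M)$ is a summand of
\begin{equation*}
H^{S^1}_*(C_0) = \bt_{d_1+2n_1-1} \oplus \bigoplus_i \bt_{d_i}\!\bigl(\tfrac{d_{i+1}+2n_{i+1}-d_i}{2}\bigr) \oplus \bigoplus_i \bt_{d_i}(n_i)
\end{equation*}
(obtained by applying Lemma~\ref{lem:deco2} with $J_2=0$); the strict monotonicities $d_i<d_{i+1}$ and $d_i+2n_i>d_{i+1}+2n_{i+1}$ show each finite simple summand appears with multiplicity one. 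Using the $j$-splitness $C_0 = \langle\fr\rangle\tilde\oplus (C_{0,+}\oplus jC_{0,+})$ together with the $\G$-invariance of $M$, an involution analysis (paralleling the $(1+j)$-doubling visible in Lemma~\ref{lem:deco2}) pairs the simple $\f[U]$-summands of $H^{S^1}_*(M)$, forcing them to appear with even multiplicity; comparing with the multiplicity-one structure above compels $M=0$. With maximality established, (\ref{eq:connhom}) follows by taking $H^{S^1}_*(C_0)$, quotienting by the tower $H^{S^1}_*(C_0^{S^1})$, and applying the shift $[-s]=[p-m+h]$; the relation $s=m-p-h$ is immediate.

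\emph{Distinctness and principal difficulty.} Any chain local equivalence $C(p,h/4,\{d_i\},\{n_i\})\equiv_{cl} C(p',h'/4,\{d'_i\},\{n'_i\})$ induces, via Theorem~\ref{thm:decomp} together with the connectedness just established, a $\G$-chain homotopy equivalence after stabilization; reading off the graded $\f[U]$-module $H^{S^1}_*$ (as in the recovery argument in the proof of Theorem~\ref{thm:onjspaces}) uniquely determines $\{d_i\}$ and $\{n_i\}$, and matching the grading of the infinite tower then pins down $p$ and $h$. The principal technical obstacle lies in the maximality step: a ``$j$-doubling'' of $H^{S^1}_*(M)$ for a $\G$-free $M$ is not a formal consequence of $\G$-freeness alone (for instance, free $\G$-CW complexes can have undoubled $S^1$-Borel homology), so one must genuinely exploit the ambient $j$-splitness of $C_0$ to produce the involution-symmetric decomposition of $M$ into $C^{CW}_*(S^1)$-halves whose $H^{S^1}_*$ pairs up.
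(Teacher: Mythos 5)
Your standardization and inessential-splitting steps coincide with the paper's: Lemma \ref{lem:jsplitcomplexes} plus Proposition \ref{prop:loctyp} reduce the problem to $C_0\oplus S(J)$ with $C_0=\langle\fr\rangle\tilde{\oplus}\bigoplus_i\mathcal{S}_{d_i}(n_i)$, and $S(J)$ is discarded as inessential. The genuine gap is in your maximality step. You correctly flag that the ``$j$-doubling'' of $H^{S^1}_*(M)$ is not a formal consequence of $\G$-freeness, and indeed it is false in general: the free complex $\G\langle a,b\rangle$ with $\partial b=(1+j)a$ has $H^{S^1}_*\cong\bt_{\deg a}(1)\oplus\bt_{\deg a+1}(1)$, two distinct summands of multiplicity one. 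The ``involution analysis'' that is supposed to rule this out inside $C_0$ is precisely the content of the claim and is never supplied; a $\G$-subcomplex $M$ with $M\cap\langle\fr\rangle=0$ need not equal $(M\cap C_{0,+})\oplus(M\cap jC_{0,+})$, so the $j$-splitting of the ambient complex does not automatically descend to $M$. The paper avoids homology-level multiplicity counting entirely: it first suspends $C(0,0,\{d_i\}_i,\{n_i\}_i)$ by $\mathbb{H}^{\lfloor\frac{-d_1+3}{4}\rfloor}$ so that the complex becomes suspensionlike, writes out the resulting complex with explicit generators $y_k$, $z^i_k$, and observes from the differentials (each $z^i_1$ hits the $y$-chain, which chains down to $\fr$) that the complex is indecomposable as a direct sum of $\G$-chain complexes; Lemma \ref{lem:homsums} then forces the maximal inessential subcomplex to vanish, giving (\ref{eq:connhom}). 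Note also that you apply Lemma \ref{lem:homsums} to $C_0$ directly, whereas the machinery of Section \ref{subsec:iness} is set up for suspensionlike complexes; the preliminary suspension is not merely cosmetic.

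Your distinctness step has a second hole: $H^{S^1}_*$ of $C(p,h/4,\{d_i\}_i,\{n_i\}_i)$ depends on $p$ and $h$ only through the total shift $p+h$, so ``matching the grading of the infinite tower'' determines $p+h$ but cannot separate $p$ from $h$; two complexes with $p+h=p'+h'$ but $p\neq p'$ have isomorphic $S^1$-Borel homology. The paper closes this by observing that a chain local equivalence restricts to a chain homotopy equivalence of fixed-point sets, and the fixed-point set records $p$ alone; $h$ then follows from $p+h$. You need this additional input.
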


\begin{proof} 
Let $[(Z,-m,0)]=[C^{CW}_*(X,\p)]\in \ce$ where $Z$ is a $j$-split chain complex, as allowed by Lemma \ref{lem:jsplitcomplexes}.  Using Proposition \ref{prop:loctyp}, we see: \[ [(Z,p,h/4)]_{cl}=((\langle \fr \rangle\tilde{\oplus}\bigoplus \mathcal{S}_{d_i}(n_i) ) ,p,h/4).\]  We have then:
\[C(p-m,h/4,\{ d_i \},\{ n_i \})= [(Z,p-m,h/4)]_{cl}=[(C^{CW}_*(X,\p),p,h/4)]_{cl}, \]
as in (\ref{eq:locstandardtyp2}).

To prove (\ref{eq:connhom}) we consider the complex $\Sigma^{\mathbb{H}^{\lfloor \frac{-d_1+3}{4}\rfloor}}C(0,0,\{d_i\},\{n_i\})[4\lfloor \frac{-d_1+3}{4} \rfloor]$ (we include the grading shift for convenience). We will see that it is a suspensionlike complex, so we may apply the results of Section \ref{subsec:iness}.  
There is a homotopy equivalence:
\begin{equation}\label{eq:lochom} \Sigma^{\mathbb{H}^{\lfloor \frac{-d_1+3}{4}\rfloor}}C(0,0,\{d_i\},\{n_i\})[4\lfloor \frac{-d_1+3}{4} \rfloor ]  \simeq \langle \fr \rangle \tilde{\oplus} \bigoplus_k \langle y_k \rangle \tilde{\oplus} \bigoplus_{i=1}^N \bigoplus_{ \{ k \equiv 1 \; \mathrm{mod} \;2, \; d_i \leq k \leq d_i+2n_i-2\} } \langle z^i_k \rangle, \end{equation}
where 
\begin{equation}\label{eq:locspl} \langle \fr \rangle \tilde{\oplus} \bigoplus_k \langle y_k \rangle \simeq \Sigma^{\mathbb{H}^{\lfloor \frac{-d_1+3}{4} \rfloor }} \langle \fr \rangle, \end{equation}
and $\mathrm{deg}\; z^i_k =\mathrm{deg} \; y_k=k$.  Additionally, $\partial(z^i_k)=s(1+j^2)z^i_{k-2}$ if $k \neq 1$, and $\partial(z^i_1)=s(1+j^2)z^i_{-1}+s(1+j)^3y_{-1}$.  The $y_k$ are defined for $k$ such that $k \not \equiv 3 \; \mathrm{mod} \; 4$ and $-4 \lfloor \frac{-d_1+3}{4} \rfloor +1 \leq k \leq -1$.  Also, 
\begin{eqnarray}
\partial(y_{4k})&=&s(1+j)^3 y_{4k-2}, \\
\partial(y_{4k+1}) & = & (1+j)y_{4k}, \; \; k \neq -\lfloor \frac{-d_1+3}{4} \rfloor ,\\
\partial(y_{4k+2}) & = & (1+j)y_{4k+1} + sy_{4k}, \\
\partial(y_{-4\lfloor \frac{-d_1+3}{4} \rfloor +1}) & = & \fr. 
\end{eqnarray}
According to equation (\ref{eq:locspl}), the first two terms on the right of (\ref{eq:lochom}) account for the suspension of the reducible tower, and the $z^i_k$ correspond to the suspension of the free part.  The $z^i_k$ are suspensions of $x^i_k \in \mathcal{S}_{d_i}(n_i) \subset C(0,0,\{d_i\},\{n_i\})$.  
From this presentation, it is clear that the chain complex $\Sigma^{\mathbb{H}^{\lfloor \frac{-d_1+3}{4}\rfloor}}C(0,0,\{d_i\},\{n_i\})[4\lfloor \frac{-d_1+3}{4} \rfloor]$ is irreducible (that is, it may not be written as a non-trivial direct sum of $\G$-chain complexes).  Then by Lemma \ref{lem:homsums} and Definition \ref{def:connplex}, 
\begin{equation}\label{eq:connexample}
(\Sigma^{\mathbb{H}^{\lfloor \frac{-d_1+3}{4}\rfloor}}C(0,0,\{d_i\},\{n_i\})[4\lfloor \frac{-d_1+3}{4} \rfloor])_{\mathrm{conn}}=\Sigma^{\mathbb{H}^{\lfloor \frac{-d_1+3}{4}\rfloor}}C(0,0,\{d_i\},\{n_i\})[4\lfloor \frac{-d_1+3}{4} \rfloor].
\end{equation}  Then (\ref{eq:connhom}) follows from the definition of $\hsc$, applied to $C(0,0,\{d_i\},\{n_i\})$.  The calculation of $\hsc((X,p,h/4))$ for nonzero $m, p,h$ follows, since
\[C(p-m,h/4,\{d_i \}, \{n_i\})=\Sigma^{(m-p)\tilde{\mathbb{R}}}\Sigma^{-\frac{h}{4}\mathbb{H}}C(0,0,\{d_i\},\{n_i\}).\]
The assertion that $s=m-p-h$ follows from the homology calculation of Theorem \ref{thm:onjspaces}.

Recall that $\hsc$ is a chain local equivalence invariant.  Hence, if $[C(p,h/4,\{d_i\},\{n_i\})]_{cl}=[C(p',h'/4,\{d'_i\},\{n'_i\})]_{cl}$, we see from (\ref{eq:connhom}) that $\{d_i\}=\{d'_i\},$  $\{n_i\}=\{n'_i\}$, and $p+h=p'+h'$.  Furthermore, if $C(p,h/4,\{d_i\},\{n_i\})$ and $C(p',h'/4,\{d'_i\},\{n'_i\})$ are chain locally equivalent, they must have chain homotopy equivalent fixed-point sets.  That is, $p=p'$ and so also $h=h'$, completing the proof.
\end{proof}

\section{Seiberg-Witten Floer spectra and Floer homologies}\label{sec:gauge}
\subsection{Finite-dimensional approximation}\label{eq:subsec:findappx}
In this section we review the finite-dimensional approximation to the Seiberg-Witten equations from Manolescu \cite{ManolescuS1},\cite{ManolescuPin}.    

Let $\mathbb{S}$ be the spinor bundle of the three-manifold with spin structure $(Y,\s)$, and $\Gamma(\mathbb{S})$ its space of sections.  For $\lambda \in (0,\infty)$, the Seiberg-Witten equations of $(Y,\s,g)$ determine a sequence of vector fields $\X^{\gc}_\lambda$ on finite-dimensional vector spaces $W^\lambda$.  Here $W^\lambda$ is the span of eigenvectors of the elliptic operator $*d+D$ acting on the global Coulomb slice of $\Omega^1(Y,i\mathbb{R})\oplus \Gamma(\mathbb{S})$, with eigenvalue in $(-\lambda,\lambda)$, where $D$ is the Dirac operator.  The vector field $\X^{\gc}_\lambda$ on $W^\lambda$ is an approximation of the Seiberg-Witten equations restricted to $W^\lambda$.  The action of $G=\mathrm{Pin}(2)$ restricts to a smooth action on $W^\lambda$ that commutes with the flow defined by $\X^{\gc}_\lambda$.  There is a distinguished subspace $W(-\lambda,0) \subset W^\lambda$ consisting of the span of the eigenvectors with eigenvalue in $(-\lambda,0)$.  Following \cite{ManolescuS1}, we will use the sequence of flows on the spaces $W^\lambda$ to define an invariant of $(Y,\s)$.  

We next recall a few properties of the Conley Index.  For a one-parameter family $\phi_t$ of diffeomorphisms of a manifold $M$ and a compact subset $A \subset M$, we define:
\[ \mathrm{Inv}(A,\phi)=\{x \in A \mid \phi_t(x) \in A \; \mathrm{ for \; all} \; t \in \mathbb{R}\}. \]

Then we say that a set $S \subset M$ is an isolated invariant set if there is some $A$ as above such that 
$S=\mathrm{Inv}(A,\phi)\subset \mathrm{int}(A)$.  Conley proved in \cite{Conley} that one may associate to any isolated invariant set $S$ a pointed homotopy type $I(S)$, an invariant of the triple $(M,\phi_t,S)$.  Floer \cite{FloerConley} and Pruszko \cite{Pruszko} defined an equivariant version, so that if a compact Lie group $K$ acts smoothly on $M$ preserving the flow $\phi_t$, then we may associate a pointed $K$-equivariant homotopy type $I_K(S)$.  The Conley Index, as well as its equivariant refinement, are invariant under continuous changes of the flow, if $S$ is isolated in an appropriate sense.  

Manolescu showed in \cite{ManolescuPin} that $S^\lambda$, the set of all critical points of $\X^{\gc}_\lambda$, along with all trajectories \emph{of finite type} between them contained in a certain sufficiently large ball in $W^{\lambda}$, is an isolated invariant set, and that the flow $\X^{\gc}_\lambda$ is $G$-equivariant.  We then write $I^\lambda(Y,\s,g)=I_G(S^\lambda)$.  To make this construction independent of $\lambda$, we desuspend by $W(-\lambda,0)$.  Then we can define a pointed stable homotopy type associated to a tuple $(Y,\mathfrak{s},g)$: \begin{equation}\label{eq:swfpredef}\mathit{SWF}(Y,\s,g)=\Sigma^{-W(-\lambda,0)}I^\lambda(Y,\s,g).\end{equation}
The desuspension in (\ref{eq:swfpredef}) is interpreted in $\E$.  That is, 
\[\mathit{SWF}(Y,\s,g)=(I^\lambda(Y,\s,g),\mathrm{dim}_{\mathbb{R}} \; W(-\lambda,0)(\tilde{\mathbb{R}}), \mathrm{dim}_{\mathbb{H}} \; W(-\lambda,0)(\mathbb{H})),\]
where $W(-\lambda,0)\cong W(-\lambda,0)(\tilde{\mathbb{R}}) \oplus W(-\lambda,0)(\mathbb{H})$, and $W(-\lambda,0)(\tilde{\mathbb{R}})$ is a direct sum of copies of $\tilde{\mathbb{R}}$.  Similarly, $W(-\lambda,0)(\mathbb{H})$ is a direct sum of copies of $\mathbb{H}$.  

Manolescu showed in \cite{ManolescuPin} that $\mathit{SWF}(Y,\s,g)$ is well-defined, for $\lambda$ sufficiently large.  Further, we must remove the dependence on the choice of metric $g$.  We use $n(Y,\mathfrak{s},g)$, a rational number which controls the spectral flow of the Dirac operator and may be expressed as a sum of eta invariants; for its definition, see \cite{ManolescuS1}.  We have:
\begin{equation}\label{eq:swfdef}
\mathit{SWF}(Y,\mathfrak{s})=\Sigma^{-\frac{1}{2}n(Y,\mathfrak{s},g)\mathbb{H}}\mathit{SWF}(Y,\mathfrak{s},g).
\end{equation}

Interpreted in $\E$, if $\mathit{SWF}(Y,\s,g)=(X,m,n)$, then $\mathit{SWF}(Y,\s)=(X,m,n+\frac{1}{2}n(Y,\s,g))$.  

In addition to the approximate flow above, we may also consider perturbations of the flow as in \cite{KM}.

We call
\[ \mathcal{C}(Y,\s)=\Omega^1(Y,i\mathbb{R}) \oplus \Gamma(\mathbb{S}) \]
the configuration space for the Seiberg-Witten equations, and we let $\mathcal{L}$ denote the Chern-Simons-Dirac functional.  Let $\X$ be the $L^2$-gradient of $\mathcal{L}$ on $\mathcal{C}(Y,\s)$.  We call a map:
\begin{equation}\label{eq:pert}
\q: \mathcal{C}(Y,\mathfrak{s}) \rightarrow \mathcal{T}_0,
\end{equation}
a perturbation, where $\mathcal{T}_j$ denotes the $L^2_j$ completion of the tangent bundle to $\mathcal{C}(Y,\mathfrak{s}).  $
Then we write 
\[\X_\q=\X +\q: \mathcal{C}(Y,\mathfrak{s}) \rightarrow \mathcal{T}_0.\]
  Let $\mathcal{C}^{\mathrm{gC}}(Y,\s)$ denote the global Coulomb slice in $\mathcal{C}(Y,\s)$ and $\mathcal{T}^{\mathrm{gC}}_k$ the $L^2_k$ completion of the tangent bundle to $\mathcal{C}^{\mathrm{gC}}(Y,\s)$.  Lidman and Manolescu also consider a version of $\X_\q$, obtained by projecting trajectories of $\X_\q$ to the global Coulomb slice $\mathcal{C}^{\mathrm{gC}}(Y,\s)$:  
\[\X^{\mathrm{gC}}_\q: \mathcal{C}^{\mathrm{gC}}(Y,\s)\rightarrow \mathcal{T}^{\mathrm{gC}}_0. \]
  Lidman and Manolescu prove that there is a bijective correspondence between finite-energy trajectories of $\X^{\mathrm{gC}}_\q$ and those of $\X_{\q}$, modulo the appropriate gauges.
  
  We write $\X_{\q,\lambda}^{\mathrm{gC}}$ for the finite-dimensional approximation of $\X_{\q}^{\mathrm{gC}}$ in $W^\lambda$ (recalling that $W^\lambda$ are finite-dimensional subspaces of $\mathcal{C}^{\mathrm{gC}}(Y,\s)$).  For tame perturbations in the sense of \cite{KM}, we may define $I^{\lambda}(Y,\mathfrak{s},g,\q)$ as above using $\mathcal{X}^{\mathrm{gC}}_{\q,\lambda}$ in place of $\X^{\gc}_\lambda$.  Furthermore, from $I^{\lambda}(Y,\mathfrak{s},g,\q)$ we may also define $\mathit{SWF}(Y,\mathfrak{s},g,\q)$ analogously to the unperturbed case. Lidman and Manolescu \cite{LM} show that the spectrum is independent of $\q$.  That is:
\[\mathit{SWF}(Y,\mathfrak{s},g,\q)=\mathit{SWF}(Y,\mathfrak{s},g).\]
We also have the attractor-repeller sequence of \cite{ManolescuPin}.  For a generic perturbation $\q$ we may arrange that the reducible critical point of $\X_\q$ is nondegenerate and that there are no irreducible critical points $x$ with $\mathcal{L}(x) \in (0,\epsilon)$ for some $\epsilon>0$. Denote the reducible critical point by $\Theta$.  Let $T=T^\lambda$ be the set of all critical points of $\X_{\q,\lambda}^{\mathrm{gC}}$ and flows of finite type between them.  Then, for all $\omega>0$, we have the following isolated invariant sets:

\begin{itemize}
\item $T^{\mathrm{irr}}_{>\omega}$: the set of irreducible critical points $x$ with $\mathcal{L}_{\q}(x)>\omega$, together with all points on the flows between critical points of this type.
\item $T_{\leq \omega}$ : Same, but with $\mathcal{L}_{\q}(x) \leq \omega$, and allowing $x$ to be reducible.
\end{itemize}

Then we have the exact sequence:
\begin{equation}\label{eq:attrep}
I(T_{\leq \omega})\rightarrow I(T) \rightarrow I(T^{\mathrm{irr}}_{> \omega}) \rightarrow \Sigma I(T_{\leq \omega}) \rightarrow ...
\end{equation}

We record a Theorem of \cite{ManolescuPin}.

\begin{thm}[Manolescu {\cite{ManolescuPin},\cite{ManolescuK}}]\label{thm:ManolescuSWF}
Associated to a three-manifold with $b_1=0$ and a choice of spin structure $(Y,\mathfrak{s})$ there is an invariant $\mathit{SWF}(Y,\mathfrak{s})$, the Seiberg-Witten Floer spectrum class, in $\E$.  A spin cobordism $(W,\mathfrak{t})$ from $Y_1$ to $Y_2$, with $b_2(W)=0$, induces a map $\mathit{SWF}(Y_1, \mathfrak{t}|_{Y_1}) \rightarrow \mathit{SWF}(Y_2,\mathfrak{t}|_{Y_2})$. The induced map is a homotopy-equivalence on the $S^1$-fixed-point set.  
\end{thm}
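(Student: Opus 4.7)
The plan is essentially to recall the construction from \cite{ManolescuPin} and \cite{ManolescuK}, showing the claimed objects and maps are well-defined in $\E$. Since the bulk of the work is in those papers, I will sketch the main steps rather than redo the analysis in detail.

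First I would construct $\mathit{SWF}(Y,\s)$. Using the finite-dimensional approximation $\X^{\gc}_{\q,\lambda}$ to the Seiberg-Witten flow on the global Coulomb slice, reviewed in Section \ref{eq:subsec:findappx}, one obtains the $G$-equivariant Conley index $I^\lambda(Y,\s,g,\q)$ of the isolated invariant set consisting of critical points of $\X^{\gc}_{\q,\lambda}$ together with finite-type trajectories between them inside a sufficiently large ball. Setting
\[
\mathit{SWF}(Y,\s,g,\q) = \bigl(I^\lambda(Y,\s,g,\q),\, \dim_\mathbb{R} W(-\lambda,0)(\tilde{\mathbb{R}}),\, \dim_\mathbb{H} W(-\lambda,0)(\mathbb{H})\bigr) \in \E,
\]
one checks that enlarging $\lambda$ replaces $I^\lambda$ by a suspension by the newly captured negative eigenspaces, so the class in $\E$ is independent of $\lambda$. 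Changing the perturbation $\q$ through tame perturbations gives a continuation of flows, hence equivalent Conley indices, by the work of Lidman-Manolescu \cite{LM}. Finally, correcting by $-\tfrac{1}{2}n(Y,\s,g)\mathbb{H}$ as in (\ref{eq:swfdef}) removes the dependence on $g$, since the correction term absorbs the spectral flow of the Dirac operator between any two metrics.

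Next I would construct the cobordism map. For a spin cobordism $(W,\mathfrak{t})$ with $b_2(W)=0$, finite-dimensional approximation of the Seiberg-Witten equations on $W$ (with cylindrical ends attached to $Y_1$ and $Y_2$) produces, as in \cite{ManolescuPin,ManolescuK}, a $G$-equivariant map between Conley indices at the two ends, desuspended appropriately. The hypothesis $b_2(W)=0$ ensures that there are no harmonic self-dual $2$-forms on $W$ obstructing the approximation, so that after stabilization the map lives in $\E$ and yields
\[
\mathit{SWF}(W,\mathfrak{t}): \mathit{SWF}(Y_1,\mathfrak{t}|_{Y_1}) \longrightarrow \mathit{SWF}(Y_2,\mathfrak{t}|_{Y_2}).
\]
The independence of this map on the chosen metric, perturbation, and $\lambda$ follows from parametrized continuation arguments and from compatibility of the $n(Y,\s,g)$ correction with the Atiyah-Patodi-Singer index of $W$; this is the most delicate point and is carried out in detail in \cite{ManolescuPin}.

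Finally, to verify that the induced map is a homotopy equivalence on the $S^1$-fixed-point set, I would restrict the finite-dimensional approximation to $S^1$-fixed configurations. On $W^\lambda$ the $S^1$-fixed subspace is precisely the negative eigenspace of $*d$, carrying a trivial $S^1$-action and a $j$-action by $-1$, and the flow there is linear. Consequently the fixed-point Conley index is simply the one-point compactification of this eigenspace, which after desuspension represents the sphere $S^0 \in \E$ up to a $\tilde{\mathbb{R}}$-suspension controlled by $b_1(Y)=0$; both ends produce the same fixed-point spectrum class, and the cobordism map restricts to the identity, or more precisely to the stable homotopy equivalence induced by the linear map between negative eigenspaces on the two ends of $W$. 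This completes the plan.
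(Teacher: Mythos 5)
The paper does not actually prove this statement---it records it as a theorem of Manolescu, citing \cite{ManolescuPin} and \cite{ManolescuK}---and your proposal is a faithful sketch of the construction carried out in those references, deferring the genuinely delicate points (independence of $\lambda$, $\q$, and $g$, and the index analysis for the cobordism map) to the same sources, which is exactly what the paper itself does. One minor imprecision worth fixing: the $S^1$-fixed subspace of $W^\lambda$ is the span of \emph{all} eigenvectors of $*d$ on the form part with eigenvalue in $(-\lambda,\lambda)$, not just the negative eigenspace; it is the Conley index of the linear flow on that subspace that equals the one-point compactification of the negative eigenspace $W(-\lambda,0)(\tilde{\mathbb{R}})$, which after the prescribed desuspension represents $S^0$.
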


\begin{rmk}
The three-manifold $Y$ in Theorem \ref{thm:ManolescuSWF} may be disconnected.
\end{rmk}
\begin{defn}
For $[(X,m,n)] \in \E$, we set 
\begin{equation}
\alpha((X,m,n))=\frac{a(X)}{2}-\frac{m}{2}-2n,
\;\beta((X,m,n))=\frac{b(X)}{2}-\frac{m}{2}-2n,
\end{equation}
\[
\gamma((X,m,n))=\frac{c(X)}{2}-\frac{m}{2}-2n.
\]
The invariants $\alpha,\beta$ and $\gamma$ do not depend on the choice of representative of the class $[(X,m,n)]$.  
\end{defn}  

The Manolescu invariants $\alpha(Y,\mathfrak{s}),\beta(Y,\mathfrak{s}),\gamma(Y,\mathfrak{s})$ of a pair $(Y,\mathfrak{s})$ are then given by $\alpha(\mathit{SWF}(Y,\mathfrak{s}))$, $\beta(\mathit{SWF}(Y,\mathfrak{s}))$, and $\gamma(\mathit{SWF}(Y,\mathfrak{s}))$, respectively.  

 From Theorem \ref{thm:ManolescuSWF}, the local and chain local equivalence classes of $\mathit{SWF}(Y,\mathfrak{s})$, $[\mathit{SWF}(Y,\mathfrak{s})]_{l}$ and $[\mathit{SWF}(Y,\mathfrak{s})]_{cl}$, respectively, are homology cobordism invariants of the pair $(Y,\mathfrak{s})$.  Since the $G$-Borel homology of $\mathit{SWF}(Y,\s)$ depends only on $[\mathit{SWF}(Y,\s)]_{cl}$, we have that $\alpha(Y,\mathfrak{s}),\beta(Y,\mathfrak{s}),$ and $\gamma(Y,\mathfrak{s})$ depend only on the chain local equivalence class $[\mathit{SWF}(Y,\mathfrak{s})]_{cl}$.  
\begin{fact}\label{fct:homf}
 Let $Y_1, Y_2$ be rational homology three-spheres with spin structures $\mathfrak{t}_1,\mathfrak{t}_2$ and $(X_i,m_i,n_i)=\mathit{SWF}(Y_i,\mathfrak{t}_i)$ for $i=1,2$.  Then \[\mathit{SWF}(Y_1 \# Y_2,\mathfrak{t}_1 \# \mathfrak{t}_2) \equiv_l (X_1 \wedge X_2, m_1+m_2,n_1+n_2).\]
 \end{fact}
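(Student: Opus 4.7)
The plan is to combine two ingredients: (i) multiplicativity of $\mathit{SWF}$ under disjoint union, and (ii) the cobordism-induced maps of Theorem \ref{thm:ManolescuSWF} applied to the standard 1-handle and 3-handle cobordisms relating $Y_1\amalg Y_2$ and $Y_1\#Y_2$.

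First I would prove that for the disjoint union,
\[
\mathit{SWF}(Y_1\amalg Y_2,\mathfrak{t}_1\amalg\mathfrak{t}_2)=(X_1\wedge X_2,\,m_1+m_2,\,n_1+n_2)\in\mathfrak{E}.
\]
On the level of finite-dimensional approximation this is essentially formal: the configuration space $\mathcal{C}(Y_1\amalg Y_2,\mathfrak{s}_1\amalg\mathfrak{s}_2)$ splits as the product $\mathcal{C}(Y_1,\mathfrak{s}_1)\times\mathcal{C}(Y_2,\mathfrak{s}_2)$, the global Coulomb slices split similarly, the operator $*d+D$ is block-diagonal, so the finite-dimensional approximating subspaces decompose as $W^\lambda(Y_1\amalg Y_2)=W^\lambda(Y_1)\oplus W^\lambda(Y_2)$ (with the $G$-representation splitting respected), and the approximate Seiberg-Witten vector field $\mathcal{X}^{\mathrm{gC}}_\lambda$ is a product vector field. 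Choosing isolating neighborhoods as products of the neighborhoods used to define the individual Conley indices, and invoking the standard product property of the equivariant Conley index, yields $I^\lambda(Y_1\amalg Y_2)=I^\lambda(Y_1)\wedge I^\lambda(Y_2)$. Desuspending by $W(-\lambda,0)(Y_1\amalg Y_2)=W(-\lambda,0)(Y_1)\oplus W(-\lambda,0)(Y_2)$ and using the additivity of the eta-invariant correction $n(Y,\mathfrak{s},g)$ under disjoint union then gives the displayed identity in $\mathfrak{E}$.

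Next, I would exhibit a spin cobordism $W_{12}\colon Y_1\amalg Y_2\to Y_1\#Y_2$ by attaching a single 1-handle to $[0,1]\times(Y_1\amalg Y_2)$ joining the two outgoing boundary components, and a spin cobordism $W_{21}\colon Y_1\#Y_2\to Y_1\amalg Y_2$ by attaching the dual 3-handle. Both $W_{12}$ and $W_{21}$ have $b_2=0$, since they are built from a single handle of index $\neq 2$. Both admit spin structures restricting to $\mathfrak{t}_1\amalg\mathfrak{t}_2$ and $\mathfrak{t}_1\#\mathfrak{t}_2$ on the appropriate ends, because the cores of the 1- and 3-handles are contractible. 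Applying Theorem \ref{thm:ManolescuSWF} to each of these cobordisms, together with the identification from the previous paragraph, yields stable $G$-maps
\begin{align*}
\phi &\colon (X_1\wedge X_2,m_1+m_2,n_1+n_2)\longrightarrow \mathit{SWF}(Y_1\#Y_2,\mathfrak{t}_1\#\mathfrak{t}_2),\\
\psi &\colon \mathit{SWF}(Y_1\#Y_2,\mathfrak{t}_1\#\mathfrak{t}_2)\longrightarrow (X_1\wedge X_2,m_1+m_2,n_1+n_2),
\end{align*}
each of which is a $G$-homotopy equivalence on the $S^1$-fixed-point set (again by Theorem \ref{thm:ManolescuSWF}). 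By Definition \ref{def:loceq}, this is exactly a local equivalence.

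The potential obstacle is verifying the product property for the equivariant Conley index in the setting of \cite{ManolescuPin}: one needs that the chosen large isolating ball for $Y_1\amalg Y_2$ can be taken as a product of balls for $Y_1$ and $Y_2$ (or at least homotopic to one through isolating neighborhoods), and that the trajectories of finite type for the product flow are precisely the pairs of such trajectories for each factor. Both are standard, but require checking that the estimates from \cite{ManolescuPin} are compatible with the product structure and that the flow remains $G$-equivariant under the diagonal action. Once this is in place, the argument above produces the claimed local equivalence.
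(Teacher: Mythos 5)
Your proposal is correct and follows essentially the same route as the paper: the paper cites \cite{ManolescuPin} for the disjoint-union formula $\mathit{SWF}(Y_1\amalg Y_2)\equiv_l(X_1\wedge X_2,m_1+m_2,n_1+n_2)$ and then invokes the homology cobordism between $Y_1\amalg Y_2$ and $Y_1\# Y_2$ together with Theorem \ref{thm:ManolescuSWF}, which is exactly what your 1-handle and 3-handle cobordisms implement. The only difference is that you re-derive the product/smash formula for the Conley index from finite-dimensional approximation rather than citing it, which is fine but not necessary.
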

 \begin{proof}
 According to \cite{ManolescuPin}, the Seiberg-Witten Floer spectrum class of the disjoint union $Y_1 \amalg Y_2$ is given by:
 \[\mathit{SWF}(Y_1 \amalg Y_2) \equiv_l (X_1 \wedge X_2, m_1+m_2,n_1+n_2).\]
 On the other hand $Y_1 \amalg Y_2$ is homology cobordant to the connected sum $Y_1 \# Y_2$.  Since the local equivalence class is a homology cobordism invariant, we obtain the claim.  
 \end{proof}
 
 By Theorem \ref{thm:ManolescuSWF} and Fact \ref{fct:homf}, we have a sequence of homomorphisms:
\begin{equation}\label{eq:homse} \theta^H_3 \xrightarrow{\mathit{SWF}} \el \xrightarrow{C_*} \CEL. \end{equation}

\subsection{The Morse-Bott condition and approximate trajectories}\label{sec:tr}

Let $\q$ be an admissible perturbation, as in Definition 22.1.1 of \cite{KM}.  We will need the following results of Lidman-Manolescu \cite{LM}.  
 
\begin{prop}\cite{LM}\label{prop:LM1}
For $\lambda$ sufficiently large, there is a grading-preserving isomorphism between the set of critical points of the finite-dimensional approximation $\X^{\mathrm{gC}}_{\q,\lambda}$ and the set of critical points of $\X_\q$.  
\end{prop}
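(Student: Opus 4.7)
The plan is to establish a bijection between $\mathrm{Crit}(\X_\q) \subset \mathcal{C}^{\mathrm{gC}}(Y,\s)$ and $\mathrm{Crit}(\X^{\mathrm{gC}}_{\q,\lambda}) \subset W^\lambda$ by a standard finite-dimensional approximation argument: for each exact critical point I produce a nearby approximate critical point via the implicit function theorem, and conversely I show any approximate critical point stays in a precompact region and subconverges to an exact one. Grading preservation follows because the spectral flow that defines the relative grading is captured by the finite-dimensional Hessian once $\lambda$ dominates all crossings.

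First I would record the input that for $\q$ admissible the critical set $\mathrm{Crit}(\X_\q)$ is compact modulo gauge and consists of smooth configurations (Kronheimer–Mrowka, Chapter 10). After passing to the global Coulomb slice this yields finitely many (or finitely many Morse–Bott orbits of) critical points $x_1,\ldots,x_k$. Each $x_i$ is smooth, so its Fourier expansion with respect to the eigenbasis of $*d+D$ decays faster than any polynomial; hence the $L^2$-projection $x_i^\lambda := \Pi^\lambda x_i$ satisfies $\|x_i - x_i^\lambda\|_{L^2_k} \to 0$ as $\lambda \to \infty$, for every $k$.

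Second I would apply the implicit function theorem in $W^\lambda$. The linearization $L_i := d\X_\q(x_i)$ is a self-adjoint Fredholm operator on $\mathcal{T}^{\mathrm{gC}}_0$, and (after a small further perturbation if needed to achieve non-degeneracy, or after working transversely to the Morse–Bott orbit) it has a two-sided bounded inverse off a fixed finite-dimensional kernel. The finite-dimensional map $\X^{\mathrm{gC}}_{\q,\lambda} = \Pi^\lambda \circ \X_\q|_{W^\lambda}$ has linearization at $x_i^\lambda$ differing from $\Pi^\lambda L_i \Pi^\lambda$ by an error that tends to zero in operator norm (using tameness of $\q$, which gives continuity of $d\q$ in appropriate Sobolev norms). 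Because $\X_\q(x_i) = 0$, we have $\X^{\mathrm{gC}}_{\q,\lambda}(x_i^\lambda) = \Pi^\lambda \X_\q(x_i^\lambda) = \Pi^\lambda \bigl(\X_\q(x_i^\lambda) - \X_\q(x_i)\bigr)$, which is $O(\|x_i - x_i^\lambda\|_{L^2_k})$ and hence arbitrarily small. A standard quantitative IFT then produces, for all $\lambda \gg 0$, a unique critical point $y_i^\lambda \in W^\lambda$ of $\X^{\mathrm{gC}}_{\q,\lambda}$ within a controlled $L^2$-ball around $x_i^\lambda$.

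Third I would prove the reverse inclusion and uniqueness. Any $y^\lambda \in \mathrm{Crit}(\X^{\mathrm{gC}}_{\q,\lambda})$ lies in a fixed ball of $\mathcal{C}^{\mathrm{gC}}$ by the a priori bounds used to build the isolating neighborhood in Section~\ref{eq:subsec:findappx}. A diagonal/elliptic-bootstrap argument then shows that any sequence $y^{\lambda_n}$ with $\lambda_n \to \infty$ has a subsequence converging in every $L^2_k$ to some $y^\infty \in \mathrm{Crit}(\X_\q)$. Combined with the local uniqueness from Step 2, this gives the bijection $x_i \leftrightarrow y_i^\lambda$ for $\lambda$ large.

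Finally, for gradings, recall that in both the exact and approximate settings the relative grading between two critical points equals a spectral flow of the Hessian along a path joining them, corrected by $\tfrac{1}{2}\dim_{\mathbb{R}}W(-\lambda,0) - \tfrac{1}{2}n(Y,\s,g)$ to pass to the absolute grading in $\E$. Eigenvalue crossings of the exact Hessian family occur only in a bounded spectral window, so once $\lambda$ exceeds this window the spectral flow of $d\X^{\mathrm{gC}}_{\q,\lambda}$ along the path $t \mapsto y_i^\lambda \to y_j^\lambda$ equals that of $d\X_\q$ along $x_i \to x_j$; the normalizations then match by construction of $\mathit{SWF}(Y,\s)$.

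The main obstacle is the quantitative IFT step together with its converse: one needs uniform-in-$\lambda$ resolvent estimates on $\Pi^\lambda L_i \Pi^\lambda$ away from its (approximate) kernel, and uniform control on the nonlinearity $\q$ under projection. Both rely crucially on $\q$ being a tame perturbation in the sense of \cite{KM}, so that $d\q$ and its Sobolev-derivatives are bounded operators between the relevant completions and interact well with spectral truncation. Given this input the matching of exact and approximate critical sets is essentially the same finite-dimensional reduction used throughout \cite{ManolescuS1}.
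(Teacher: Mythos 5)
The paper offers no proof of this Proposition: it is imported verbatim from the forthcoming Lidman--Manolescu paper \cite{LM}, so there is no internal argument to compare yours against. Your outline is the standard finite-dimensional-approximation template that \cite{LM} follows: spectral projection plus a quantitative implicit function theorem to produce and locally identify approximate critical points near each exact one, $\lambda$-independent a priori bounds and elliptic bootstrapping to show every approximate critical point (in the ball used to define $S^\lambda$) converges to an exact one, and stabilization of spectral flow for the gradings. As a sketch it is sound; the actual content of the proof lies precisely in the two estimates you defer, namely the uniform invertibility of $\Pi^\lambda\, d\X_\q(x_i)\, \Pi^\lambda$ on $W^\lambda$ for large $\lambda$ (which follows from the compactness of the nonlinear part of the linearization, since the linear part $*d+D$ preserves $W^\lambda$), and the ball-confinement of approximate critical points --- together with some care in the Morse--Bott setting relevant to Seifert metrics, where the correspondence is between critical submanifolds rather than isolated points.
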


For $x,y$ critical points of $\X^{\mathrm{gC}}_{\q,\lambda}$, let $M_\lambda(x,y)$ denote the set of unparameterized trajectories of $\X^{\mathrm{gC}}_{\q,\lambda}$ from $x$ to $y$ contained in the ball used to define $S^\lambda$.  Similarly, we let $M(x,y)$ be the set of unparameterized trajectories between critical points of $\X_\q$.  
\begin{prop}\cite{LM}\label{prop:LM2}
There is a correspondence of degree one trajectories compatible with Proposition \ref{prop:LM1}.  That is, if $x_\lambda,y_\lambda$ are critical points, with $\mathrm{gr}  (x_\lambda) = \mathrm{gr}  (y_\lambda) +1$, of $\X^{\mathrm{gC}}_{\q,\lambda}$ corresponding to critical points $x,y $ of $\X_\q$, respectively, then there is an identification
\[ M(x,y) = M_\lambda(x_\lambda,y_\lambda).\]
\end{prop}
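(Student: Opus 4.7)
The plan is to prove Proposition~\ref{prop:LM2} by upgrading the bijection of critical points in Proposition~\ref{prop:LM1} to a bijection of zero-dimensional moduli spaces via a combined compactness-and-implicit-function-theorem argument. The setup is that for an admissible $\q$, the moduli space $M(x,y)$ of unparameterized $\X_\q$-trajectories with $\mathrm{gr}(x)=\mathrm{gr}(y)+1$ is a compact regular zero-manifold (finitely many points), and the Lidman--Manolescu correspondence between $\X_\q$-trajectories and $\X^{\gc}_\q$-trajectories (modulo gauge) reduces the claim to comparing $M^{\gc}(x,y)$ with $M_\lambda(x_\lambda,y_\lambda)$.

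First I would build a map $M^{\gc}(x,y)\to M_\lambda(x_\lambda,y_\lambda)$ for $\lambda$ sufficiently large. Given $\gamma\in M^{\gc}(x,y)$, let $\gamma_\lambda=\pi_\lambda\gamma$ be the orthogonal projection onto $W^\lambda$. Using that $\gamma$ lies in the distinguished ball and that $*d+D$ has discrete spectrum with eigenspaces orthogonal in $L^2$, elliptic estimates give $\|\gamma-\gamma_\lambda\|_{L^2_k}\to 0$ as $\lambda\to\infty$ for every $k$. In particular the residual $\X^{\gc}_{\q,\lambda}(\gamma_\lambda)-\dot\gamma_\lambda$ tends to zero in any Sobolev norm. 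Since $\gamma$ is a regular point of the $\X^{\gc}_\q$-flow, the linearized operator is surjective with bounded right inverse, and a standard Newton iteration applied inside $W^\lambda$ deforms $\gamma_\lambda$ to a genuine trajectory $\tilde\gamma_\lambda$ of $\X^{\gc}_{\q,\lambda}$ from $x_\lambda$ to $y_\lambda$, still contained in the defining ball; this produces the desired element of $M_\lambda(x_\lambda,y_\lambda)$.

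Next I would show the assignment is a bijection by constructing the inverse through a compactness argument. Take any sequence $\lambda_n\to\infty$ and $\gamma_n\in M_{\lambda_n}(x_{\lambda_n},y_{\lambda_n})$. The a priori $C^0$-bound built into the definition of $S^\lambda$, together with the parabolic regularity of the perturbed Seiberg--Witten equation, lets one extract a subsequence converging (in $C^\infty_{\mathrm{loc}}$) to a broken $\X^{\gc}_\q$-trajectory from $x$ to $y$. The relative grading constraint rules out breaking, so the limit is a single unbroken trajectory $\gamma_\infty\in M^{\gc}(x,y)$. Regularity of $\gamma_\infty$ and uniqueness in the implicit function theorem of the previous paragraph then force $\gamma_n$ to coincide with the image of $\gamma_\infty$ for all large $n$, giving mutual inverses.

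The main obstacle will be making the implicit function step uniform in $\lambda$: one must control the operator norm of the right inverse to the linearized flow $D\X^{\gc}_{\q,\lambda}$ along $\gamma_\lambda$ independently of $\lambda$. This reduces to a quantitative spectral decoupling for the linearization at the regular trajectory $\gamma$, where the high-frequency modes of $*d+D$ are nearly diagonal and so inherit invertibility from the leading-order eigenvalue growth, while the low-frequency block converges to the (invertible) linearization of $\X^{\gc}_\q$ at $\gamma$. Once this uniform estimate is established, using ellipticity of $*d+D$ and tameness of $\q$ in the sense of \cite{KM}, the remainder of the argument is essentially formal bookkeeping.
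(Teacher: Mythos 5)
This proposition is not proved in the paper at all: it is quoted verbatim from the forthcoming work of Lidman and Manolescu \cite{LM}, so there is no in-paper argument to compare yours against. What the paper does supply is the surrounding scaffolding you correctly invoke --- the bijection of critical points (Proposition \ref{prop:LM1}), the gauge-theoretic correspondence between finite-energy trajectories of $\X_\q$ and of $\X^{\mathrm{gC}}_\q$, and the compactness statement (Proposition \ref{prop:LM3}). Your outline follows the expected strategy for such a result: project a regular trajectory onto $W^\lambda$, correct it by a Newton iteration to a genuine trajectory of $\X^{\mathrm{gC}}_{\q,\lambda}$, and invert via compactness plus the index-one constraint that forbids breaking. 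That is the right shape of argument, and you correctly identify the uniform-in-$\lambda$ bound on the right inverse of the linearization as the crux.

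That said, what you have written is a proof plan rather than a proof, and several of the asserted steps carry real analytic content that is not supplied. First, a trajectory is a map on the noncompact domain $\mathbb{R}$; the claim that $\|\gamma-\pi_\lambda\gamma\|_{L^2_k}\to 0$ and that the residual $\dot\gamma_\lambda-\X^{\mathrm{gC}}_{\q,\lambda}(\gamma_\lambda)$ is small in a global Sobolev norm requires exponential decay of $\gamma$ to the nondegenerate critical points at $\pm\infty$ and uniform control of the error term $\pi_\lambda\X^{\mathrm{gC}}_\q(\gamma)-\X^{\mathrm{gC}}_{\q,\lambda}(\pi_\lambda\gamma)$ coming from the nonlinearity of $\q$; neither is addressed. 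Second, the ``spectral decoupling'' you sketch for the uniform right inverse is plausible but is precisely the hard estimate; asserting that the high-frequency block ``inherits invertibility from the eigenvalue growth'' does not engage with the off-diagonal coupling introduced by the quadratic term and the perturbation. Third, to conclude that the two constructions are mutually inverse you need a quantitative uniqueness radius for the Newton iteration together with an upgrade of the $C^\infty_{\mathrm{loc}}$ convergence from your compactness step to convergence in the norm in which that uniqueness holds; without this, distinct elements of $M_\lambda(x_\lambda,y_\lambda)$ could in principle limit onto the same $\gamma_\infty$. None of these gaps indicates a wrong approach, but each must be closed before the identification $M(x,y)=M_\lambda(x_\lambda,y_\lambda)$ is established; within the present paper the honest course is simply to cite \cite{LM}, as the author does.
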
 

The condition $\mathrm{gr}(x)=\mathrm{gr}(y)+1$ allows the application of an inverse function theorem.  However, without the grading assumption, a compactness result still holds, providing:  

\begin{prop}\cite{LM}\label{prop:LM3}
Let $x$ and $y$ be critical points of $\X_\q$ corresponding to critical points $x_\lambda,y_\lambda$ of $\X^{\mathrm{gC}}_{\q,\lambda}$.  If $M(x,y)=\emptyset$, then $M_\lambda(x_\lambda,y_\lambda)=\emptyset$.  
\end{prop}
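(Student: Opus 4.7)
The plan is to argue by contradiction, producing from a sequence of approximate trajectories a genuine (possibly broken) trajectory of $\X_\q$ via finite-dimensional approximation compactness, in the same spirit as the proofs of Propositions \ref{prop:LM1} and \ref{prop:LM2} of Lidman--Manolescu. Suppose instead that $M_{\lambda_n}(x_{\lambda_n}, y_{\lambda_n}) \neq \emptyset$ for some sequence $\lambda_n \to \infty$, and pick trajectories $\gamma_n : \mathbb{R} \to W^{\lambda_n}$ of $\X^{\mathrm{gC}}_{\q,\lambda_n}$ with $\gamma_n(-\infty)=x_{\lambda_n}$, $\gamma_n(+\infty)=y_{\lambda_n}$, each contained in the uniformly bounded ball used to define $S^{\lambda_n}$. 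The goal is to pass to a limit $\gamma_\infty$ that is a (broken) $\X_\q$-trajectory from $x$ to $y$, contradicting the emptiness of $M(x,y)$ (interpreted, as in \cite{KM}, to include broken trajectories, since otherwise some piece of the broken limit still lies in some nontrivial $M(z,z')=M(x,y)$).

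First I would establish the a priori bounds. Uniform boundedness in $W^{\lambda_n}$ together with the energy drop $\mathcal{L}_\q(x)-\mathcal{L}_\q(y)$ (which is finite and independent of $n$) provides $L^2$ control on $\dot\gamma_n$, while the ellipticity of $*d+D$ promotes this to uniform $C^k_{\mathrm{loc}}$ bounds on the pullbacks of $\gamma_n$ to $\mathcal{C}^{\mathrm{gC}}(Y,\s)$. After appropriate time translations $t_n^{(i)}$, an Arzel\`a--Ascoli extraction yields $C^\infty_{\mathrm{loc}}$ subsequential limits. The central analytic input is that, on any bounded subset of $\mathcal{C}^{\mathrm{gC}}(Y,\s)$, the vector field $\X^{\mathrm{gC}}_{\q,\lambda_n}$ converges to $\X^{\mathrm{gC}}_\q$ as $\lambda_n \to \infty$ (the discrepancy is supported in the spectral range $|\mu|>\lambda_n$ of $*d+D$ and decays uniformly). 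Hence each limit $\gamma_\infty^{(i)}$ is a genuine finite-energy trajectory of $\X^{\mathrm{gC}}_\q$, and by the Seiberg--Witten compactness theorem its asymptotics are critical points of $\X^{\mathrm{gC}}_\q$, corresponding under Proposition \ref{prop:LM1} to critical points of $\X_\q$.

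Next I would assemble the pieces into a broken trajectory. The standard argument is: at each scale $t_n^{(i)}$ we extract a limiting unbroken trajectory $v_i$ from some critical point $z_{i-1}$ to $z_i$; the failure of any single time-shift to capture all of $\gamma_n$ (quantified by bubbling of the distance between consecutive pieces, i.e., $|t_n^{(i+1)}-t_n^{(i)}|\to\infty$) forces $v_{i+1}$ to begin where $v_i$ ends. Since energy is finite and each $v_i$ carries at least a quantum of energy bounded below by the Morse--Bott non-degeneracy of the perturbed critical set, the process terminates after finitely many steps, yielding a broken $\X_\q$-trajectory $(v_1,\ldots,v_k)$ from $z_0=x$ to $z_k=y$. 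At least one $v_i$ is nonconstant, so $M(z_{i-1},z_i)\neq\emptyset$, and when $k=1$ this directly contradicts $M(x,y)=\emptyset$; the general $k$ case is handled by iterating: either $M(x,y)\neq\emptyset$ already, or one inductively reduces to a pair with strictly smaller $\mathcal{L}$-drop for which we obtain a contradiction.

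The main obstacle is the convergence step $\X^{\mathrm{gC}}_{\q,\lambda_n} \to \X^{\mathrm{gC}}_\q$ on bounded sets in a norm strong enough to force ODE solutions to converge to ODE solutions in the limit. This requires control on the tame perturbation $\q$ and on the projection onto $W^{\lambda_n}$ in an $L^2_k$-norm with $k$ large enough for a Sobolev embedding into $C^0$, and is the place where the choice of admissible perturbation and the elliptic theory of $*d+D$ must be used carefully; once this is in place, everything else is a routine application of Floer-theoretic compactness adapted to the finite-dimensional approximation setting.
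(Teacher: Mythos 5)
First, a caveat: the paper does not actually prove Proposition \ref{prop:LM3} — it is imported verbatim from \cite{LM} — so there is no in-paper argument to compare against. Your proposal can only be measured against the intended compactness argument there and against how the statement is used in Lemma \ref{lem:crossflows}. The analytic skeleton you describe (a priori bounds from the uniform energy drop and the ellipticity of $*d+D$, convergence of $\X^{\mathrm{gC}}_{\q,\lambda_n}$ to $\X^{\mathrm{gC}}_\q$ on bounded sets, Arzel\`a--Ascoli after time shifts, and assembly of the sublimits into a finite broken trajectory) is the right approach and is the standard one in this finite-dimensional-approximation setting.

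The gap is in the endgame. What the compactness argument delivers is a \emph{broken} trajectory $(v_1,\dots,v_k)$ of $\X_\q$ from $x=z_0$ to $y=z_k$. When $k\geq 2$ this shows that $M(x,z_1),M(z_1,z_2),\dots,M(z_{k-1},y)$ are nonempty; it says nothing about $M(x,y)$, which is the only moduli space the hypothesis constrains. Your proposed fix — ``one inductively reduces to a pair with strictly smaller $\mathcal{L}$-drop for which we obtain a contradiction'' — does not work: the hypothesis $M(x,y)=\emptyset$ does not descend to the sub-pairs $(z_{i-1},z_i)$, and there is nothing to induct on. The escape you gesture at in your opening parenthetical, namely reading $M(x,y)$ as the space of (possibly broken) trajectories, is the correct resolution, but then the proposition must be stated and proved in that form, and each \emph{application} must supply emptiness of the broken moduli space, not just the unbroken one. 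That is exactly what happens in the proof of Lemma \ref{lem:crossflows}: a broken trajectory from $C^+$ to $C^-$ must either contain an unbroken piece from some $C^+(\mathbf{e})$ to some $C^-(\mathbf{f})$ (excluded by Fact \ref{fct:crossflows}) or pass through the reducible (excluded by Fact \ref{fct:csddeg}, since $\mathcal{L}$ decreases along trajectories and the reducible has minimal $\mathcal{L}$). If you instead insist on the literal unbroken reading of $M(x,y)$, your argument does not establish the statement, and repairing it would require a gluing-type converse (producing unbroken approximate trajectories from a broken genuine one, or vice versa) for which you have no transversality. So: prove the broken version, and check in the applications that the stronger emptiness hypothesis holds.
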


We will also need the following Theorem from \cite{LM}.  

\begin{thm}\cite{LM}\label{thm:LM}
Let $(Y,\mathfrak{s})$ be a rational homology three-sphere with spin structure.  Then
\[ \widecheck{HM}(Y,\mathfrak{s})=\mathit{SWFH^{S^1}}(Y,\mathfrak{s}), \]  
as absolutely graded $\f[U]$-modules, where $\widecheck{HM}(Y,\s)$ denotes the ``to" version of monopole Floer homology defined in \cite{KM}.   
\end{thm}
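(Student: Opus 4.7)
My plan is to construct, for $\lambda \gg 0$ and a generic admissible perturbation $\q$, a $\G$-equivariant Morse--Bott decomposition of the Conley index $I^\lambda(Y,\s,g,\q)$ whose $S^1$-Borel homology is identified, term by term, with the Kronheimer--Mrowka ``to'' chain complex $\widecheck{CM}_*(Y,\s)$ built from $\X_\q$. The equality of modules then follows by applying $H^{S^1}_*$ to both sides and desuspending by $W(-\lambda,0)$ as in (\ref{eq:swfpredef}), with the grading correction (\ref{eq:swfdef}) accounting for the absolute $\mathbb{Q}$-grading via the eta-invariant term $n(Y,\s,g)$.

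The first step is to reduce $\mathit{SWFH}^{S^1}(Y,\s)$ to a critical-point chain complex. Because $\X^{\mathrm{gC}}_{\q,\lambda}$ is gradient-like on the relevant isolating neighborhood and is $S^1$-equivariantly Morse--Bott (with critical sets being either isolated irreducibles or the fixed reducible $\Theta$), the equivariant Conley index $I^\lambda$ admits a $G$-CW structure whose Borel chain complex is freely generated over $C^{CW}_*(EG)$ by cells indexed by the critical points, with differentials counting parameterized gradient flows modulo $S^1$. By Proposition~\ref{prop:LM1}, the critical points of $\X^{\mathrm{gC}}_{\q,\lambda}$ are in grading-preserving bijection with the critical points of $\X_\q$, so the generators match those of $\widecheck{CM}$ once the reducible tower is accounted for (see below). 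By Propositions~\ref{prop:LM2} and \ref{prop:LM3}, the index-$1$ moduli spaces $M_\lambda(x_\lambda,y_\lambda)$ are identified with $M(x,y)$, and the emptiness of the infinite-dimensional moduli spaces forces emptiness on the finite-dimensional side. Consequently the irreducible-to-irreducible portion of the differential on the Borel complex agrees with the differential on the irreducible part of $\widecheck{CM}_*$.

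The subtle step is the treatment of the reducible critical point $\Theta$. In the Conley-index picture $\Theta$ is a single $S^1$-fixed critical point, and its contribution to the Borel complex is $C^{CW}_*(BS^1) \simeq \f[U]$, giving the infinite $U$-tower. In Kronheimer--Mrowka's setup the reducible is blown up to $S^1/S^1 = \mathrm{pt}$ in the quotient of the blowup, creating the tower $\widecheck{\mathfrak{C}}^{\mathrm{red}}$ of boundary-stable reducibles indexed by eigenvalues of the Dirac operator. I would show that the reducible Conley index decomposes, up to suspension by $W(-\lambda,0)(\mathbb{H})$, as $(\tilde{\mathbb{R}}^{\dim W(-\lambda,0)(\tilde{\mathbb{R}})})^+$ smashed with a representation sphere, and that after desuspension by $W(-\lambda,0)$ its Borel homology is $\bt = \f[U,U^{-1}]/U\f[U]$ in a grading shifted by $-2n(Y,\s,g)$, matching the reducible part of $\widecheck{HM}$ once the Kronheimer--Mrowka absolute grading convention is applied. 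Differentials from irreducible to reducible generators must also match: a degree-$1$ flow from an irreducible $x$ to the reducible in Kronheimer--Mrowka counts boundary-obstructed trajectories via the coupling between $x$ and all boundary-stable reducibles in the tower, and on the Conley-index side these correspond to $S^1$-orbits of trajectories $x \to \Theta$, contributing to the $U$-action structure of the Borel differential.

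The hard part, which is really the content of \cite{LM}, is making precise the correspondence between trajectories that end at the reducible after blowup and trajectories of the finite-dimensional approximation flow ending at $\Theta$, including a careful matching of signs, the $U$-equivariance of the chain-level identification, and the absolute-grading comparison (the shift by $-n(Y,\s,g)$ in $\mathit{SWF}(Y,\s)$ is exactly compensated by the eta-invariant appearing in Kronheimer--Mrowka's absolute $\mathbb{Q}$-grading). Once these ingredients are in place, the identification of Borel chain complexes is tautological, and passing to homology yields the claimed isomorphism of absolutely graded $\f[U]$-modules.
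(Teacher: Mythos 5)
The paper does not prove this statement: it is quoted verbatim from the (then-forthcoming) work of Lidman and Manolescu \cite{LM} and used as a black box, so there is no internal proof to compare your attempt against. What you have written must therefore be judged as an outline of the external proof, not as something the paper supplies.

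As such an outline it identifies the right ingredients, but it is not a proof, and the two places where you wave your hands are exactly where the substance lies. First, the assertion that $I^\lambda(Y,\s,g,\q)$ ``admits a $G$-CW structure whose Borel chain complex is freely generated~\dots\ by cells indexed by the critical points, with differentials counting parameterized gradient flows modulo $S^1$'' is itself a major theorem: one must show the approximated flow is Morse--Smale (or Morse--Bott--Smale) after perturbation, build an equivariant CW model of the Conley index from unstable manifolds, and identify the cellular attaching degrees with signed trajectory counts. Propositions \ref{prop:LM1}, \ref{prop:LM2}, and \ref{prop:LM3} as quoted in the paper give only a bijection of critical points, an identification of \emph{degree-one} moduli spaces, and a vanishing statement; they do not by themselves identify chain complexes, because the $\widecheck{HM}$ differential also involves boundary-obstructed trajectories of higher expected dimension whose finite-dimensional counterparts those propositions do not address. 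Second, you explicitly defer the reducible/blow-up comparison --- the matching of the $U$-tower, the boundary-obstructed contributions, and the absolute $\mathbb{Q}$-grading via $n(Y,\s,g)$ --- to ``the content of \cite{LM}.'' Deferring the crux of the argument to the reference whose theorem you are trying to prove is circular as a proof, even if it is accurate as a description of where the difficulty sits. In short: your sketch is a correct map of the terrain, but every nontrivial step is asserted rather than established, which is consistent with the paper's own decision to cite rather than prove this result.
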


\subsection{Connected Seiberg-Witten Floer homology}

 \begin{defn}
Let $(Y,\s)$ be a rational homology three-sphere with spin structure, and \[[\mathit{SWF}(Y,\s)] =(Z,m,n) \in \ce,\] with $Z$ suspensionlike.  The \emph{connected Seiberg-Witten Floer homology} of $(Y,\mathfrak{s})$, $\hfc(Y,\mathfrak{s})$, is the quotient $(H^{S^1}_*(Z)/(H_*^{S^1}(Z^{S^1})+H^{S^1}_*(\ine)))[m+4n]$, where $\ine\subset Z$ is a maximal inessential subcomplex.  By Theorem \ref{thm:decomp}, the isomorphism class of $\hfc(Y,\mathfrak{s})$ is a homology cobordism invariant.  
\end{defn}
\begin{rmk}
We could have instead considered the quotient $(H^{S^1}_*(Z)/H^{S^1}_*(\ine))[m+4n]$, which is isomorphic to $\hfc(Y,\s)\oplus \bt_d$ where $d$ is the Heegaard Floer correction term of $(Y,\s)$.  As defined above, $\hfc(Y,\s)$ has no infinite $\f[U]$-tower, because of the quotient by $H^{S^1}_*(Z^{S^1})$.  Further, let $Z_{\mathrm{conn}}$ denote the connected complex (Definition \ref{def:connplex}) of $Z$.  It is clear from the construction that 
\[ \hfc(Y,\s)=(H^{S^1}_*(Z_{\mathrm{conn}})/H^{S^1}_*(Z^{S^1}))[m+4n].\]
\end{rmk}
\begin{rmk}
Let $\phi$ be the canonical isomorphism:
\[\phi: H^{S^1}(\mathit{SWF}(Y,\mathfrak{s})) \rightarrow \widecheck{HM}(Y,\mathfrak{s}) \rightarrow \mathit{HF^+}(Y,\mathfrak{s}), \]
provided by, for the first map, \cite{LM}, and for the second, \cite{CGH1} and \cite{KLT1}.  Let $\pi$ be the projection $\pi:\mathit{HF^+}(Y,\s) \rightarrow \mathit{HF_{\mathrm{red}}}(Y,\s)$.  
We note that $\hfc(Y,\mathfrak{s})$ is naturally isomorphic to the quotient 
\[\pi (\phi(H_*^{S^1}(\mathit{SWF}(Y,\mathfrak{s})))/\phi(H_*^{S^1}(\ine)).\]
Then $\hfc(Y,\s)$ can be viewed as a summand of $HF_{\mathrm{red}}(Y,\s)$.    
\end{rmk}

\section{Floer spectra of Seifert fiber spaces}\label{sec:seifmod}
\subsection{The Seiberg-Witten equations on Seifert spaces}\label{sec:moy}
In this section we record some results of \cite{MOY} to describe explicitly the monopole moduli space on Seifert fiber spaces.  First we recall some notation associated with Seifert fiber spaces.

The standard fibered torus corresponding to a pair of integers $(a,b)$, for $a>0$, is the mapping torus of the automorphism of the disk $D^2$ given by rotation by $2\pi b/a$.  Let $D^2_{a}$ be the standard disk, given an orbifold structure by letting $\mathbb{Z}/a$ act by rotation by $2\pi/a$; the origin is then an orbifold point, with multiplicity $a$.  The standard fibered torus is naturally a circle bundle over the orbifold $D^2_a$.  

Let $f: Y \rightarrow P$ be a circle bundle over an orbifold $P$, and $x \in P$ an orbifold point with multiplicity $a$.  If a neighborhood of the fiber over $x$ is equivalent, as an orbifold circle bundle, to the standard fibered torus corresponding to $(a,b)$, we say that $Y$ has \emph{local invariant} $b$ at $x$.  

For $a_i \in \mathbb{Z}_{\geq 1}$, let $S(a_1,\dots,a_k)$ denote the orbifold with underlying space $S^2$ and $k$ orbifold points, with corresponding multiplicities $a_1,\dots,a_k$.  Fix $b_i \in \mathbb{Z}$ with $\gcd(a_i,b_i)=1$ for all $i$.  We let $\Sigma(b,(b_1,a_1),\dots, (b_k,a_k))$ denote the circle bundle over $S(a_1,\dots,a_k)$ with first Chern class $b$ and local invariants $b_i$.  We define the \emph{degree} of the Seifert space $\Sigma(b,(b_1,a_1), \dots, (b_k,a_k))$ by $b+\sum \frac{b_i}{a_i}$.  Finally, we call a space $\Sigma(b,(b_1,a_1),\dots,(b_k,a_k))$ negative (positive) if $b+\sum \frac{b_i}{a_i}$ is negative (positive).  The spaces $\Sigma(b,(b_1,a_1), \dots, (b_k,a_k))$ of nonzero degree are rational homology spheres.  As orbifold circle bundles, the orientation reversal $-\Sigma(b,(b_1,a_1), \dots, (b_k,a_k))$ is isomorphic to $\Sigma(-b,(-b_1,a_1), \dots, (-b_k,a_k))$.  We write $\Sigma(a_1,\dots,a_k)$ for the unique negative Seifert integral homology sphere fibering over $S^2(a_1,\dots,a_k)$.

Let $Y$ be a negative Seifert rational homology three-sphere fibering over a base orbifold $P$ with underlying space $S^2$.  Equipping $Y$ with the metric for which $Y$ has the Seifert geometry, Mrowka, Ozsv{\'a}th, and Yu \cite{MOY} show that the Seiberg-Witten moduli space $\mathcal{M}(Y)$ is composed of the following:  
\begin{itemize}
\item A finite set of points forming the reducible critical set, in bijection with $\mathit{Hom}(H_1(Y),S^1)$, and 
\item for each $(k+1)$-tuple of non-negative integers $\textbf{e}=(e,\epsilon_1,...,\epsilon_k)$, such that $ 0 \leq \epsilon_i < a_i$ and

\[e+ \sum^k_{i=1} \frac{\epsilon_i}{a_i} \leq (\frac{k}{2}-1)-\sum^k_{i=1}\frac{1}{2a_i},\]
there are two components, labelled $C^+(\textbf{e})$ and $C^-(\textbf{e})$, in $\mathcal{M}(Y)$.   

\end{itemize}

  Each component $C^+(\textbf{e}), C^-(\textbf{e})$ is a copy of $Sym^e(|\Sigma|)$, where $\Sigma$ is the base orbifold and $|\Sigma|$ its underlying manifold.  Furthermore, $C^+(\textbf{e})$ and  $C^-(\textbf{e})$ are related by the action of $j\in \mathrm{Pin}(2)$.  That is, the restriction of $j$ to $C^+(\textbf{e})$ acts as a diffeomorphism $C^+(\textbf{e}) \rightarrow C^-(\textbf{e})$, and vice versa.  Then, in the quotient of the configuration space by the based gauge group, each $C^\pm(\textbf{e})$ is diffeomorphic to $G \times Sym^e(|\Sigma|)$.  
  
\begin{fact}\label{fct:csddeg}
All reducible critical points $x$ have $\mathcal{L}(x)=0$, where $\mathcal{L}$ is the Chern-Simons-Dirac functional.  All irreducible critical points have $\mathcal{L}>0$.    
\end{fact}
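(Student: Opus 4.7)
The argument splits cleanly into a reducible part and an irreducible part.

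For a reducible critical point $(A,0)$, the spinor vanishes, so the Dirac term $\tfrac{1}{2}\int_Y\langle D_A\Phi,\Phi\rangle$ in $\mathcal{L}$ is automatically zero, and the remaining Seiberg-Witten equation $\tfrac{1}{2}*F_A=\sigma(\Phi)=0$ forces $A$ to be flat.  I will choose the reference connection $A_0$ used to pin down $\mathcal{L}$ to be one of the flat reducibles.  Then the Chern-Simons piece of $\mathcal{L}$, which is a constant multiple of
\[\int_Y (A-A_0)\wedge (F_A+F_{A_0}),\]
vanishes on every reducible, since both $F_A$ and $F_{A_0}$ are zero.  This yields $\mathcal{L}(A,0)=0$ throughout the reducible locus, giving the first assertion.

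For the irreducible part I will use the MOY classification recalled just above.  Each irreducible critical point lies on a component $C^{\pm}(\mathbf{e})$ labelled by an admissible tuple $\mathbf{e}=(e,\epsilon_1,\dots,\epsilon_k)$, and since $\mathcal{L}$ is constant on connected components of the critical set (and preserved by $j$, which interchanges $C^+(\mathbf{e})$ and $C^-(\mathbf{e})$), it takes a single value $\mathcal{L}(\mathbf{e})$ on $C^+(\mathbf{e})\cup C^-(\mathbf{e})$.  To compute $\mathcal{L}(\mathbf{e})$, I will plug in the explicit MOY model: the spinor $\Phi$ on $C^\pm(\mathbf{e})$ is realised as (the pull-back of) a holomorphic section of an orbifold line bundle over the base whose degree is determined by $\mathbf{e}$ and the Seifert data $(b,(b_i,a_i))$, and $F_A$ is recovered via the curvature equation $\tfrac{1}{2}*F_A=\sigma(\Phi)$.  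After integration by parts and pushforward to the base orbifold, this reduces $\mathcal{L}(\mathbf{e})$ to an explicit rational expression in $b$, $(b_i,a_i)$, and $\mathbf{e}$.

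The main obstacle is then showing this explicit expression is strictly positive for every admissible $\mathbf{e}$.  I expect positivity to come out of combining the MOY inequality
\[e+\sum_{i=1}^k\frac{\epsilon_i}{a_i}\leq \frac{k}{2}-1-\sum_{i=1}^k\frac{1}{2a_i}\]
with the negative-fibration hypothesis $b+\sum_i b_i/a_i<0$: both inequalities point the same way in the reduced CSD integrand, and their combination forces each admissible $\mathbf{e}$ to contribute a strictly positive amount.  The bookkeeping -- orbifold volume factors, MOY normalization conventions, and signs in the Chern-Simons term -- is the only subtle part; once it is carried out the desired inequality $\mathcal{L}(\mathbf{e})>0$ will follow directly.
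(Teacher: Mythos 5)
The paper offers no proof of this Fact at all: it is stated as part of the recollection of the Mrowka--Ozsv{\'a}th--Yu results, and the substantive content (the value of the Chern--Simons--Dirac functional on each critical manifold) is a computation carried out in \cite{MOY}. So the real question is whether your sketch amounts to an independent proof. Your reducible half does: with the reference connection taken flat, every reducible critical point is a flat connection (the curvature equation with $\Phi=0$ forces $F_A=0$), the Dirac term vanishes, and the Chern--Simons integrand $(A-A_0)\wedge(F_A+F_{A_0})$ is identically zero; the MOY perturbation $\hat D\mapsto \hat D-\tfrac12\xi$ only modifies the spinor term, so it does not disturb this. That part is complete and correct.

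The irreducible half, however, has a genuine gap. The entire content of the assertion $\mathcal{L}>0$ is the explicit evaluation of $\mathcal{L}$ on $C^{\pm}(\mathbf{e})$ and the verification that the resulting rational expression is strictly positive for every admissible $\mathbf{e}$ --- and this is exactly the step you defer (``I expect positivity to come out of combining\dots'', ``once it is carried out the desired inequality will follow''). The phrase ``both inequalities point the same way'' is not an argument: the actual mechanism in \cite{MOY} is that $\mathcal{L}(C^{\pm}(\mathbf{e}))$ equals, up to a positive constant, a square of a quantity built from $\mathbf{e}$ and the Seifert data divided by $-(b+\sum b_i/a_i)$, so negativity of the fibration makes the sign manifest; one must then also check that the numerator does not vanish for admissible $\mathbf{e}$ (i.e.\ rule out $\mathcal{L}=0$ on an irreducible component, which is needed for the \emph{strict} inequality used later in Lemma \ref{lem:crossflows}). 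Without exhibiting that formula and checking the nondegeneracy, the positivity claim is an expectation, not a proof; the correct short route here is simply to cite the relevant computation in \cite{MOY}.
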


Mrowka, Ozsv{\'a}th, and Yu do not use the Seiberg-Witten equations as in \cite{KM}.  Instead, they replace the Dirac operator $\hat{D}$ associated to the Seifert metric in the equations with $D=\hat{D}-\frac{1}{2}\xi$ for $\xi$ some constant depending on the Seifert fibration.  It is then clear that the Seiberg-Witten equations they consider differ from the usual equations by a tame perturbation $\q_0$ in the sense of \cite{KM}.  Abusing notation somewhat, we call the Seiberg-Witten equations as in \cite{MOY} simply the Seiberg-Witten equations, or the \emph{unperturbed} Seiberg-Witten equations in the sequel.  

In the case of a negative Seifert space $Y$ with four or fewer singular fibers, the Seiberg-Witten equations are transverse in the sense of \cite{KM}, so we may take $\q=\q_0$, as in \cite{MOY}.  

We will further need: 

\begin{fact}\label{fct:crossflows} There are no flows between $C^+(\textbf{e})$ and $C^-(\textbf{f})$ for any $\textbf{e},\textbf{f}$.  The flow of the Seiberg-Witten equations on $Y$ is Morse-Bott, and if $Y$ has four or fewer singular fibers, the perturbation $\q=\q_0$ is \emph{admissible} in the sense of Definition 22.1.1 of \cite{KM}.  
\end{fact}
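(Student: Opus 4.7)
My plan is to prove the three claims in turn, leveraging the explicit analysis of the Seiberg-Witten moduli space on Seifert fibered spaces carried out in \cite{MOY}.

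For the absence of flows between $C^+(\textbf{e})$ and $C^-(\textbf{f})$, the key ingredient will be the extra $S^1$-symmetry on $(Y,g)$ coming from the Seifert fibration, distinct from the $U(1)$ of gauge transformations. With $g$ the Seifert metric and $\q_0$ the constant Dirac shift, this circle action commutes with both the Chern-Simons-Dirac functional and its gradient, and it induces a weight decomposition $\mathbb{S}=\bigoplus_{k}\mathbb{S}_k$ of the spinor bundle. As recorded in \cite{MOY}, a spinor configuration lies in $C^+(\textbf{e})$ precisely when its spinor component is concentrated in positive-weight summands, while for $C^-(\textbf{e})$ it lies in negative-weight summands; the element $j\in\mathrm{Pin}(2)$ conjugates the $S^1$-action and interchanges the two. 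Since the Dirac operator preserves weights and the quadratic Seiberg-Witten nonlinearity does not change the sign of the weight of a nonzero spinor, trajectories of the $\mathcal{L}$-gradient stay in a single half of the weight decomposition. Hence no trajectory can start on $C^+(\textbf{e})$ and limit to a critical point in $C^-(\textbf{f})$, or vice versa.

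For the Morse-Bott property, I would appeal directly to the linearization computations in \cite{MOY}: each irreducible critical set is identified there with the smooth manifold $\mathrm{Sym}^e(|\Sigma|)$, and the Hessian of $\mathcal{L}$ in the normal direction is computed to be non-degenerate. Combined with the standard fact that the reducible critical points of $\X_{\q_0}$ are isolated and non-degenerate (using $b_1(Y)=0$ and that the perturbed Dirac operator at the reducible has no kernel for the chosen spin structure on a rational homology sphere), this yields that $\X_{\q_0}$ is Morse-Bott along its entire critical set.

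Finally, admissibility in the sense of Definition 22.1.1 of \cite{KM} amounts to tameness, non-degeneracy, and transversality conditions. Tameness of $\q_0$ is immediate from its definition as a bounded, constant-coefficient zeroth-order perturbation of the Dirac operator. Non-degeneracy is precisely the Morse-Bott statement above. The substantive remaining issue, and the step I expect to be the main obstacle, is transversality of the moduli spaces of trajectories between critical sets. This is where the hypothesis that $Y$ has at most four singular fibers enters: in that range, the explicit parametrization of trajectories in \cite{MOY}, via degenerations of effective divisors on $|\Sigma|$, delivers the transversality of unparameterized trajectory spaces of all relative dimensions needed by \cite{KM} without any further abstract perturbation. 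With five or more singular fibers the dimensions of the $\mathrm{Sym}^e(|\Sigma|)$ components grow to the point that $\q_0$ alone no longer suffices, and an additional admissible perturbation would have to be introduced.
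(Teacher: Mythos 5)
The paper does not actually prove this Fact: Section \ref{sec:moy} opens by saying it ``records some results of \cite{MOY},'' and the Fact is asserted as a citation, so the real comparison is with the arguments in \cite{MOY}. Your sketch has the right flavor, but the first part contains a genuine gap. You argue that the gradient flow preserves the ``positive-weight half'' of the spinor decomposition and conclude that no trajectory connects $C^+(\textbf{e})$ to $C^-(\textbf{f})$. Invariance of a subspace of the configuration space under the flow does not give this: a finite-energy trajectory limiting to a point of $C^+(\textbf{e})$ at $-\infty$ need not lie in that subspace at any finite time (its initial condition sits in the unstable manifold, which can have directions transverse to the invariant subspace). What \cite{MOY} actually prove is a vanishing theorem for finite-energy solutions of the four-dimensional equations on the cylinder $\mathbb{R}\times Y$: under the splitting $W^+\cong E\oplus(E\otimes K^{-1})$ induced by the Seifert structure, a Weitzenb\"ock/integration-by-parts argument forces one of the two spinor components to vanish identically on any finite-energy solution. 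That global vanishing statement, not pointwise weight preservation, is what rules out trajectories between $C^+$ and $C^-$.

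Two smaller points. The Morse--Bott claim is indeed a direct citation of the linearization computations in \cite{MOY}, so that part is fine. For admissibility, however, you slightly misidentify the role of the four-fiber hypothesis: Definition 22.1.1 of \cite{KM} requires all critical points to be non-degenerate in the Morse (not Morse--Bott) sense, and the point of $k\le 4$ is that the constraint $e+\sum \epsilon_i/a_i\le (k/2-1)-\sum 1/(2a_i)$ then forces $e=0$, so every irreducible critical set $C^{\pm}(\textbf{e})\cong G\times \mathrm{Sym}^0(|\Sigma|)$ is a single free orbit and the flow is honestly Morse. With five or more singular fibers one can have $e\ge 1$, and the unperturbed flow fails admissibility already because positive-dimensional critical manifolds appear --- not because transversality of trajectory spaces degenerates first. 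Your closing sentence gestures at this but attributes the failure to the wrong mechanism.
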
 
Combining Propositions \ref{prop:LM1},\ref{prop:LM2}, and Fact \ref{fct:crossflows}, we have:

\begin{lem}\label{lem:crossflows}
Let $Y=\Sigma(b,(b_1,a_1),\dots,(b_k,a_k))$ be a negative Seifert rational homology three-sphere.  Then $\mathit{SWF}(Y,\s)$ has a representative $(X,m,n) \in \mathfrak{E}$ with $X$ a $j$-split space.  
\end{lem}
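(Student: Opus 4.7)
The plan is to combine the moduli space description from \cite{MOY} recalled above with the finite-dimensional approximation results of Lidman--Manolescu. By Fact \ref{fct:csddeg}, the reducible critical points all sit at the level set $\{\mathcal{L} = 0\}$ and all irreducibles have $\mathcal{L} > 0$, so the Chern--Simons--Dirac functional separates the two types of critical set. By MOY, the irreducibles split into a disjoint union of components $C^+(\mathbf{e}) \sqcup C^-(\mathbf{e})$, indexed by tuples $\mathbf{e}$, and the action of $j \in G$ restricts to a diffeomorphism $C^+(\mathbf{e}) \to C^-(\mathbf{e})$ for each $\mathbf{e}$; set $C_+ = \bigsqcup_{\mathbf{e}} C^+(\mathbf{e})$ and $C_- = jC_+$. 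Fact \ref{fct:crossflows} guarantees the absence of flow lines of $\mathcal{X}_\q$ from any $C^+(\mathbf{e})$ to any $C^-(\mathbf{f})$ (and vice versa).

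Assuming for the moment that $k \leq 4$, so that $\q = \q_0$ is admissible, Proposition \ref{prop:LM1} gives a $G$-equivariant identification of critical points of $\mathcal{X}^{\mathrm{gC}}_{\q,\lambda}$ with critical points of $\mathcal{X}_\q$ for $\lambda \gg 0$, and Proposition \ref{prop:LM3} lifts the non-existence of flows: there are no finite-dimensional approximate trajectories from the lift of $C^+(\mathbf{e})$ to the lift of $C^-(\mathbf{f})$. Consequently, the isolated invariant set $T^{\mathrm{irr}}_{>0}$ (consisting of irreducible critical points together with flows of finite type between them) breaks as a disjoint union $T^{\mathrm{irr}}_{>0} = T_+ \sqcup T_-$ with $jT_+ = T_-$, and the equivariant Conley index satisfies $I_G(T^{\mathrm{irr}}_{>0}) \simeq I_{S^1}(T_+) \vee I_{S^1}(T_-)$ with $j$ exchanging the wedge factors. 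Applying the attractor--repeller sequence (\ref{eq:attrep}) with a small $\omega>0$ separating the reducible from the irreducibles yields a cofiber sequence
\[
I(T_{\leq \omega}) \to I(T) \to I(T^{\mathrm{irr}}_{>\omega}) \to \Sigma I(T_{\leq \omega}),
\]
where $I(T_{\leq \omega})$ deformation retracts onto the Conley index of the reducible set and becomes the $S^1$-fixed-point set $\mathit{SWF}(Y,\mathfrak{s})^{S^1}$ after the appropriate desuspension. Collapsing $\mathit{SWF}(Y,\mathfrak{s})^{S^1}$ then produces $X_+ \vee jX_+$, so after taking a representative in $\mathfrak{E}$ we obtain a $j$-split space, as required by Definition \ref{def:split}.

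The main obstacle is the case $k \geq 5$, when $\q_0$ may fail to be admissible and a further perturbation is needed before \ref{prop:LM1}--\ref{prop:LM3} apply. The plan is to choose a small $G$-equivariant admissible perturbation $\q = \q_0 + \q_1$ supported in a small $G$-invariant neighborhood of the critical set, chosen so that each $j$-pair $C^+(\mathbf{e}) \cup C^-(\mathbf{e})$ lies in a disjoint $j$-invariant neighborhood. By the openness/compactness part of the Morse--Bott theory in \cite{KM} and the isolation of the original components, all critical points and short trajectories of $\q$ remain within these $j$-invariant neighborhoods; since there were no $C^+$-to-$C^-$ trajectories before perturbation and the perturbation is small and equivariant, no crossing flows can appear. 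Once admissibility is achieved the argument of the previous paragraph goes through verbatim, producing a $j$-split representative $(X,m,n) \in \mathfrak{E}$ of $\mathit{SWF}(Y,\mathfrak{s})$.
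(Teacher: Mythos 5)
Your argument for the case of at most four singular fibers is essentially the paper's, packaged a little differently: the paper attaches the free cells one critical level at a time and checks at each stage (via Fact \ref{fct:crossflows} and Proposition \ref{prop:LM3}) that each attaching map lands in the correct wedge summand, whereas you observe in one step that $T^{\mathrm{irr}}_{>\omega}$ is a disjoint union of two isolated invariant sets interchanged by $j$, so its Conley index is the corresponding wedge, and then quote the attractor--repeller cofiber sequence. Both versions rest on exactly the same inputs (Facts \ref{fct:csddeg} and \ref{fct:crossflows} together with Propositions \ref{prop:LM1} and \ref{prop:LM3}), and your packaging of this part is fine.

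The gap is in the case of five or more singular fibers. You assert that ``since there were no $C^+$-to-$C^-$ trajectories before perturbation and the perturbation is small and equivariant, no crossing flows can appear.'' That is precisely the nontrivial claim, and it does not follow from critical points and ``short trajectories'' remaining inside small neighborhoods of the original components: a trajectory joining a perturbed critical point near $C^+(\mathbf{e})$ to one near $C^-(\mathbf{f})$ is a global object and need not stay anywhere near the critical set. The paper proves the claim by contradiction: taking a sequence of $j$-equivariant tame admissible perturbations $\q_i \to 0$ (whose existence, with nondegenerate irreducible critical points, is quoted from Lin \cite{flin}), a crossing trajectory for every $i$ would, by the compactness theorem for broken trajectories (Theorem 16.1.3 of \cite{KM}, adapted to a sequence of trajectories with varying perturbations), produce a limiting \emph{broken} trajectory for the unperturbed equations from some $C^+(\mathbf{e})$ to some $C^-(\mathbf{f})$. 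One must then exclude two possibilities: a component crossing directly from $C^+$ to $C^-$, which Fact \ref{fct:crossflows} forbids, and --- the case your argument does not see at all --- a broken trajectory passing through the \emph{reducible}, which is excluded only because $\mathcal{L}$ vanishes on the reducibles and is strictly positive on the irreducibles (Fact \ref{fct:csddeg}), so no flow can leave an irreducible and arrive at the reducible and then continue to another irreducible of larger $\mathcal{L}$. Without this compactness-plus-energy argument the perturbed case remains unproven.
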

\begin{proof} 
We first treat the case where $Y$ has at most four singular fibers. Then the irreducibles are isolated, by Fact \ref{fct:crossflows}.  
 
We recall the attractor-repeller sequence (\ref{eq:attrep}), which shows that $\mathit{SWF}(Y,\s)$ is obtained by successively attaching stable cells $G \times D^{\mathrm{ind}\, C^+(\textbf{e})}$, corresponding to the irreducible critical point $C^+(\textbf{e})$, to the reducible cell in degree $0$.  Let $I_{\leq \omega}$ be the complex obtained by attaching all critical points with $\mathcal{L} \leq \omega$.  We show by induction that $I_{\leq \omega}$ is $j$-split for all $\omega$.  For $\omega=0$, the only critical point is the reducible by Fact \ref{fct:csddeg}, so the statement is vacuous.  Let 
\begin{equation}\label{eq:isplit}  I_{\leq \omega_0}/I^{S^1}=I_{\leq \omega_0}^+ \vee jI_{\leq \omega_0}^+, \end{equation}
for some fixed $\omega_0$, where $I_{\leq \omega_0}^+$ contains all irreducible critical points $C^+(\textbf{e})$ with $\mathcal{L} \leq \omega_0$.  Fix $\textbf{e}_1$ so that $\mathcal{L}(C^+(\textbf{e}_1))> \omega_0$ and $\mathcal{L}(C^+(\textbf{e}_1))$ is minimal among $\mathcal{L}(x)$ for critical points $x$ with $\mathcal{L}(x)>\omega_0$.  By Fact \ref{fct:crossflows}, and Proposition \ref{prop:LM3}, $M_\lambda(x_\lambda,y_\lambda)=0$, where $x_\lambda$ corresponds to $C^{+}(\textbf{e}_1)$, and $y_\lambda$ corresponds to any critical point of $C^-(\textbf{f})$.  Additionally, the Conley Index satisfies:
\[I_{\leq \mathcal{L}(C^+(\textbf{e}_1) \cup jC^+(\textbf{e}_1))}/I_{\leq \omega_0}=G \times D^{\mathrm{ind}\,C^+(\textbf{e}_1)}=S^1\times D^{\mathrm{ind}\, C^+(\textbf{e}_1)} \vee jS^1\times D^{\mathrm{ind}\, C^+(\textbf{e}_1)},\]
as $S^1\times D^{\mathrm{ind}\, C^+(\textbf{e}_1)}$ and $jS^1\times D^{\mathrm{ind}\, C^+(\textbf{e}_1)}$ are disjoint isolated invariant sets.  Since $M_\lambda(x_\lambda,y_\lambda)=0$ for all $y_\lambda \in jI^+_{\leq \omega_0}$ we have that the attaching map of the cell $S^1\times D^{\mathrm{ind}\, C^+(\textbf{e}_1)}$ has target only in $I^+_{\leq \omega_0}$; then we set \[I_{\leq \mathcal{L}(C^+(\textbf{e}_1))}^+=I_{\leq \omega_0}^+ \cup (S^1 \times D^{\mathrm{ind}\, C^+(\textbf{e}_1)}),\] so that the analogue of the splitting (\ref{eq:isplit}) holds:
\begin{equation}\label{eq:analogueispl}
I_{\leq \mathcal{L}(C^+(\textbf{e}_1)\cup C^-(\textbf{e}_1))}/I^{S^1} = I^+_{\leq \mathcal{L}(C^+(\textbf{e}_1)\cup C^-(\textbf{e}_1))} \vee jI^+_{\leq \mathcal{L}(C^+(\textbf{e}_1)\cup C^-(\textbf{e}_1))},
\end{equation}
completing the induction.

In the case of five or more singular fibers, we perturb the Seiberg-Witten equations to be nondegenerate.  We can arrange that for a small perturbation $\q$ the analogue of Fact \ref{fct:crossflows} continues to hold.  That is, there exists some tame admissible perturbation $\q$ such that the set of irreducible critical points of $\X_\q$ may be partitioned into two sets $C^+$ and $C^-$, interchanged by the action of $j$, so that for all $x \in C^+, y \in C^-$, we have $M(x,y)=\emptyset$.  

We show the existence of such a $j$-equivariant perturbation $\q$.  Choose a sequence of small $j$-equivariant tame admissible perturbations $\q_i$, converging to $0$ in $C^\infty$, so that for each $i$ the perturbed Seiberg-Witten equations have non-degenerate irreducible critical points.  Lin establishes the existence of such perturbations in \cite{flin}.  Choose disjoint neighbourhoods $\mathcal{U}^{\pm}(\textbf{e})$ of $C^{\pm}(\textbf{e})$ such that for $i$ sufficiently large all irreducible critical points of $\mathcal{L}_{\q_i}$ lie in \[\bigcup_{\textbf{e}} (\mathcal{U}^{+}(\textbf{e})\cup\mathcal{U}^{-}(\textbf{e})).\]
Let $C^+_i$ denote the set of irreducible critical points of $\mathcal{L}_{\mathfrak{q}_i}$ in $\cup_{\textbf{e}} \mathcal{U}^+(\textbf{e})$ and let $C^-_i$ denote the set of irreducible critical points of $\mathcal{L}_{\mathfrak{q}_i}$ in $\cup_{\textbf{e}}\mathcal{U}^-(\textbf{e})$.  Let $C^{\pm}$ denote the union $\cup_{\textbf{e}} C^{\pm}(\textbf{e})$.  

Say, to obtain a contradiction, that for all $i$ there exists some pair of critical points $x_i \in C^+_i$, $y_i \in C^{-}_{i}$, such that $M(x_i,y_i)$ is nonempty.  The sequences $x_i,y_i$ have limit points $x \in C^+(\textbf{e})$ and $y \in C^-(\textbf{f})$, by Proposition 11.6.4 of \cite{KM}.  Theorem 16.1.3 of \cite{KM} shows that the moduli space of unparameterized broken trajectories (for a fixed perturbation) is compact.  The proof of Theorem 16.1.3 can be applied to a sequence of trajectories $\breve{\gamma}_i$ for perturbations $\q_i$ with $\q_i\rightarrow \q$.  That is, the sequence $\breve{\gamma}_i$ has a limit point a broken trajectory $(\breve{\tau}_1,...,\breve{\tau}_n)$ from $x$ to $y$, for the perturbation $\q$.  Since $x \in C^+, y \in C^-$, there exists a trajectory $\breve{\tau}_k$ from $C^+$ to $C^-$, or there exists a trajectory $\breve{\tau}_k$ from $C^+$ to the reducible and a trajectory $\breve{\tau}_l$ from the reducible to $C^-$.  The first case contradicts Fact \ref{fct:crossflows}.  The second case contradicts the minimality of $\mathcal{L}$ on the reducible (Fact \ref{fct:csddeg}).  Thus, for some perturbation $\q$ as above we have the desired partition.    

The Lemma then follows as in the case of three or four singular fibers. 
\end{proof}   

By Lemma \ref{lem:crossflows}, Theorem \ref{thm:onjspaces} applies to $\mathit{SWF}(Y,\s)$ for $Y$ a Seifert rational homology sphere, and we obtain the following Corollary, from which Theorems \ref{thm:submain} and \ref{thm:main} of the Introduction follow.
\begin{cor}\label{thm:fullswfh}
Let $Y=\Sigma(b,(\beta_1,\alpha_1),\dots,(\beta_k,\alpha_k))$ be a negative Seifert rational homology sphere with a choice of spin structure $\mathfrak{s}$.  Then 
\begin{equation}\label{eq:hffullswfh2}
\mathit{HF^+}(Y,\mathfrak{s})= \bt_{s+d_1+2n_1-1} \oplus \bigoplus^{N}_{i=1}\bt_{s+d_i}(\frac{d_{i+1}+2n_{i+1}-d_i}{2})\oplus \bigoplus^N_{i=1} \bt_{s+d_i}(n_i) \oplus J^{\oplus 2}[-s],
\end{equation}
for some constants $s,d_i,n_i,N$ and some $\f[U]$-module $J$, all determined by $(Y,\mathfrak{s})$.  Furthermore, $2n_i+d_i>2n_{i+1}+d_{i+1}$ for all $i$, $2n_N+d_N \geq 3$, $d_N \leq 1$, and $d_{N+1}=1$, $n_{N+1}=0$.  Let $\mathcal{J}_0=\{(a_k,b_k)\}_k$ be the collection of pairs containing all $(d_i,\lfloor \frac{n_i+1}{2} \rfloor)$ for $d_i \equiv 1 \;\mathrm{mod}\; 4$ and all $(d_i+2,\lfloor \frac{n_i}{2} \rfloor)$ for $d_i \equiv 3 \; \mathrm{mod} \; 4$, counting multiplicity.  Let $(a,b) \succeq (c,d)$ if $a+4b \geq c+4d$ and $a \geq c$, and let $\mathcal{J}$ be the subset of $\mathcal{J}_0$ consisting of pairs maximal under $\succeq$ (not counted with multiplicity).  If $(a,b) \in \mathcal{J}$, set $m(a,b)+1$ to be the multiplicity of $(a,b)$ in $\mathcal{J}_0$.  If $(a,b) \not\in \mathcal{J}$, set $m(a,b)$ to be the multiplicity of $(a,b)$ in $\mathcal{J}_0$.  Let $|\mathcal{J}|=N_0$ and order the elements of $\mathcal{J}$ so that $\mathcal{J}=\{(a_i,b_i) \}_i $, with $a_i+4b_i > a_{i+1} +4b_{i+1}$.   Then: 
\begin{eqnarray} \mathit{SWFH_*^G}(Y,\mathfrak{s}) & = & (\bv_{4\lfloor \frac{d_1+2n_1+1}{4} \rfloor} \oplus \bv_{1} \oplus \bv_{2}\\ && \oplus \bigoplus_{i=1}^{N_0} \bv_{a_i}(\frac{a_{i+1}+4b_{i+1}-a_i}{4}) \oplus \bigoplus_{(a,b) \in \mathcal{J}_0} \bv_a(b)^{\oplus m(a,b)} \oplus \re^{\f[U]}_{\f[v]}J \nonumber\\ & & \oplus \bigoplus_{\{i \mid d_i \equiv 1\; \mathrm{mod}\;4\} } \bv_{d_i+2}(\lfloor \frac{n_i}{2} \rfloor) \oplus \bigoplus_{\{i \mid d_i \equiv 3 \; \mathrm{mod}\; 4 \} } \bv_{d_i}(\lfloor \frac{n_i+1}{2} \rfloor) )[-s]. \nonumber
\end{eqnarray}

The $q$-action is given by the isomorphism $\bv_2[-s] \rightarrow \bv_1[-s]$ and the map $\bv_1[-s] \rightarrow \bv_{4\lfloor\frac{d_1+2n_1+1}{4}\rfloor}[-s]$ which is an $\f$-vector space isomorphism in all degrees (in $\bv_1[-s]$) greater than or equal to $4\lfloor\frac{d_1+2n_1+1}{4}\rfloor+s+1$, and vanishes on elements of $\bv_1[-s]$ of degree less than $4\lfloor\frac{d_1+2n_1+1}{4}\rfloor+s+1$.  We interpret $a_{N_0+1}=1,b_{N_0+1}=0$.

The action of $q$ annihilates $\bigoplus_{i=1}^{N_0} \bv_{a_i}(\frac{a_{i+1}+4b_{i+1}-a_i}{4})[-s]$ and $(\bigoplus_{(a,b) \in \mathcal{J}_0} \bv_a(b)^{\oplus m(a,b)}\oplus \re^{\f[U]}_{\f[v]}J)[-s]$.  

To finish specifying the $q$-action, let $x_i$ be a generator of $\bv_{d_i+2}(\lfloor \frac{n_i}{2} \rfloor)[-s]$ for $i$ such that $d_i \equiv 1 \, \mathrm{mod}\, 4$ (respectively, let $x_i$ be a generator of $\bv_{d_i}(\lfloor \frac{n_i+1}{2} \rfloor)[-s]$ if $d_i \equiv 3 \, \mathrm{mod}\, 4$).  Then $qx_i$ is the unique nonzero element of $(\bv_{4\lfloor \frac{d_1+2n_1+1}{4} \rfloor} \oplus \bv_{1} \oplus \bv_{2})[-s]$ in grading $\mathrm{deg} \, x_i -1$, for all $i$.
\end{cor}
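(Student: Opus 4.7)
The plan is to obtain Corollary \ref{thm:fullswfh} essentially as an amalgamation of three results already in place: Lemma \ref{lem:crossflows}, Theorem \ref{thm:LM}, and Theorem \ref{thm:onjspaces}. The content is that the Seifert hypothesis reduces the problem to the purely algebraic calculation carried out in Section \ref{sec:jsplit}, once the $S^1$-equivariant input is identified with Heegaard Floer homology.

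First I would fix a negative Seifert rational homology three-sphere $Y=\Sigma(b,(\beta_1,\alpha_1),\dots,(\beta_k,\alpha_k))$ with spin structure $\s$ and invoke Lemma \ref{lem:crossflows} to choose a representative $(X,m,n)\in\mathfrak{E}$ of $\mathit{SWF}(Y,\s)$ with $X$ a $j$-split space of type SWF. Next, by Theorem \ref{thm:LM} (together with the identification $\widecheck{HM}\cong \mathit{HF^+}$ of \cite{KLT1},\cite{CGH1},\cite{Taubes}), we have the isomorphism of absolutely graded $\f[U]$-modules
\[
\mathit{HF^+}(Y,\s)\cong \mathit{SWFH^{S^1}}(Y,\s)=\tilde{H}^{S^1}_*((X,m,n)).
\]
Thus the graded $\f[U]$-module decomposition of $\mathit{HF^+}(Y,\s)$ plays the role of the input $\tilde{H}^{S^1}_*(X)$ in Theorem \ref{thm:onjspaces}.

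Second, I would verify that the hypotheses of Theorem \ref{thm:onjspaces} are satisfied by the decomposition (\ref{eq:hf1}). Using the graded roots algorithm of N{\'e}methi \cite{Nemethigr}, for a negative Seifert rational homology sphere the reduced summand of $\mathit{HF^+}(Y,\s)$ can indeed be written in the form of equation (\ref{eq:formloctyp}) from Proposition \ref{prop:loctyp}: that is, the constants $d_i,n_i,N$ and the module $J$ in (\ref{eq:hf1}) exist and can be arranged, via the normal-form procedure of Lemma \ref{lem:deco1}, so that $d_{i+1}>d_i$, $2n_i+d_i>2n_{i+1}+d_{i+1}$, $d_N\leq 1$, and $2n_N+d_N\geq 3$, with the convention $d_{N+1}=1, n_{N+1}=0$. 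This is the step where a bit of care is needed, because the ``raw'' output of the graded roots algorithm might not immediately present the summands in the reduced form of Lemma \ref{lem:deco1}; however, the normalization procedure in the proof of Lemma \ref{lem:deco1} shows that any such decomposition can be brought into this form, and the constants $\{d_i\},\{n_i\}$ and the module $J$ are then uniquely determined as explained at the beginning of the proof of Theorem \ref{thm:onjspaces}.

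Finally, having matched inputs, I would apply Theorem \ref{thm:onjspaces} directly with $d_i'=d_i$ and $s$ as in (\ref{eq:hf1}) to read off the promised formula for $\mathit{SWFH^G}(Y,\s)$ as an $\f[q,v]/(q^3)$-module. The description of the $v$-module structure, of the sets $\mathcal{J}_0$ and $\mathcal{J}$, of the maximal pairs $(a_i,b_i)$ with their multiplicities $m(a,b)$, and of the $q$-action (the isomorphism from $\bv_2[-s]$ to $\bv_1[-s]$, the map from $\bv_1[-s]$ to $\bv_{4\lfloor(d_1+2n_1+1)/4\rfloor}[-s]$, the specification of $qx_i$ on generators $x_i$ of the remaining summands, and the vanishing of $q$ on $\bigoplus\bv_{a_i}(\tfrac{a_{i+1}+4b_{i+1}-a_i}{4})$, on $\re^{\f[U]}_{\f[v]}J$, and on the repeated summands $\bigoplus_{(a,b)\in\mathcal{J}_0}\bv_a(b)^{\oplus m(a,b)}$) are then exactly the output of that theorem. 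The only genuine obstacle in this plan is the bookkeeping involved in the normalization step for (\ref{eq:hf1}); all analytic content has already been absorbed into Lemma \ref{lem:crossflows} and Theorem \ref{thm:LM}, and all algebraic content into Theorem \ref{thm:onjspaces}.
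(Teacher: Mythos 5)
Your proposal is correct and follows essentially the same route as the paper: a $j$-split representative from Lemma \ref{lem:crossflows}, the identification $\mathit{HF^+}\cong\mathit{SWFH^{S^1}}$ via \cite{LM}, \cite{KLT1}, \cite{CGH1}, and then Theorem \ref{thm:onjspaces}. The one small difference is that the paper does not need the graded roots algorithm to put $\mathit{HF^+}$ into the form (\ref{eq:hffullswfh2}) — that form is forced by the $j$-split structure itself via Lemma \ref{lem:deco2}, so your ``verification'' step is automatic rather than something requiring external input.
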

Theorem \ref{thm:main} follows by setting $N=1$ and $d_1=1$; these conditions imply that $d_2+2n_2-d_1=0$, and so the term $\bigoplus^{N}_{i=1}\bt_{s+d_i}(\frac{d_{i+1}+2n_{i+1}-d_i}{2})$ in (\ref{eq:hffullswfh2}) is the zero module in this case.  

The constant $s$ is the grading of the reducible critical point, where the metric on $Y$ is that associated to the Seifert geometry on $Y$.    

\begin{proof}
Let $(X',p,h/4)$ be a $j$-split representative for $\mathit{SWF}(Y,\s)$ at level $m$, and let $s=m-p-h$.  We may choose such a representative for $\mathit{SWF}(Y,\s)$ by Lemma \ref{lem:crossflows}.  Then, using Lemma \ref{lem:deco2}, we have: 
\[
\mathit{SWFH^{S^1}_*}(Y,\s)=\tilde{H}^{S^1}_*(X')[-p-h]=(\bigoplus^{N}_{i=1}\bt_{d_i}(\frac{d_{i+1}+2n_{i+1}-d_i}{2})\oplus \bigoplus^{N}_{i=1}\bt_{d_i}(n_i) \oplus J_2^{\oplus 2} \oplus \bt_{d_1+2n_1-1})[-s].
\]
Applying the equivalence of $\widecheck{HM}$ and $\mathit{SWFH^{S^1}}$ of \cite{LM}, and the equivalence of $\widecheck{HM}$ and $\mathit{HF^+}$ of \cite{CGH1} and \cite{KLT1}, we obtain the expression (\ref{eq:hffullswfh2}).  Then we apply Theorem \ref{thm:onjspaces} to obtain the calculation of $\mathit{SWFH^G_*}$ of the Corollary.  
\end{proof}

Further, using the results of Section \ref{subsec:chain}, we prove the results of the Introduction on homology cobordisms of Seifert spaces.  Corollaries \ref{cor:dini} and \ref{cor:dini2} of the Introduction follow from Proposition \ref{prop:eqclassdistseif} below.  
\begin{prop}\label{prop:eqclassdistseif}
Let $Y=\Sigma(b,(b_1,a_1),\dots,(b_k,a_k))$ be a negative Seifert rational homology three-sphere with a choice of spin structure $\s$, and
\begin{equation}\label{eq:propclassseif}
\mathit{HF^+}(Y,\mathfrak{s})=\bt_{s+d_1+1} \oplus \bigoplus^{N}_{i=1}\bt_{s+d_i}(\frac{d_{i+1}+2n_{i+1}-d_i}{2})\oplus \bigoplus^N_{i=1} \bt_{s+d_i}(n_i) \oplus J^{\oplus 2}[-s] ,\end{equation}
where $d_{i+1}> d_i$ and $2n_i+d_i>2n_{i+1}+d_{i+1}$, as well as $2n_{N}+d_{N}\geq 3$ and $d_N \leq 1$. Then the chain local equivalence type $[\mathit{SWF}(Y,\mathfrak{s})]_{cl} \in \CEL$ is the equivalence class of \begin{equation}\label{eq:locstandardtyp2seif}
C(s,\{ d_i \}_i,\{ n_i \}_i)= (( \langle \fr \rangle \tilde{\oplus} (\bigoplus_i \mathcal{S}_{d_i}(n_i))),0,-s/4)\in \CEL. \end{equation}
Further, the connected Seiberg-Witten Floer homology of $(Y,\s)$ is:    
\begin{equation}\label{eq:hfcconncalc}\hfc(Y,\s)=\bigoplus^{N}_{i=1}\bt_{s+d_i}(\frac{d_{i+1}+2n_{i+1}-d_i}{2})\oplus \bigoplus^N_{i=1} \bt_{s+d_i}(n_i).\end{equation}
Moreover, if $s \neq t$, or $\{d_i\}_i \neq \{ e_i \}_i $, or $\{n_i\}_i \neq \{ m_i\}_i$, the complexes $C(s,\{ d_i \}_i,\{ n_i \}_i)$ and $C(t,\{ e_i \}_i,\{ m_i \}_i)$ are not locally equivalent.  
\end{prop}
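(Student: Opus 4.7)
The plan is to reduce Proposition \ref{prop:eqclassdistseif} to the already established Proposition \ref{prop:eqclassdist} via Lemma \ref{lem:crossflows}. By Lemma \ref{lem:crossflows}, the Seiberg--Witten Floer spectrum class $\mathit{SWF}(Y,\mathfrak{s})$ admits a representative $(X',p,h/4)\in\mathfrak{E}$ with $X'$ a $j$-split space of type SWF at some level $m$. I would first identify the integer $s$ appearing in (\ref{eq:propclassseif}) with the grading of the reducible critical point: combining the identification $\mathit{SWFH}^{S^1}(Y,\mathfrak{s})\cong \mathit{HF}^+(Y,\mathfrak{s})$ (Theorem \ref{thm:LM} together with \cite{KLT1},\cite{CGH1},\cite{Taubes}) with the description of $\tilde H^{S^1}_*((X',p,h/4))$ provided by Lemma \ref{lem:deco2} forces $s=m-p-h$, exactly as in the uniqueness part of the proof of Theorem \ref{thm:onjspaces}. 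In particular the constants $\{d_i\}$, $\{n_i\}$, and the module $J$ in (\ref{eq:propclassseif}) agree with those extracted from any $j$-split representative of $\mathit{SWF}(Y,\mathfrak{s})$.

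Next I would invoke Proposition \ref{prop:eqclassdist} verbatim with the $j$-split representative $(X',p,h/4)$. That proposition furnishes a chain local equivalence
\[
[C^{CW}_*(X',\mathrm{pt})]_{cl}\;\equiv_{cl}\; \bigl(\langle \fr\rangle\,\tilde\oplus\,\bigoplus_i\mathcal{S}_{d_i}(n_i),\,p-m,\,h/4\bigr),
\]
which, after desuspending by $(p,h/4)$ to pass to the chain local class of $\mathit{SWF}(Y,\mathfrak{s})$ itself, is precisely $C(s,\{d_i\},\{n_i\})=\bigl(\langle\fr\rangle\,\tilde\oplus\,\bigoplus_i\mathcal{S}_{d_i}(n_i),0,-s/4\bigr)$. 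The formula (\ref{eq:hfcconncalc}) for $\hfc(Y,\mathfrak{s})$ is then just the translation of (\ref{eq:connhom}) under the $s=m-p-h$ identification, together with the remark that the connected $S^1$-Borel homology of a suspensionlike chain complex agrees, under the Lidman--Manolescu isomorphism, with the connected Seiberg--Witten Floer homology as defined in Corollary \ref{cor:dinig}.

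For the final distinctness assertion, I would appeal to the invariance statements of Proposition \ref{prop:eqclassdist}: the connected $S^1$-homology $\hsc$ is a chain local equivalence invariant, and by (\ref{eq:connhom}) it determines both $\{d_i\}$ and $\{n_i\}$ (together with their shifts by $s$, hence $s$ modulo the ambiguity of the grading shift on $\hsc$). To pin down $s$ itself, I would note that in $C(s,\{d_i\},\{n_i\})$ the fixed-point subcomplex is a single copy of $\langle\fr\rangle$ concentrated in degree $s$ (after the $(0,-s/4)$ desuspension), and any chain local equivalence is by definition a chain homotopy equivalence on fixed-point sets; hence $s$ is recovered from $[C(s,\{d_i\},\{n_i\})]_{cl}$ as the degree of the generator of the fixed-point subcomplex. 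This gives the three statements of the proposition.

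The main obstacle, and the only nontrivial bookkeeping, is matching the grading conventions: one must check that the indexing $s=m-p-h$ extracted from the representative $(X',p,h/4)$ coincides with the constant $s$ appearing in (\ref{eq:propclassseif}), because only after this identification does the output of Proposition \ref{prop:eqclassdist} align, degree-for-degree, with the claimed complex $C(s,\{d_i\},\{n_i\})$. Once the degree shifts are handled, the rest of the argument is a direct citation of Lemma \ref{lem:crossflows} and Proposition \ref{prop:eqclassdist}.
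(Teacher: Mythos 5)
Your overall strategy is the same as the paper's: obtain a $j$-split representative via Lemma \ref{lem:crossflows}, identify the constant $s$ with $m-p-h$ through the $S^1$-homology, and then quote Proposition \ref{prop:eqclassdist}. However, there is a gap in the step where you pass from the output of Proposition \ref{prop:eqclassdist}, namely $C(p-m,h/4,\{d_i\},\{n_i\})=(\langle\fr\rangle\,\tilde{\oplus}\,\bigoplus_i\mathcal{S}_{d_i}(n_i),\,p-m,\,h/4)$, to the claimed class $(\dots,0,-s/4)$. The identity $s=m-p-h$ alone does not force $p-m=0$ and $h=-s$; and by the final clause of Proposition \ref{prop:eqclassdist} itself, $C(p-m,h/4,\dots)$ and $C(0,-s/4,\dots)$ are chain locally \emph{inequivalent} whenever $p\neq m$. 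So you must separately establish $p=m$, i.e.\ that the $j$-split representative $(X',p,h/4)$ has $S^1$-fixed-point set $(\tilde{\mathbb{R}}^{p})^+$ (equivalently, in the paper's chain-level formulation via Lemma \ref{lem:jsplitcomplexes}, that the $j$-split chain complex representative $(Z,p',h'/4)$ has $p'=0$, forced by matching fixed-point sets, since a $j$-split chain complex is by definition at level $0$). This is exactly the one-line observation the paper makes from the construction of $\mathit{SWF}(Y,\s)$ by finite-dimensional approximation; once $p=m$ is in hand, $h=-s$ follows and your identification goes through.

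Your distinctness argument also misattributes the mechanism by which $s$ is recovered. Suspension by $\mathbb{H}$ does not move the $S^1$-fixed-point set, so the fixed-point subcomplexes of $C(s,\{d_i\},\{n_i\})$ and $C(t,\{e_i\},\{m_i\})$ (both of the form $(\langle\fr\rangle,0,\cdot)$) become chain homotopy equivalent after the suspensions allowed in Definition \ref{def:compstl} regardless of $s$ and $t$; the ``degree of the generator of the fixed-point subcomplex'' therefore cannot distinguish $s$ from $t$ here. What does pin down $s$ is the graded isomorphism type of $\hsc$: by the parity-of-rank argument in the proof of Theorem \ref{thm:onjspaces}, the module $\bigoplus_i\bt_{s+d_i}(\frac{d_{i+1}+2n_{i+1}-d_i}{2})\oplus\bigoplus_i\bt_{s+d_i}(n_i)$, subject to the normalizations $d_N\leq 1$, $2n_N+d_N\geq 3$, $d_{N+1}=1$, $n_{N+1}=0$, determines $s$ and the sets $\{d_i\},\{n_i\}$ separately. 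Replacing your fixed-point-set step with this observation (which you already half-invoke) completes the distinctness claim.
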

\begin{proof}
Let $\mathit{SWF}(Y,\s)=(X,p,h/4)\in \E$ with $X$ a $j$-split space of type SWF.  By the construction of $\mathit{SWF}(Y,\s)$, $X^{S^1} \simeq (\tilde{\mathbb{R}}^p)^+$.  By Lemma \ref{lem:jsplitcomplexes}, $[(X,p,h/4)] \in \ce$ admits a representative $(Z,p',h'/4)$ with $Z$ a $j$-split chain complex, for some $p',h'$.  Since $[(X,p,h/4)]\in \ce$ and $(Z,p',h'/4)$ must have chain homotopy equivalent fixed-point sets, we have:
\[\Sigma^{-\tilde{\mathbb{R}}^p}((\tilde{\mathbb{R}}^p)^+)=[(X^{S^1},p,0)]=(Z^{S^1},p',0)\in \ce.\]
 However, by the requirement that $Z$ is $j$-split, $Z^{S^1} \simeq \langle \fr \rangle$, where $j\fr=s\fr=\partial(\fr)=0$.  Thus, $p'=0$.  Furthermore, by the proof of Corollary \ref{thm:fullswfh}, $-p'-h'=-h'=s$.  Proposition \ref{prop:eqclassdist} applied to $(Z,0,-s/4)$ yields (\ref{eq:locstandardtyp2seif}) from (\ref{eq:locstandardtyp2}) and (\ref{eq:hfcconncalc}) from (\ref{eq:connhom}).
\end{proof}

\subsection{Spaces of projective type}\label{subsec:projtyp}
Let $Y=\Sigma(b,(b_1,a_1),\dots,(b_k,a_k))$ be a negative Seifert rational homology three-sphere.  Consider the case that $\mathit{HF^+}(Y,\mathfrak{s})$ is given by: 
\begin{equation}\label{eq:projhf}
\mathit{HF^+}(Y,\mathfrak{s})=\bt_{2\delta} \oplus \bt_{d}(n) \oplus J^{\oplus 2},
\end{equation} 
for some $\f[U]$-module $J$, where possibly $n=0$.  In particular, by Corollary \ref{thm:fullswfh}, this implies $d+2n-1=2\delta$.  Let $(Z,0,-s/4)=\mathit{SWF}(Y,\s) \in \ce$. Then by Proposition \ref{prop:loctyp}, we may write:  
\begin{equation}\label{eq:prjtyp0}
Z = (\langle \fr \rangle \tilde{\oplus} \mathcal{S}_1(n) ) \oplus S(J)
\end{equation} 
as a direct sum of $C^{CW}_*(S^1)$-chain complexes, with $\partial(x_1)=\fr$, $\partial(x_{2i+1})=s(1+j^2)x_{2i-1}$ for $i=1,...,n-1$.  Here $d=s+1$, by Corollary \ref{thm:fullswfh}.  The complex $Z$ is evidently chain locally equivalent to $\langle \fr \rangle \tilde{\oplus} \mathcal{S}_d(n) $.  For $X$ a space of type SWF, let $\tilde{\Sigma}X$ denote the unreduced suspension of $X$.  The complex (\ref{eq:prjtyp0}), for $\delta>0$, may be realized as the $G$-CW complex associated to 
\begin{equation*}
(\tilde{\Sigma}(S^{2n-1} \amalg S^{2n-1}),0,-s/4),
\end{equation*}
where $S^1$ acts by complex multiplication on each of the two factors, and $j$ interchanges the factors.  Then 
\begin{equation}\label{eq:prjtyp}
[\mathit{SWF}(Y,\mathfrak{s})]_{cl}\equiv [(\tilde{\Sigma}(S^{2n-1} \amalg S^{2n-1}),0,-s/4)]_{cl}.
\end{equation}
We call a negative Seifert rational homology sphere with spin structure $(Y,\mathfrak{s})$ \emph{of projective type} if (\ref{eq:prjtyp}) holds or if the chain local equivalence class of $\mathit{SWF}(Y,\s)$ is $[\langle \fr \rangle]_{cl}$.  Indeed, we have established that $(Y,\s)$ is of projective type if and only if $\mathit{HF^+}(Y,\s)$ takes the form (\ref{eq:projhf}).  The term \emph{of projective type} refers to the fact:
\begin{equation*}
(S^{2n-1} \amalg S^{2n-1})/G \simeq \mathbb{C}P^{n-1}.
\end{equation*}
We can rephrase the projective type condition (\ref{eq:projhf}) in terms of the \emph{graded roots} of \cite{Nemethigr}.  A graded root $(\Gamma,\chi)$ is an infinite tree $\Gamma$ with an action of $\f[U]$, together with a grading function $\chi:\Gamma \rightarrow \mathbb{Z}$.  Associated to any positive Seifert rational homology sphere with spin structure there is a graded root, which, additionally, has an involution $\iota:\Gamma \rightarrow \Gamma$ that preserves the grading.  

We have the following characterization of spaces of projective type in terms of graded roots as a consequence of Corollary \ref{thm:fullswfh}.

\begin{fact}\label{fct:gradedrootcriterion}  
Let $Y=\Sigma(b,(b_1,a_1),\dots,(b_k,a_k))$ be a negative Seifert rational homology sphere with spin structure $\s$.  Let $(\Gamma_{Y},\chi)$ be the graded root associated to $(-Y,\mathfrak{s})$, and let $\iota$ be the associated involution of $\Gamma_{Y}$.  Let $v \in \Gamma_Y$ be the vertex of minimal grading which is invariant under $\iota$.  The space $(Y,\s)$ is of projective type if and only if there exists a vertex $w$, and a path from $v$ to $w$ in $\Gamma_Y$ which is grading-decreasing at each step, with $\chi(w)=\mathrm{min}_{x\in \Gamma_Y} \chi(x)$.  Moreover, $\delta(Y,\s)-\beta(Y,\s)=\chi(v)-\chi(w)$.  \end{fact}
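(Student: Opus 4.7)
The plan is to translate the algebraic condition (\ref{eq:projhf}) on $\mathit{HF}^+(Y,\s)$ into a combinatorial condition on the $\iota$-equivariant graded root $(\Gamma_Y,\chi)$ via Némethi's algorithm \cite{Nemethigr}, and then read off the grading formula. Recall that Némethi's algorithm produces $\mathit{HF}^+(-Y,\s)$ as an $\f[U]$-module, where the unique infinite $U$-tower corresponds to the descent along the ``trunk'' of $\Gamma_Y$ to the global minimum of $\chi$, and each remaining finite tower $\bt_{d}(n)$ corresponds to a local maximum branch of $\Gamma_Y$ that merges with the trunk. The conjugation involution $\iota$ acts on this decomposition, fixing the trunk and either fixing an off-trunk branch (contributing a single $\bt_{d}(n)$ summand) or pairing two off-trunk branches (contributing a $\bt_{d}(n)^{\oplus 2}$ summand). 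Under Poincaré duality between $\mathit{HF}^+(-Y,\s)$ and $\mathit{HF}^+(Y,\s)$, this exactly matches the decomposition (\ref{eq:hf1}): the $\iota$-fixed branches account for the summands $\bt_{s+d_i}(\tfrac{d_{i+1}+2n_{i+1}-d_i}{2})$ and $\bt_{s+d_1+2n_1-1}$, while the $\iota$-paired branches account for $J^{\oplus 2}$.

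The first step I would carry out is to identify $v$ and $w$ with concrete features of $\mathit{HF}^+$. The vertex $w$ of minimal grading corresponds to the bottom of the infinite $U$-tower, which in $\mathit{HF}^+(Y,\s)$ sits in degree $d(Y,\s) = 2\delta(Y,\s)$ (so $\chi(w) = -\delta(Y,\s)$ up to Némethi's normalization). The $\iota$-invariant vertex $v$ of minimal grading corresponds to the Heegaard Floer grading of the ``reducible'' summand, that is, the unique summand of $\mathit{HF}^+(Y,\s)$ sitting above the $\iota$-action's lowest fixed point on the trunk; by Corollary \ref{thm:fullswfh} this is the bottom of the summand $\bt_{s+d_1+2n_1-1}$, whose grading is $s+d_1+2n_1-1$. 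A direct computation using the definition of $\beta$ (via $b(X)$ in (\ref{eq:adef})) applied to the $j$-split model of $\mathit{SWF}(Y,\s)$ from Lemma \ref{lem:crossflows} identifies this with $-2\beta(Y,\s)$.

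The second step is the biconditional. The projective-type condition (\ref{eq:projhf}) says exactly that there is no $\iota$-fixed finite-tower summand strictly between $w$ and $v$ on the trunk of $\Gamma_Y$. In graded-root language this amounts to the existence of a monotonically decreasing path from $v$ to $w$: every off-trunk branch of $\Gamma_Y$ that would introduce an extra $\iota$-fixed $\bt_{d_i}(n_i)$ summand with $d_i < s+d_1+2n_1-1$ would break such a path, and conversely every break in a monotone path corresponds to such a branch. Translating across Némethi's algorithm yields precisely that $N \leq 1$ and $d_1=1$ in (\ref{eq:hf1}), i.e.\ (\ref{eq:projhf}).

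Given the dictionary, the grading formula is immediate: $\chi(v)-\chi(w)$ equals the trunk distance from the reducible to the bottom, which after unwinding normalizations is $\delta(Y,\s)-\beta(Y,\s)$. The main obstacle will be carefully reconciling the grading and sign conventions of Némethi's graded root for $-Y$, Ozsváth--Szabó's conventions for $\mathit{HF}^+(Y,\s)$, and Manolescu's definition of $\beta$ via (\ref{eq:adef}); once those conventions are pinned down the two assertions are essentially combinatorial bookkeeping rather than new geometric input.
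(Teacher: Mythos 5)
Your overall strategy -- translating the algebraic characterization of projective type (exactly one odd-multiplicity finite tower in $\mathit{HF^+}(Y,\s)$, i.e.\ $N\leq 1$ and $d_1=1$ in (\ref{eq:hf1})) into the combinatorics of the $\iota$-equivariant graded root via N\'emethi's algorithm -- is the right one, and it is essentially all the paper has in mind: the paper offers no proof of this Fact, asserting it only as a consequence of Corollary \ref{thm:fullswfh}. Your matching of $\iota$-fixed branches with odd-multiplicity summands and $\iota$-paired branches with $J^{\oplus 2}$ is correct, and your reduction of the biconditional to ``$N\leq 1$, $d_1=1$'' is the correct target.

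There is, however, a concrete inconsistency in your dictionary that breaks the ``Moreover'' clause as written. You identify $v$ with ``the bottom of the summand $\bt_{s+d_1+2n_1-1}$, whose grading is $s+d_1+2n_1-1$,'' and then assert that this grading equals $-2\beta(Y,\s)$. But $s+d_1+2n_1-1=d(Y,\s)=2\delta(Y,\s)$ is the bottom of the \emph{infinite} tower, which is the feature corresponding to $w$ (the global minimum of $\chi$), not to $v$; and $2\beta(Y,\s)=s$, which differs from $d(Y,\s)$ by exactly $d_1+2n_1-1$. If $v$ and $w$ both corresponded to the bottom of the infinite tower, the formula would give $\chi(v)-\chi(w)=0$ identically, whereas $\delta-\beta=(d_1+2n_1-1)/2$ is generally positive. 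The correct identification is: $w$ corresponds to the bottom of the infinite $U$-tower (degree $d=2\delta$), while $v$ -- the lowest $\iota$-fixed vertex, i.e.\ the point where all odd-multiplicity branches have merged into the trunk -- corresponds to the grading $s=2\beta$ of the reducible, equivalently to the degree at which the summands $\bt_{s+d_i}(\cdot)$ of (\ref{eq:hf1}) begin. With that correction, $\chi(v)-\chi(w)$ is the length of the trunk segment between the reducible level and the bottom of the tower, namely $(d-s)/2=\delta-\beta$, and the rest of your argument (including the equivalence of a monotone descent from $v$ to a global minimum with $N\leq1$, $d_1=1$) goes through, modulo the orientation and normalization bookkeeping for the root of $-Y$ that you already flag.
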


For instance, we refer to Figure \ref{fig:grad1}.  We call a graded root \emph{of projective type} if its homology is of the form (\ref{eq:projhf}), so that a Seifert integral homology sphere is of projective type if and only if its graded root is.

\begin{figure}
\input{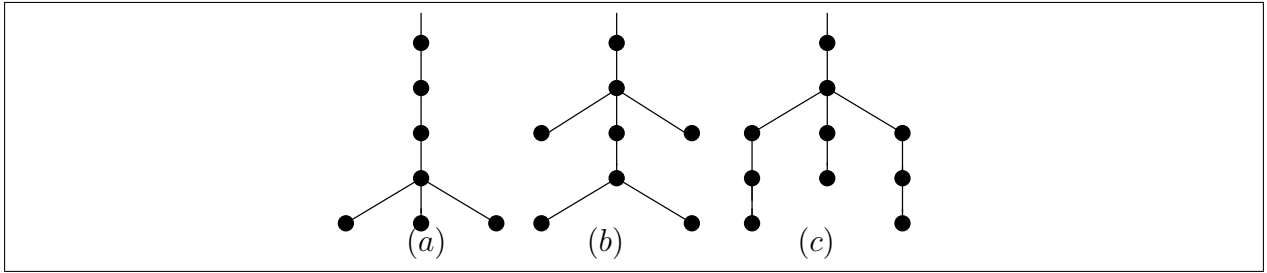}
\caption{Three Graded Roots.  The roots $(a)$ and $(b)$ are of projective type, while $(c)$ is not.  }
\label{fig:grad1}
\end{figure}

More generally, the sets $\{d_i\}$ and $\{n_i\}$ may be read from the graded root, in terms of the minimal grading elements $w$ that are leaves of vertices $v$ that are invariant under $\iota$.  

For spaces $Y$ of projective type, the homology cobordism invariants $(d_i,n_i)$ are determined by $d(Y),\bar{\mu}(Y)$.  The nice topological description of the Seiberg-Witten Floer spectrum of spaces of projective type simplifies calculations.  

The spaces $\Sigma(p,q,pqn+1)$ and $\Sigma(p,q,pqn-1)$ are of projective type for all $p,q,n$, as shown by N\'{e}methi \cite{Nem07} and Tweedy \cite{Tweedy}, respectively, building on work of Borodzik and N\'{e}methi \cite{BN}.  

However, not all Seifert fiber spaces are of projective type.  The Brieskorn sphere $\Sigma(5,8,13)$ is a Seifert space not of projective type, for instance, as one may confirm using graded roots.  Indeed, $\hfc(\Sigma(5,8,13))=\bt_1(2) \oplus \bt_1(1)$.  By Corollary \ref{cor:dini}, any space not of projective type is not homology cobordant to a space of projective type.  In particular, $\Sigma(5,8,13)$ is not homology cobordant to any $\Sigma(p,q,pqn \pm 1)$. 

\subsection{Calculation of Beta}
By the construction of $\mathit{SWF}(Y,\s)$, the grading of the reducible element is $-2n(Y,\s,g)$.  We also saw that the constant $s$ (depending on $(Y,\s)$) in Corollary \ref{thm:fullswfh} is the grading of the reducible (with respect to the Seifert metric).  Also in Corollary \ref{thm:fullswfh}, we saw $s/2=\beta(Y,\s)$ for Seifert rational homology spheres.  We then obtain:
\begin{cor} Let $Y=\Sigma(b,(b_1,a_1),\dots,(b_k,a_k))$ be a negative Seifert rational homology sphere and $\mathfrak{s}$ a spin structure on $Y$.  Then $\beta(Y,\mathfrak{s})=-n(Y,\mathfrak{s},g)$, where $g$ is a metric for which $Y$ has the Seifert geometry. \end{cor}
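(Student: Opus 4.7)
The proof will be a short synthesis of three ingredients, all already in place earlier in the excerpt; it is essentially the paragraph preceding the corollary made rigorous.

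First, I will pin down the grading of the reducible in $\mathit{SWF}(Y,\s)$. By the finite-dimensional approximation construction, $\mathit{SWF}(Y,\s,g)=(I^\lambda(Y,\s,g),a,b)\in\E$ where $a=\dim_{\mathbb{R}}W(-\lambda,0)(\tilde{\mathbb{R}})$ and $b=\dim_{\mathbb{H}}W(-\lambda,0)(\mathbb{H})$. The reducible critical point has its local unstable manifold equal to $W(-\lambda,0)(\tilde{\mathbb{R}})$ (the $S^1$-fixed part), so its cell in $I^\lambda(Y,\s,g)$ lies in grading $a$; after the formal desuspension by $a\tilde{\mathbb{R}}\oplus b\mathbb{H}$ and the further $\mathbb{H}$-desuspension in (\ref{eq:swfdef}), the fixed cell of $\mathit{SWF}(Y,\s)$ sits in grading $-2n(Y,\s,g)$.

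Second, I will identify this grading with the constant $s$ from Corollary \ref{thm:fullswfh}. By Lemma \ref{lem:crossflows}, $\mathit{SWF}(Y,\s)$ admits a $j$-split representative, so Proposition \ref{prop:eqclassdistseif} applies, giving
\[
[\mathit{SWF}(Y,\s)]_{cl}=C(s,\{d_i\},\{n_i\})=\big((\langle \fr\rangle\tilde{\oplus}\bigoplus_i\mathcal{S}_{d_i}(n_i)),0,-s/4\big)\in\CEL.
\]
Since $\fr$ sits in degree $0$ of the underlying chain complex and is shifted by $-4\cdot(-s/4)=s$ under the formal $\mathbb{H}$-desuspension bookkeeping, the fixed element of the chain-local-equivalence representative occupies grading $s$. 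But $[\,\cdot\,]_{cl}$ preserves the grading of the fixed-point set, so comparing with the first step yields $s=-2n(Y,\s,g)$.

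Third, I will verify that $\beta(Y,\s)=s/2$ directly from the homology calculation in Corollary \ref{thm:fullswfh}. Inspecting the $\bv$-towers in $\mathit{SWFH^G}(Y,\s)$, the infinite $v$-towers of lowest grading in residue classes $s+1$ and $s+2\pmod 4$ are $\bv_{1}[-s]$ and $\bv_{2}[-s]$, beginning in gradings $s+1$ and $s+2$ respectively. Feeding these into the homological characterization (\ref{eq:aaltdef}) of $b(X)$ for a $j$-split representative of $\mathit{SWF}(Y,\s)$, and tracking the $(p,h/4)$-shift through the definition $\beta((X,m,n))=b(X)/2-m/2-2n$, one reads off $\beta(Y,\s)=s/2$. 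Combining the three steps, $\beta(Y,\s)=s/2=-n(Y,\s,g)$.

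There is no real obstacle here; the whole content of the corollary is the bookkeeping of gradings between the finite-dimensional approximation picture (where $n(Y,\s,g)$ lives), the chain local equivalence representative (where $s$ lives), and the Borel-homology output (where $\beta$ lives). The trickiest point is merely to make sure the shift conventions in $(X,m,n)\in\E$ and in $[-s]$ are applied consistently, which is already handled by (\ref{eq:borelgeneral}) and the definitions in Section \ref{sec:2}.
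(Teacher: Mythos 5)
Your outline is the same as the paper's: (i) the reducible sits in grading $-2n(Y,\s,g)$, (ii) that grading is the constant $s$ of Corollary \ref{thm:fullswfh}, (iii) $\beta=s/2$. Step (iii) is fine. But step (i) as you have written it is wrong in a way that matters. The unstable manifold of the reducible $\Theta$ for the linearized flow $*d\oplus D$ is \emph{all} of $W(-\lambda,0)$, not just the $S^1$-fixed part $W(-\lambda,0)(\tilde{\mathbb{R}})$: the negative eigenspace of the Dirac operator contributes the quaternionic summand $W(-\lambda,0)(\mathbb{H})$. With your identification the bookkeeping does not close: a cell in grading $a$ desuspended by $a\tilde{\mathbb{R}}\oplus b\mathbb{H}\oplus\tfrac12 n(Y,\s,g)\mathbb{H}$ lands in grading $-4b-2n(Y,\s,g)$, which is $\lambda$-dependent and not the claimed $-2n(Y,\s,g)$. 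The correct statement is that the Conley index of $\{\Theta\}$ is $(W(-\lambda,0))^+$, whose fundamental class sits in grading $a+4b$; the $4b$ cancels against the $b\mathbb{H}$ in the formal desuspension, and the residual $-2n(Y,\s,g)$ is exactly the metric correction term. This cancellation is the entire content of step (i), so eliding the spinorial part of the unstable manifold removes the heart of the argument.

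Step (ii) has a related imprecision. The $S^1$-fixed-point sets of $(I^\lambda,a,b+\tfrac12 n)$ and of $C(s,\{d_i\},\{n_i\})=(Z,0,-s/4)$ are both $[S^0]$ concentrated in degree $0$, because an $\mathbb{H}$-desuspension does not move the fixed-point set (this is why $p'=0$ in the proof of Proposition \ref{prop:eqclassdistseif}). So "chain local equivalence preserves the grading of the fixed-point set" only yields $0=0$ and cannot give $s=-2n(Y,\s,g)$. What actually identifies $s$ with the grading of the reducible is the Borel homology: the tower $\tilde H^{S^1}_*(X^{S^1})$ contributes $\mathcal{T}^+_s$ after all shifts (this is how $s$ is pinned down, via the parity-of-ranks argument in the proof of Theorem \ref{thm:onjspaces}), while the same tower computed from the Conley index of $\Theta$ via the Thom isomorphism for $(W(-\lambda,0))^+$ starts in degree $-2n(Y,\s,g)$. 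With these two corrections your argument becomes the paper's.
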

Ruberman and Saveliev \cite{RubermanSaveliev10} show $n(Y,g) = \bar{\mu}(Y)$ for Seifert integral homology spheres for the Seifert metric, from which we establish Theorem \ref{thm:betamu}.

We have established that $\bar{\mu}$ restricted to Seifert integral homology three-spheres extends to a homology cobordism invariant, but not necessarily that $\bar{\mu}$ extends to a homology cobordism invariant.  In \cite{Manolescusurv} it is shown that $\beta$ is not additive; on the other hand, $\bar{\mu}$ is additive.  Similarly, $\beta$ does not agree with the Saveliev $\nu$ invariant of \cite{SavelievHomCob},\cite{SavelievBrieskorn}, although the two agree on Seifert fiber spaces. 

\section{Applications and Examples}\label{sec:example}
First, we see that Corollary \ref{cor:betamucor} follows from Corollary \ref{thm:fullswfh} and Theorem \ref{thm:betamu}.  Indeed, the negative fibration case follows immediately, and the positive fibration statement follows by using the properties of $\alpha, \beta, \gamma, \bar{\mu}$, and $d$ under orientation reversal.  

We also obtain: 
\begin{thm}  Let $Y$ be a Seifert integral homology sphere.  If $-\bar{\mu}(Y)/2 \neq d(Y)$, then $Y$ is not homology cobordant to any Seifert integral homology sphere with fibration of sign opposite that of $Y$.  
\end{thm}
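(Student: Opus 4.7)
The plan is to argue by contradiction, extracting enough constraints from $\alpha$, $\beta$, $\gamma$ to force the equality $d(Y)/2 = -\bar{\mu}(Y)$. Suppose first that $Y$ has negative fibration and is homology cobordant to a Seifert integral homology three-sphere $Y'$ with positive fibration (the case in which $Y$ has positive fibration will follow by passing to $-Y$ and $-Y'$, since $d(-Y)=-d(Y)$ and $\bar{\mu}(-Y)=-\bar{\mu}(Y)$, so the hypothesis is preserved under orientation reversal). By Theorem \ref{thm:ManolescuSWF}, the invariants $\alpha,\beta,\gamma$ are homology cobordism invariants, so $\alpha(Y)=\alpha(Y')$, $\beta(Y)=\beta(Y')$, $\gamma(Y)=\gamma(Y')$. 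The Heegaard Floer correction term $d$ is a homology cobordism invariant in its own right, and Theorem \ref{thm:betamu} together with the invariance of $\beta$ implies that $\bar{\mu}$ restricted to Seifert integral homology spheres descends to a homology cobordism invariant. Hence $d(Y)=d(Y')$ and $\bar{\mu}(Y)=\bar{\mu}(Y')$.

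Next I would feed these equalities into Corollary \ref{cor:betamucor}. Part (a), applied to $Y$, yields
\[
\alpha(Y)\in\{d(Y)/2,\;d(Y)/2+1\},\qquad \gamma(Y)=-\bar{\mu}(Y).
\]
Part (b), applied to $Y'$, yields
\[
\alpha(Y')=-\bar{\mu}(Y'),\qquad \gamma(Y')\in\{d(Y')/2,\;d(Y')/2-1\}.
\]
Combining $\alpha(Y)=\alpha(Y')$ with $d(Y)=d(Y')$ and $\bar{\mu}(Y)=\bar{\mu}(Y')$ gives
\[
-\bar{\mu}(Y)\in\{d(Y)/2,\;d(Y)/2+1\},
\]
while $\gamma(Y)=\gamma(Y')$ gives
\[
-\bar{\mu}(Y)\in\{d(Y)/2,\;d(Y)/2-1\}.
\]

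Intersecting the two sets leaves only $-\bar{\mu}(Y)=d(Y)/2$, i.e.\ $d(Y)/2=-\bar{\mu}(Y)$, contradicting the hypothesis. The argument has no serious obstacle: all of the heavy lifting has already been done in Corollary \ref{cor:betamucor} and Theorem \ref{thm:betamu}; what makes the intersection sharp is precisely the asymmetry between the $\alpha$/$\gamma$ offsets in the two orientation cases, which ensures that the two single-element shifts cut each other out. The only mild subtlety worth flagging in the write-up is the orientation-reversal reduction from the positive case to the negative case, which relies on checking that ``positive fibration'' and ``negative fibration'' are interchanged under $Y\mapsto -Y$, and that the condition $d(Y)/2\neq-\bar{\mu}(Y)$ is invariant under this operation.
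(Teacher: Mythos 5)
Your argument is correct and is essentially the paper's: both proofs are immediate consequences of Corollary \ref{cor:betamucor} together with the homology cobordism invariance of $\alpha,\beta,\gamma$ (and of $d$ and, via Theorem \ref{thm:betamu}, of $\bar\mu$ on Seifert spaces). The paper packages the contradiction as $\alpha(Y)\neq\beta(Y)$ for the negative space versus $\alpha=\beta$ for every positive Seifert space (and dually with $\gamma$ and $\beta$), whereas you intersect the two one-element ambiguities coming from $\alpha$ and $\gamma$; this is the same computation in slightly different clothing.
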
 \begin{proof} If $Y$ is a negative Seifert fibration, and $-\bar{\mu}(Y)/2 \neq d(Y)$, then $\alpha(Y) \neq \beta(Y)$, but for all positive fibrations $\alpha=\beta$.  One performs a similar check for positive fibrations. 
\end{proof}

This statement is expressed only in terms of $\bar{\mu}$ and $d$, but the proof comes from the properties of $\alpha, \beta, \gamma$.  As a particular example, we have $\Sigma(2,3,12k-5)$ and $\Sigma(2,3,12k-1)$, for all $k \geq 1$, have $\alpha \neq \beta$ and so are not homology cobordant to any positive Seifert fibration.  

We remark that N{\'e}methi's algorithm \cite{Nemethigr} for Heegaard Floer homology of Seifert fiber spaces makes $\mathit{SWFH^G_*}$ of Seifert spaces computable.  Using Tweedy's computations in \cite{Tweedy}, we provide calculations of $\mathit{SWFH^G_*}$ for the following infinite families as an example.  In the following tables, there are nontrivial $q$-actions between infinite towers.  The only other nontrivial $q$-actions are for $\Sigma(2,7,28k-1)$ and $\Sigma(2,7,28k+15)$, where $q$ sends each summand of $\bv_3(1)^{\oplus k}$ (respectively $\bv_{-1}(1)^{\oplus k+1}$) to $\bv_2$ (respectively $\bv_{-2}$).

\begin{center}
\begin{tabular}{|l|l|r|r|r|r|r|}
\hline
$Y$&  $\mathit{SWFH^G_*}(Y)$ & $\alpha$ & $\beta$ & $\gamma$ &  $\delta$  \\ \hline
$\Sigma(2,5,20k+11)$ & $\mathcal{V}^+_2 \oplus \mathcal{V}^+_{-1} \oplus \mathcal{V}^+_0 \oplus \bv_{-1}(1)^{\oplus k}\oplus \bigoplus^{2k+1}_{i=1}\bv_{-1-2i}(1)$ &1&-1&-1&0\\ \hline
$\Sigma(2,5,20k+1)$ & $\mathcal{V}^+_0 \oplus \mathcal{V}^+_{1} \oplus \mathcal{V}^+_2 \oplus\bv_{-1}(1)^{\oplus k}\oplus \bigoplus^{2k}_{i=1}\bv_{-1-2i}(1)$ &0&0&0&0\\ \hline
$\Sigma(2,5,20k-11)$ & $\mathcal{V}^+_2 \oplus \mathcal{V}^+_{3} \oplus \mathcal{V}^+_4 \oplus \bv_{1}(1)^{\oplus k-1}\oplus \bigoplus^{2k-2}_{i=0}\bv_{-1-2i}(1)$ &1&1&1&1\\ \hline
$\Sigma(2,5,20k-1)$ & $\mathcal{V}^+_4 \oplus \mathcal{V}^+_{1} \oplus \mathcal{V}^+_2 \oplus \bv_1(1)^{\oplus k-1}\oplus \bigoplus^{2k-1}_{i=0}\bv_{-1-2i}(1)$ &2&0&0&1\\ \hline
$\Sigma(2,5,20k-13)$ & $\mathcal{V}^+_0 \oplus \mathcal{V}^+_{1} \oplus \mathcal{V}^+_2 \oplus \bv_{-1}(1)^{\oplus k-1}\oplus \bigoplus^{2k-2}_{i=0}\bv_{-1-2i}(1)$ &0&0&0&0\\ \hline
$\Sigma(2,5,20k-3)$ & $\mathcal{V}^+_2 \oplus \mathcal{V}^+_{-1} \oplus \mathcal{V}^+_0 \oplus \bv_{-1}(1)^{\oplus k-1}\oplus \bigoplus^{2k-1}_{i=0}\bv_{-1-2i}(1)$ &1&-1&-1&0\\ \hline
$\Sigma(2,5,20k+3)$ & $\mathcal{V}^+_2 \oplus \mathcal{V}^+_{3} \oplus \mathcal{V}^+_4 \oplus \bv_{1}(1)^{\oplus k}\oplus \bigoplus^{2k-1}_{i=0}\bv_{-1-2i}(1)$ &1&1&1&1\\ \hline
$\Sigma(2,5,20k+13)$ & $\mathcal{V}^+_4 \oplus \mathcal{V}^+_{1} \oplus \mathcal{V}^+_2 \oplus \bv_{1}(1)^{\oplus k}\oplus \bigoplus^{2k}_{i=0}\bv_{-1-2i}(1)$ &2&0&0&1\\
\hline
\end{tabular}
\end{center}

\begin{flushleft}
{\small
\begin{tabular}{|l|l|r|r|r|r|r|}
\hline
$Y$&  $\mathit{SWFH^G_*}(Y)$ & $\alpha$ & $\beta$ & $\gamma$ &  $\delta$  \\ \hline
$\Sigma(2,7,28k-1)$ & $\mathcal{V}^+_4 \oplus \mathcal{V}^+_{1} \oplus \mathcal{V}^+_2 \oplus \bv_3(1)^{\oplus k}\oplus \bv_1(1)^{\oplus k-1}\oplus\bigoplus^{2k-1}_{i=0}\bv_{-1-2i}(1) \oplus\bigoplus^{2k-1}_{i=0}\bv_{-1-4k-4i}(1)$ &2&0&0&2\\ \hline
$\Sigma(2,7,28k-15)$ & $\mathcal{V}^+_4 \oplus \mathcal{V}^+_{5} \oplus \mathcal{V}^+_6 \oplus \bv_3(1)^{\oplus k-1}\oplus \bv_1(1)^{\oplus k-1}\oplus\bigoplus^{2k-2}_{i=0}\bv_{-1-2i}(1) \oplus\bigoplus^{2k-2}_{i=0}\bv_{1-4k-4i}(1)$ &2&2&2&2\\ \hline
$\Sigma(2,7,28k+1)$ & $\mathcal{V}^+_0 \oplus \mathcal{V}^+_{1} \oplus \mathcal{V}^+_2 \oplus \bv_{-3}(1)^{\oplus k}\oplus \bv_{-1}(1)^{\oplus k}\oplus\bigoplus^{2k}_{i=1}\bv_{-1-2i}(1) \oplus\bigoplus^{2k}_{i=1}\bv_{-1-4k-4i}(1)$ &0&0&0&0\\ \hline
$\Sigma(2,7,28k+15)$ & $\mathcal{V}^+_0 \oplus \mathcal{V}^+_{-3} \oplus \mathcal{V}^+_{-2} \oplus \bv_{-3}(1)^{\oplus k}\oplus \bv_{-1}(1)^{k+1} \oplus\bigoplus^{2k+1}_{i=1}\bv_{-1-2i}(1) \oplus\bigoplus^{2k+1}_{i=1}\bv_{-3-4k-4i}(1)$ &0&-2&-2&0\\ \hline
$\Sigma(2,7,14k-3)$ & $\mathcal{V}^+_2 \oplus \mathcal{V}^+_{3} \oplus \mathcal{V}^+_4 \oplus \bv_1(1)^{\oplus k-1}\oplus \bigoplus^{k-1}_{i=0}\bv_{1-2i}(1)\oplus\bigoplus^{k-1}_{i=0}\bv_{1-2k-4i}(1)$ &1&1&1&1\\ \hline
$\Sigma(2,7,14k+3)$ & $\mathcal{V}^+_2 \oplus \mathcal{V}^+_{-1} \oplus \mathcal{V}^+_0 \oplus \bv_{-1}(1)^{\oplus k}\oplus \bigoplus^{k}_{i=1}\bv_{-1-2i}(1)\oplus\bigoplus^{k}_{i=1}\bv_{-1-2k-4i}(1)$ &1&-1&-1&0\\ \hline
$\Sigma(2,7,14k-5)$ & $\mathcal{V}^+_4 \oplus \mathcal{V}^+_{1} \oplus \mathcal{V}^+_2 \oplus \bv_1(1)^{\oplus k-2}\oplus \bigoplus^{k-1}_{i=0}\bv_{1-2i}(1)\oplus\bigoplus^{k-1}_{i=0}\bv_{1-2k-4i}(1)$ &2&0&0&1\\ \hline
$\Sigma(2,7,14k+5)$ & $\mathcal{V}^+_0 \oplus \mathcal{V}^+_{1} \oplus \mathcal{V}^+_2 \oplus \bv_{-1}(1)^{\oplus k+1}\oplus \bigoplus^{k}_{i=1}\bv_{-1-2i}(1)\oplus\bigoplus^{k}_{i=1}\bv_{-1-2k-4i}(1)$ &0&0&0&0\\
\hline
\end{tabular}
\par}
\end{flushleft}

\bibliography{betaseifert2.bib}

\begin{thebibliography}{10}

\bibitem{Adams}
Frank Adams.
\newblock Prerequisites (on equivariant stable homotopy) for {C}arlsson's
  lecture, algebraic topology, {A}arhus 1982.
\newblock In I.~Madsen and B.~Oliver, editors, {\em Proceedings of a conference
  held at the {M}athematics {I}nstitute, {A}arhus {U}niversity, {A}arhus,
  {A}ugust 1--7, 1982}, volume 1051 of {\em Lecture Notes in Mathematics},
  pages x+665. Springer-Verlag, Berlin, 1984.

\bibitem{BN}
Maciej Borodzik and Andr{\'a}s N{\'e}methi.
\newblock Heegaard-{F}loer homologies of {$(+1)$} surgeries on torus knots.
\newblock {\em Acta Math. Hungar.}, 139(4):303--319, 2013.

\bibitem{CGH1}
Vincent Colin, Paolo Ghiggini, and Ko~Honda.
\newblock ${HF} = {ECH}$ via open book decompositions: A summary.
\newblock {\em preprint}, 2011.

\bibitem{Conley}
Charles Conley.
\newblock {\em Isolated invariant sets and the {M}orse index}, volume~38 of
  {\em CBMS Regional Conference Series in Mathematics}.
\newblock American Mathematical Society, Providence, R.I., 1978.

\bibitem{FintushelStern85}
Ronald Fintushel and Ronald~J. Stern.
\newblock Pseudofree orbifolds.
\newblock {\em Ann. of Math. (2)}, 122(2):335--364, 1985.

\bibitem{FloerConley}
Andreas Floer.
\newblock A refinement of the {C}onley index and an application to the
  stability of hyperbolic invariant sets.
\newblock {\em Ergodic Theory Dynam. Systems}, 7(1):93--103, 1987.

\bibitem{FFU}
Yoshihiro Fukumoto, Mikio Furuta, and Masaaki Ue.
\newblock {$W$}-invariants and {N}eumann-{S}iebenmann invariants for {S}eifert
  homology {$3$}-spheres.
\newblock {\em Topology Appl.}, 116(3):333--369, 2001.

\bibitem{GKM}
Mark Goresky, Robert Kottwitz, and Robert MacPherson.
\newblock Equivariant cohomology, {K}oszul duality, and the localization
  theorem.
\newblock {\em Invent. Math.}, 131(1):25--83, 1998.

\bibitem{CanKarakurt}
Cagri Karakurt and Mahir Can.
\newblock Calculating {H}eegaard-{F}loer homology by counting lattice points in
  tetrahedra.
\newblock {\em preprint}, 2012.

\bibitem{KM}
Peter Kronheimer and Tomasz Mrowka.
\newblock {\em Monopoles and three-manifolds}, volume~10 of {\em New
  Mathematical Monographs}.
\newblock Cambridge University Press, Cambridge, 2007.

\bibitem{KLT1}
Cagatay Kutluhan, Yi-Jen Lee, and Clifford~Henry Taubes.
\newblock {HF} = {HM} {I}: Heegaard {F}loer homology and {S}eiberg-{W}itten
  {F}loer homology.
\newblock {\em preprint}, 2010.

\bibitem{LM}
Tye Lidman and Ciprian Manolescu.
\newblock The equivalence of two {S}eiberg-{W}itten {F}loer homologies.
\newblock {\em In preparation}, 2013.

\bibitem{flin}
Francesco Lin.
\newblock A {M}orse-{B}ott approach to monopole {F}loer homology and the
  {T}riangulation conjecture.
\newblock {\em preprint}, 2014.

\bibitem{flin2}
Francesco Lin.
\newblock The surgery exact triangle in {P}in(2)-monopole {F}loer homology.
\newblock {\em preprint}, 2015.

\bibitem{ManolescuS1}
Ciprian Manolescu.
\newblock Seiberg-{W}itten-{F}loer stable homotopy type of three-manifolds with
  {$b\sb 1=0$}.
\newblock {\em Geom. Topol.}, 7:889--932 (electronic), 2003.

\bibitem{Manolescusurv}
Ciprian Manolescu.
\newblock The {C}onley index, gauge theory, and triangulations.
\newblock {\em J. Fixed Point Theory Appl.}, 13(2):431--457, 2013.

\bibitem{ManolescuPin}
Ciprian Manolescu.
\newblock Pin(2)-equivariant {S}eiberg-{W}itten {F}loer homology and the
  triangulation conjecture.
\newblock {\em J. Amer. Math. Soc., to appear}, 2013.

\bibitem{ManolescuK}
Ciprian Manolescu.
\newblock On the intersection forms of spin four-manifolds with boundary.
\newblock {\em Math. Ann.}, 359(3-4):695--728, 2014.

\bibitem{MOY}
Tomasz Mrowka, Peter Ozsv{\'a}th, and Baozhen Yu.
\newblock Seiberg-{W}itten monopoles on {S}eifert fibered spaces.
\newblock {\em Comm. Anal. Geom.}, 5(4):685--791, 1997.

\bibitem{Nemethigr}
Andr{\'a}s N{\'e}methi.
\newblock On the {O}zsv\'ath-{S}zab\'o invariant of negative definite plumbed
  3-manifolds.
\newblock {\em Geom. Topol.}, 9:991--1042, 2005.

\bibitem{Nem07}
Andr{\'a}s N{\'e}methi.
\newblock Graded roots and singularities.
\newblock In {\em Singularities in geometry and topology}, pages 394--463.
  World Sci. Publ., Hackensack, NJ, 2007.

\bibitem{NeumannPlumbings}
Walter~D. Neumann.
\newblock An invariant of plumbed homology spheres.
\newblock In {\em Topology {S}ymposium, {S}iegen 1979 ({P}roc. {S}ympos.,
  {U}niv. {S}iegen, {S}iegen, 1979)}, volume 788 of {\em Lecture Notes in
  Math.}, pages 125--144. Springer, Berlin, 1980.

\bibitem{OzSzgrad}
Peter Ozsv{\'a}th and Zolt{\'a}n Szab{\'o}.
\newblock Absolutely graded {F}loer homologies and intersection forms for
  four-manifolds with boundary.
\newblock {\em Adv. Math.}, 173(2):179--261, 2003.

\bibitem{OzSzplumb}
Peter Ozsv{\'a}th and Zolt{\'a}n Szab{\'o}.
\newblock On the {F}loer homology of plumbed three-manifolds.
\newblock {\em Geom. Topol.}, 7:185--224 (electronic), 2003.

\bibitem{OzSz2}
Peter Ozsv{\'a}th and Zolt{\'a}n Szab{\'o}.
\newblock Holomorphic disks and three-manifold invariants: properties and
  applications.
\newblock {\em Ann. of Math. (2)}, 159(3):1159--1245, 2004.

\bibitem{OzSz1}
Peter Ozsv{\'a}th and Zolt{\'a}n Szab{\'o}.
\newblock Holomorphic disks and topological invariants for closed
  three-manifolds.
\newblock {\em Ann. of Math. (2)}, 159(3):1027--1158, 2004.

\bibitem{Pruszko}
Artur~M. Pruszko.
\newblock The {C}onley index for flows preserving generalized symmetries.
\newblock In {\em Conley index theory ({W}arsaw, 1997)}, volume~47 of {\em
  Banach Center Publ.}, pages 193--217. Polish Acad. Sci., Warsaw, 1999.

\bibitem{RubermanSaveliev10}
Daniel Ruberman and Nikolai Saveliev.
\newblock The {$\overline\mu$}-invariant of {S}eifert fibered homology spheres
  and the {D}irac operator.
\newblock {\em Geom. Dedicata}, 154:93--101, 2011.

\bibitem{SavelievHomCob}
Nikolai Saveliev.
\newblock Floer homology and invariants of homology cobordism.
\newblock {\em Internat. J. Math.}, 9(7):885--919, 1998.

\bibitem{SavelievBrieskorn}
Nikolai Saveliev.
\newblock Floer homology of {B}rieskorn homology spheres.
\newblock {\em J. Differential Geom.}, 53(1):15--87, 1999.

\bibitem{Savelievmu}
Nikolai Saveliev.
\newblock Fukumoto-{F}uruta invariants of plumbed homology 3-spheres.
\newblock {\em Pacific J. Math.}, 205(2):465--490, 2002.

\bibitem{Siebenmann}
L.~Siebenmann.
\newblock On vanishing of the {R}ohlin invariant and nonfinitely amphicheiral
  homology {$3$}-spheres.
\newblock In {\em Topology {S}ymposium, {S}iegen 1979 ({P}roc. {S}ympos.,
  {U}niv. {S}iegen, {S}iegen, 1979)}, volume 788 of {\em Lecture Notes in
  Math.}, pages 172--222. Springer, Berlin, 1980.

\bibitem{Taubes}
Clifford~Henry Taubes.
\newblock The {S}eiberg-{W}itten equations and the {W}einstein conjecture.
\newblock {\em Geom. Topol.}, 11:2117--2202, 2007.

\bibitem{tomDieck}
Tammo tom Dieck.
\newblock {\em Transformation groups}, volume~8 of {\em de Gruyter Studies in
  Mathematics}.
\newblock Walter de Gruyter \& Co., Berlin, 1987.

\bibitem{Tweedy}
Eamonn Tweedy.
\newblock Heegaard {F}loer homology and several families of {B}rieskorn
  spheres.
\newblock {\em Topology Appl.}, 160(4):620--632, 2013.

\bibitem{Waner}
Stefan Waner.
\newblock Equivariant homotopy theory and {M}ilnor's theorem.
\newblock {\em Trans. Amer. Math. Soc.}, 258(2):351--368, 1980.

\bibitem{Weibel}
Charles~A. Weibel.
\newblock {\em An introduction to homological algebra}, volume~38 of {\em
  Cambridge Studies in Advanced Mathematics}.
\newblock Cambridge University Press, Cambridge, 1994.

\end{thebibliography}
\bibliographystyle{plain}

\end{document}